\newcommand\DoToC{%
  \startcontents
  \printcontents{}{1}{\textbf{Contents}\vskip3pt\hrule\vskip5pt}
  \vskip3pt\hrule\vskip5pt
}
\toks@\expandafter{\@endtheorem\@endpetrue}
\edef\@endtheorem{\the\toks@}
\definecolor{Bleu}{RGB}{0,0,204}
\newcommand{\iidsim}{\raisebox{-2pt}{$\,\overset{\textnormal{\scriptsize iid}}{\sim}\,$}}
\renewcommand\thmcontinues[1]{continued}
\newtheorem{theorem}{Theorem}
\newtheorem{lemma}{Lemma}
\newtheorem{corollary}{Corollary}
\theoremstyle{definition}
\newtheorem{conditionC}{Condition}
\newtheorem{example}{Example}
\newcounter{parentnumber}
\newenvironment{subexample}[1]{%
  \counterwithin*{example}{parentnumber}
  \def\subexamplecounter{#1}%
  \refstepcounter{#1}%
  \protected@edef\theparentnumber{\csname the#1\endcsname}%
  \setcounter{parentnumber}{\value{#1}}%
  \setcounter{#1}{0}%
  \expandafter\def\csname the#1\endcsname{\theparentnumber\alph{#1}}%
  \ignorespaces
}{%
  \setcounter{\subexamplecounter}{\value{parentnumber}}%
  \counterwithout*{example}{parentnumber} 
  \ignorespacesafterend
}
\crefname{example}{Example}{Examples} 
\DeclareMathOperator*{\argmin}{argmin}
\DeclareMathOperator*{\esssup}{ess\,sup}
\DeclareMathOperator*{\essinf}{ess\,inf}
\newcommand{\indep}{\perp \!\!\! \perp}
\title{One-Step Estimation of Differentiable Hilbert-Valued Parameters}
\author{Alex Luedtke}
\author{Incheoul Chung}
\affil{Department of Statistics, University of Washington}
\date{\today}
\begin{document}
\allowdisplaybreaks
\maketitle

\begin{abstract}
We present estimators for smooth Hilbert-valued parameters, where smoothness is characterized by a pathwise differentiability condition. When the parameter space is a reproducing kernel Hilbert space, we provide a means to obtain efficient, root-$n$ rate estimators and corresponding confidence sets. These estimators correspond to generalizations of cross-fitted one-step estimators based on Hilbert-valued efficient influence functions. We give theoretical guarantees even when arbitrary estimators of nuisance functions are used, including those based on machine learning techniques. We show that these results naturally extend to Hilbert spaces that lack a reproducing kernel, as long as the parameter has an efficient influence function. However, we also uncover the unfortunate fact that, when there is no reproducing kernel, many interesting parameters fail to have an efficient influence function, even though they are pathwise differentiable. To handle these cases, we propose a regularized one-step estimator and associated confidence sets. We also show that pathwise differentiability, which is a central requirement of our approach, holds in many cases. Specifically, we provide multiple examples of pathwise differentiable parameters and develop corresponding estimators and confidence sets. Among these examples, four are particularly relevant to ongoing research by the causal inference community: the counterfactual density function, dose-response function, conditional average treatment effect function, and counterfactual kernel mean embedding.
\end{abstract}

\section{Introduction}

There has been much recent work on combining tools from semiparametric efficiency and machine learning to estimate finite-dimensional parameters \citep{baiardi2021value,kennedy2022semiparametric,hines2022demystifying}. These works often focus on pathwise differentiable parameters, which are characterized by their smoothness along regular univariate submodels of the statistical model \citep{pfanzagl1990estimation,van1991differentiable,bickel1993efficient}. When a finite-dimensional parameter is pathwise differentiable, it also has an efficient influence function (EIF), which corresponds to the Riesz representation of its pathwise derivative. Efficient influence functions are the critical ingredient used to define various estimation strategies, such as those based on one-step estimation \citep{pfanzagl1982lecture,newey1994large}, estimating equations \citep{van2003unified,tsiatis2006semiparametric}, targeted learning \citep{van2006targeted,van2011targeted}, and double machine learning \citep{chernozhukov2017double,chernozhukov2018double}. When paired with cross-fitting \citep{schick1986asymptotically,klaassen1987consistent}, these frameworks yield asymptotically efficient estimators provided the nuisance functions are estimated well enough as the sample size $n$ grows to make a certain remainder term negligible. Often, this amounts to requiring an $n^{-1/4}$-rate condition, which will most plausibly hold if the nuisance functions are estimated flexibly.

Another line of research has focused on leveraging machine learning tools to estimate function-valued parameters, such as the causal dose-response function \citep{diaz2013targeted}, counterfactual density function \citep{kennedy2021semiparametric}, and conditional average treatment effect function \citep{nie2021quasi}. Possibly owing to the wealth of available methods for estimating real-valued functionals, many of these works have focused on the evaluation of these functions at a point. As has been noted in \cite{van2018cv} and \cite{chernozhukov2018biased}, the resulting point evaluations tend not to be pathwise differentiable except in trivial cases (e.g., when the data are discrete). To overcome this challenge, kernel-smoothed approximations of the function evaluation parameter have been considered in those two works and others \citep{colangelo2020double,luedtke2020efficient,chernozhukov2021simple,jung2021double}, and local polynomial approximations have also been introduced for several parameters \citep{kennedy2017non,takatsu2022debiased,kennedy2022minimax}. These smoothed approximations tend to yield pathwise differentiable parameters, which enables the use of one-step estimators. Slower-than-$n^{1/2}$ convergence rates are typically attained because the fineness of the approximation must improve with sample size. The guarantees provided for these estimators tend to be pointwise in nature. Given that pointwise convergence does not generally imply norm convergence or uniform convergence without additional regularity conditions, these pointwise-based estimators usually only facilitate inference for the evaluation of the unknown function at one or finitely many points, rather than for the entire function.

Some works have focused on estimating unknown function-valued parameters in a norm sense. Many of these works incorporate objects from semiparametric efficiency theory. For example, in the context of conditional average treatment effect estimation, risk functions have been developed \citep{van2006statistical,luedtke2016super,nie2021quasi}, and \cite{kennedy2020optimal} develops rate-of-convergence guarantees for the corresponding empirical risk minimizers. These estimators incorporate (weighted) variants of the EIF of the marginal average treatment effect in their construction. As further examples, EIFs have been used to construct norm-convergent estimators of the counterfactual density function \citep{kennedy2021semiparametric} and dose-response function \citep{takatsu2022debiased}. A drawback to these approaches to estimating function-valued parameters is that, to date, it has seemed that a new estimator must be derived and new regularity conditions established for each new parameter considered. Others have presented general approaches to learning unknown functions based on empirical risk minimization, where the population risk depends on unknown nuisance functions that can be orthogonalized by conducting statistical learning using an efficient estimator of the risk function as an objective function \citep{van2003unifiedCV,foster2019orthogonal}. When the population regret takes the form of a squared norm, these approaches provide a means to derive estimators with norm-convergence guarantees. However, unlike standard approaches such as one-step-estimation that are used for estimating finite-dimensional quantities, these methods do not appear to easily lend themselves to the construction of confidence sets for the unknown functions. 
Instead, the available approaches to construct confidence sets rely on approaches that are generally distinct from those used for estimating the function, such as building them using higher-order influence functions \citep{robins2008higher}, a restricted score test \citep{hudson2021inference}, or a maximum mean discrepancy (MMD) criterion \citep{luedtke2019omnibus}.

In this work, we establish that the one-step estimation methodology can be extended to estimate and make inference about pathwise differentiable parameters that take values in a Hilbert space. This pathwise differentiability condition turns out to be quite reasonable for many function-valued parameters of current interest. Indeed, we show that all of the parameters mentioned earlier in this Introduction satisfy it under regularity conditions. This is true in spite of the fact that these Hilbert-valued parameters are not pathwise differentiable when composed with an evaluation map.

The notion of pathwise differentiability that we focus on in this work is that studied in some early literature on semiparametric efficiency theory, which defined pathwise differentiability and EIFs not just for finite-dimensional parameters, but for general Banach-valued parameters (\citealp{van1989prohorov}; \citealp{van1991differentiable}; page 179 of \citealp{bickel1993efficient}). 
Since all Hilbert spaces are Banach spaces, their definitions apply in our case, as do some useful results that they present, such as a convolution theorem. Nevertheless, existing works did not provide any examples of how to evaluate the pathwise differentiability of infinite-dimensional Hilbert-valued parameters --- for example, see \cite{van1991differentiable} and Chapter 5.3 of \cite{bickel1993efficient}, whose infinite-dimensional examples all pertain to parameters taking values in a Banach space equipped with the uniform norm. Since they are not even pathwise differentiable at a point, none of the aforementioned function-valued parameters are pathwise differentiable in such a Banach space. 
Previous works also do not indicate whether or how the pathwise differentiability of an infinite-dimensional Hilbert-valued parameter can be used to facilitate estimation or inference, whether via the one-step estimation methodology or otherwise. While a brief, one-paragraph sketch was given on page 405 of \cite{bickel1993efficient} suggesting that providing a general, efficient $n^{-1/2}$-rate estimation framework may be difficult for infinite-dimensional spaces, this sketch only discusses a single example where $n^{-1/2}$-rate estimation may not even be possible. Moreover, neither that work, nor any subsequent ones, appear to evaluate whether leveraging the pathwise differentiability of a Hilbert-valued parameter would be useful for constructing a performant, but slower than $n^{-1/2}$-rate, estimator, or for constructing a confidence set.

The main contributions of this work are as follows:
\begin{enumerate}
    \item We characterize the EIF of a pathwise differentiable Hilbert-valued parameter, when it exists, and provide a means to obtain a regularized version thereof, when it does not.
    \item We construct one-step estimators using (possibly regularized) EIFs. Any method can be used to estimate the needed nuisance functions provided it converges at a suitable rate.
    \item We provide root-$n$-rate weak convergence and efficiency guarantees for these estimators, when an EIF exists, and slower rate-of-convergence guarantees, when one does not.
    \item We show how to construct asymptotically-valid confidence sets for the Hilbert-valued estimands. These confidence sets take different forms depending on whether an EIF exists.
    \item We study our framework in examples of current interest to the causal inference community and establish the pathwise differentiability of several more traditional parameters.
\end{enumerate}
When the estimand is a function, our confidence sets will contain it with a specified probability. Thus, if the aim is to infer about the whole function, our methodology is likely preferable to pointwise approaches. To accomplish the last point above, we derive a general lemma that facilitates the evaluation of the pathwise differentiability of Hilbert-valued parameters. 
Finally, we have conducted a simulation study to evaluate the proposed approach. 
All proofs can be found in the appendix.

\section{Pathwise differentiability in Hilbert spaces and constructing estimators}\label{sec:pd}

\subsection{Notation}
We work on a Polish space $(\mathcal{Z},\mathbf{B})$ with a collection of distributions $\mathcal{P}$, which we refer to as the model. 
Let $Z_1, Z_2, \cdots, Z_n \sim P_0$ be an independent and identically distributed (iid) sample from a distribution $P_0\in\mathcal{P}$, and let $P_n$ denote the corresponding empirical distribution. Let $\widehat{P}_n \in \mathcal{P}$ be an estimate of $P_0$. 
To ease notation, for now we consider a sample splitting approach wherein $\widehat{P}_n$ is fitted using an iid sample that is independent of $Z_1, Z_2, \cdots, Z_n$; in Section~\ref{sec:cf}, we describe the case where cross-fitting is used  \citep{schick1986asymptotically,klaassen1987consistent}, which is our preferred approach.  
For $Q$ a signed measure on $(\mathcal{Z},\mathbf{B})$ and a measurable function $f: \mathcal{Z} \to \mathbb{R}$, we use the shorthand $Qf := \int f dQ$. 
For any object indexed by $P_0$, we will abbreviate the notation by replacing `$P_0$' by `$0$'; for example, we will write $f_0$ rather than $f_{P_0}$. Similarly, we will replace `$\widehat{P}_n$' by `$n$' and write $f_n$ rather than $f_{\widehat{P}_n}$.

All Hilbert spaces mentioned in this paper are real Hilbert spaces. For a measure $\mu$ on a measurable space $(\mathcal{X},\Sigma)$, we write $L^2(\mu)$ to denote the Hilbert space of $\mu$-a.s. equivalence classes of $\mathcal{X}\rightarrow \mathbb{R}$ functions equipped with inner product $\langle f,g \rangle_{L^2(\mu)}:=\int fg\,d\mu$. If $\mathcal{X}\subset \mathbb{R}^d$, $\mu$ is the Lebesgue measure, and $\Sigma$ is the Borel $\sigma$-algebra on $\mathbb{R}^d$, we will sometimes write $L^2(\mathcal{X})$ instead of $L^2(\mu)$. 
In what follows $\mathcal{V}$ denotes a generic Hilbert space. 
We let $\| \cdot \|_{\mathcal{V}}$ and $\langle \cdot, \cdot \rangle_{\mathcal{V}}$ denote the norm and inner product associated with $\mathcal{V}$.
The space $L^2(P;\mathcal{V})$ is the Hilbert space containing all Bochner measurable functions $f: \mathcal{Z} \to \mathcal{V}$ such that
$$\|f\|_{L^2(P;\mathcal{V})} := \left( \int \| f(z) \|_{\mathcal{V}}^2\, P(dz) \right)^{1/2}<\infty.$$
The operator norm of a linear functional $f : \mathcal{V}\rightarrow\mathbb{R}$ is defined as $\|f\|_{\textnormal{op}}:=\inf\{c\ge 0 : |f(v)|\le c\|v\|_{\mathcal{V}}\textnormal{ for all }v\in\mathcal{V}\}$. 
If $\mathcal{W}$ is a closed subspace of $\mathcal{V}$, then let $\Pi_{\mathcal{V}}[h \mid \mathcal{W}]$ denote the orthogonal projection of $h$ to $\mathcal{W}$. We let $\ell^2$ denote the space of all square-summable sequences and $\|b\|_{\ell^2}:=[\sum_{k=1}^\infty b_k^2]^{1/2}$. We also let $[0,1]^{\mathbb{N}}$ denote the space of all $[0,1]$-valued sequences. To avoid having to use different notation to treat finite- and infinite-dimensional Hilbert spaces, throughout we use the convention that, if $\mathcal{V}$ is finite-dimensional, then we call $(v_k)_{k=1}^{\infty}$ an orthonormal basis of $\mathcal{V}$ if $(v_k)_{k=1}^{\mathrm{dim}(\mathcal{V})}$ is an orthonormal system that spans $\mathcal{V}$ and $v_k=0$ for all $k>\mathrm{dim}(\mathcal{V})$.

\subsection{Pathwise differentiability in Hilbert spaces}
We start with a brief review of important definitions that can be used to characterize the smoothness of a Hilbert-valued parameter. These definitions are adapted from those given in \citep{bickel1993efficient} for more general Banach-valued parameter settings. The subsequent parts of this section will involve developing estimators for our more specialized, but understudied, setting, where we heavily leverage the availability of an inner product in our Hilbert parameter space.

Let $\mathcal{P}$ be a collection of distributions defined on a common Polish space $(\mathcal{Z},\mathbf{B})$, which we refer to as the model. Suppose that the model is dominated by a $\sigma$-finite measure $\lambda$. 
A submodel $\{P_\epsilon : \epsilon \in [0,\delta)\} \subset \mathcal{P}$ is said to be quadratic mean differentiable at $P$ if and only if there exists a score function $s\in L^2(P)$ such that
\begin{align}
\left\|p_\epsilon^{1/2} - p^{1/2} - \epsilon s p^{1/2}/2 \right\|_{L^2(\lambda)} = o(\epsilon),\label{eq:qmd}
\end{align}
where, for $\epsilon\ge 0$, $p_\epsilon^{1/2} = \sqrt{\frac{dP_\epsilon}{d\lambda}}$ and $p^{1/2}=\sqrt{\frac{dP}{d\lambda}}$. 
Let $\mathscr{P}(P,\mathcal{P},s)$ refer to the set of quadratic mean differentiable submodels at $P$ with score function $s$. The set $\{s \in L^2(P) : \mathscr{P}(P,\mathcal{P},s) \neq \emptyset\}$ is called the tangent set, and its closed linear span is called the tangent space of $\mathcal{P}$ at $P$, denoted by $\dot{\mathcal{P}}_P$. For all $s \in \dot{\mathcal{P}}_P$, $P s = \int s dP = 0$. We let $L^2_0(P) := \{h \in L^2(P) : Ph = 0\}$, which is the largest possible tangent space at $P$. Any model with this tangent space at all distributions $P$ it contains is referred to as locally nonparametric.

Let $\mathcal{H}$ be a set known as the action space and $\nu: \mathcal{P} \to \mathcal{H}$ a parameter whose value is to be estimated. Throughout we assume that $\mathcal{H}$ is a real separable Hilbert space. 
The parameter $\nu$ is said to be pathwise differentiable at $P$ if and only if there exists a continuous linear operator $\dot{\nu}_P: \dot{\mathcal{P}}_P \to \mathcal{H}$ such that, for all $\{P_\epsilon : \epsilon \in [0,\delta)\} \in \mathscr{P}(P,\mathcal{P},s)$,
\begin{align}
    \| \nu(P_\epsilon) - \nu(P) - \epsilon \dot{\nu}_P(s) \|_{\mathcal{H}} = o(\epsilon). \label{eq:pdA}
\end{align}
The operator $\dot{\nu}_P$ is called the local parameter of $\nu$ at $P$ and its Hermitian adjoint, denoted by $\dot{\nu}_P^\ast: \mathcal{H} \to \dot{\mathcal{P}}_P$, is referred to as the efficient influence operator. 
The image of the local parameter $\dot{\nu}_P$, denoted by $\dot{\mathcal{H}}_P$, is a closed subspace of $\mathcal{H}$ that is referred to as the local parameter space. Throughout we equip $\dot{\mathcal{H}}_P$ with the inner product $\langle\cdot,\cdot\rangle_{\mathcal{H}}$, so that $\dot{\mathcal{H}}_P$ is itself a Hilbert space. 
The efficient influence operator can be shown to only depend on its argument through its projection onto the local parameter space, in the sense that $\dot{\nu}_P^*(h)=\dot{\nu}_P^*(\Pi_{\mathcal{H}}[h\mid \dot{\mathcal{H}}_P])$ for all $h\in\mathcal{H}$. At times in this work, we will consider pointwise evaluations of the efficient influence operator of the form $\dot{\nu}_P^*(h)(z)$. When doing so, we always assume that suitably `nice' elements of the $P$-a.s. equivalence classes defined by the elements $\dot{\nu}_P^*(h)$ of $L^2(P)$ are used to define these evaluations. In particular, we select these elements so that the efficient influence process, which we define as $\{\dot{\nu}_P^*(h) : h\in\mathcal{H}\}$, is a separable stochastic process, in the sense that there exists a countable dense subset $\mathcal{H}'$ of $\mathcal{H}$ and a $P$-probability one subset $\mathcal{Z}'$ of $\mathcal{Z}$ such that, for all $h\in\mathcal{H}$ and $z\in\mathcal{Z}'$, there exists an $\mathcal{H}'$-valued sequence $(h_j)_{j=1}^\infty$ that converges to $h$ and satisfies $\dot{\nu}_P^*(h_j)(z)\rightarrow \dot{\nu}_P^*(h)(z)$ as $j\rightarrow\infty$.

Analogous to the case for Euclidean parameters, in some semiparametric models it may be natural to describe a Hilbert-valued parameter as the restriction of a parameter defined on a larger, possibly nonparametric, model. If the true parameter lies in a model $\mathcal{P}' \subset \mathcal{P}$ with tangent space $\dot{\mathcal{P}}_P'$ and $\nu: \mathcal{P} \to \mathcal{H}$ is a parameter defined on $\mathcal{P}$, then the restriction $\nu \vert_{\mathcal{P}'}$ has local parameter $\dot{\nu} \vert_{\dot{\mathcal{P}}'_P}$ and efficient influence operator $h\mapsto \Pi_{L^2(P)}[\dot{\nu}_{P}^\ast(h) | \dot{\mathcal{P}}'_P]$. 
Armed with this fact, results can easily be transferred from a larger nonparametric model to a semiparametric model provided the form of the projection operator $\Pi_{L^2(P)}[\;\cdot\mid \dot{\mathcal{P}}'_P]$ is known. As a simple example, we may have that $\nu(P)(\cdot)=E_P[Y\mid X=\cdot\,]$ for each $P$ in a nonparametric model $\mathcal{P}$, and the model $\mathcal{P}'=\{P\in\mathcal{P} : \mathrm{var}_{P}(Y)=1\}$ may reflect knowledge that the variance of an outcome $Y$ is $1$. The form of the local parameter and efficient influence operator of $\nu$ relative to $\mathcal{P}$ are given in Example~\ref{ex:reg} in the appendix when $\mathcal{H}$ is an $L^2$ space, and the form of the projection onto $\dot{\mathcal{P}}_P'$ is given in Example~3.2.3 of \cite{bickel1993efficient}.

The parameter $\nu$ is said to have an EIF $\phi_P : \mathcal{Z}\rightarrow\mathcal{H}$ when there exists a $P$-probability-one set $\mathcal{Z}'$ such that
\begin{align}
\hspace{5em}\dot{\nu}_P^*(h)(z)=\langle h, \phi_P(z)\rangle_{\mathcal{H}}\ \ \textnormal{ for all $(h,z)\in\mathcal{H}\times \mathcal{Z}'$.} \label{eq:EIFRiesz}
\end{align}
By the Riesz representation theorem, $\nu$ has an EIF if and only if $\dot{\nu}_P^*(\cdot)(z) : \mathcal{H}\rightarrow\mathbb{R}$ is a bounded linear functional $P$-almost surely; in those cases, $\phi_P(z)$ is $P$-a.s. equal to the Riesz representation of $\dot{\nu}_P^*(\cdot)(z)$. The fact that $\dot{\nu}_P^*(h)=\dot{\nu}_P^*(\Pi_{\mathcal{H}}[h\mid \dot{\mathcal{H}}_P])$ implies that the image of $\phi_P$ is necessarily contained in $\dot{\mathcal{H}}_P$. 
When $\mathcal{H}=\mathbb{R}^d$, the EIF of $\nu$ at $P$ takes the form $\phi_P(z)=(\dot{\nu}_{P}^\ast(e_t)(z))_{t=1}^d$, where $\{e_t\}_{t=1}^d$ is the standard basis. To our knowledge, the existence and form of this object have not previously been studied in infinite-dimensional Hilbert spaces. 
Given that knowing the form of the EIF readily facilitates the construction of estimators in finite-dimensional settings, this appears to constitute an important gap in the literature. 
We therefore focus the remainder of this section on studying the existence of EIFs in Hilbert spaces and providing ways to construct estimators based on EIFs, when we can show they exist, or imitations thereof, when we cannot. 

The cases where EIFs do not exist become particularly salient in Section~\ref{sec:exPD}, where we demonstrate through examples that, for several interesting $L^2$-valued parameters, $\dot{\nu}_P^*(\cdot)(z) : \mathcal{H}\rightarrow\mathbb{R}$ depends on a point evaluation functional, and so is not bounded $P$-almost surely. Nevertheless, even when $\nu$ does not have an EIF, we will show in Section~\ref{sec:confSetsRegularized} that it is always possible to define an injective transformation of $\nu$ that has one. Consequently, the procedure we shall present to construct confidence sets for parameters with EIFs can also be used to construct them for those without one: first construct a confidence set for the transformation of $\nu(P_0)$, and then invert it to obtain one for $\nu(P_0)$.

Before proceeding, we note that inefficient influence operators can be defined when the model is semiparametric at $P$, in that $\dot{\mathcal{P}}_P$ is a strict subspace of $L_0^2(P)$. Under a condition akin to \eqref{eq:EIFRiesz}, inefficient influence functions can also be defined. To streamline presentation, we defer the presentation of these objects and their use for constructing estimators to Appendix~\ref{app:inefficient}. There, we also show that inefficient influence functions can only exist if an EIF exists.

\subsection{One-step estimation based on the efficient influence function} \label{sec:onestep}

On the one hand, if $\mathcal{H}$ is finite-dimensional, then the EIF can be used to construct what is known as a one-step estimator, which is known to be efficient under conditions \citep{pfanzagl1982lecture}. This estimator takes the form $\nu(\widehat{P}_n) + P_n \phi_n$, where $\widehat{P}_n\in\mathcal{P}$ is an initial estimate of the data-generating distribution $P_0$ and we recall the convention that $\phi_n:=\phi_{\widehat{P}_n}$. On the other hand, if $\mathcal{H}$ is infinite-dimensional, then previously studied one-step estimators cannot be applied. In this section, we provide a natural means to extend the one-step estimation framework to infinite-dimensional settings. 
Similarly to the one-step estimator in finite-dimensional settings, this one-step estimator takes the form $\widehat{\nu}_n:=\nu(\widehat{P}_n) + P_n \phi_n$ for an $\mathcal{H}$-valued EIF $\phi_n$. 
This estimator is applicable whenever $\nu$ has an EIF at $\widehat{P}_n$ with $P_0$-probability one. 
We will see that, under conditions that include that $\nu$ also has an EIF $\phi_0$ at $P_0$, $\widehat{\nu}_n$ is an asymptotically linear estimator of $\nu(P_0)$ with influence function $\phi_0$, in the sense that
\begin{align}
\widehat{\nu}_n - \nu(P_0)&= {\textstyle\frac{1}{n}\sum_{i=1}^n} \phi_0(Z_i) + o_p(n^{-1/2}), \label{eq:al}
\end{align}
where throughout we let Hilbert-valued quantities of the form $o_p(n^{-a})$ denote terms whose Hilbert norm goes to zero in probability even after being multiplied by $n^{a}$. 
We will be especially interested in cases where $n^{1/2}[\widehat{\nu}_n - \nu(P_0)]$ will converge weakly to a tight random element, since this can be used to facilitate the construction of confidence sets for $\nu(P_0)$ (see Section~\ref{sec:confSets}). To be able to apply a central limit theorem to establish the weak convergence of $n^{1/2}[\widehat{\nu}_n - \nu(P_0)]$, it suffices that $\phi_0$ is $P_0$-Bochner square integrable, in the sense that
$\|\phi_0\|_{L^2(P_0;\mathcal{H})}<\infty$ \citep[Example~1.8.5 of][]{van1996weak}. 
Therefore, we will focus on settings where $\phi_0\in L^2(P_0;\mathcal{H})$.

We begin by establishing the existence and form of the EIF at a generic $P\in\mathcal{P}$ in an interesting class of problems. In particular, we consider cases where $\mathcal{H}$ is an RKHS over a space $\mathcal{T}$ or, more generally, the local parameter space $\dot{\mathcal{H}}_P$ is an RKHS over $\mathcal{T}$. 
Denote the feature map of $\dot{\mathcal{H}}_P$ by $t\mapsto K_t$. For $P\in\mathcal{P}$, define $\tilde{\phi}_P : \mathcal{Z} \to \mathcal{H}$ as follows for each $t\in\mathcal{T}$:
\begin{align}
\tilde{\phi}_P(z)(t) = \dot{\nu}_P^*(K_t)(z) \quad \text{$P$-a.s. }z. \label{eq:eifDef}
\end{align}
The following result shows that $\tilde{\phi}_P$ both provides the form of the EIF of $\nu$, when it exists, and also a sufficient condition that can be used to verify this existence.
\begin{theorem}[Form of the efficient influence function in RKHS settings]\label{thm:eifProperties}
Suppose $\nu$ is pathwise differentiable at $P$ and $\dot{\mathcal{H}}_P$ is an RKHS. Both of the following implications hold:
\begin{enumerate}[label=(\roman*)]
    \item\label{it:eifForm} If $\nu$ has an EIF $\phi_P$ at $P$, then $\phi_P=\tilde{\phi}_P$ $P$-almost surely.
    \item\label{it:eifSuffCond} If $\|\tilde{\phi}_P\|_{L^2(P;\mathcal{H})}<\infty$, then $\nu$ has an EIF at $P$.
\end{enumerate}
\end{theorem}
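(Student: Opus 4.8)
The plan is to prove the two implications separately, with \ref{it:eifForm} a quick consequence of the reproducing property and \ref{it:eifSuffCond} carrying the real content. For \ref{it:eifForm}, suppose the EIF $\phi_P$ exists. The defining relation \eqref{eq:EIFRiesz} furnishes a single $P$-probability-one set $\mathcal{Z}'$ on which $\dot{\nu}_P^*(h)(z)=\langle h,\phi_P(z)\rangle_{\mathcal{H}}$ holds simultaneously for every $h\in\mathcal{H}$. I would specialize this to $h=K_t$ for each $t\in\mathcal{T}$ and use that $\phi_P(z)\in\dot{\mathcal{H}}_P$ (noted below \eqref{eq:EIFRiesz}) together with the reproducing property $\langle K_t,\phi_P(z)\rangle_{\mathcal{H}}=\phi_P(z)(t)$, which is valid because $\dot{\mathcal{H}}_P$ carries the inner product $\langle\cdot,\cdot\rangle_{\mathcal{H}}$. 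Comparing with the definition \eqref{eq:eifDef} of $\tilde{\phi}_P$ gives $\tilde{\phi}_P(z)(t)=\phi_P(z)(t)$ for every $t$ and every $z\in\mathcal{Z}'$, hence $\phi_P=\tilde{\phi}_P$ $P$-almost surely. The only point to note is that one null set serves all $t$ at once, which holds because \eqref{eq:EIFRiesz} already quantifies over all $h$ on a single set.

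For \ref{it:eifSuffCond}, the goal is to exhibit a $P$-probability-one set on which $\dot{\nu}_P^*(h)(z)=\langle h,\tilde{\phi}_P(z)\rangle_{\mathcal{H}}$ holds for all $h$ simultaneously; by the Riesz characterization stated after \eqref{eq:EIFRiesz} this shows $\dot{\nu}_P^*(\cdot)(z)$ is bounded $P$-a.s. with representation $\tilde{\phi}_P(z)$, so that $\tilde{\phi}_P$ is the EIF. The finiteness $\|\tilde{\phi}_P\|_{L^2(P;\mathcal{H})}<\infty$ first gives, as a preliminary, that $\tilde{\phi}_P$ is Bochner measurable with $\tilde{\phi}_P(z)\in\dot{\mathcal{H}}_P$ for $P$-a.s. $z$. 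I would then define the candidate map $h\mapsto Th:=\langle h,\tilde{\phi}_P(\cdot)\rangle_{\mathcal{H}}$; Cauchy--Schwarz gives $\|Th\|_{L^2(P)}\le\|\tilde{\phi}_P\|_{L^2(P;\mathcal{H})}\|h\|_{\mathcal{H}}$, so $T:\mathcal{H}\to L^2(P)$ is bounded. On any finite combination $h=\sum_j c_jK_{t_j}$, linearity of $\dot{\nu}_P^*$ and the definition \eqref{eq:eifDef} give $\dot{\nu}_P^*(h)=\sum_j c_j\tilde{\phi}_P(\cdot)(t_j)=Th$ as elements of $L^2(P)$. Because the span of $\{K_t\}$ is dense in the RKHS $\dot{\mathcal{H}}_P$ and both $T$ and $\dot{\nu}_P^*$ (the latter bounded, being the adjoint of the continuous operator $\dot{\nu}_P$) are continuous, the two agree on all of $\dot{\mathcal{H}}_P$; the identity $\dot{\nu}_P^*(h)=\dot{\nu}_P^*(\Pi_{\mathcal{H}}[h\mid\dot{\mathcal{H}}_P])$ together with $\tilde{\phi}_P(z)\in\dot{\mathcal{H}}_P$ then extends $\dot{\nu}_P^*=T$ to all of $\mathcal{H}$.

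The delicate and decisive step is to upgrade these $L^2(P)$-identities, each holding only off an $h$-dependent null set, to a single null set valid for every $h$; this is exactly the role of the separability of the efficient influence process. I would take the countable dense $\mathcal{H}'$ and probability-one $\mathcal{Z}'$ from that assumption, intersect $\mathcal{Z}'$ with the full-measure sets on which $\dot{\nu}_P^*(h)(z)=\langle h,\tilde{\phi}_P(z)\rangle_{\mathcal{H}}$ holds for each of the countably many $h\in\mathcal{H}'$ (and on which $\tilde{\phi}_P(z)\in\mathcal{H}$), and call the result $\mathcal{Z}''$. For arbitrary $h\in\mathcal{H}$ and $z\in\mathcal{Z}''$, separability supplies $h_j\to h$ in $\mathcal{H}$ with $h_j\in\mathcal{H}'$ and $\dot{\nu}_P^*(h_j)(z)\to\dot{\nu}_P^*(h)(z)$; since $\dot{\nu}_P^*(h_j)(z)=\langle h_j,\tilde{\phi}_P(z)\rangle_{\mathcal{H}}\to\langle h,\tilde{\phi}_P(z)\rangle_{\mathcal{H}}$ by continuity of the inner product, passing to the limit yields $\dot{\nu}_P^*(h)(z)=\langle h,\tilde{\phi}_P(z)\rangle_{\mathcal{H}}$ on $\mathcal{Z}''$ for all $h$, which is precisely \eqref{eq:EIFRiesz}. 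I expect this measure-theoretic passage from pointwise-in-$h$ to uniform-in-$h$ statements to be the main obstacle; the RKHS structure (density of feature-map spans) and the separability convention are the two ingredients that make it go through.
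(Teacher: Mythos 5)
Your proof is correct, and its overall architecture matches the paper's: part~\ref{it:eifForm} is proved exactly as in the paper (Lemma~\ref{lem:eifMustBephiTilde}), by specializing the single-null-set identity \eqref{eq:EIFRiesz} to $h=K_t$ and applying the reproducing property to $\phi_P(z)\in\dot{\mathcal{H}}_P$; and the final, decisive step of part~\ref{it:eifSuffCond} --- merging the $h$-dependent null sets into one via the separability convention --- is the same device as the paper's Lemma~\ref{lem:RieszRep} (your direct limit along the separability sequence for each fixed $h$ is equivalent to the paper's $\epsilon$-slack supremum argument). Where you genuinely depart from the paper is the intermediate per-$h$ step of \ref{it:eifSuffCond}. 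The paper's Lemma~\ref{lem:repProp} establishes $\dot{\nu}_P^*(h)=\langle h,\tilde{\phi}_P(\cdot)\rangle_{\mathcal{H}}$ in $L^2(P)$ by a testing argument: it shows the difference is $L^2(P)$-orthogonal both to every score in $\dot{\mathcal{P}}_P$ and to every element of its orthogonal complement, using Bochner square integrability to interchange the integral with the inner product and applying the reproducing property to $\dot{\nu}_P(s)\in\dot{\mathcal{H}}_P$. You instead note that $T:h\mapsto\langle h,\tilde{\phi}_P(\cdot)\rangle_{\mathcal{H}}$ and $\dot{\nu}_P^*$ are two bounded operators into $L^2(P)$ that agree on the feature maps (reproducing property now applied to $\tilde{\phi}_P(z)$, which lies in $\dot{\mathcal{H}}_P$ almost surely under the hypothesis), hence agree on $\overline{\mathrm{span}}\{K_t : t\in\mathcal{T}\}=\dot{\mathcal{H}}_P$ by density and continuity, and you then pass to all of $\mathcal{H}$ via the projection identity $\dot{\nu}_P^*(h)=\dot{\nu}_P^*(\Pi_{\mathcal{H}}[h\mid\dot{\mathcal{H}}_P])$ together with self-adjointness of the projection. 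Both routes consume the hypothesis $\|\tilde{\phi}_P\|_{L^2(P;\mathcal{H})}<\infty$ essentially --- yours through the boundedness of $T$ and the a.s.\ membership $\tilde{\phi}_P(z)\in\dot{\mathcal{H}}_P$, the paper's through the integral/inner-product interchange. Your version is shorter and avoids the tangent-space decomposition entirely, at the cost of invoking two additional (standard, and in the paper's case explicitly stated) facts: density of the feature-map span in an RKHS and the projection invariance of the efficient influence operator. The paper's version yields as a by-product that $\langle h,\tilde{\phi}_P(\cdot)\rangle_{\mathcal{H}}\in\dot{\mathcal{P}}_P$ without appeal to those facts. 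Either way, the theorem follows.
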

\noindent The form of the EIF in \eqref{eq:eifDef} naturally generalizes its form in finite-dimensional spaces, where the feature $K_t$ replaces the $t$-th standard basis element $e_t$. The proof of \ref{it:eifForm} is a straightforward extension of results about the Riesz representation of a bounded linear functional to our setting, where $\dot{\nu}_P^*(\cdot)(z)$ is only known to be bounded and linear $P$-almost surely \citep[cf. Lemma~10 of][]{berlinet2011reproducing}. The proof of \ref{it:eifSuffCond} is more subtle, and involves showing that, when $\|\tilde{\phi}_P\|_{L^2(P;\mathcal{H})}<\infty$, \textit{any} separable version of the efficient influence process must $P$-a.s. have sample paths $\dot{\nu}_P^*(\cdot)(z)$ that are both bounded and linear. 
In the remainder of this subsection, we suppose that $\nu$ has an EIF $\phi_P$ at each $P\in\mathcal{P}$.

In Section~\ref{sec:oneStepGuarantee}, we establish that, under conditions, a cross-fitted variant of the one-step estimator $\widehat{\nu}_n:= \nu(\widehat{P}_n) + P_n \phi_n$ is efficient, in the sense that $\|\widehat{\nu}_n-\nu(P_0)\|_{\mathcal{H}}$ is as concentrated about zero as is possible for any estimator satisfying appropriate regularity conditions. Here, we provide two more heuristic arguments as to why the one-step correction should lead to improvements. The first, which applies specifically in cases where $\mathcal{H}$ is an RKHS, is based on the pointwise performance of the one-step estimator. In particular, the fact that norm convergence in an RKHS implies pointwise convergence can be used to show that the pathwise differentiability of $\nu : \mathcal{P}\rightarrow\mathcal{H}$ also implies the pathwise differentiability of $\nu(\cdot)(t) : \mathcal{P}\rightarrow\mathbb{R}$ for each $t\in\mathcal{T}$. Moreover, the EIF of $\nu(\cdot)(t)$ at $P\in\mathcal{P}$ is equal to $z\mapsto \dot{\nu}_P^*(K_t)(z)$, and so the one-step estimator for the real-valued parameter $\nu(P_0)(t)$ is equal to the evaluation of the $\mathcal{H}$-valued one-step estimator $\widehat{\nu}_n$ at the point $t$. While this pointwise justification of the one-step estimator $\widehat{\nu}_n$ is informative, it does not in itself explain why $\widehat{\nu}_n$ should be expected to perform well in a norm sense. Indeed, pointwise convergence in an RKHS does not necessarily imply norm convergence. For the same reason, pathwise differentiability of the real-valued parameters $\nu(\cdot)(t)$, $t\in\mathcal{T}$, does not generally imply pathwise differentiability of the RKHS-valued parameter $\nu$.

The second heuristic justification that we provide here provides initial insights into why the one-step estimator $\widehat{\nu}_n$ should perform well in a norm sense. This justification applies regardless of whether $\dot{\mathcal{H}}_P$ is an RKHS. Fix a submodel $\{P_\epsilon: \epsilon\in [0,\delta)\} \in \mathscr{P}(P,\mathcal{P},s)$. In the appendix, we establish that, under conditions on either the EIF (Lemma~\ref{lem:localParamToMean}) or the submodel (Lemma~\ref{lem:LinearQMDRemainder}), a first-order approximation to the local parameter $\dot{\nu}_P(s)$ is given by $\epsilon^{-1}(P_\epsilon - P)\phi_P$, in the sense that the difference between these quantities converges to zero as $\epsilon\rightarrow 0$. Combining this with \eqref{eq:pdA} and the fact that $P \phi_P = 0$, this yields the approximation $\nu(P_\epsilon) \approx \nu(P) + P_\epsilon \phi_P$, which is valid up to an additive $o(\epsilon)$ remainder term. Letting $P_0$ play the role of $P_\epsilon$ and $\widehat{P}_n$ play the role of $P$, this suggests that the von Mises approximation
\begin{align}
\nu(P_0) \approx \nu(\widehat{P}_n) + P_0 \phi_n \label{eq:firstOrderApproxRKHS}
\end{align}
may also be valid up to a term that goes to zero in probability at a reasonable rate. Admittedly, caution is needed when making the leap from the approximation along the fixed quadratic mean differentiable submodel $\{P_\epsilon: \epsilon\in [0,\delta)\}$ to an approximation that involves the random quantity $\widehat{P}_n$. For a given parameter $\nu$, the sense in which the above approximation holds can be made precise by directly studying the quantity $\nu(P_0)-\nu(\widehat{P}_n)-P_0\phi_n$. In any case, the approximation above is appealing in that it only relies on $P_0$ through an expectation, which can naturally be approximated by an expectation under the empirical distribution. This, therefore, suggests the one-step estimator $\widehat{\nu}_n:=\nu(\widehat{P}_n) + P_n \phi_n$. 

As will follow from the upcoming Lemma~\ref{lem:regParamEIF}, $\dot{\mathcal{H}}_P$ need not be an RKHS for an EIF to exist. 
Consequently, when one does, it is natural to wonder whether there is a general expression for its form. The Riesz representation theorem provides an affirmative answer to this question, showing that, when an EIF exists, it is $P$-a.s. equal to the following convergent sum:
\begin{align}
\phi_P(z)= \sum_{k=1}^\infty \dot{\nu}_P^*(h_k)(z) \,h_k, \label{eq:EIFinfiniteSum}
\end{align}
where here and throughout we let $(h_k)_{k=1}^\infty$ denote an orthonormal basis of $\mathcal{H}$. If $\dot{\mathcal{H}}_P$ is an RKHS, evaluating the expression for the EIF in \eqref{eq:eifDef} is typically easier than computing (or approximating) the infinite sum in \eqref{eq:EIFinfiniteSum}. However, in non-RKHS settings, \eqref{eq:EIFinfiniteSum} is useful both as an explicit expression for the EIF, if it exists, and as a basis for generalizing the one-step estimator to settings where it does not.

\subsection{Regularized one-step estimation when there is no efficient influence function} \label{sec:twostep}

We now introduce a generalization of the one-step estimator that can be employed regardless of whether an EIF exists. This estimator is a type of series estimator \citep{schwartz1967estimation,chen2007large} based on the Riesz representation of a regularized form of the efficient influence operator. This regularized form is motivated by the fact that, when $\nu$ has an EIF $\phi_P$, it is $P$-a.s. true that $\dot{\nu}_P^*(\cdot)(z) : h\mapsto \sum_{k=1}^\infty \langle h,h_k\rangle_{\mathcal{H}} \dot{\nu}_P^*(h_k)(z)$. The regularized form is designed to ensure that the terms in this sum must decay as $k$ grows sufficiently large. 
For a square summable $[0,1]$-valued sequence $\beta:=(\beta_k)_{k=1}^\infty$, the $\beta$-regularized efficient influence operator is given by $r_P^\beta(h)(z):= \sum_{k=1}^\infty \beta_k \langle h,h_k\rangle_{\mathcal{H}} \dot{\nu}_P^*(h_k)(z)$. 
We now show that $r_P^\beta(\cdot)(z)$ is always $P$-a.s. bounded and linear, and we also provide an explicit expression for its Riesz representation. In what follows we let $\ell_{*}^2:=\ell^2\cap[0,1]^{\mathbb{N}}$.
\begin{lemma}[$\beta$-regularized EIF based on $\beta$-regularized efficient influence operator]\label{lem:approximateEIF}
If $\nu$ is pathwise differentiable at $P$ and $\beta\in \ell_{*}^2$, then $r_P^\beta(\cdot)(z) : \mathcal{H}\rightarrow\mathbb{R}$ is a bounded linear functional on a $P$-probability one set $\mathcal{Z}^\beta$ with Riesz representation
\begin{align*}
\phi_P^\beta(z)&:= \sum_{k=1}^\infty \beta_k \dot{\nu}_P^*(h_k)(z) h_k.
\end{align*}
Moreover, $\sigma_P(\beta):=\|\phi_P^\beta\|_{L^2(P;\mathcal{H})}=[\sum_{k=1}^\infty \beta_k^2 P \dot{\nu}_P(h_k)^2]^{1/2}\le \|\dot{\nu}_P^*\|_{\mathrm{op}}\|\beta\|_{\ell^2}<\infty$.
\end{lemma}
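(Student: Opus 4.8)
The plan is to exhibit the candidate Riesz representation $\phi_P^\beta$ explicitly, show it defines a bona fide $\mathcal{H}$-valued function that is $P$-Bochner square integrable with the stated norm, and then read off boundedness, linearity, and the Riesz property of $r_P^\beta(\cdot)(z)$ from a pointwise inner-product expansion. Only the countable family $\{\dot{\nu}_P^*(h_k)\}_{k\ge 1}$ enters, so I fix measurable representatives of these equivalence classes and work on the intersection of the countably many $P$-probability one sets on which their pointwise evaluations $\dot{\nu}_P^*(h_k)(z)$ are defined; this is consistent with the separability of the efficient influence process invoked above.

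First I would control the nonnegative double series $I:=\int \sum_{k=1}^\infty \beta_k^2\,\dot{\nu}_P^*(h_k)(z)^2\,P(dz)$. Tonelli permits exchanging sum and integral, giving $I=\sum_{k=1}^\infty \beta_k^2\,\|\dot{\nu}_P^*(h_k)\|_{L^2(P)}^2$. Since $\dot{\nu}_P$ is continuous by pathwise differentiability, its adjoint $\dot{\nu}_P^*$ is bounded with $\|\dot{\nu}_P^*\|_{\mathrm{op}}=\|\dot{\nu}_P\|_{\mathrm{op}}<\infty$, whence $\|\dot{\nu}_P^*(h_k)\|_{L^2(P)}\le\|\dot{\nu}_P^*\|_{\mathrm{op}}\|h_k\|_{\mathcal{H}}=\|\dot{\nu}_P^*\|_{\mathrm{op}}$; summing and using $\beta\in\ell_*^2\subset\ell^2$ yields $I\le\|\dot{\nu}_P^*\|_{\mathrm{op}}^2\|\beta\|_{\ell^2}^2<\infty$.

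The finiteness of $I$ is the crux of the argument. It forces $\sum_{k=1}^\infty \beta_k^2\,\dot{\nu}_P^*(h_k)(z)^2<\infty$ for all $z$ in a $P$-probability one set $\mathcal{Z}^\beta$; on this set the orthonormality of $(h_k)$ makes the partial sums $\sum_{k\le m}\beta_k\dot{\nu}_P^*(h_k)(z)h_k$ Cauchy in $\mathcal{H}$, so $\phi_P^\beta(z)$ is a genuine element of $\mathcal{H}$ with $\|\phi_P^\beta(z)\|_{\mathcal{H}}^2=\sum_k \beta_k^2\,\dot{\nu}_P^*(h_k)(z)^2$ by Parseval. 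Integrating this identity recovers $\|\phi_P^\beta\|_{L^2(P;\mathcal{H})}^2=I$, which is the stated formula for $\sigma_P(\beta)$ together with the bound by $\|\dot{\nu}_P^*\|_{\mathrm{op}}\|\beta\|_{\ell^2}$. I expect this passage from $L^2(P;\mathcal{H})$-integrability to genuine $\mathcal{H}$-valued convergence at $P$-almost every fixed $z$ to be the only real obstacle; everything after it is algebraic.

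Finally, for each fixed $z\in\mathcal{Z}^\beta$ I would expand an arbitrary $h=\sum_j\langle h,h_j\rangle_{\mathcal{H}}h_j$ and use continuity of the inner product together with orthonormality to obtain $\langle h,\phi_P^\beta(z)\rangle_{\mathcal{H}}=\sum_{k=1}^\infty \beta_k\langle h,h_k\rangle_{\mathcal{H}}\dot{\nu}_P^*(h_k)(z)$, the series converging absolutely by Cauchy--Schwarz in $\ell^2$ since both $(\langle h,h_k\rangle_{\mathcal{H}})_k$ and $(\beta_k\dot{\nu}_P^*(h_k)(z))_k$ lie in $\ell^2$. The right-hand side is precisely $r_P^\beta(h)(z)$, so on $\mathcal{Z}^\beta$ we have $r_P^\beta(\cdot)(z)=\langle\,\cdot\,,\phi_P^\beta(z)\rangle_{\mathcal{H}}$; this simultaneously shows $r_P^\beta(\cdot)(z)$ is linear and bounded, with operator norm $\|\phi_P^\beta(z)\|_{\mathcal{H}}$, and identifies $\phi_P^\beta(z)$ as its Riesz representation, completing the argument.
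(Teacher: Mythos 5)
Your proposal is correct and follows essentially the same route as the paper's proof: control $\int \sum_k \beta_k^2\,\dot{\nu}_P^*(h_k)(z)^2\,P(dz)$ via Tonelli/monotone convergence and the boundedness of $\dot{\nu}_P^*$, deduce $P$-a.s. square summability so that $\phi_P^\beta(z)\in\mathcal{H}$ with the Parseval identity giving $\|\phi_P^\beta\|_{L^2(P;\mathcal{H})}^2$ equal to that integral, and then verify the Riesz property by expanding $h$ in the orthonormal basis and using continuity of the inner product. The only cosmetic differences (your explicit remark on measurable representatives and the absolute convergence via Cauchy--Schwarz in $\ell^2$) are details the paper handles implicitly or by its separability convention.
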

When $\nu$ has an EIF , $\phi_P^\beta$ is similar to the expression for it given in \eqref{eq:EIFinfiniteSum}, but with the $k$-th term dampened by the multiplier $\beta_k\in [0,1]$. Given this similarity, we call $\phi_P^\beta$ the $\beta$-regularized EIF of $\nu$ at $P$. The corresponding $\beta$-regularized one-step estimator is $\widehat{\nu}_n^\beta:= \nu(\widehat{P}_n) + P_n \phi_n^\beta$.

We now provide a heuristic argument that is similar to one used in the previous subsection for justifying the (non-regularized) one-step estimator, but adapted to account for the regularization bias that arises from using a $\beta$-regularized EIF. Fix a submodel $\{P_\epsilon: \epsilon\} \in \mathscr{P}(P,\mathcal{P},s)$. In Lemma~\ref{lem:LinearQMDApproxRemainder} in the appendix, we show that, under a regularity condition on the submodel,
\begin{align}
&\left\|\nu(P)-\nu(P_\epsilon) + P_\epsilon \phi_P^\beta - {\textstyle\sum_{k=1}^\infty} (1-\beta_k) \langle\nu(P) - \nu(P_\epsilon),h_k\rangle_{\mathcal{H}} h_k\right\|_{\mathcal{H}} \label{eq:regularizedRemainder} \\
&\quad= \left(1 + \|\phi_P^\beta\|_{L^2(P;\mathcal{H})}\right)\cdot o(\epsilon), \nonumber
\end{align}
where the $o(\epsilon)$ term does not depend on the choice of $\beta$. Similarly to how we did when deriving \eqref{eq:firstOrderApproxRKHS}, we let $P_0$ play the role of $P_\epsilon$ and $\widehat{P}_n$ play the role of $P$. Recalling that $\widehat{\nu}_n^\beta:= \nu(\widehat{P}_n) + P_n \phi_n^\beta$ then yields that
\begin{align}
\widehat{\nu}_n^\beta-\nu(P_0)&\approx (P_n-P_0) \phi_n^\beta + \mathcal{B}_{\widehat{P}_n}^\beta, \label{eq:biasedAL}
\end{align}
where, for $P'\in\mathcal{P}$, we let $\mathcal{B}_{P'}^\beta:=\sum_{k=1}^\infty (1-\beta_k) \langle\nu(P') - \nu(P_0),h_k\rangle_{\mathcal{H}} h_k$. 
Our formal study of the regularized one-step estimator in Section~\ref{sec:regularizedOneStepGuarantee} builds on the above. Informally speaking, that study will show that the latter term above plays the role of a regularization bias term that decays as $\beta$ grows entrywise to $(1,1,1,\ldots)$ under conditions, and the leading term plays the role of a variance term whose magnitude typically grows with that of $\beta$. Hence, a bias-variance tradeoff must be considered when selecting a value for the tuning parameter $\beta$. In Section~\ref{sec:regularizedOneStepTuning}, we describe a cross-validation strategy for making this selection. There, we also discuss the selection of the basis $(h_k)_{k=1}^\infty$.

\subsection{Cross-fitted (regularized) one-step estimation}\label{sec:cf}

So far, the estimators we have defined have assumed the availability of an iid sample that is independent of $Z_1,Z_2,\ldots,Z_n$ that can be used to obtain the estimate $\widehat{P}_n$ of $P_0$. We now describe how cross-fitting \citep{schick1986asymptotically,klaassen1987consistent,zheng2011cross,chernozhukov2018double} can be used to avoid the need for this independent sample. For simplicity, we focus on the case of 2-fold cross-fitting and suppose that the sample size is an even number. The generalizations to $k$-fold cross-fitting ($k\ge 2$) and to the case where $n$ is not divisible by $k$ are straightforward and so are omitted. 
Let $\widehat{P}_n^1 \in \mathcal{P}$ denote an estimate of $P_0$ based on $\{Z_i\}_{i=1}^{n/2}$ and let $P_n^1$ denote the empirical distribution of the remainder of the sample $\{Z_i\}_{i=n/2+1}^{n}$. 
Define $\widehat{P}_n^2$ and $P_n^2$ similarly, but with the roles of the two subsamples reversed. 
We note that, in a slight abuse of notation, $P_n^j$ denotes an empirical distribution derived from $n/2$ observations rather than a $j$-fold product distribution derived from $j$ independent draws from the empirical distribution $P_n$ of the full sample $\{Z_i\}_{i=1}^n$. 
Cross-fitting enables the use of arbitrary estimation strategies when constructing $\widehat{P}_n^j$, $j\in\{1,2\}$, including those based on machine learning techniques.

We now present the form of our cross-fitted estimators. From a notational standpoint, these estimators will be denoted by replacing the hat accents used to denote the sample-splitting estimators in Sections~\ref{sec:onestep} and \ref{sec:twostep} by bar accents --- for example, the cross-fitted one-step estimator will be denoted by $\bar{\nu}_n$ rather than $\widehat{\nu}_n$. This cross-fitted one-step estimator takes the form $\bar{\nu}_n:= \frac{1}{2}\sum_{j=1}^2 [\nu(\widehat{P}_n^j) + P_n^j \phi_n^j]$, where $\phi_n^j:=\phi_{\widehat{P}_n^j}$. 
Let $\phi_n^{j,\beta}:=\phi_{\widehat{P}_n^j}^\beta$. The cross-fitted $\beta$-regularized one-step estimator takes the form $\bar{\nu}_n^\beta:= \frac{1}{2}\sum_{j=1}^2 [\nu(\widehat{P}_n^j) + P_n^j \phi_n^{j,\beta}]$.

\section{Examples of pathwise differentiable parameters}\label{sec:exPD}

In this section, we present examples of pathwise differentiable Hilbert-valued parameters and the forms of their efficient influence operators and, where applicable, EIFs. From these objects, cross-fitted (regularized) one-step estimators can be derived using the formulas at the end of the previous section. 
We study the performance of these estimators in Section~\ref{sec:exEst}. 

In the main text, we focus on parameters that have recently become objects of interest to the causal inference community. Two of these examples (Examples \ref{ex:cfdBandlimited} and \ref{ex:gkmed}) consider cases where the action space is an RKHS, and two (Examples \ref{ex:cfdNonparametric} and \ref{ex:conac}) consider cases where the action space is an $L^2$ space, and is therefore not an RKHS. 
In Appendix~\ref{app:exClassical}, we show that four more well-studied Hilbert-valued parameters are also pathwise differentiable. In particular, we show that regression functions, square-root density functions, and conditional average treatment effect functions are pathwise differentiable when viewed as elements of appropriate $L^2$ spaces, and we also show that a kernel mean embedding of a distribution \citep{gretton2012kernel} is pathwise differentiable when viewed as an element of an RKHS. 
We are not aware of any reference establishing the pathwise differentiability of any of the eight Hilbert-valued parameters that we consider in this work.

All derivations for our examples are deferred to Appendix~\ref{app:exDerivations}. 
For most of these examples, our derivations make use of the following lemma, which we prove in Appendix~\ref{app:lem}. In this lemma, $H(P,P'):=[\int (\sqrt{dP}-\sqrt{dP'})^2]^{1/2}$ denotes the Hellinger distance.
\begin{lemma}[Sufficient condition for pathwise differentiability] \label{lem:suffCondsPD}
$\nu : \mathcal{P}\rightarrow\mathcal{H}$ is pathwise differentiable at $P$ with local parameter $\dot{\nu}_P=\eta_P$ if both of the following hold:
\begin{enumerate}[label=(\roman*)]
    \item\label{it:closure} $\eta_P : \dot{\mathcal{P}}_P\rightarrow\mathcal{H}$ is bounded and linear and there exists a set of scores $\mathcal{S}(P)$ whose $L^2(P)$-closure is equal to $\dot{\mathcal{P}}_P$ such that, for all $s\in \mathcal{S}(P)$, there is at least one submodel $\{P_\epsilon : \epsilon\}\in \mathscr{P}(P,\mathcal{P},s)$ for which $\| \nu(P_\epsilon) - \nu(P) - \epsilon\, \eta_P(s) \|_{\mathcal{H}} = o(\epsilon)$; and
    \item\label{it:localLip} $\nu$ is locally Lipschitz at $P$ in the sense that there exist $(c,\delta)\in (0,\infty)^2$ such that 
\begin{align}
    \|\nu(P_1)-\nu(P_2)\|_{\mathcal{H}}&\le cH(P_1,P_2)\ \textnormal{ for all } P_1,P_2\in B_\delta(P), \label{eq:localLip}
\end{align}
where $B_\delta(P)$ consists of all $P'\in\mathcal{P}$ for which $H(P,P')\le \delta$.
\end{enumerate}
\end{lemma}
This lemma is most useful when the set $\mathcal{S}(P)$ and corresponding submodels in $\mathscr{P}(P,\mathcal{P},s)$ in \ref{it:closure} can be chosen to make establishing \eqref{eq:pdA} for those submodels simple. For example, in a locally nonparametric model, we may take $\mathcal{S}(P)$ to be the set of bounded, $P$-mean zero functions, and we may take the chosen submodel in $\mathscr{P}(P,\mathcal{P},s)$ to be such that, for all $\epsilon\in [0,1/\esssup_z|s(z)|)$, $\frac{dP_\epsilon}{dP}=1+\epsilon s$, where the essential supremum is under $P$.

All of the examples presented in the main text are motivated by questions arising in causal inference. The data structure is common across them, with $Z=(X,A,Y)\sim P$, where $X$ is a vector of covariates with support on $\mathcal{X}$, $A$ is a treatment with support on either $\{0,1\}$ or $\mathbb{R}$, and $Y$ is an outcome with support on $\mathcal{Y}$. In each example, we suppose that $\mathcal{P}$ is locally nonparametric. We further suppose, for simplicity, that all pairs of distributions in $\mathcal{P}$ are mutually absolutely continuous. For a given distribution $P$, we let $P_{Y\mid A,X}$ denote the conditional distribution of $Y$ given $(A,X)$ and $P_X$ denote the marginal distribution of $X$. We let $g_P(\cdot\mid x)$ denote the conditional probability mass function of $A$ given $X=x$ under $P$, when $A$ is binary, and the density of $A$ given $X=x$ under $P$, when $A$ is continuous. For $s\in \dot{\mathcal{P}}_P$, we let $s_X(x):=E_P[s(Z)\mid X=x]$ and $s_{Y\mid A,X}(y\mid a,x):=s(z)-E_P[s(Z)\mid A=a,X=x]$.

\begin{subexample}{example}\label{ex:cfd}
\begin{example}[Counterfactual density function] \label{ex:cfdNonparametric}

Suppose that the treatment $A$ is binary and the goal is to estimate the density function of the counterfactual outcome in a setting where everyone receives treatment $A=1$. 
This density can offer a more nuanced measure of causal effects than can a more commonly studied counterfactual mean outcome \citep{kennedy2021semiparametric}. 
Suppose that there is a $\sigma$-finite measure $\lambda_Y$ such that, for all $P \in \mathcal{P}$, there is a regular conditional probability $P_{Y \mid A,X}$ such that $P_{Y \mid A,X}(\cdot \mid a,x) \ll \lambda_Y$ for $P$-almost all $(a,x) \in \{0,1\} \times \mathcal{X}$. Define the propensity to receive treatment $a$ as $g_P(a\mid x) := P(A=a \mid X=x)$ and let $p_{Y\mid A,X}(\cdot\mid 1,x)$ denote the conditional density of $Y$ given $(A,X)=(1,x)$. 
The parameter of interest $\nu: \mathcal{P} \to L^2(\lambda_Y)$ takes the form
\begin{align}
\nu(P)(y) = \int p_{Y\mid A,X}(y\mid 1,x)\, P_X(dx). \label{eq:cdDef}
\end{align}
Under typical causal assumptions, $\nu$ corresponds to the density of the counterfactual outcome that would be seen under treatment $A=1$. We require that $\mathcal{P}$ satisfy the following conditions:
\begin{equation} \label{cond:cfDensAndstrPos}
\inf_{P\in\mathcal{P}}\essinf_{x \in \mathcal{X}} g_P(1\mid x) > 0\ \ \textnormal{ and } \  \sup_{P\in\mathcal{P}}\esssup_{(x,y)\in\mathcal{X}\times\mathcal{Y}} p_{Y\mid A,X}(y\mid 1,x) < \infty,
\end{equation}
where the essential infimum is under $P_X$ and the essential supremum is under $P_X\times \lambda_Y$. 
The first inequality, referred to as a strong positivity assumption, holds when the propensity to receive treatment 1 is not vanishingly small. The second holds when the conditional density of $Y$ given $(A,X)=(1,x)$ is uniformly bounded across all distributions in the model. While in principle the second condition could be weakened, this condition appears to be sufficiently general to capture many statistical models of interest.

The local parameter takes the form
\begin{align}
\dot{\nu}_P(s)(y) = \int \big\{&s_{Y\mid A,X}(y\mid 1,x) + s_X(x)\big\}p_{Y\mid A,X}(y\mid 1,x) P_X(dx), \label{eq:countDenLocalP}
\end{align}
and the efficient influence operator takes the form
\begin{align}
\dot{\nu}_P^\ast (h)(y,a,x) &= \frac{1\{a=1\}}{g_P(a\mid x)}\left\{h(y)-E_P\left[h(Y) \mid A=a, X=x \right]\right\} \nonumber \\
&\quad+ \Big(E_P\left[h(Y)\mid A=1, X=x\right] - \int E_P\left[h(Y) \mid A=1, X=x'\right]P_X(dx')\Big).
\label{eq:countDenAdjoint}
\end{align}
Unless $\lambda_Y$ is a discrete measure, $\dot{\nu}_P^*$ will not generally be a bounded operator. This can be shown to follow from the facts that point evaluation is not continuous in $L^2$ spaces \citep[page 8 of][]{berlinet2011reproducing} and $\dot{\nu}_P^\ast (h)(y,a,x)$ depends on the evaluation of $h$ at $y$.
\end{example}

\begin{example}[Bandlimited counterfactual density function]\label{ex:cfdBandlimited}

\begin{figure}[tb]
    \centering
    \includegraphics[width=\textwidth]{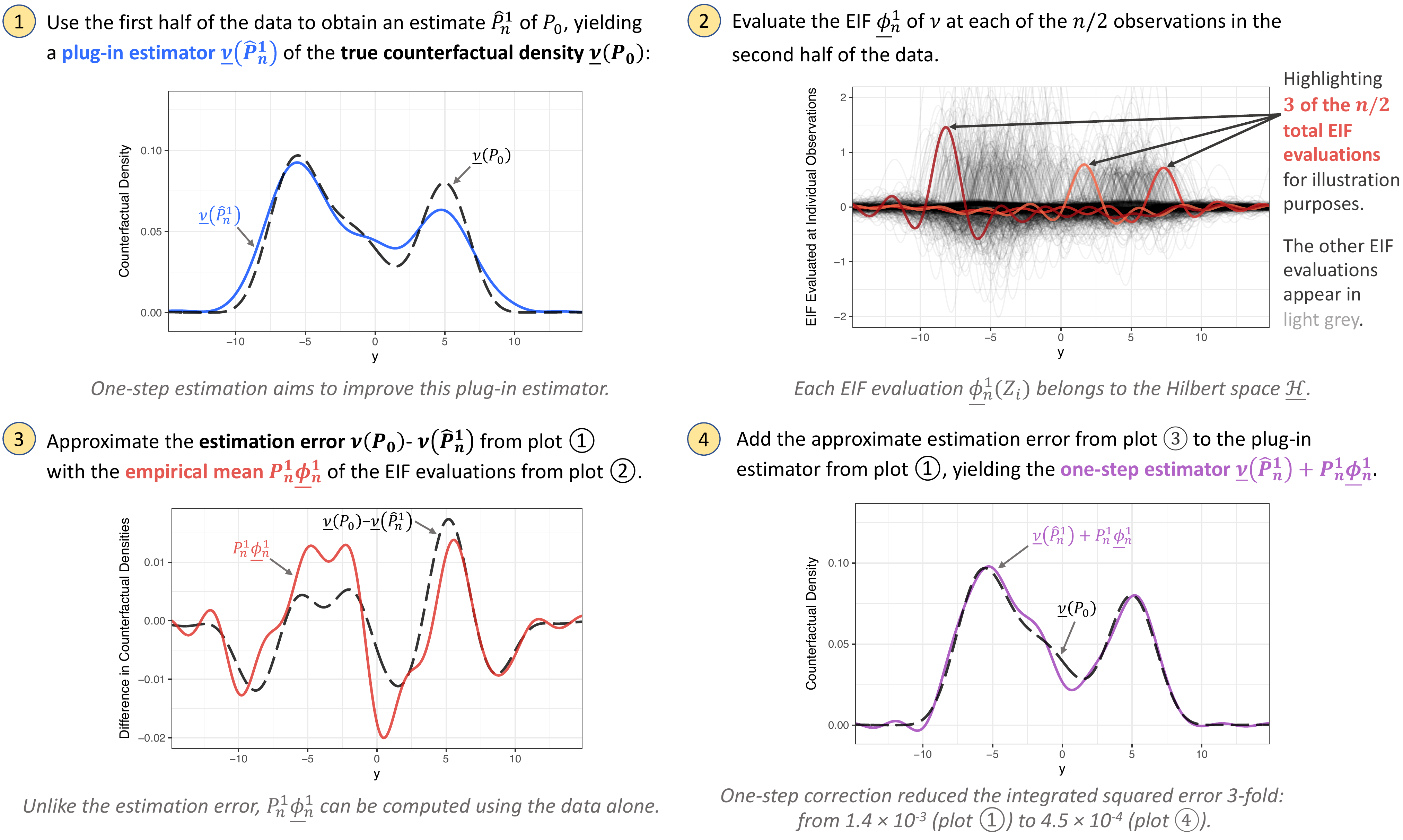}
    \caption{Illustration of how a sample-splitting one-step estimator is constructed in Example~\ref{ex:cfdBandlimited}. 
    The cross-fitted estimator averages two such estimators, one obtained as above and the other with the roles of the two halves of the data reversed.  
    Details of the data-generating process, along with a Monte Carlo assessment of the performance of a cross-fitted one-step estimator, are given in Appendix~\ref{app:simCfdBandlimited}.}
    \label{fig:onestepIllustration}
\end{figure}

The setting is the same as in Example~\ref{ex:cfdNonparametric}, except that $Y$ must be real-valued and continuous, $\lambda_Y$ must be the Lebesgue measure, and, for fixed $b>0$, the target of inference is the following transformation of the counterfactual density $\nu(P)$ that was defined in Example~\ref{ex:cfdNonparametric}:
\begin{align}
    \underline{\nu}(P)(y):= \int_{-\infty}^\infty \underline{K}_y(\tilde{y})\,\nu(P)(\tilde{y})\, \lambda_Y(d\tilde{y}), \label{eq:underlineNuDef}
\end{align}
where $\underline{K}_y(\tilde{y}):=\{\sin[b(\tilde{y}-y)]\}/[\pi (\tilde{y}-y)]$. This estimand corresponds to a bandlimiting of the counterfactual density function. In particular, letting $\mathcal{F}$ and $\mathcal{F}^{-1}$ denote the Fourier transform and inverse Fourier transform and fixing $b>0$, the $b$-bandlimiting of a Lebesgue square integrable function $f : \mathbb{R}\rightarrow\mathbb{R}$ is the function $\mathscr{B}(f) : \mathbb{R}\rightarrow\mathbb{R}$ given by
\begin{align}
\mathscr{B}(f) := \mathcal{F}^{-1}\big(1_{[-b,b]}\cdot \mathcal{F}(f)\big)(y) = \int_{-\infty}^\infty \underline{K}_y(\tilde{y})\,f(\tilde{y})\, \lambda_Y(d\tilde{y}), \label{eq:bandlimitingDef} 
\end{align}
where $1_{[-b,b]}\cdot \mathcal{F}(f)$ represents the function $\xi\mapsto 1_{[-b,b]}(\xi)\cdot \mathcal{F}(f)(\xi)$ and the latter equality holds by the convolution theorem. 
The estimand is equal to $\underline{\nu}(P):=\mathscr{B}(\nu(P))$. Lemma~\ref{lem:bandlimitedproj} in Appendix~\ref{lem:bandLimDen} shows that $\mathscr{B}(\nu(P))$ corresponds to an $L^2(\lambda_Y)$ projection of $\nu(P)$ onto
\begin{align*}
    \underline{\mathcal{H}}:= \left\{ h \in C(\mathbb{R}) \;\middle|\; \textnormal{support}[\mathcal{F}(h)]\subseteq [-b,b]\right\},
\end{align*}
where $C(\mathbb{R})$ denotes the set of continuous, Lebesgue square integrable functions. A little care is needed to make this result precise since $L^2(\lambda_Y)$ is a space of equivalence classes of functions whereas $\underline{\mathcal{H}}$ is a space of functions (see the statement of Lemma~\ref{lem:bandlimitedproj} for details). The space $\underline{\mathcal{H}}$ is an RKHS when equipped with the $L^2(\lambda_Y)$ inner product $\langle h,\tilde{h}\rangle_{\underline{\mathcal{H}}} = \int h(y)\tilde{h}(y)\lambda_Y(dy)$ \citep{yao1967applications}. The kernel function in this RKHS is given by 
$(y,\tilde{y})\mapsto \underline{K}_y(\tilde{y})$. This RKHS is a smoothness class consisting of all square integrable functions that have an analytic continuation to the complex plane that satisfies an exponential growth restriction \citep[Theorem~19.3 and page 372 of][]{rudin1987real}. 
When a (non-counterfactual) density function belongs to $\underline{\mathcal{H}}$, a particular kernel density estimator has been shown to attain mean integrated squared error (MISE) that decays at an $n^{-1}$ rate \citep{ibragimov1983estimation,agarwal2015nonparametric}. Our setting differs from earlier ones in that (i) we focus on a counterfactual density, and (ii) we define our estimand as a nonparametric projection onto the space of $b$-bandlimited functions, rather than requiring that our estimand belong to this class. 
Naturally, when the counterfactual density $\nu(P)$ already belongs to $\underline{\mathcal{H}}$, $\underline{\nu}(P)$ will equal this density and so our approach will yield estimators of it.

In the appendix, we rely heavily on the calculations performed in Example~\ref{ex:cfdNonparametric} when showing that $\underline{\nu} : \mathcal{P}\rightarrow\underline{\mathcal{H}}$ is pathwise differentiable. 
We show that the local parameter $\underline{\dot{\nu}}_P : \dot{\mathcal{P}}_P\rightarrow \underline{\mathcal{H}}$ of $\underline{\nu}$ at $P$ is closely related to that of $\nu$. In particular, $\underline{\dot{\nu}}_P(s)=\mathscr{B}(\dot{\nu}_P(s))$, where $\dot{\nu}_P$ is as defined in Example~\ref{ex:cfdNonparametric}. 
Letting $[h]$ denote the equivalence class of functions that are equal to some $h\in\underline{\mathcal{H}}$ Lebesgue-almost everywhere, the efficient influence operator $\underline{\dot{\nu}}_P^* : \underline{\mathcal{H}}\rightarrow\dot{\mathcal{P}}_P$ takes the form $\underline{\dot{\nu}}_P^*(h)=\dot{\nu}_P^*([h])$, where $\dot{\nu}_P^*$ is as defined in Example~\ref{ex:cfdNonparametric}. Since $\underline{\mathcal{H}}$ is an RKHS, we can also look to define the EIF $\underline{\phi}_P$ of $\underline{\nu}$ at $P$. In particular, \eqref{cond:cfDensAndstrPos} can be used to show that the function $\underline{\phi}_P(z) : y\mapsto \underline{\nu}_P^*(\underline{K}_y)(z)$ belongs to $L^2(P;\underline{\mathcal{H}})$, and so $\underline{\phi}_P$ is indeed the EIF of $\underline{\nu}$ at $P$. See \eqref{eq:cfDensEIF} in the appendix for a more explicit expression for the EIF $\underline{\phi}_P$.

Figure~\ref{fig:onestepIllustration} shows how a one-step estimator can improve an initial estimator in this example.
\end{example}
\end{subexample}

\begin{example}[Counterfactual mean outcome under a continuous treatment] \label{ex:conac}
In the previous example, the treatment was considered to be binary. In this example, we take $A$ to be a continuous treatment taking values in $\mathcal{A}=[0,1]$, such as a dosage, duration, or frequency of intervention. 
Denote the marginal distribution of $A$ by $P_A$ and suppose that the Lebesgue measure $\lambda_A$ dominates the conditional distribution $P_{A\mid X}(\cdot\mid x)$ of $A\mid X=x$ under $P$ for $P$-almost all $x$. Let $g_P(\,\cdot\mid x)$ denote the conditional density of $A$ given that $X=x$.  
The target of estimation is $\nu: \mathcal{P} \to L^2(\lambda_A)$, where $\nu(P)(a) = \int E_P[Y \mid A=a,X=x]P_X(dx)$. 
Under causal conditions, $\nu(P)(a)$ corresponds to the mean outcome under a continuous treatment \citep{diaz2013targeted}. We suppose the strong positivity assumption that $\inf_{P\in\mathcal{P}}\essinf_{(a,x)} g_P(a\mid x) > 0$, where the essential infimum is under $\lambda_A\times P_X$. We further suppose that $Y$ has a bounded conditional second moment, in the sense that $\sup_{P \in \mathcal{P}} \esssup_{(a, x)} E_P[Y^2 \mid A=a,X=x]<\infty$, where the essential supremum is under $\lambda_A\times P_X$.

The local parameter takes the form
\begin{align}
\dot{\nu}_P(s)(a) &= \iint \{y-\mu_P(a,x)\}s_{Y \mid A,X}(y\mid a,x)P_{Y \mid A,X}(dy \mid a,x) P_X(dx) \nonumber \\
&\quad+ \int [\mu_P(a,x)-\nu(P)(a)]s_X(x)P_X(dx), \label{eq:conacLocalP}
\end{align}
where $\mu_P(a,x) = E_P[Y \mid A=a,X=x]$. The efficient influence operator takes the form
\begin{align}
\dot{\nu}_P^\ast(h)(y,a,x) = \frac{y-\mu_P(a,x)}{g_P(a\mid x)}h(a) + \int [\mu_P(a',x)-\nu(P)(a')]h(a')\lambda_A(da'). \label{eq:conacEIO}
\end{align}
Similarly to Example~\ref{ex:cfdNonparametric}, $\dot{\nu}_P^*$ is not generally a bounded operator.
\end{example}

\begin{example}[Counterfactual kernel mean embedding] \label{ex:gkmed}
Let $A$ be a binary treatment and suppose the strong positivity assumption that $\inf_{P\in\mathcal{P}}\essinf_{x \in \mathcal{X}} g_P(1\mid x) > 0$, where the essential infimum is under $P_X$. Let $\kappa: \mathcal{Y} \times \mathcal{Y} \to \mathbb{R}$ be a bounded, symmetric, positive define function, $\mathcal{H}$ be the RKHS associated with the kernel $\kappa$, and $K_y:=\kappa(y,\cdot)$ be the associated feature map. The counterfactual kernel mean embedding \citep{muandet2021counterfactual,fawkes2022doubly} is a parameter $\nu: \mathcal{P} \to \mathcal{H}$ such that
$$\nu(P) = \int E_P[K_Y \mid A=1,X=x] P_X(dx).$$
Under standard causal conditions \citep{robins1986new}, $\nu(P)$ can be shown to be equal to the kernel mean embedding \citep{gretton2012kernel} of the distribution of a counterfactual outcome in a world where treatment $1$ was given to everyone. We suppose that the strong positivity assumption in \eqref{cond:cfDensAndstrPos} holds.

The local parameter takes the form
\begin{align}
\dot{\nu}_P(s) = \iint K_y\,[s_{Y \mid A,X}(y\mid 1,x) + s_X(x)]\,P_{Y \mid A,X}(dy \mid 1,x) P_X(dx). \label{eq:gkmedLocalP}
\end{align}
The efficient influence operator takes the form
\begin{align*}
\dot{\nu}_P^\ast (h)(y,a,x) &= \frac{a}{g_P(1\mid x)}\left\{h(y)-E_P[h(Y) \mid A=a,X=x]\right\} \\
&\quad+ E_P[h(Y) \mid A=1,X=x] - \int E_P[h(Y) \mid A=1,X=x'] P_X(dx'),
\end{align*}
and the EIF is $P$-Bochner square integrable and takes the form
\begin{align}
    \phi_P (y,a,x) = \frac{a}{g_P(1\mid x)}\left\{K_y - \mu_P^K(x)\right\} + \mu_P^K(x) - \nu(P), \label{eq:gkmedEIF}
\end{align}
where $\mu_P^K : \mathcal{X}\rightarrow\mathcal{H}$ is defined as $\mu_P^K(x):=E_P[K_Y \mid A=1,X=x]$.
\end{example}

\section{Performance guarantees and inference when there is an EIF} \label{sec:perfGuarantees}

\subsection{Performance guarantees for one-step estimators}\label{sec:oneStepGuarantee}

In this section, we provide conditions under which a cross-fitted one-step estimator $\bar{\nu}_n$ is both asymptotically linear and efficient. 
These conditions concern the terms arising in the following decomposition:
\begin{align}
\left\|\bar{\nu}_n - \nu(P_0) - P_n \phi_0\right\|_{\mathcal{H}} &= \frac{1}{2}\Bigg\|\sum_{j=1}^2\left[\nu(\widehat{P}_n^j) + P_0 \phi_n^j - \nu(P_0)\right] + \sum_{j=1}^2(P_n^j - P_0)(\phi_n^j - \phi_0)\Bigg\|_{\mathcal{H}} \nonumber \\
&\le \max_j \|\mathcal{R}_n^j\|_{\mathcal{H}} + \max_j\|\mathcal{D}_n^j\|_{\mathcal{H}}, \label{eq:cfExpansion}
\end{align}
where $\mathcal{R}_n^j := \nu(\widehat{P}_n^j)+P_0 \phi_n^j - \nu(P_0)$ and $\mathcal{D}_n^j := (P_n^j - P_0)(\phi_n^j - \phi_0)$. We call $\mathcal{R}_n^j$ the remainder terms and $\mathcal{D}_n^j$ the drift terms, $j\in \{1,2\}$. Asymptotic linearity, as defined in \eqref{eq:al}, holds whenever both of these quantities are negligible, in the sense that they are $o_p(n^{-1/2})$. The remainder terms $\mathcal{R}_n^j$, $j\in\{1,2\}$, quantify the error in the approximation \eqref{eq:firstOrderApproxRKHS} across the two splits of the sample. 
As described heuristically in the text surrounding \eqref{eq:firstOrderApproxRKHS}, it is reasonable to expect that this remainder term will be negligible under appropriate conditions. The following result provides a reasonable condition under which the drift terms will be negligible.
\begin{lemma}[Sufficient condition for negligible drift terms]\label{lem:driftTerm}
Suppose that $\nu$ is pathwise differentiable at $P_0$ with EIF $\phi_0$. For each $j\in\{1,2\}$, $\|\phi_n^j-\phi_0\|_{L^2(P_0;\mathcal{H})}=o_p(1)$ implies that $\|\mathcal{D}_n^j\|_{\mathcal{H}}=o_p(n^{-1/2})$.
\end{lemma}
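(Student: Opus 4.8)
The plan is to exploit the independence built into cross-fitting: for each $j$, the nuisance estimate $\widehat{P}_n^j$, and hence $\phi_n^j$, is constructed from one half of the sample, while $P_n^j$ is the empirical distribution of the complementary half, which is independent of it. I would therefore condition on the subsample used to fit $\widehat{P}_n^j$, denoting the corresponding $\sigma$-field by $\mathcal{G}_n^j$. Conditionally on $\mathcal{G}_n^j$, the map $g_n:=\phi_n^j-\phi_0$ is a fixed (nonrandom) element of $L^2(P_0;\mathcal{H})$, which is legitimate since $\phi_0\in L^2(P_0;\mathcal{H})$ and the hypothesis $\|\phi_n^j-\phi_0\|_{L^2(P_0;\mathcal{H})}=o_p(1)$ guarantees that $g_n$ is Bochner measurable and square integrable with probability tending to one. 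Writing $m:=n/2$, we then have $\mathcal{D}_n^j=(P_n^j-P_0)g_n=\frac{1}{m}\sum_{i} W_i$, where the $W_i:=g_n(Z_i)-P_0 g_n$ (indexed by the evaluation subsample) are, conditionally on $\mathcal{G}_n^j$, iid, mean-zero, $\mathcal{H}$-valued random variables. The task reduces to a conditional variance bound.

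The key step is to compute the conditional second moment. Expanding $\|\mathcal{D}_n^j\|_{\mathcal{H}}^2=m^{-2}\sum_{i,i'}\langle W_i,W_{i'}\rangle_{\mathcal{H}}$ and taking the conditional expectation, the off-diagonal terms vanish: for $i\neq i'$, conditional independence and the mean-zero property give $\expect[\langle W_i,W_{i'}\rangle_{\mathcal{H}}\mid\mathcal{G}_n^j]=\langle \expect[W_i\mid\mathcal{G}_n^j],\expect[W_{i'}\mid\mathcal{G}_n^j]\rangle_{\mathcal{H}}=0$, using the linearity of the Bochner integral and the continuity of the inner product. The diagonal terms yield
\[
\expect\!\big[\|\mathcal{D}_n^j\|_{\mathcal{H}}^2 \,\big|\, \mathcal{G}_n^j\big]
= \tfrac{1}{m}\,\expect\!\big[\|W_1\|_{\mathcal{H}}^2 \,\big|\, \mathcal{G}_n^j\big]
\le \tfrac{1}{m}\,\|g_n\|_{L^2(P_0;\mathcal{H})}^2
= \tfrac{2}{n}\,\|\phi_n^j - \phi_0\|_{L^2(P_0;\mathcal{H})}^2 ,
\]
where the inequality uses $\expect\|W_1\|_{\mathcal{H}}^2=\|g_n\|_{L^2(P_0;\mathcal{H})}^2-\|P_0 g_n\|_{\mathcal{H}}^2\le\|g_n\|_{L^2(P_0;\mathcal{H})}^2$.

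To conclude, set $R_n:=\|\phi_n^j-\phi_0\|_{L^2(P_0;\mathcal{H})}^2$, so that $\expect[n\|\mathcal{D}_n^j\|_{\mathcal{H}}^2\mid\mathcal{G}_n^j]\le 2R_n$ with $R_n=o_p(1)$ by hypothesis. For any $\eta>0$, the conditional Markov inequality gives $\prob(n^{1/2}\|\mathcal{D}_n^j\|_{\mathcal{H}}>\eta\mid\mathcal{G}_n^j)\le\min(1,2R_n/\eta^2)$. Taking unconditional expectations and applying the bounded convergence theorem, whose integrand is bounded by $1$ and converges to $0$ in probability since $R_n\convpr 0$, we obtain $\prob(n^{1/2}\|\mathcal{D}_n^j\|_{\mathcal{H}}>\eta)\to 0$. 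As $\eta>0$ was arbitrary, this establishes $\|\mathcal{D}_n^j\|_{\mathcal{H}}=o_p(n^{-1/2})$.

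The probabilistic content here is the standard cross-fitting / sample-splitting argument, and the main point requiring care is carrying out the second-moment computation in a general separable Hilbert space rather than on $\real$ or $\real^d$: specifically, verifying that the cross terms genuinely vanish for Bochner-integrable variables, that $\phi_n^j-\phi_0$ is Bochner measurable and square integrable so the manipulations are valid, and that the passage from the conditional (random) bound to an unconditional $o_p$ statement via bounded convergence is clean. None of these is deep, but each must be handled with the appropriate vector-valued integration theory.
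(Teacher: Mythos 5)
Your proposal is correct and follows essentially the same route as the paper's proof: condition on the fitting subsample, expand the conditional second moment of the Hilbert-valued average (cross terms vanish by independence and Bochner integrability, the diagonal is bounded by $\|\phi_n^j-\phi_0\|_{L^2(P_0;\mathcal{H})}^2/m$), apply a conditional Chebyshev/Markov bound, and pass to the unconditional statement via the dominated convergence theorem applied to the bound capped at $1$. The only cosmetic differences are that the paper invokes Chebyshev's inequality for Hilbert-valued random variables by citation and bounds the centered second moment via the projection property of $P_0(\phi_n^j-\phi_0)$, whereas you derive both facts directly; these are equivalent.
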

In the appendix, this lemma is proved via a conditioning argument that makes use of Chebyshev's inequality for Hilbert-valued random variables \citep{grenander1963probabilities} and the dominated convergence theorem. 
In the next section, we provide lower-level conditions under which the remainder and drift terms will be small in the contexts of Examples~\ref{ex:cfdBandlimited} and \ref{ex:gkmed}. 

In the process of showing that $\bar{\nu}_n$ is asymptotically linear, we also show that it is regular. An estimator sequence $(\widetilde{\nu}_n)_{n=1}^\infty$ is said to be regular at $P_0 \in \mathcal{P}$ if and only if there is a tight $\mathcal{H}$-valued random variable $\widetilde{\mathbb{H}}$ such that, for every score in the tangent set, quadratic mean differentiable submodel $\{P_\epsilon: \epsilon\}\in \mathscr{P}(P_0,\mathcal{P},s)$, and every sequence $\epsilon_n = O(n^{-1/2})$, $\sqrt{n}[\widetilde{\nu}_n - \nu(P_{\epsilon_n})]$ converges weakly to $\widetilde{\mathbb{H}}$ under iid sampling of $n$ observations from $P_{\epsilon_n}$. We say that an estimator $\widetilde{\nu}_n$ is regular when the implied estimator sequence $(\widetilde{\nu}_n)_{n=1}^\infty$ is clear from context. In the upcoming theorem, we write `$\rightsquigarrow$' to denote weak convergence in $\mathcal{H}$.
\begin{theorem}[Asymptotic linearity and weak convergence of a one-step estimator]\label{thm:al}
Suppose that $\nu$ is pathwise differentiable at $P_0$ with EIF $\phi_0\in L^2(P_0;\mathcal{H})$ and, for $j\in\{1,2\}$,  $\mathcal{R}_n^j=o_p(n^{-1/2})$ and $\mathcal{D}_n^j = o_P(n^{-1/2})$. Under these conditions, \eqref{eq:al} holds, $\bar{\nu}_n$ is regular, and
\begin{align}
n^{1/2}\left[\bar{\nu}_n - \nu(P_0)\right]&\rightsquigarrow \mathbb{H}, \label{eq:oneStepWeakConv}
\end{align}
where $\mathbb{H}$ is a tight $\mathcal{H}$-valued Gaussian random variable that is such that, for each $h\in\mathcal{H}$, the marginal distribution $\langle\mathbb{H} ,h \rangle_{\mathcal{H}}$ is $N(0,E_0[\langle \phi_0(Z),h \rangle_{\mathcal{H}}^2])$.
\end{theorem}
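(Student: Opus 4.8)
The plan is to handle the three assertions in turn, getting asymptotic linearity and the limit under $P_0$ quickly and reserving the real work for regularity. Asymptotic linearity \eqref{eq:al} is essentially immediate: the decomposition \eqref{eq:cfExpansion} bounds $\|\bar{\nu}_n - \nu(P_0) - P_n\phi_0\|_{\mathcal{H}}$ by $\max_j\|\mathcal{R}_n^j\|_{\mathcal{H}} + \max_j\|\mathcal{D}_n^j\|_{\mathcal{H}}$, and since each maximum is over the two fixed folds, the hypotheses $\mathcal{R}_n^j = o_p(n^{-1/2})$ and $\mathcal{D}_n^j = o_p(n^{-1/2})$ force the left side to be $o_p(n^{-1/2})$; recognizing $P_n\phi_0 = \tfrac1n\sum_{i=1}^n\phi_0(Z_i)$ gives \eqref{eq:al}. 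The weak-convergence limit under $P_0$ then reduces, via \eqref{eq:al} and Slutsky's lemma, to identifying the limit of $n^{-1/2}\sum_{i=1}^n\phi_0(Z_i)$. These summands are iid and Bochner square integrable by hypothesis, and they are $P_0$-mean-zero because $\langle h,P_0\phi_0\rangle_{\mathcal{H}} = P_0\dot{\nu}_{P_0}^\ast(h) = 0$ for every $h$, the last equality holding since $\dot{\nu}_{P_0}^\ast(h)\in\dot{\mathcal{P}}_{P_0}\subset L^2_0(P_0)$. The Hilbert-space central limit theorem [Example~1.8.5 of \citealp{van1996weak}] then yields $n^{-1/2}\sum_i\phi_0(Z_i)\rightsquigarrow\mathbb{H}$ for a tight Gaussian $\mathbb{H}$, and the scalar central limit theorem applied to $\langle n^{-1/2}\sum_i\phi_0(Z_i),h\rangle_{\mathcal{H}} = n^{-1/2}\sum_i\langle\phi_0(Z_i),h\rangle_{\mathcal{H}}$ identifies the marginal law $\langle\mathbb{H},h\rangle_{\mathcal{H}}\sim N(0,E_0[\langle\phi_0(Z),h\rangle_{\mathcal{H}}^2])$, establishing \eqref{eq:oneStepWeakConv}.

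For regularity I would fix a score $s$, a submodel $\{P_\epsilon\}\in\mathscr{P}(P_0,\mathcal{P},s)$, and a sequence $\epsilon_n = O(n^{-1/2})$, and show that the limit is the same $\mathbb{H}$ along every subsequence on which $\sqrt{n}\,\epsilon_n\to t$ for some $t\in\mathbb{R}$; this suffices because $\sqrt{n}\,\epsilon_n$ is bounded and the limit will turn out not to depend on $t$. Quadratic mean differentiability \eqref{eq:qmd} supplies the LAN expansion $\log(dP_{\epsilon_n}^n/dP_0^n) = \tfrac{t}{\sqrt{n}}\sum_i s(Z_i) - \tfrac{t^2}{2}P_0 s^2 + o_p(1)$ and, through Le Cam's first lemma, the mutual contiguity of $P_{\epsilon_n}^n$ and $P_0^n$; contiguity carries the $o_p(1)$ remainder of \eqref{eq:al} over to $P_{\epsilon_n}$, so under $P_{\epsilon_n}$ we still have $n^{1/2}[\bar{\nu}_n - \nu(P_0)] = n^{-1/2}\sum_i\phi_0(Z_i) + o_p(1)$. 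The adjoint identity $\langle\dot{\nu}_{P_0}(s),h\rangle_{\mathcal{H}} = \langle s,\dot{\nu}_{P_0}^\ast(h)\rangle_{L^2(P_0)} = E_0[s(Z)\langle\phi_0(Z),h\rangle_{\mathcal{H}}]$, which follows from \eqref{eq:EIFRiesz} and the definition of the Hermitian adjoint, shows that the asymptotic covariance of $\langle n^{-1/2}\sum_i\phi_0(Z_i),h\rangle_{\mathcal{H}}$ with the linear part of the log-likelihood ratio equals $\langle t\,\dot{\nu}_{P_0}(s),h\rangle_{\mathcal{H}}$. Le Cam's third lemma, applied to finite-dimensional projections, then gives under $P_{\epsilon_n}$ the convergence of $n^{-1/2}\sum_i\phi_0(Z_i)$ to $\mathbb{H}$ shifted by the deterministic element $t\,\dot{\nu}_{P_0}(s)$, with asymptotic tightness preserved under contiguity. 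Finally, pathwise differentiability \eqref{eq:pdA} yields $\sqrt{n}[\nu(P_{\epsilon_n}) - \nu(P_0)] = \sqrt{n}\,\epsilon_n\,\dot{\nu}_{P_0}(s) + o(1)\to t\,\dot{\nu}_{P_0}(s)$, so subtracting this deterministic shift cancels it and gives $\sqrt{n}[\bar{\nu}_n - \nu(P_{\epsilon_n})]\rightsquigarrow\mathbb{H}$ under $P_{\epsilon_n}$, independently of $t$, which is exactly regularity.

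The main obstacle is making the regularity argument rigorous in infinite dimensions: weak convergence in $\mathcal{H}$ demands both convergence of every finite-dimensional projection and asymptotic tightness, and both must survive the change of measure to the contiguous alternatives $P_{\epsilon_n}$. The cleanest route is to reduce the finite-dimensional convergence to the standard multivariate Le Cam third lemma applied to $(\langle\cdot,h_1\rangle_{\mathcal{H}},\dots,\langle\cdot,h_m\rangle_{\mathcal{H}})$ jointly with the log-likelihood ratio, and to invoke the stability of asymptotic tightness under contiguity separately. The step requiring the most care is verifying the covariance identity that produces precisely the shift $t\,\dot{\nu}_{P_0}(s)$, since this is where the pathwise-differentiability structure and the EIF representation \eqref{eq:EIFRiesz} enter and must be reconciled with the deterministic displacement of $\nu(P_{\epsilon_n})$.
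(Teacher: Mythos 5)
Your proof is correct, and its first two steps coincide with the paper's: asymptotic linearity comes straight from \eqref{eq:cfExpansion}, and \eqref{eq:oneStepWeakConv} from the Hilbert-space CLT plus Slutsky. Where you genuinely diverge is regularity. The paper runs no LAN/contiguity argument at all: it upgrades asymptotic linearity to \emph{joint} convergence $n^{1/2}\bigl(\bar{\nu}_n-\nu(P_0),\,P_n s\bigr)\rightsquigarrow(\mathbb{H},S)$ in the product Hilbert space $\mathcal{H}\times\mathbb{R}$ (possible because $(\phi_0,s)$ is Bochner square integrable there), and then invokes Theorem~3 in Chapter~5.2 of \cite{bickel1993efficient}, which reduces regularity to the single cross-covariance identity $E[S\mathbb{H}]=\dot{\nu}_0(s)$. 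Verifying that identity is the only real work in the paper's proof, and it is exactly the computation you perform at the pre-limit level, $E_0[s(Z)\langle\phi_0(Z),h\rangle_{\mathcal{H}}]=\langle s,\dot{\nu}_0^*(h)\rangle_{L^2(P_0)}=\langle \dot{\nu}_0(s),h\rangle_{\mathcal{H}}$, supplemented by a Fernique/Cauchy--Schwarz step to justify exchanging $E$ with the inner product. So the paper buys brevity by outsourcing the Le Cam machinery to a citation, while your route is self-contained and makes the shift-cancellation mechanism transparent. The one step of yours that needs shoring up is the claim that ``asymptotic tightness is stable under contiguity'': this is true, but it is not a one-line citation (it requires the uniform/quantitative characterization of contiguity). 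The cleaner patch is to avoid it entirely by using the abstract form of Le Cam's third lemma for metric-space-valued sequences (Chapter~3.10 of \cite{van1996weak}): the joint weak convergence of the $\mathcal{H}$-valued average $n^{-1/2}\sum_i\phi_0(Z_i)$ and the log-likelihood ratio under $P_0^n$ --- which you already have from the Hilbert CLT, the LAN expansion, and Slutsky --- directly yields weak convergence under $P_{\epsilon_n}^n$ to the exponentially tilted law, which for a jointly Gaussian limit is precisely the Gaussian shifted by $t\,\dot{\nu}_0(s)$, and tightness of the limit comes for free. With that substitution your argument is a complete alternative proof, resting on the same adjoint identity as the paper's.
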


The convolution theorem for Banach-valued estimators can be used to characterize one sense in which $\bar{\nu}_n$ is an efficient estimator of $\nu(P_0)$ \citep[see Theorem~3.11.2 and Lemma~3.11.4 of][for a convenient version]{van1996weak}. Rather than present the theorem in full generality, here we present a particularly interpretable consequence of it. Specifically, under the conditions of Theorem~\ref{thm:al}, that theorem shows that $\bar{\nu}_n$ is optimally concentrated about $\nu(P_0)$ in the sense that, for any regular estimator sequence $(\widetilde{\nu}_n)_{n=1}^\infty$ and $c\ge 0$,
\begin{align}
\lim_{n\rightarrow\infty}P_0^n\left\{n\|\bar{\nu}_n-\nu(P_0)\|_{\mathcal{H}}^2>c\right\}\le \lim_{n\rightarrow\infty}P_0^n\left\{n\|\widetilde{\nu}_n-\nu(P_0)\|_{\mathcal{H}}^2>c\right\}. \label{eq:convolution}
\end{align}
The above describes a sense in which $\bar{\nu}_n$ is optimal among all regular estimators of $\nu(P_0)$. Under the conditions of Theorem~\ref{thm:al}, $\bar{\nu}_n$ can also be shown to outperform all estimators, including non-regular ones, in a local asymptotic minimax sense --- see Theorem 3.11.5 in \cite{van1996weak} for details.

\subsection{Construction of confidence sets based on one-step estimators}\label{sec:confSets}

As we will now show, the weak convergence result in Theorem~\ref{thm:al} can be used to facilitate the construction of $(1-\alpha)$-level confidence sets for the Hilbert-valued parameter $\nu(P_0)$, where $\alpha\in(0,1)$ is some fixed constant. 
Our proposed confidence set is built based upon a quadratic form $w(\,\cdot\,;\Omega) : h\mapsto \langle \Omega(h),h\rangle_{\mathcal{H}}$ that is parameterized by a standardization operator $\Omega$ that belongs to the set $\mathcal{O}$ of continuous, self-adjoint, positive definite linear operators mapping from $\mathcal{H}$ to $\mathcal{H}$. 
In particular, letting $\zeta\ge 0$ be a specified threshold and $\Omega_n\in \mathcal{O}$ an estimator of a some possibly-$P_0$-dependent operator $\Omega_0\in\mathcal{O}$, our confidence set will take the form
\begin{align}
\mathcal{C}_n(\zeta) := \left\{h \in \mathcal{H} : w(\bar{\nu}_n - h;\Omega_n) \leq \zeta/n\right\}. \label{eq:confSet}
\end{align}
We will see that, for an appropriate choice of $\zeta$ and provided $\|\Omega_n-\Omega_0\|_{\mathrm{op}}=o_p(1)$, the continuous mapping theorem justifies the asymptotic validity of this confidence set. 

Before presenting that result, we discuss two interesting choices of $\Omega_0$. The first is the identity function. This choice yields a spherical confidence set that consists of all $h$ belonging to an $\mathcal{H}$-ball centered at $\bar{\nu}_n$ of radius $(\zeta/n)^{1/2}$. Since the form of $\Omega_0$ does not rely on $P_0$ in this case, $\Omega_n$ can be taken to be equal to $\Omega_0$. The second is proportional to the inverse of a regularized form of the covariance operator of $\mathbb{H}$, which will yield a Wald-type confidence set for $\nu(P_0)$ that has an elliptical shape. 
Compared to spherical confidence sets, Wald-type confidence sets have the benefit of being narrower in the direction of unit vectors $h$ where estimation is easier, in the sense that the variance of $\langle \mathbb{H},h \rangle_{\mathcal{H}}$ is smaller. 
Regularization is needed when defining a Wald-type confidence set because the covariance operator of $\mathbb{H}$ will not generally be invertible and, even if it is, this inverse will not be continuous when $\|\phi_0\|_{L^2(P_0;\mathcal{H})}<\infty$ unless $\mathcal{H}$ is finite-dimensional. A simple example of a regularized covariance operator $\Omega_0$, and an operator-norm consistent estimator thereof, is given in Appendix~\ref{app:regOmega0}. Many other types of regularization are also possible \citep{tikhonov1995numerical}.

The following result establishes the asymptotic validity of the confidence set in \eqref{eq:confSet} based on a threshold $\widehat{\zeta}_n$ that is measurable with respect to the $\sigma$-field generated by the iid sample $Z_1,\ldots,Z_n$ from $P_0$. This threshold is an estimate of the $(1-\alpha)$-quantile $\zeta_{1-\alpha}$ of $w(\mathbb{H};\Omega_0)$.
\begin{theorem}[Asymptotically valid confidence set] \label{thm:CIcoverage}
Suppose the conditions of Theorem~\ref{thm:al} hold. Further suppose that $\|\phi_0\|_{L^2(P_0;\mathcal{H})}>0$, $\Omega_n\in\mathcal{O}$, $\Omega_0\in\mathcal{O}$, and $\|\Omega_n - \Omega_0\|_{\mathrm{op}}=o_p(1)$.
\begin{enumerate}[label=(\roman*)]
    \item\label{it:exactCoverage} if $\widehat{\zeta}_n\rightarrow \zeta_{1-\alpha}$ in probability, then $\lim_{n\rightarrow\infty} P_0^n\{\nu(P_0)\in \mathcal{C}_n(\widehat{\zeta}_n)\}= 1-\alpha$.
    \item\label{it:consCoverage} if $\widehat{\zeta}_n$ is an asymptotically conservative estimator of $\zeta_{1-\alpha}$, in the sense that $P_0^n\{\widehat{\zeta}_n\ge \zeta_{1-\alpha}-\delta\}\overset{n\rightarrow\infty}{\longrightarrow} 1$ for all $\delta>0$, then $\liminf_{n\rightarrow\infty} P_0^n\{\nu(P_0)\in \mathcal{C}_n(\widehat{\zeta}_n)\}\ge 1-\alpha$.
\end{enumerate}
\end{theorem}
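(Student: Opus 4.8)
The plan is to reduce the coverage statement to a one-dimensional weak-convergence-plus-quantile argument. Membership $\nu(P_0)\in\mathcal{C}_n(\widehat{\zeta}_n)$ is by definition the event $w(\bar{\nu}_n-\nu(P_0);\Omega_n)\le\widehat{\zeta}_n/n$. Since $\Omega_n$ is linear and $w(\,\cdot\,;\Omega)$ is a quadratic form, multiplying through by $n$ rewrites this as $T_n\le\widehat{\zeta}_n$, where I set $G_n:=n^{1/2}[\bar{\nu}_n-\nu(P_0)]$ and $T_n:=w(G_n;\Omega_n)=\langle\Omega_n G_n,G_n\rangle_{\mathcal{H}}$. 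The whole theorem thus reduces to comparing the real-valued statistic $T_n$ with the (possibly random) cutoff $\widehat{\zeta}_n$. By Theorem~\ref{thm:al}, $G_n\rightsquigarrow\mathbb{H}$ in $\mathcal{H}$.

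First I would establish $T_n\rightsquigarrow W:=w(\mathbb{H};\Omega_0)$. Because $\Omega_0$ is deterministic and $\|\Omega_n-\Omega_0\|_{\mathrm{op}}\convpr 0$, the pair $(G_n,\Omega_n)$ converges weakly to $(\mathbb{H},\Omega_0)$ in the product space $\mathcal{H}\times(\mathcal{O},\|\cdot\|_{\mathrm{op}})$, via the Slutsky-type lemma for metric spaces (weak convergence of one coordinate together with convergence in probability of the other to a constant yields joint weak convergence to the product). The map $(v,\Omega)\mapsto\langle\Omega v,v\rangle_{\mathcal{H}}$ is continuous at every point of the form $(v,\Omega_0)$: writing $\langle\Omega_n v_n,v_n\rangle_{\mathcal{H}}-\langle\Omega_0 v,v\rangle_{\mathcal{H}}=\langle(\Omega_n-\Omega_0)v_n,v_n\rangle_{\mathcal{H}}+[\langle\Omega_0 v_n,v_n\rangle_{\mathcal{H}}-\langle\Omega_0 v,v\rangle_{\mathcal{H}}]$, the first term is bounded by $\|\Omega_n-\Omega_0\|_{\mathrm{op}}\|v_n\|_{\mathcal{H}}^2\to 0$ and the second vanishes by boundedness of $\Omega_0$. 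The continuous mapping theorem then gives $T_n\rightsquigarrow W$.

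The crux, and what I expect to be the main obstacle, is to show that the law of $W$ has no atoms, so that its distribution function $F_W$ is continuous everywhere and the $(1-\alpha)$-quantile $\zeta_{1-\alpha}$ satisfies $F_W(\zeta_{1-\alpha})=1-\alpha$. Since $\phi_0\in L^2(P_0;\mathcal{H})$, the limit $\mathbb{H}$ is a centered Gaussian with trace-class covariance operator $\Sigma_0$, and diagonalizing the self-adjoint trace-class operator $\Omega_0^{1/2}\Sigma_0\Omega_0^{1/2}$ yields the representation $W\eqindist\sum_k\lambda_k\xi_k^2$, with $(\xi_k)$ iid standard normal and $(\lambda_k)$ the nonnegative eigenvalues. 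The hypothesis $\|\phi_0\|_{L^2(P_0;\mathcal{H})}>0$ forces $\Sigma_0\ne 0$, and positive-definiteness of $\Omega_0$ then forces at least one $\lambda_k>0$; a weighted chi-square sum with a positive weight is absolutely continuous (it is the convolution of an independent law with an absolutely continuous one-dimensional chi-square factor), so $F_W$ is continuous. This is precisely where the assumption $\|\phi_0\|_{L^2(P_0;\mathcal{H})}>0$ is used and where the infinite-dimensional quadratic-form structure must be handled with care.

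With $T_n\rightsquigarrow W$, $F_W$ continuous, and $\widehat{\zeta}_n$ either consistent (case \ref{it:exactCoverage}) or asymptotically conservative (case \ref{it:consCoverage}) for $\zeta_{1-\alpha}$, both conclusions follow by a sandwich argument. For any $\delta>0$, on the high-probability events controlling $\widehat{\zeta}_n$ one has the set inclusions $\{T_n\le\zeta_{1-\alpha}-\delta\}\subseteq\{T_n\le\widehat{\zeta}_n\}\subseteq\{T_n\le\zeta_{1-\alpha}+\delta\}$ up to events of probability $o(1)$, so that $P_0^n\{T_n\le\widehat{\zeta}_n\}$ is squeezed between $P_0^n\{T_n\le\zeta_{1-\alpha}\mp\delta\}+o(1)$. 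Letting $n\to\infty$ (using that $\zeta_{1-\alpha}\pm\delta$ are continuity points of $F_W$) and then $\delta\downarrow 0$ (using continuity of $F_W$ at $\zeta_{1-\alpha}$) gives $P_0^n\{T_n\le\widehat{\zeta}_n\}\to F_W(\zeta_{1-\alpha})=1-\alpha$, proving \ref{it:exactCoverage}. For \ref{it:consCoverage} only the lower inclusion is available under the conservativeness hypothesis, which yields $\liminf_{n\to\infty}P_0^n\{T_n\le\widehat{\zeta}_n\}\ge\lim_{\delta\downarrow 0}F_W(\zeta_{1-\alpha}-\delta)=F_W(\zeta_{1-\alpha})=1-\alpha$.
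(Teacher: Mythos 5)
Your proof is correct, and it follows the paper's skeleton (reduce coverage to the event $\{T_n\le\widehat{\zeta}_n\}$, establish $T_n\rightsquigarrow w(\mathbb{H};\Omega_0)$ via a Slutsky-type argument, prove the limit law has no atoms, then sandwich over $\delta$), but it diverges from the paper at the one step that genuinely matters: atomlessness of $W=w(\mathbb{H};\Omega_0)$. The paper handles this by invoking Corollary~3.3 of \cite{bogachev1996gaussian}, verifying that $w(\,\cdot\,;\Omega_0)$ is locally Lipschitz and that its Gateaux derivative $h\mapsto 2\langle\Omega_0(g),h\rangle_{\mathcal{H}}$ has image $\mathbb{R}$ on the $\mathbb{P}$-probability-one set $\mathcal{H}\setminus\{0\}$; this is where positive definiteness of $\Omega_0$ and $\|\phi_0\|_{L^2(P_0;\mathcal{H})}>0$ enter for them. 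You instead use the spectral route: $W=\|\Omega_0^{1/2}\mathbb{H}\|_{\mathcal{H}}^2$, Karhunen--Lo\`{e}ve gives $W\eqindist\sum_k\lambda_k\xi_k^2$ with $(\lambda_k)$ the eigenvalues of the trace-class operator $\Omega_0^{1/2}\Sigma_0\Omega_0^{1/2}$, and $\|\phi_0\|_{L^2(P_0;\mathcal{H})}>0$ plus injectivity and dense range of $\Omega_0^{1/2}$ force some $\lambda_k>0$, whence absolute continuity by convolution. Both arguments are valid and use the hypotheses at exactly the same place. The paper's route is the more general one --- it would apply to locally Lipschitz functionals of $\mathbb{H}$ that are not quadratic forms, with no spectral calculus needed --- while yours is more elementary and self-contained, and as a bonus identifies the limit law explicitly as a weighted chi-square series, which connects directly to the Sz\'{e}kely--Bakirov-type conservative threshold the paper discusses in Appendix~\ref{app:szekely}. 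Two small points to tighten: your claim that $\Omega_0$ positive definite and $\Sigma_0\ne 0$ force some $\lambda_k>0$ deserves the one-line justification (if $\Omega_0^{1/2}\Sigma_0\Omega_0^{1/2}=0$, injectivity of $\Omega_0^{1/2}$ gives $\Sigma_0=0$ on the dense range of $\Omega_0^{1/2}$, hence $\Sigma_0=0$); and the joint-weak-convergence framing of step one should be handled with care since $(\mathcal{O},\|\cdot\|_{\mathrm{op}})$ is non-separable --- though since the second coordinate tends to a constant this is harmless, and your displayed bound $|w(G_n;\Omega_n-\Omega_0)|\le\|\Omega_n-\Omega_0\|_{\mathrm{op}}\|G_n\|_{\mathcal{H}}^2$ is precisely the paper's Slutsky decomposition in \eqref{eq:Slutsky}, so you could state it that way and avoid the product-space formalism entirely.
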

The proof of this result is similar in structure to those used to establish the validity of Wald-type confidence sets for finite-dimensional parameters. It consists in applying the continuous mapping theorem and Slutsky's lemma to show that $n\cdot w[\bar{\nu}_n-\nu(P_0);\Omega_n]\rightsquigarrow w(\mathbb{H};\Omega_0)$, showing that $w(\mathbb{H};\Omega_0)$ is a continuous random variable, and finally using that convergence in distribution implies convergence of cumulative distribution functions at continuity points.

A consistent estimator of $\zeta_{1-\alpha}$ can be defined using the bootstrap \citep{efron1979bootstrap}.   
To define this estimator, we let $Z_1^\sharp,\ldots,Z_{n/2}^\sharp\iidsim P_n^2$ be sampled independently of $Z_{n/2+1}^\sharp,\ldots,Z_n^\sharp\iidsim P_n^1$. We then let $P_n^{2,\sharp}$ be the empirical distribution of $Z_1^\sharp,\ldots,Z_{n/2}^\sharp$ and $P_n^{1,\sharp}$ be the empirical distribution of $Z_{n/2+1}^\sharp,\ldots,Z_n^\sharp$. 
We let $\mathbb{H}_n^\sharp:= n^{1/2}\sum_{j=1}^2 (P_n^{j,\sharp}-P_n^j) \phi_n^j/2$. 
The threshold $\widehat{\zeta}_n$ is taken to be equal to the $(1-\alpha)$-quantile of $w(\mathbb{H}_n^\sharp,\Omega_n)$, conditionally on the original sample $(Z_1,\ldots,Z_n)$ used to define $P_n^1$ and $P_n^2$. In practice this quantile can be well-approximated by selecting $m$ Monte Carlo draws of the sample $Z_1^\sharp,\ldots,Z_{n}^\sharp$ and then returning the empirical $(1-\alpha)$-quantile of $w(\mathbb{H}_n^\sharp,\Omega_n)$ over these draws. A computational benefit of this bootstrap procedure is that it does not require refitting the initial estimators $\widehat{P}_n^j$ of $P_0$; indeed, the same EIFs $\phi_n^j:=\phi_{\widehat{P}_n^j}$ are used for each bootstrap replication.
\begin{theorem}[Consistent estimation of $\zeta_{1-\alpha}$ via the bootstrap]\label{thm:threshEst}
Suppose the conditions of Theorem~\ref{thm:al} hold. Further suppose that  $\Omega_n\in\mathcal{O}$, $\Omega_0\in\mathcal{O}$, $\|\Omega_n - \Omega_0\|_{\mathrm{op}}=o_p(1)$, and $\max_{j\in\{1,2\}}\|\phi_n^j-\phi_0\|_{L^2(P_0;\mathcal{H})}=o_p(1)$. Under these conditions, $\widehat{\zeta}_n\rightarrow \zeta_{1-\alpha}$ in probability.
\end{theorem}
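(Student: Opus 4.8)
The plan is to establish consistency of the bootstrap threshold in three stages: (i) show that, conditionally on the data, the bootstrap statistic $\mathbb{H}_n^\sharp$ converges weakly to $\mathbb{H}$ in probability; (ii) transfer this to the real-valued quadratic form $w(\mathbb{H}_n^\sharp;\Omega_n)$ via a continuous mapping argument that also absorbs the estimation error in $\Omega_n$; and (iii) deduce convergence of the conditional $(1-\alpha)$-quantile $\widehat{\zeta}_n$ to $\zeta_{1-\alpha}$ from the continuity of the distribution function of the limit $w(\mathbb{H};\Omega_0)$. Throughout, I would condition on the full sample $(Z_1,\ldots,Z_n)$, so that each $\phi_n^j$ is a fixed element of $L^2(P_0;\mathcal{H})$ and $\mathbb{H}_n^\sharp$ is, within each fold, a normalized sum of conditionally iid, conditionally mean-zero $\mathcal{H}$-valued summands obtained by resampling from $P_n^j$.

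For stage (i), the key device is the decomposition $\mathbb{H}_n^\sharp = A_n + B_n$, where $A_n := \tfrac{1}{2}\sum_{j=1}^2 n^{1/2}(P_n^{j,\sharp}-P_n^j)\phi_0$ replaces each estimated influence function by the truth and $B_n := \tfrac{1}{2}\sum_{j=1}^2 n^{1/2}(P_n^{j,\sharp}-P_n^j)(\phi_n^j-\phi_0)$ collects the remainder. Since the two folds resample independently and the resampled points within fold $j$ are conditionally iid from $P_n^j$, a direct computation gives the conditional second-moment bound $E[\|B_n\|_{\mathcal{H}}^2\mid (Z_i)_i]\le \tfrac{1}{2}\sum_{j=1}^2 \|\phi_n^j-\phi_0\|_{L^2(P_n^j;\mathcal{H})}^2$. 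Because $\phi_n^j$ is computed from the training fold and is therefore independent of the evaluation fold defining $P_n^j$, a conditional Markov inequality combined with bounded convergence (applied to $\min\{1,P_0\|\phi_n^j-\phi_0\|_{\mathcal{H}}^2/\epsilon\}$) upgrades the assumed $\|\phi_n^j-\phi_0\|_{L^2(P_0;\mathcal{H})}=o_p(1)$ to $\|\phi_n^j-\phi_0\|_{L^2(P_n^j;\mathcal{H})}=o_p(1)$, so that $B_n$ is conditionally $o_p(1)$ in probability, exactly as in the proof of Lemma~\ref{lem:driftTerm}. For the leading term $A_n$, I would invoke the bootstrap central limit theorem for the mean of a fixed square-integrable $\mathcal{H}$-valued function: the conditional covariance operators $\mathrm{Cov}_{P_n^j}(\phi_0)$ converge in trace norm to $\Sigma_0:=E_0[\phi_0\otimes\phi_0]$ by a strong law for rank-one trace-class summands (using $\|\phi_0\|_{L^2(P_0;\mathcal{H})}<\infty$), and the Lindeberg condition holds because $\|\phi_0\|_{\mathcal{H}}^2$ is $P_0$-integrable; summing the two conditionally independent folds yields $A_n\rightsquigarrow \mathbb{H}$ conditionally in probability, with limiting covariance $\tfrac12\Sigma_0+\tfrac12\Sigma_0=\Sigma_0$ matching the covariance of $\mathbb{H}$ from Theorem~\ref{thm:al}. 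Slutsky's lemma then gives $\mathbb{H}_n^\sharp\rightsquigarrow\mathbb{H}$ conditionally in probability.

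Stage (ii) uses that $(v,\Omega)\mapsto\langle\Omega v,v\rangle_{\mathcal{H}}$ is jointly continuous: I would write $w(\mathbb{H}_n^\sharp;\Omega_n)-w(\mathbb{H}_n^\sharp;\Omega_0)=\langle(\Omega_n-\Omega_0)\mathbb{H}_n^\sharp,\mathbb{H}_n^\sharp\rangle_{\mathcal{H}}$, bound its absolute value by $\|\Omega_n-\Omega_0\|_{\mathrm{op}}\|\mathbb{H}_n^\sharp\|_{\mathcal{H}}^2$, and note this is $o_p(1)$ since $\|\Omega_n-\Omega_0\|_{\mathrm{op}}=o_p(1)$ and $\|\mathbb{H}_n^\sharp\|_{\mathcal{H}}^2$ is conditionally tight. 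Combining this with the continuous mapping theorem applied to the fixed map $v\mapsto\langle\Omega_0 v,v\rangle_{\mathcal{H}}$ gives $w(\mathbb{H}_n^\sharp;\Omega_n)\rightsquigarrow w(\mathbb{H};\Omega_0)$ conditionally in probability. For stage (iii), I would use the subsequence principle: conditional weak convergence in probability implies that along any subsequence there is a further subsequence along which the conditional laws converge weakly almost surely, so the conditional distribution functions converge at every continuity point of the law of $w(\mathbb{H};\Omega_0)$. As in Theorem~\ref{thm:CIcoverage}, this law is absolutely continuous on $(0,\infty)$ when $\Sigma_0\ne 0$ (it is a nondegenerate weighted sum of $\chi^2_1$ variables after diagonalizing $\Omega_0^{1/2}\Sigma_0\Omega_0^{1/2}$), and is a point mass at $0$ otherwise; in either case its quantile function is continuous at $1-\alpha$. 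The quantile convergence lemma for distribution functions then yields $\widehat{\zeta}_n\to\zeta_{1-\alpha}$ almost surely along the subsequence, and since the limit is the same deterministic constant for every subsequence, $\widehat{\zeta}_n\to\zeta_{1-\alpha}$ in probability.

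The main obstacle I anticipate is stage (i): proving a genuine bootstrap central limit theorem in the infinite-dimensional space $\mathcal{H}$ for the cross-fitted, triangular-array statistic, and doing so in the conditional-weak-convergence-in-probability mode that stage (iii) requires. The decomposition $\mathbb{H}_n^\sharp=A_n+B_n$ is what makes this tractable, since it isolates a fixed-function bootstrap mean (to which standard Hilbert-space bootstrap CLTs apply) from an estimation-error term controlled purely by a conditional second-moment bound; verifying the tightness and Lindeberg conditions for $A_n$ under only a second-moment assumption on $\phi_0$, and carefully bookkeeping the resampling from $P_n^j$ rather than $P_0$, is where the real work lies. A secondary technical point is the rigorous passage from conditional weak convergence in probability to quantile convergence, which I would handle cleanly through the subsequence argument rather than attempting a direct almost-sure statement.
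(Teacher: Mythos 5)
Your proposal is correct and follows essentially the same route as the paper: the paper uses exactly your decomposition $\mathbb{H}_n^\sharp = \mathbb{H}_{n,0}^\sharp + (\mathbb{H}_n^\sharp-\mathbb{H}_{n,0}^\sharp)$ with $\mathbb{H}_{n,0}^\sharp:=n^{1/2}\tfrac{1}{2}\sum_{j=1}^2(P_n^{j,\sharp}-P_n^j)\phi_0$, controls the remainder by the same conditional second-moment/Markov plus dominated-convergence argument (its Lemma~\ref{lem:bootstraDriftTerm}), absorbs $\Omega_n-\Omega_0$ by the same Slutsky-type operator-norm bound, and then passes to quantile convergence using the continuity of the law of $w(\mathbb{H};\Omega_0)$ established in the proof of Theorem~\ref{thm:CIcoverage}. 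The only substantive difference is at the step you flag as the main labor: rather than verifying the Hilbert-space bootstrap CLT for the fixed-function term from scratch via covariance convergence and Lindeberg conditions, the paper simply invokes the bootstrap CLT of \cite{gine1990bootstrapping} (their Remark~2.5), which delivers $\mathbb{H}_{n,0}^\sharp\rightsquigarrow\mathbb{H}$ weakly almost surely under only $\phi_0\in L^2(P_0;\mathcal{H})$.
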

In brief, the proof of the above consists in showing that $\mathbb{H}_n^\sharp$ is asymptotically equivalent to $\mathbb{H}_{n,0}^\sharp:= n^{1/2}\frac{1}{2}\sum_{j=1}^2 (P_n^{j,\sharp}-P_n^j) \phi_0$ in probability, invoking a guarantee from \cite{gine1990bootstrapping} regarding the weak convergence of the bootstrap for Hilbert-valued sample means and the continuous mapping theorem to show that $w(\mathbb{H}_{n,0}^\sharp;\Omega_0)\rightsquigarrow w(\mathbb{H};\Omega_0)$ conditionally on $(Z_i)_{i=1}^\infty$ with probability one, and finally applying a Slutsky-type argument to replace the $P_0$-dependent quantities $\mathbb{H}_{n,0}^\sharp$ and $\Omega_0$ in $w(\mathbb{H}_{n,0}^\sharp;\Omega_0)$ by $\mathbb{H}_n^\sharp$ and $\Omega_n$, respectively.  

When $\Omega_0$ is the identity function, Theorem~1 in \cite{szekely2003extremal} provides a means to derive an alternative estimator of $\zeta_{1-\alpha}$. This estimator does not require the bootstrap, but is asymptotically conservative. See Appendix~\ref{app:szekely} for details.

In practice, it will typically be necessary to use numerical techniques to compute the quadratic form $w(\bar{\nu}_n-h;\Omega_n)$ that is used to define our confidence set. We discuss some such approaches in Appendix~\ref{app:practicalNonRegCS}.

\section{Performance guarantees and inference when there is no EIF}\label{sec:perfGuaranteesRegularized}

\subsection{Performance guarantees for regularized one-step estimation}\label{sec:regularizedOneStepGuarantee}

In this subsection, we provide performance guarantees for the cross-fitted $\beta_n$-regularized one-step estimator $\bar{\nu}_n^{\beta_n}$, where, for each $n$, $\beta_n$ is an $\ell_{*}^2$-valued regularization parameter. 
Before doing so, we acknowledge a minor abuse of notation. We will denote the $k^{\mathrm{th}}$ entry of a generic regularization parameter $\beta\in \ell_*^2$ by $\beta_k$, which should not be mistaken for the sample-size-$n$ dependent regularization parameter $\beta_n$, whose $k^{\mathrm{th}}$ entry we will denote by $\beta_{n,k}$. 
This should not cause confusion, as we always denote sample size by $n$ and a generic index of a vector in $\ell_*^2$ by $k$.

We will show that, under conditions, $\bar{\nu}_n^{\beta_n}$ satisfies the following biased and slower-than-$n^{-1/2}$-rate  asymptotically linear expansion, which formalizes the approximation in \eqref{eq:biasedAL} for a cross-fitted one-step estimator:
\begin{align}
\bar{\nu}_n^{\beta_n}-\nu(P_0)&= \frac{1}{2}\sum_{j=1}^2\mathcal{B}_n^{j,\beta_n} + P_n \phi_0^{\beta_n} +  O_p\left(\|\beta_n\|_{\ell^2}/n^{1/2}\right). \label{eq:regularizedMainDecomp}
\end{align}
Above $\phi_0^{\beta_n}$ is as defined in Lemma~\ref{lem:approximateEIF} and $\mathcal{B}_n^{j,\beta_n}:=\mathcal{B}_{\widehat{P}_n^j}^{\beta_n}$ denotes the bias term defined below \eqref{eq:biasedAL}. When $\nu$ does not have an EIF, there is generally a tradeoff between the bias term, whose magnitude is smaller when the entries of $\beta_n$ are closer to $1$, and the linear `variance' term $P_n \phi_0^{\beta_n}$, whose magnitude scales as $O_p[\sigma_0(\beta_n)/n^{1/2}]$, where $\sigma_0(\beta_n)=O(\|\beta_n\|_{\ell^2})$ is as defined in Lemma~\ref{lem:approximateEIF}. These two terms will typically be of the same order when $\beta_n$ is selected to minimize the mean-squared error $E_{P_0^n}[\|\bar{\nu}_n^{\beta_n}-\nu(P_0)\|_{\mathcal{H}}^2]$, which makes it so that $\bar{\nu}_n^{\beta_n}-\nu(P_0)$ converges to zero in probability slower than does $n^{-1/2}$. Owing to the bias term, and also to the fact that there is generally not a scaling of 
$P_n\phi_0^{\beta_n}$ that will converge to a nondegenerate, tight random element when $\nu$ does not have an EIF (see Lemma~\ref{lem:notTight} in the appendix), our focus in this subsection will be on deriving rates of convergence for the regularized estimator $\bar{\nu}_n^{\beta_n}$. We provide a means to construct confidence sets for $\nu(P_0)$ in Section~\ref{sec:confSetsRegularized}.

To establish \eqref{eq:regularizedMainDecomp}, we introduce regularized versions of the drift and remainder terms considered in Section~\ref{sec:oneStepGuarantee}. In particular, for $j\in\{1,2\}$ and $\beta=(\beta_k)_{k=1}^\infty\in \ell^2$, define the $\mathcal{H}$-valued random elements $\mathcal{D}_n^{j,\beta}:=(P_n^j-P_0)(\phi_n^{j,\beta}-\phi_0^\beta)$ and $\mathcal{R}_n^{j,\beta}:=\mathcal{R}_{\widehat{P}_n^j}^{\beta}$, where, for $P\in\mathcal{P}$,
\begin{align}
\mathcal{R}_P^{\beta}:= \nu(P)-\nu(P_0) + P_0 \phi_P^{\beta} - \sum_{k=1}^\infty (1-\beta_k) \langle\nu(P) - \nu(P_0),h_k\rangle_{\mathcal{H}} h_k. \label{eq:remainder}
\end{align}
The main result of this subsection is as follows.
\begin{theorem}[Rate of convergence of regularized one-step estimator]\label{thm:alReg}
Suppose $\nu$ is pathwise differentiable at $P_0$, $\beta_n\in \ell_{*}^2$ for each $n\in\mathbb{N}$, and both $\mathcal{R}_n^{j,\beta_n}$ and $\mathcal{D}_n^{j,\beta_n}$ are $O_p[\|\beta_n\|_{\ell^2}/n^{1/2}]$ for $j\in\{1,2\}$. Under these conditions, \eqref{eq:regularizedMainDecomp} holds. Moreover, if $\mathcal{B}_n^{j,\beta_n}=O_p(\|\beta_n\|_{\ell^2}/n^{1/2})$ for $j\in\{1,2\}$, then
\begin{align}
\bar{\nu}_n^{\beta_n}-\nu(P_0)&= O_p\left(\|\beta_n\|_{\ell^2}/n^{1/2}\right).\label{eq:regRate}
\end{align}
\end{theorem}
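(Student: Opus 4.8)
The plan is to prove \eqref{eq:regularizedMainDecomp} by an add-and-subtract decomposition that isolates the bias, remainder, drift, and leading linear terms, and then to bound each piece using the hypothesized rates together with Lemma~\ref{lem:approximateEIF}. This mirrors the structure of the expansion \eqref{eq:cfExpansion} used for the unregularized estimator, but now carries the regularization-bias term $\mathcal{B}_n^{j,\beta_n}$.

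First, I would start from $\bar{\nu}_n^{\beta_n}=\frac{1}{2}\sum_{j=1}^2[\nu(\widehat{P}_n^j)+P_n^j\phi_n^{j,\beta_n}]$ and, on each split, write $P_n^j\phi_n^{j,\beta_n}=P_0\phi_n^{j,\beta_n}+(P_n^j-P_0)(\phi_n^{j,\beta_n}-\phi_0^{\beta_n})+(P_n^j-P_0)\phi_0^{\beta_n}$, recognizing the middle term as $\mathcal{D}_n^{j,\beta_n}$. Since \eqref{eq:remainder} gives $\nu(\widehat{P}_n^j)-\nu(P_0)+P_0\phi_n^{j,\beta_n}=\mathcal{R}_n^{j,\beta_n}+\mathcal{B}_n^{j,\beta_n}$, averaging over $j$ produces
\[
\bar{\nu}_n^{\beta_n}-\nu(P_0)=\frac{1}{2}\sum_{j=1}^2\Big[\mathcal{B}_n^{j,\beta_n}+\mathcal{R}_n^{j,\beta_n}+\mathcal{D}_n^{j,\beta_n}+(P_n^j-P_0)\phi_0^{\beta_n}\Big].
\]
I would then collapse the cross-fitted linear terms using $\tfrac12(P_n^1+P_n^2)=P_n$, which gives $\tfrac12\sum_{j}(P_n^j-P_0)\phi_0^{\beta_n}=P_n\phi_0^{\beta_n}-P_0\phi_0^{\beta_n}$, and verify $P_0\phi_0^{\beta_n}=0$: testing against arbitrary $h\in\mathcal{H}$ and passing the inner product through the Bochner integral gives $\langle P_0\phi_0^{\beta_n},h\rangle_{\mathcal{H}}=\sum_k\beta_{n,k}\langle h,h_k\rangle_{\mathcal{H}}\,P_0\dot{\nu}_0^*(h_k)=0$, since every $\dot{\nu}_0^*(h_k)$ is a score and hence $P_0$-mean zero (the interchange of $P_0$ with the sum being justified by the $L^2(P_0)$-convergence of the partial sums guaranteed in Lemma~\ref{lem:approximateEIF}). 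Feeding the hypothesized bounds $\mathcal{R}_n^{j,\beta_n},\mathcal{D}_n^{j,\beta_n}=O_p(\|\beta_n\|_{\ell^2}/n^{1/2})$ into the remaining average then yields \eqref{eq:regularizedMainDecomp}.

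To obtain the rate \eqref{eq:regRate}, it remains to bound the two surviving terms. The averaged bias $\tfrac12\sum_j\mathcal{B}_n^{j,\beta_n}$ is $O_p(\|\beta_n\|_{\ell^2}/n^{1/2})$ directly from the additional hypothesis. For the linear term, I would observe that $P_n\phi_0^{\beta_n}=\tfrac1n\sum_{i=1}^n\phi_0^{\beta_n}(Z_i)$ is an average of iid, $P_0$-mean-zero, $\mathcal{H}$-valued summands, so $\expect\|P_n\phi_0^{\beta_n}\|_{\mathcal{H}}^2=\sigma_0(\beta_n)^2/n$; by the bound $\sigma_0(\beta_n)\le\|\dot{\nu}_0^*\|_{\mathrm{op}}\|\beta_n\|_{\ell^2}$ from Lemma~\ref{lem:approximateEIF}, the rescaled quantity $n\|P_n\phi_0^{\beta_n}\|_{\mathcal{H}}^2/\|\beta_n\|_{\ell^2}^2$ has expectation at most $\|\dot{\nu}_0^*\|_{\mathrm{op}}^2$ for every $n$, so Markov's inequality gives $P_n\phi_0^{\beta_n}=O_p(\|\beta_n\|_{\ell^2}/n^{1/2})$. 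The triangle inequality applied to the three $O_p(\|\beta_n\|_{\ell^2}/n^{1/2})$ contributions then delivers \eqref{eq:regRate}.

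The step I expect to require the most care is the control of $P_n\phi_0^{\beta_n}$, because $\phi_0^{\beta_n}$ is a triangular array whose law changes with $n$ through $\beta_n$; no fixed law of large numbers or central limit theorem applies, and one must instead rely on a second-moment bound that is uniform in $n$. The uniform operator-norm control $\sigma_0(\beta_n)\le\|\dot{\nu}_0^*\|_{\mathrm{op}}\|\beta_n\|_{\ell^2}$ from Lemma~\ref{lem:approximateEIF} is exactly what makes the Chebyshev-type argument go through uniformly, which is why I would route the bound through the variance rather than through any asymptotic distribution of the rescaled linear term.
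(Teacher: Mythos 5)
Your proof is correct and takes essentially the same route as the paper's: the identical decomposition of $\bar{\nu}_n^{\beta_n}-\nu(P_0)$ into bias, remainder, drift, and linear terms (the paper folds your explicit verification that $P_0\phi_0^{\beta_n}=0$ into the phrase ``by the definitions''), followed by plugging in the assumed rates and controlling the linear term via the same second-moment argument, namely $E_{P_0^n}\|P_n\phi_0^{\beta_n}\|_{\mathcal{H}}^2=\sigma_0(\beta_n)^2/n\le\|\dot{\nu}_0^*\|_{\mathrm{op}}^2\|\beta_n\|_{\ell^2}^2/n$ from Lemma~\ref{lem:approximateEIF} together with Markov/Chebyshev for Hilbert-valued averages, which handles the triangular-array issue uniformly in $n$ exactly as the paper does. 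No gaps.
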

Eq.~\ref{eq:regRate} suggests it is desirable to select $\beta_n$ as small as possible, while still ensuring that the drift, remainder, and bias terms are all $O_p(\|\beta_n\|_{\ell^2}/n^{1/2})$. Below we provide three general results that can aid in establishing these conditions. The first two provide ways to guarantee the drift and remainder terms are of no larger order than the variance term $P_n\phi_0^{\beta_n}$ in \eqref{eq:regularizedMainDecomp}, implying that the rate of convergence is determined by the variance and bias terms. The third makes precise our earlier statement that the bias term is smaller when the entries of $\beta_n$ are closer to $1$, thereby enforcing a lower bound on how small $\beta_n$ can be to ensure that the bias term is of the same order as the variance term and, therefore, \eqref{eq:regRate} holds.

\begin{lemma}[Sufficient condition for negligible regularized drift terms]\label{lem:driftTermRegularized}
Suppose that $\nu$ is pathwise differentiable at $P_0$ and $(r_n)_{n=1}^\infty$ is a nonnegative sequence. Fix $j\in\{1,2\}$ and, for each $n\in\mathbb{N}$, let $\beta_n\in \ell_{*}^2$. If $\|\phi_n^{j,\beta_n}-\phi_0^{\beta_n}\|_{L^2(P_0;\mathcal{H})}=o_p(r_n)$, 
then $\|\mathcal{D}_n^{j,\beta_n}\|_{\mathcal{H}}=o_p(r_n/n^{1/2})$.
\end{lemma}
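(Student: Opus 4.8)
The plan is to mirror the conditioning argument used for the unregularized drift term in Lemma~\ref{lem:driftTerm}, while carefully tracking the dependence on $\beta_n$ and the rate $r_n$. The structural fact that makes this work is that, under the cross-fitting scheme of Section~\ref{sec:cf}, the estimator $\widehat{P}_n^j$ and the empirical distribution $P_n^j$ are built from disjoint halves of the sample and are therefore independent. I would condition on the subsample used to form $\widehat{P}_n^j$; given this subsample, the function $g_n:=\phi_n^{j,\beta_n}-\phi_0^{\beta_n}$ is a fixed element of $L^2(P_0;\mathcal{H})$ — finite because Lemma~\ref{lem:approximateEIF} gives $\phi_0^{\beta_n}\in L^2(P_0;\mathcal{H})$ and the hypothesis $\|g_n\|_{L^2(P_0;\mathcal{H})}=o_p(r_n)$ presupposes that $\|g_n\|_{L^2(P_0;\mathcal{H})}<\infty$ — and the $n/2$ observations defining $P_n^j$ are iid draws from $P_0$ that are independent of $g_n$.

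First I would bound the conditional second moment of the drift term. Since $\mathcal{D}_n^{j,\beta_n}=(P_n^j-P_0)g_n$ is, conditionally on $\widehat{P}_n^j$, an average of $n/2$ iid centered $\mathcal{H}$-valued random variables, its conditional expected squared norm equals $\tfrac{2}{n}$ times the per-observation variance, giving
\begin{align*}
\expect\big[\|\mathcal{D}_n^{j,\beta_n}\|_{\mathcal{H}}^2 \,\big|\, \widehat{P}_n^j\big] = \frac{2}{n}\Big(\|g_n\|_{L^2(P_0;\mathcal{H})}^2 - \|P_0 g_n\|_{\mathcal{H}}^2\Big) \le \frac{2}{n}\|g_n\|_{L^2(P_0;\mathcal{H})}^2 .
\end{align*}
Next I would apply Chebyshev's inequality for Hilbert-valued random variables \citep{grenander1963probabilities}, conditionally on $\widehat{P}_n^j$, with threshold $\eta r_n/n^{1/2}$ for an arbitrary $\eta>0$. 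The factor of $n$ in the second-moment bound cancels against the squared threshold, yielding
\begin{align*}
\prob\Big(\|\mathcal{D}_n^{j,\beta_n}\|_{\mathcal{H}} > \eta r_n/n^{1/2} \,\Big|\, \widehat{P}_n^j\Big) \le \frac{2}{\eta^2}\,\frac{\|g_n\|_{L^2(P_0;\mathcal{H})}^2}{r_n^2}.
\end{align*}

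This right-hand side is exactly where the hypothesis enters: $\|g_n\|_{L^2(P_0;\mathcal{H})}=o_p(r_n)$ means $\|g_n\|_{L^2(P_0;\mathcal{H})}^2/r_n^2\convpr 0$, so the conditional tail probability — bounded above by the minimum of this quantity and $1$ — tends to zero in probability while remaining dominated by the constant $1$. Finally I would pass from conditional to unconditional control via the dominated (bounded) convergence theorem: a sequence of $[0,1]$-valued random variables that converges to zero in probability converges to zero in $L^1$ as well, so taking expectations of the conditional bound shows $\prob(\|\mathcal{D}_n^{j,\beta_n}\|_{\mathcal{H}} > \eta r_n/n^{1/2})\to 0$; since $\eta>0$ is arbitrary, this is precisely $\|\mathcal{D}_n^{j,\beta_n}\|_{\mathcal{H}}=o_p(r_n/n^{1/2})$. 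I do not anticipate a genuinely hard step, as this is a routine generalization of Lemma~\ref{lem:driftTerm}; the only point demanding care is the bookkeeping that lets the $o_p(r_n)$ hypothesis survive the conditioning — namely, verifying that the randomness of $g_n$ entering through the training fold is correctly frozen by conditioning on $\widehat{P}_n^j$, so that the resulting random tail bound is dispatched by bounded convergence rather than by a pointwise argument.
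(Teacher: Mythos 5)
Your proposal is correct and follows essentially the same route as the paper's proof: condition on the training fold so that $\phi_n^{j,\beta_n}-\phi_0^{\beta_n}$ is frozen, apply Chebyshev's inequality for Hilbert-valued random variables \citep{grenander1963probabilities} to the centered empirical average (with cross terms vanishing by independence/Fubini), cap the resulting conditional tail bound at $1$, and conclude via the dominated convergence theorem. The only cosmetic difference is that the paper introduces an explicit event $\mathcal{E}_n$ on which the $L^2(P_0;\mathcal{H})$ norm is small before applying the $\min\{1,\cdot\}$ bound, whereas you fold that step directly into the bounded-convergence argument; the substance is identical.
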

By taking $r_n=\|\beta_n\|_{\ell^2}$, the above gives a condition for $\mathcal{D}_n^{j,\beta_n}$ to be $o_p(\|\beta_n\|_{\ell^2}/n^{1/2})$, and therefore $O_p(\|\beta_n\|_{\ell^2}/n^{1/2})$. 
For the regularized remainder term, the following can be useful.
\begin{lemma}[Bound on regularized remainder term]\label{lem:regRemBd}
Fix $j\in\{1,2\}$ and $\beta\in \ell_{*}^2$. If $\nu$ is pathwise differentiable at $P\in\mathcal{P}$, $P_0\ll P$, and $\phi_P^{\beta}\in L^2(P_0;\mathcal{H})$, then
\begin{align*}
\|\mathcal{R}_P^\beta\|_{\mathcal{H}}^2&= \sum_{k=1}^\infty \beta_k^2 \cdot \left(\mathscr{R}_{P,k}\right)^2\le \|\beta\|_{\ell^2}^2\sup_{k\in\mathbb{N}}\left(\mathscr{R}_{P,k}\right)^2,
\end{align*}
where $\mathscr{R}_{P,k}:=\left\langle\nu(P) - \nu(P_0),h_k\right\rangle_{\mathcal{H}} + P_0 \dot{\nu}_{P}^*(h_k)$.
\end{lemma}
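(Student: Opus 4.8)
The plan is to compute the Fourier coefficients of $\mathcal{R}_P^\beta$ with respect to the orthonormal basis $(h_k)_{k=1}^\infty$ and then invoke Parseval's identity, using that $\mathcal{R}_P^\beta\in\mathcal{H}$. Concretely, I would show that $\langle \mathcal{R}_P^\beta, h_k\rangle_{\mathcal{H}} = \beta_k\,\mathscr{R}_{P,k}$ for each $k$; the claimed equality $\|\mathcal{R}_P^\beta\|_{\mathcal{H}}^2 = \sum_k \beta_k^2 (\mathscr{R}_{P,k})^2$ is then immediate from Parseval, and the stated inequality is the trivial bound $\sum_k \beta_k^2 (\mathscr{R}_{P,k})^2 \le \big(\sup_j (\mathscr{R}_{P,j})^2\big)\sum_k \beta_k^2 = \|\beta\|_{\ell^2}^2 \sup_j (\mathscr{R}_{P,j})^2$.

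To obtain the coefficient, I would take the inner product of each of the three terms in the definition \eqref{eq:remainder} against $h_k$. By orthonormality, the first term contributes $\langle \nu(P)-\nu(P_0), h_k\rangle_{\mathcal{H}}$ and the projection-type series contributes $(1-\beta_k)\langle \nu(P)-\nu(P_0), h_k\rangle_{\mathcal{H}}$. The delicate term is the middle one, $\langle P_0\phi_P^\beta, h_k\rangle_{\mathcal{H}}$. Using the explicit form $\phi_P^\beta(z)=\sum_{j}\beta_j \dot{\nu}_P^*(h_j)(z)\,h_j$ from Lemma~\ref{lem:approximateEIF} together with orthonormality gives $\langle \phi_P^\beta(z), h_k\rangle_{\mathcal{H}} = \beta_k \dot{\nu}_P^*(h_k)(z)$ $P$-almost surely. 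This is where the hypotheses enter: $\phi_P^\beta\in L^2(P_0;\mathcal{H})\subseteq L^1(P_0;\mathcal{H})$ (as $P_0$ is a probability measure) guarantees that the Bochner integral $P_0\phi_P^\beta$ is a well-defined element of $\mathcal{H}$ and that the bounded linear functional $\langle\,\cdot\,,h_k\rangle_{\mathcal{H}}$ commutes with it, while $P_0\ll P$ ensures the $P$-a.s. identity transfers to a $P_0$-a.s. one, so that $P_0\dot{\nu}_P^*(h_k)$ is unambiguously defined for the $P$-a.s. equivalence class $\dot{\nu}_P^*(h_k)$. Combining these yields $\langle P_0\phi_P^\beta, h_k\rangle_{\mathcal{H}} = \beta_k\, P_0\dot{\nu}_P^*(h_k)$.

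Assembling the three contributions, the bare first term and the $-(1-\beta_k)$ multiple of the same inner product combine so that the net coefficient of $\langle \nu(P)-\nu(P_0), h_k\rangle_{\mathcal{H}}$ is exactly $\beta_k$. This gives $\langle \mathcal{R}_P^\beta, h_k\rangle_{\mathcal{H}} = \beta_k\big[\langle \nu(P)-\nu(P_0), h_k\rangle_{\mathcal{H}} + P_0\dot{\nu}_P^*(h_k)\big] = \beta_k\,\mathscr{R}_{P,k}$, as required, after which Parseval closes the equality and the supremum bound follows by factoring out the supremum.

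I expect the only genuine obstacle to be the justification of the interchange in the middle term, namely confirming that $P_0\phi_P^\beta$ exists as an element of $\mathcal{H}$ and that the inner product passes through the Bochner integral; the remaining algebra is a short telescoping computation. A secondary point to verify is that the projection-type series in \eqref{eq:remainder} converges in $\mathcal{H}$, which holds because $(1-\beta_k)\in[0,1]$ and $\nu(P)-\nu(P_0)\in\mathcal{H}$ force its coefficients to be square-summable.
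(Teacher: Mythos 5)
Your proposal is correct and follows essentially the same route as the paper's proof: compute the coordinate $\langle \mathcal{R}_P^\beta, h_k\rangle_{\mathcal{H}} = \beta_k\,\mathscr{R}_{P,k}$, with the key step being $\langle P_0\phi_P^\beta, h_k\rangle_{\mathcal{H}} = \beta_k P_0\dot{\nu}_P^*(h_k)$ justified via Lemma~\ref{lem:approximateEIF}, the absolute continuity $P_0\ll P$ (to transfer the $P$-a.s. identity to a $P_0$-a.s. one), and $\phi_P^\beta\in L^2(P_0;\mathcal{H})$ (to pass the inner product through the Bochner integral), then conclude with Parseval and the trivial supremum bound. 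The paper phrases the key step through the Riesz-representation property of $r_P^\beta(\cdot)(z)$ rather than the explicit series expansion of $\phi_P^\beta$, but since Lemma~\ref{lem:approximateEIF} supplies both, this is the same argument.
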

For a given $k\in\mathbb{N}$, $\mathscr{R}_{P,k}$ corresponds to the remainder term in a von Mises expansion of the real-valued parameter $\psi_k : P'\mapsto \langle \nu(P'),h_k\rangle_{\mathcal{H}}$ \citep{mises1947asymptotic}, where we note that the pathwise differentiability of $\nu$ at $P$ implies the pathwise differentiability of $\psi_k$ at $P$ with canonical gradient $\dot{\nu}_P^*(h_k)$. 

We now turn to the bias term. For $u\ge 0$, let $\|\cdot\|_u : \mathcal{H}\rightarrow [0,+\infty]$ denote the norm defined by $\|h\|_u^2:=\sum_{k=1}^\infty k^{2u}\langle h,h_k\rangle_{\mathcal{H}}^2$, where the dependence of $\|\cdot\|_u$ on the basis $(h_k)_{k=1}^\infty$ used to construct the regularized one-step estimator is suppressed in the notation.
\begin{lemma}[Bound on bias term]\label{lem:biasTerm}
For any $u\ge 0$ and $\beta\in \ell_{*}^2$, $\|\mathcal{B}_P^\beta\|_{\mathcal{H}}\le \|\nu(P)-\nu(P_0)\|_u \sup_{k\in\mathbb{N}} (1-\beta_k)/k^{u}$. 
If there exists $K\in\mathbb{N}$ such that $\beta_k=1$ for all $k\le K$ and $\beta_k=0$ for all $k>K$, then this implies that
\begin{align}
    \|\mathcal{B}_P^\beta\|_{\mathcal{H}}&\le (K+1)^{-u}\|\nu(P)-\nu(P_0)\|_u\le  2(K+1)^{-u}\sup_{P'\in\mathcal{P}}\|\nu(P')\|_u. \label{eq:biasK}
\end{align}
\end{lemma}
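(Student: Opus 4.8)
The plan is to prove the bound on $\|\mathcal{B}_P^\beta\|_{\mathcal{H}}$ by working in the orthonormal basis $(h_k)_{k=1}^\infty$, where both the bias term and the norm $\|\cdot\|_u$ have clean coordinate expressions. Writing $c_k := \langle \nu(P)-\nu(P_0),h_k\rangle_{\mathcal{H}}$ for the coordinates of the difference, the definition of $\mathcal{B}_P^\beta$ below \eqref{eq:biasedAL} gives $\mathcal{B}_P^\beta = \sum_{k=1}^\infty (1-\beta_k)c_k\, h_k$, so that by Parseval,
\begin{align*}
\|\mathcal{B}_P^\beta\|_{\mathcal{H}}^2 = \sum_{k=1}^\infty (1-\beta_k)^2 c_k^2.
\end{align*}
The first step is to insert a factor of $k^{2u}/k^{2u}$ into each summand and pull out the supremum. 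Since $\beta \in \ell_*^2$ has entries in $[0,1]$, the factor $(1-\beta_k)/k^u$ is nonnegative, so I would bound $(1-\beta_k)^2 = \big[(1-\beta_k)/k^u\big]^2 k^{2u} \le \big[\sup_{j}(1-\beta_j)/j^u\big]^2 k^{2u}$. This yields
\begin{align*}
\|\mathcal{B}_P^\beta\|_{\mathcal{H}}^2 \le \Big(\sup_{k\in\mathbb{N}} \tfrac{1-\beta_k}{k^u}\Big)^2 \sum_{k=1}^\infty k^{2u} c_k^2 = \Big(\sup_{k\in\mathbb{N}} \tfrac{1-\beta_k}{k^u}\Big)^2 \|\nu(P)-\nu(P_0)\|_u^2,
\end{align*}
where the last equality is exactly the definition of $\|\cdot\|_u$ applied to $\nu(P)-\nu(P_0)$. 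Taking square roots gives the first displayed inequality of the lemma.

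The second step handles the special hard-thresholding choice of $\beta$. When $\beta_k = 1$ for $k \le K$ and $\beta_k = 0$ for $k > K$, the quantity $(1-\beta_k)/k^u$ equals $0$ for $k \le K$ and equals $k^{-u}$ for $k > K$. Since $k \mapsto k^{-u}$ is nonincreasing for $u \ge 0$, its supremum over $k > K$ is attained at $k = K+1$, giving $\sup_k (1-\beta_k)/k^u = (K+1)^{-u}$. Substituting this into the first inequality yields the bound $(K+1)^{-u}\|\nu(P)-\nu(P_0)\|_u$ in \eqref{eq:biasK}.

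For the final inequality in \eqref{eq:biasK}, I would apply the triangle inequality in the $\|\cdot\|_u$ norm, $\|\nu(P)-\nu(P_0)\|_u \le \|\nu(P)\|_u + \|\nu(P_0)\|_u$, and then bound each term by the supremum $\sup_{P'\in\mathcal{P}}\|\nu(P')\|_u$, noting that both $P$ and $P_0$ lie in $\mathcal{P}$; this produces the factor of $2$. I anticipate no genuine obstacle here, as the argument is entirely a matter of Parseval's identity and elementary manipulation of a supremum; the only point requiring a moment's care is confirming that $(1-\beta_k)/k^u \ge 0$ so that squaring preserves the bound, and that the $\|\cdot\|_u$ norm of $\nu(P)-\nu(P_0)$ is finite whenever the right-hand side is (otherwise the inequalities hold trivially with the convention that $\|\cdot\|_u$ may take the value $+\infty$).
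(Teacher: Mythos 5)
Your proof is correct and follows essentially the same route as the paper's: both expand $\|\mathcal{B}_P^\beta\|_{\mathcal{H}}^2$ in the basis $(h_k)_{k=1}^\infty$, insert the factor $k^{2u}/k^{2u}$, pull out $\sup_k (1-\beta_k)^2/k^{2u}$ to recover $\|\nu(P)-\nu(P_0)\|_u^2$, and then handle the hard-thresholding case by evaluating the supremum at $k=K+1$ and finishing with the triangle inequality. Your additional remarks on nonnegativity of $(1-\beta_k)/k^u$ and the $+\infty$ convention for $\|\cdot\|_u$ are sound and only make explicit what the paper leaves implicit.
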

Naturally, the upper bounds are only informative if the evaluations of $\|\cdot\|_u$ upon which they rely are finite. Conditions for the finiteness of this norm have been evaluated in several settings. In particular, $\{h\in\mathcal{H} : \|h\|_u<\infty\}$ corresponds to a periodic Sobolev space when $\mathcal{H}=L^2([0,1])$ and $(h_k)_{k=1}^\infty$ is the trigonometric basis \citep[Proposition 1.14 of][]{tsybakov2009nonparametric}, Sobolev-Laguerre space when $\mathcal{H}=L^2([0,\infty))$ and $(h_k)_{k=1}^\infty$ consists of the Laguerre functions \citep{bongioanni2008sobolev}, and Sobolev-Hermite space when $\mathcal{H}=L^2(\mathbb{R})$ and $(h_k)_{k=1}^\infty$ consists of the Hermite functions \citep{bongioanni2006sobolev}.

In Section~\ref{sec:exEst}, we study the selection of $\beta_n$ in the context of our examples. We do this by leveraging the bounds from the preceding three lemmas and then deriving the choice of $\beta_n$ that balances the variance and bias terms.

\subsection{Construction of confidence sets}\label{sec:confSetsRegularized}

In what follows we fix $\beta\in \ell_{*}^2$ and define $\Gamma_\beta : \mathcal{H}\rightarrow\mathcal{H}$ as $\Gamma_\beta(h)=\sum_{k=1}^\infty \beta_k\langle h,h_k\rangle_{\mathcal{H}}h_k$. 
The following is the key observation that we use to construct our confidence set for $\nu(P_0)$.
\begin{lemma}\label{lem:regParamEIF}
If $\nu$ is pathwise differentiable at $P$, then its transformation $\nu^{\beta}:=\Gamma_\beta\circ \nu$ is pathwise differentiable at $P$ with local parameter $\dot{\nu}_P^\beta:= \Gamma_\beta\circ\dot{\nu}_P$ and EIF $\phi_P^\beta\in L^2(P;\mathcal{H})$.
\end{lemma}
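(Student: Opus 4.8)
The plan is to exploit that $\Gamma_\beta$ is a bounded, self-adjoint linear operator on $\mathcal{H}$, so that the pathwise differentiability of $\nu^\beta = \Gamma_\beta\circ\nu$ follows by post-composing the first-order expansion of $\nu$ with $\Gamma_\beta$, and the EIF claim reduces to an adjoint computation combined with Lemma~\ref{lem:approximateEIF}. First I would record the operator-theoretic properties of $\Gamma_\beta$: since $\beta\in\ell_*^2=\ell^2\cap[0,1]^{\mathbb{N}}$, the map $\Gamma_\beta(h)=\sum_k\beta_k\langle h,h_k\rangle_{\mathcal{H}}h_k$ is diagonal in the orthonormal basis $(h_k)_{k=1}^\infty$ with eigenvalues $\beta_k\in[0,1]$, hence bounded with $\|\Gamma_\beta\|_{\mathrm{op}}\le 1$ and self-adjoint (the $\beta_k$ are real).

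For pathwise differentiability, I would fix any submodel $\{P_\epsilon:\epsilon\}\in\mathscr{P}(P,\mathcal{P},s)$ and use linearity of $\Gamma_\beta$ to write
\[
\nu^\beta(P_\epsilon)-\nu^\beta(P)-\epsilon(\Gamma_\beta\circ\dot{\nu}_P)(s)=\Gamma_\beta\big[\nu(P_\epsilon)-\nu(P)-\epsilon\dot{\nu}_P(s)\big].
\]
Taking $\mathcal{H}$-norms and applying $\|\Gamma_\beta\|_{\mathrm{op}}\le 1$ together with \eqref{eq:pdA} shows the left side is $o(\epsilon)$. Since $\Gamma_\beta\circ\dot{\nu}_P$ is a composition of continuous linear maps it is itself continuous and linear, so $\nu^\beta$ is pathwise differentiable at $P$ with local parameter $\dot{\nu}_P^\beta=\Gamma_\beta\circ\dot{\nu}_P$, as claimed.

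For the EIF, I would identify the efficient influence operator $(\dot{\nu}_P^\beta)^*$ as the Hermitian adjoint of $\dot{\nu}_P^\beta$. Self-adjointness of $\Gamma_\beta$ gives $(\dot{\nu}_P^\beta)^*=(\Gamma_\beta\circ\dot{\nu}_P)^*=\dot{\nu}_P^*\circ\Gamma_\beta$, i.e. $(\dot{\nu}_P^\beta)^*(h)=\dot{\nu}_P^*(\Gamma_\beta h)$. Because $\dot{\nu}_P^*$ is a bounded linear operator and the series $\Gamma_\beta h=\sum_k\beta_k\langle h,h_k\rangle_{\mathcal{H}}h_k$ converges in $\mathcal{H}$, I can pass $\dot{\nu}_P^*$ through the sum to obtain, as an identity in $L^2(P)$, $(\dot{\nu}_P^\beta)^*(h)=\sum_k\beta_k\langle h,h_k\rangle_{\mathcal{H}}\dot{\nu}_P^*(h_k)=r_P^\beta(h)$, where $r_P^\beta$ is the $\beta$-regularized efficient influence operator. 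Lemma~\ref{lem:approximateEIF} then shows that $r_P^\beta(\cdot)(z)$ is $P$-a.s.\ a bounded linear functional with Riesz representation $\phi_P^\beta(z)=\sum_k\beta_k\dot{\nu}_P^*(h_k)(z)h_k$ and that $\phi_P^\beta\in L^2(P;\mathcal{H})$. Verifying $\langle h,\phi_P^\beta(z)\rangle_{\mathcal{H}}=r_P^\beta(h)(z)$ then confirms that $\phi_P^\beta$ satisfies the Riesz characterization \eqref{eq:EIFRiesz} of the EIF of $\nu^\beta$.

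The main obstacle I anticipate is reconciling the two notions of the series $\sum_k\beta_k\langle h,h_k\rangle_{\mathcal{H}}\dot{\nu}_P^*(h_k)$ that arise: the $L^2(P)$-limit obtained by passing $\dot{\nu}_P^*$ through the convergent series, versus the pointwise-in-$z$ sum used to define $r_P^\beta(h)(z)$ and $\phi_P^\beta$. Establishing that the pointwise functional $(\dot{\nu}_P^\beta)^*(\cdot)(z)$ agrees with $r_P^\beta(\cdot)(z)$ on a single $P$-probability-one set, simultaneously over all $h\in\mathcal{H}$, requires invoking the separability convention imposed on the efficient influence process: one checks equality on a countable dense $\mathcal{H}'\subset\mathcal{H}$ and extends by separability and continuity. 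Most of this bookkeeping is, however, already subsumed in Lemma~\ref{lem:approximateEIF}, so the incremental work here is the adjoint identity $(\dot{\nu}_P^\beta)^*=\dot{\nu}_P^*\circ\Gamma_\beta$ and the matching of $P$-almost-sure versions.
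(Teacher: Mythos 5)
Your proposal is correct and follows essentially the same route as the paper: both use the linearity and contraction property $\|\Gamma_\beta\|_{\mathrm{op}}\le 1$ to transfer the first-order expansion of $\nu$ to $\nu^\beta$, identify the efficient influence operator of $\nu^\beta$ with the $\beta$-regularized operator $r_P^\beta$, and then invoke Lemma~\ref{lem:approximateEIF} to obtain the Riesz representation $\phi_P^\beta\in L^2(P;\mathcal{H})$. The only cosmetic difference is that the paper verifies $\langle s, r_P^\beta(h)\rangle_{L^2(P)}=\langle h,\Gamma_\beta\circ\dot{\nu}_P(s)\rangle_{\mathcal{H}}$ by a direct inner-product chain, whereas you obtain the same identity via the adjoint-composition rule $(\Gamma_\beta\circ\dot{\nu}_P)^*=\dot{\nu}_P^*\circ\Gamma_\beta$ together with self-adjointness of $\Gamma_\beta$ and continuity of $\dot{\nu}_P^*$.
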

Since $\nu^\beta$ has an EIF, the methods from Section~\ref{sec:confSets} can be used to construct a confidence set for $\nu^\beta(P_0)$ based on a one-step estimator. This one-step estimator takes the form $\widetilde{\nu}_n^\beta:= \frac{1}{2}\sum_{j=1}^2 [\Gamma_\beta\circ \nu(\widehat{P}_n^j) + P_n^j\phi_n^{j,\beta}]$. By Theorem~\ref{thm:CIcoverage}, the main condition for the asymptotic validity of the resulting confidence set is that $\widetilde{\nu}_n^\beta$ is an asymptotically linear estimator of $\nu^\beta(P_0)$ with influence function $\phi_0^\beta$. 
Since $\widetilde{\nu}_n^\beta$ is a one-step estimator of $\nu^\beta(P_0)$, rather than $\nu(P_0)$, $\widetilde{\nu}_n^\beta$ generally differs from the regularized one-step estimator $\bar{\nu}_n^\beta$ of $\nu(P_0)$ --- indeed, $\widetilde{\nu}_n^\beta-\bar{\nu}_n^\beta=\frac{1}{2}\sum_{j=1}^2 [\Gamma_\beta\circ \nu(\widehat{P}_n^j)-\nu(\widehat{P}_n^j)]$. 
Nevertheless, conditions on the same regularized remainder and drift terms studied to establish rate guarantees for $\bar{\nu}_n^{\beta_n}$ can ensure the asymptotic linearity of $\widetilde{\nu}_n^\beta$ --- see Corollary~\ref{cor:nuTildeAL} in the appendix.

Let $\mathcal{C}_n^\beta(\widehat{\zeta}_n)$ denote an asymptotically valid $(1-\alpha)$-level confidence set for $\nu^\beta(P_0)$ constructed according to the methods in Section~\ref{sec:confSets}. Any standardization operator $\Omega_n$ satisfying the conditions of Theorem~\ref{thm:CIcoverage} may be used when doing this. For example, if $\Omega_n$ is taken to be the identity, then, for a cutoff $\widehat{\zeta}_n$ selected via the bootstrap, a spherical confidence set for $\nu^\beta(P_0)$ would take the form
\begin{align}
\mathcal{C}_n^\beta(\widehat{\zeta}_n) := \left\{h \in \mathcal{H} : \|\widetilde{\nu}_n^{\beta}-h\|_{\mathcal{H}}^2 \le \widehat{\zeta}_n/n\right\}. \label{eq:confSetRegularized}
\end{align}
Since the methods in Section~\ref{sec:confSets} require the parameter of interest to be fixed and not depend on sample size, when constructing $\mathcal{C}_n^\beta(\widehat{\zeta}_n)$ we require the choice of $\beta$ to remain fixed as $n\rightarrow\infty$. Handling cases where $\beta$ changes with $n$ or is selected data-adaptively is an interesting area for future work. To transform the confidence set for $\nu^\beta(P_0)$ into one for $\nu(P_0)$, we take the preimage $\Gamma_\beta^{-1}[\mathcal{C}_n^\beta(\widehat{\zeta}_n)]:=\{h\in\mathcal{H}: \Gamma_\beta(h)\in \mathcal{C}_n^\beta(\widehat{\zeta}_n)\}$. Since $\nu^{\beta}(P_0):=\Gamma_\beta\circ \nu(P_0)$, this preimage is an asymptotically valid $(1-\alpha)$-level confidence set for $\nu(P_0)$ provided $\mathcal{C}_n^\beta(\widehat{\zeta}_n)$ is an asymptotically valid $(1-\alpha)$-level confidence set for $\nu^{\beta}(P_0)$. The transformation $\Gamma_\beta$ has a left inverse if all entries of $\beta$ are nonzero, with $\Gamma_\beta^{-1}(h)=\sum_{k=1}^\infty \beta_k^{-1}\langle h,h_k\rangle_{\mathcal{H}}h_k$ for $h$ in the image of $\Gamma_\beta$. Figure~\ref{fig:CSillustration} illustrates how the map $\Gamma_\beta^{-1}$ stretches spherical and Wald-type confidence sets for the regularized parameter $\nu^\beta(P_0)$ into confidence sets for $\nu(P_0)$.

\begin{figure}[tb]
    \centering
    \includegraphics[width=\textwidth]{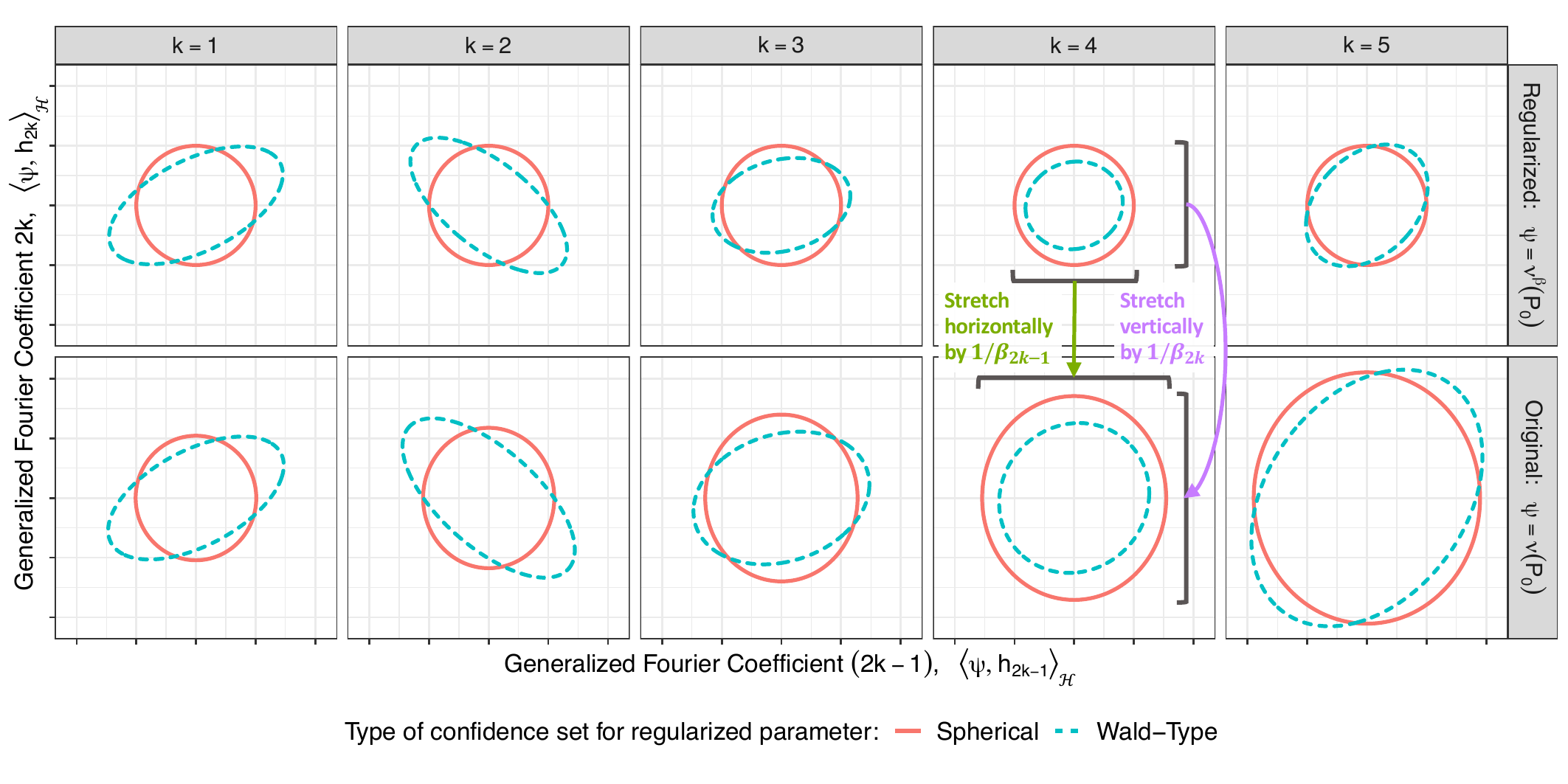}
    \caption{Projections of the boundaries of confidence sets for the regularized parameter $\nu^\beta(P_0)$ (top) and original parameter $\nu(P_0)$ (bottom), plotted via pairs of generalized Fourier coefficients with respect to the basis $(h_k)_{k=1}^\infty$. The transformation applied to confidence sets for $\nu^{\beta}(P_0)$ to obtain those for $\nu(P_0)$ stretch the axes by the reciprocals of entries of the regularization parameter $\beta$. Since $\beta_k\rightarrow 0$ as $k\rightarrow\infty$, this stretch factor becomes arbitrarily large as $k\rightarrow\infty$.}
    \label{fig:CSillustration}
\end{figure}

To simplify the discussion, hereafter we focus on the special case where $\mathcal{C}_n^\beta(\widehat{\zeta}_n)$ takes the spherical form in \eqref{eq:confSetRegularized}. In this case, $\Gamma_\beta^{-1}[\mathcal{C}_n^\beta(\widehat{\zeta}_n)]$ takes the elliptical form
\begin{align}
&\left\{h\in\mathcal{H} : {\textstyle\sum_{k=1}^\infty} \beta_k^2 \left[{\textstyle\frac{1}{2}\sum_{j=1}^2}\left\{\langle \nu(\widehat{P}_n^j),h_k\rangle_{\mathcal{H}} + P_n^j \dot{\nu}_n^{j,*}(h_k)\right\} - \langle h,h_k\rangle_{\mathcal{H}}\right]^2\le \widehat{\zeta}_n/n\right\}. \label{eq:ellipticalCS}
\end{align}
Because $\beta_k$ must tend to zero as $k\rightarrow\infty$ in order for $\beta$ to belong to $\ell^2$, the $\|\cdot\|_{\mathcal{H}}$-diameter of this confidence set, namely $\sup_{h,h'\in\Gamma_\beta^{-1}[\mathcal{C}_n^\beta(\widehat{\zeta}_n)]}\|h-h'\|_{\mathcal{H}}$, will not converge to zero with sample size. In contrast, the $\|\cdot\|_{\beta}$-diameter of this confidence set will generally shrink to zero at an $n^{-1/2}$-rate, where $\|h\|_{\beta}:=[\sum_{k=1}^\infty \beta_k^2 \langle h,h_k\rangle_{\mathcal{H}}^2]^{1/2}$. Here we note that $\|\cdot\|_{\beta}$ is a norm on $\mathcal{H}$ if all of the entries of $\beta$ are nonzero and is otherwise a seminorm.

The confidence set in \eqref{eq:ellipticalCS} also satisfies another desirable property, which can be most easily described by studying a corresponding hypothesis test. 
For fixed $h_0\in\mathcal{H}$, this test rejects the null hypothesis that $\nu(P_0)=h_0$ in favor of the complementary alternative precisely when $h_0\not\in \Gamma_\beta^{-1}[\mathcal{C}_n^\beta(\widehat{\zeta}_n)]$. 
This test asymptotically controls the type I error at level $\alpha$ when $\Gamma_\beta^{-1}[\mathcal{C}_n^\beta(\widehat{\zeta}_n)]$ has asymptotically valid coverage, and, by the triangle inequality, is consistent against fixed alternatives when $\widetilde{\nu}_n^\beta$ is a consistent estimator of $\nu^\beta(P_0)$, $\mathrm{plim}_{n\rightarrow\infty} \,\widehat{\zeta}_n<\infty$, and all entries of $\beta$ are nonzero. 
We now show that this test also achieves nontrivial power against a class of $n^{-1/2}$-rate local alternatives.  
In what follows we let $\mathbb{H}^\beta$ denote a tight $\mathcal{H}$-valued Gaussian random variable that is such that, for each $h\in\mathcal{H}$, the marginal distribution $\langle\mathbb{H}^\beta ,h \rangle_{\mathcal{H}}$ follows a $N(0,E_0[\langle \phi_0^\beta(Z),h \rangle_{\mathcal{H}}^2])$ distribution, where $\phi_0^\beta$ is as defined in Lemma~\ref{lem:regParamEIF}. Unlike in the rest of the paper, the following theorem requires all entries of $\beta$ to be nonzero, since its proof will rely on $\|\cdot\|_\beta$ being a norm.
\begin{theorem}[Local power of regularized hypothesis test]\label{thm:localAlt}
Fix $\beta\in \ell^2\cap (0,1]^{\mathbb{N}}$ and $h_0\in\mathcal{H}$. Suppose $\nu$ is pathwise differentiable at $P_0$, $\nu(P_0)=h_0$, $\widetilde{\nu}_n^\beta$ is an asymptotically linear estimator of $\nu^\beta(P_0)$ with influence function $\phi_0^\beta$, and $\widehat{\zeta}_n$ is a consistent estimator of the $(1-\alpha)$-quantile $\zeta_{1-\alpha}$ of $\|\mathbb{H}^\beta\|_{\mathcal{H}}^2$. 
Fix $\{P_\epsilon: \epsilon\}\in\mathscr{P}(P_0,\mathcal{P},s)$ such that $\|\dot{\nu}_0(s)\|_{\mathcal{H}}>0$. 
If $\Gamma_\beta^{-1}[\mathcal{C}_n^\beta(\widehat{\zeta}_n)]$ is as defined in \eqref{eq:ellipticalCS}, then
\begin{align*}
P_{\epsilon=n^{-1/2}}^n\left\{h_0\not\in \Gamma_\beta^{-1}[\mathcal{C}_n^\beta(\widehat{\zeta}_n)]\right\}&\overset{n\rightarrow\infty}{\longrightarrow} \mathrm{Pr}\left\{\|\mathbb{H}^\beta+\dot{\nu}_0^\beta(s)\|_{\mathcal{H}}^2> \zeta_{1-\alpha}\right\}> \alpha.
\end{align*}
Also, $h_n:=\nu(P_{\epsilon=n^{-1/2}})$ is an $n^{-1/2}$-rate local alternative in that $\|h_n-h_0\|_{\mathcal{H}}=O(n^{-1/2})$.
\end{theorem}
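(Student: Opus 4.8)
The plan is to reduce the coverage statement to the asymptotic law of the test statistic $n\|\widetilde{\nu}_n^\beta-\nu^\beta(P_0)\|_{\mathcal{H}}^2$ under the local alternative and then to invoke a shift inequality for Gaussian measures. First I would rewrite the event of interest. Since $\mathcal{C}_n^\beta(\widehat{\zeta}_n)$ is the spherical set in \eqref{eq:confSetRegularized} and $\Gamma_\beta(h_0)=\Gamma_\beta\circ\nu(P_0)=\nu^\beta(P_0)$ under the null value $\nu(P_0)=h_0$, membership $h_0\in\Gamma_\beta^{-1}[\mathcal{C}_n^\beta(\widehat{\zeta}_n)]$ is equivalent to $n\|\widetilde{\nu}_n^\beta-\nu^\beta(P_0)\|_{\mathcal{H}}^2\le\widehat{\zeta}_n$. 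Thus it suffices to analyze the law of $X_n:=n^{1/2}[\widetilde{\nu}_n^\beta-\nu^\beta(P_0)]$ under $P_{n^{-1/2}}^n$ and then to compare its squared norm against $\widehat{\zeta}_n$.

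Next I would obtain the limit of $X_n$ under the local alternative through a Hilbert-valued version of Le Cam's third lemma. By the assumed asymptotic linearity of $\widetilde{\nu}_n^\beta$ with influence function $\phi_0^\beta\in L^2(P_0;\mathcal{H})$ (Lemma~\ref{lem:regParamEIF}), we have $X_n=n^{1/2}P_n\phi_0^\beta+o_p(1)$, which converges under $P_0^n$ to the tight Gaussian $\mathbb{H}^\beta$. Writing $\Lambda_n$ for the log-likelihood ratio of $P_{n^{-1/2}}^n$ against $P_0^n$, quadratic mean differentiability yields the LAN expansion $\Lambda_n=n^{-1/2}\sum_{i=1}^n s(Z_i)-\tfrac12 P_0 s^2+o_p(1)$. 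For each $h\in\mathcal{H}$, the pair $(\langle X_n,h\rangle_{\mathcal{H}},\Lambda_n)$ is asymptotically bivariate Gaussian by the multivariate central limit theorem, with limiting covariance $P_0[s\,\langle\phi_0^\beta,h\rangle_{\mathcal{H}}]=\langle\dot{\nu}_0^\beta(s),h\rangle_{\mathcal{H}}$, where the last equality uses the adjoint relation defining the EIF of $\nu^\beta$ together with $\dot{\nu}_0^\beta=\Gamma_\beta\circ\dot{\nu}_0$. Le Cam's third lemma applied to the finite-dimensional projections, together with transfer of asymptotic tightness from $P_0^n$ to the contiguous sequence $P_{n^{-1/2}}^n$, then gives $X_n\rightsquigarrow\mathbb{H}^\beta+\dot{\nu}_0^\beta(s)$, so the continuous mapping theorem yields $n\|\widetilde{\nu}_n^\beta-\nu^\beta(P_0)\|_{\mathcal{H}}^2\rightsquigarrow\|\mathbb{H}^\beta+\dot{\nu}_0^\beta(s)\|_{\mathcal{H}}^2$. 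Contiguity also transfers the consistency $\widehat{\zeta}_n\to\zeta_{1-\alpha}$ to $P_{n^{-1/2}}^n$, so a Slutsky argument combined with continuity of the limiting law at $\zeta_{1-\alpha}$ produces the stated convergence to $\mathrm{Pr}\{\|\mathbb{H}^\beta+\dot{\nu}_0^\beta(s)\|_{\mathcal{H}}^2>\zeta_{1-\alpha}\}$. The required continuity holds because $\mathbb{H}^\beta$ is nondegenerate: were $\phi_0^\beta=0$ in $L^2(P_0;\mathcal{H})$, then $\dot{\nu}_0^\beta(s)=P_0[s\,\phi_0^\beta]=0$, contradicting $\dot{\nu}_0^\beta(s)=\Gamma_\beta(\dot{\nu}_0(s))\neq 0$.

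The main obstacle is the strict inequality $\mathrm{Pr}\{\|\mathbb{H}^\beta+\dot{\nu}_0^\beta(s)\|_{\mathcal{H}}^2>\zeta_{1-\alpha}\}>\alpha$, which distinguishes nontrivial local power from the size $\alpha=\mathrm{Pr}\{\|\mathbb{H}^\beta\|_{\mathcal{H}}^2>\zeta_{1-\alpha}\}$. I would establish it via a strict Anderson-type inequality: the ball $\{x\in\mathcal{H}:\|x\|_{\mathcal{H}}^2\le\zeta_{1-\alpha}\}$ is symmetric and convex, and translating the centered Gaussian $\mathbb{H}^\beta$ by the nonzero vector $\dot{\nu}_0^\beta(s)$ strictly decreases its probability. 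Nonvanishing of the shift follows because $\Gamma_\beta$ is injective when all entries of $\beta$ are positive and $\|\dot{\nu}_0(s)\|_{\mathcal{H}}>0$. The delicate point is strictness, which requires the shift to lie in the Cameron--Martin space of $\mathbb{H}^\beta$. I would verify this by diagonalizing the covariance operator of $\mathbb{H}^\beta$ as $\{(\lambda_k,e_k)\}$ on its range, observing that the functions $\langle\phi_0^\beta,e_k\rangle_{\mathcal{H}}/\sqrt{\lambda_k}$ are orthonormal in $L^2(P_0)$, and applying Bessel's inequality to the score to obtain $\sum_k\langle\dot{\nu}_0^\beta(s),e_k\rangle_{\mathcal{H}}^2/\lambda_k\le\|s\|_{L^2(P_0)}^2<\infty$. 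Representing $\|\mathbb{H}^\beta+\dot{\nu}_0^\beta(s)\|_{\mathcal{H}}^2$ as a weighted sum of noncentral chi-square variables with nonzero noncentrality then yields the strict exceedance gap over the central case.

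To close, the local-alternative rate follows immediately from pathwise differentiability: \eqref{eq:pdA} gives $\nu(P_{n^{-1/2}})-\nu(P_0)=n^{-1/2}\dot{\nu}_0(s)+o(n^{-1/2})$, so $\|h_n-h_0\|_{\mathcal{H}}=\|\nu(P_{n^{-1/2}})-\nu(P_0)\|_{\mathcal{H}}=O(n^{-1/2})$.
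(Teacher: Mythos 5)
Your proposal is correct, and it reaches the same three-part structure as the paper's proof (shifted Gaussian limit under the local alternative, continuity of the limit law, strict Anderson inequality, plus the direct rate computation at the end), but the two middle steps are executed by genuinely different means. For the weak convergence $n^{1/2}[\widetilde{\nu}_n^\beta-\nu^\beta(P_0)]\rightsquigarrow \mathbb{H}^\beta+\dot{\nu}_0^\beta(s)$ under $P_{\epsilon=n^{-1/2}}^n$, the paper does not run a third-lemma argument inside this proof: it asserts that $\widetilde{\nu}_n^\beta$ is \emph{regular} (reusing the machinery of Theorem~\ref{thm:al}, which rests on joint convergence of the estimator error and the empirical score together with Theorem~3 in Chapter~5.2 of \cite{bickel1993efficient}), so that $n^{1/2}[\widetilde{\nu}_n^\beta-\nu^\beta(P_{\epsilon_n})]\rightsquigarrow\mathbb{H}^\beta$ under the local laws, and then adds the deterministic shift $\dot{\nu}_0^\beta(s)$ from pathwise differentiability of $\nu^\beta$. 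Your LAN-plus-Le-Cam route is the direct analogue of what underlies that regularity claim, and your covariance identity $P_0[s\,\langle\phi_0^\beta,h\rangle_{\mathcal{H}}]=\langle\dot{\nu}_0^\beta(s),h\rangle_{\mathcal{H}}$ is exactly the identity $E[S\mathbb{H}]=\dot{\nu}_P(s)$ proved there; what your packaging buys is self-containedness, plus two points of care the paper glosses over: the contiguity transfer of $\widehat{\zeta}_n\to\zeta_{1-\alpha}$ from $P_0^n$ to $P_{\epsilon=n^{-1/2}}^n$, and the observation that nondegeneracy of $\mathbb{H}^\beta$ (needed for continuity of the limit law, as in Theorem~\ref{thm:CIcoverage}) follows automatically from $\dot{\nu}_0^\beta(s)=P_0[s\,\phi_0^\beta]\neq 0$. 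For the strict inequality, the paper simply cites Corollary~2 of \cite{lewandowski1995anderson} together with the injectivity of $\Gamma_\beta$, whereas you sketch a from-scratch proof: your Bessel-inequality verification that $\dot{\nu}_0^\beta(s)$ lies in the Cameron--Martin space of $\mathbb{H}^\beta$ is correct (and is precisely the hypothesis under which strictness of Anderson's inequality is available), though the final step--that a weighted sum of independent noncentral chi-squares with at least one positive noncentrality strictly dominates the central case in upper-tail probability at $\zeta_{1-\alpha}$--is only asserted; making it rigorous requires a short conditioning argument showing the remaining coordinates leave room below the threshold with positive probability. That step is true and standard, so this is a matter of detail rather than a gap, and citing the literature as the paper does is the cheaper way to close it.
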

The above focuses on local alternatives that are defined via smooth parametric submodels of $\mathcal{P}$. It is worth noting, however, that by selecting such a submodel, the first-order direction of the local alternative, defined by the value of the local parameter $\dot{\nu}_0(s)$, is fixed as the sample size grows. Since the $\|\cdot\|_{\mathcal{H}}$-diameter of our confidence set does not decay with sample size, it does not appear that our test will generally have nontrivial asymptotic power against local alternatives whose direction is not fixed and whose $\|\cdot\|_{\mathcal{H}}$-magnitude decays at an $n^{-1/2}$ rate.

\subsection{Tuning parameter selection}\label{sec:regularizedOneStepTuning}

We begin by discussing tuning parameter selection for the regularized one-step estimator of $\nu(P_0)$, and then we subsequently discuss confidence set construction. 
Evaluating the regularized one-step estimator requires selecting three key components: the initial estimator $\widehat{P}_n^j$, orthonormal basis $(h_k)_{k=1}^\infty$, and regularization parameter $\beta_n$. Similarly to finite-dimensional problems, the suitability of an initial estimator $\widehat{P}_n^j$ will depend on the parameter of interest and the form of its corresponding remainder term as defined in \eqref{eq:remainder}. In the next section, we will study these remainder terms in our illustrative examples. In what follows we discuss the choice of basis $(h_k)_{k=1}^\infty$ and regularization parameter $\beta_n$.

Following the literature on series estimators \citep{chen2007large} and motivated by the bound in \eqref{eq:biasK}, we suggest choosing the basis $(h_k)_{k=1}^\infty$ so that the span of finitely many initial basis elements yields an accurate approximation of $\nu(P)$, $P\in\mathcal{P}$, provided these Hilbert random elements are smooth enough. Here, smoothness is characterized by the rate of decay of the generalized Fourier coefficients $\langle\nu(P),h_k\rangle_{\mathcal{H}}$ as $k\rightarrow\infty$. 
If $\mathcal{H}$ is an $L^2(\lambda)$ space with $\lambda$ the Lebesgue measure on the real line or a bounded subset thereof, then common choices of bases include Legendre polynomials, Laguerre functions, Hermite functions, trigonometric polynomials, and wavelets, among others. If $\mathcal{H}$ is instead an $L^2(Q)$ space with $Q$ an absolutely continuous probability measure on $\mathbb{R}^d$, then an orthonormal basis for $\mathcal{H}$ can be obtained in several ways.  
One is to multiply an orthornormal basis $(g_k)_{k=1}^\infty$ for $L^2(\mathbb{R}^d)$ by the root-density $q^{1/2}$ of $Q$; in particular, $(q^{1/2}g_k)_{k=1}^\infty$ is an orthonormal basis for $L^2(Q)$.  The suitability of these bases for characterizing the smoothness of $\nu(P_0)$ can be assessed on a case-by-case basis. 
If $d=1$, then another approach involves transforming an orthornormal basis $(g_k)_{k=1}^\infty$ for $L^2([0,1])$ via the cumulative distribution function $F_Q$ of $Q$; in particular, $(g_k\circ F_Q)_{k=1}^\infty$ is an orthonormal basis of $L^2(Q)$.
Other orthonormal bases of $L^2(Q)$-spaces are also readily available for certain choices of $Q$, such as if $Q$ is a Gaussian measure \citep[Chapter 9 of][]{da2006introduction}. 
Orthonormal bases for some non-$L^2$ spaces, such as Sobolev Hilbert spaces, are also well studied \citep{marcellan2015sobolev}.

We propose using cross-validation to choose the regularization parameter $\beta_n$. If there is uncertainty about which orthonormal basis $(h_k)_{k=1}^\infty$ should be used, this could also be selected via cross-validation, though the discussion that follows focuses on selecting $\beta_n$. Our proposal is based on the following loss for $\nu(P_0)$, which relies on an estimate $P$ of $P_0$:
\begin{align}
    \mathcal{L}_P(z;h):= \tfrac{1}{2}\|h-\nu(P)\|_{\mathcal{H}}^2 - \dot{\nu}_P^*[h-\nu(P)](z). \label{eq:lossDef}
\end{align}
The algorithm to implement the proposed cross-validation scheme can be found in Appendix~\ref{app:CV}. There, we also explain why $\mathcal{L}_P$ is a reasonable loss function to use for estimating $\nu(P_0)$. The cross-validation algorithm will be easiest to implement when the search for a regularization parameter is reduced to a search over a finite subset $B_n$ of $\ell_{*}^2$. A particularly simple choice of $B_n$ consists of the $K_n+1$ elements of $\ell^2$ that take the value $1$ in their first $k\in\{0,1,\ldots,K_n\}$ entries and zero in all remaining entries. 
Selecting over a finite set of possible values is also desirable since there are oracle inequalities for cross-validation selectors over finite sets provided the loss function satisfies appropriate regularity conditions \citep{van2003unifiedCV,vaart2006oracle}. Exploring the applicability of these conditions in our setting is an interesting area for future study.

We now turn to tuning parameter selection for confidence set construction. The considerations for selecting the basis $(h_k)_{k=1}^\infty$ are similar to those discussed above for estimation, and so we focus on selecting the regularization parameter $\beta$. As our coverage guarantees rely on the regularization parameter $\beta$ being fixed and not depending on sample size, cross-validation should not be used to select this quantity. Instead, we recommend choosing a fixed, square-summable sequence $\beta$. One natural family of choices is given by setting $\beta=(\beta_k)_{k=1}^\infty$ with $\beta_k=1/[1+(k/c)^{1/2+d}]$ for $c,d>0$. The parameters $c$ and $d$ control the stretch and polynomial rate of decay of the function $k\mapsto \beta_k$, respectively. 
Finally, we note that, to ensure computational feasibility, the infinite sum used to define the confidence set in \eqref{eq:ellipticalCS} can be truncated at a large, finite number of terms $K_n^\star$ that grows with $n$, without adversely affecting coverage. This follows from the fact that the set on the right-hand side of \eqref{eq:ellipticalCS} can only be made larger by replacing the sum from $k=1$ to $\infty$ with one from $k=1$ to $K_n^*$.

\section{Study of (regularized) one-step estimators in our examples}\label{sec:exEst}

We now revisit Examples~\ref{ex:cfdNonparametric} and \ref{ex:cfdBandlimited} from Section~\ref{sec:exPD}. For each, we evaluate the plausibility of the regularity conditions that guarantee our theoretical results hold. We revisit the other two examples from Section~\ref{sec:exPD} in Appendices~\ref{app:conacOS} and \ref{app:gkmedOS}. In what follows, $C$ denotes a generic finite constant whose value may differ from display to display. 

\begin{example}[name=Counterfactual density function,continues=ex:cfdNonparametric]
Since there is no EIF in this example, we study a regularized one-step estimator $\bar{\nu}_n^{\beta_n}$. This estimator is defined based on an orthonormal basis $(h_k)_{k=1}^\infty$ of $\mathcal{H}$ and a regularization parameter $\beta_n\in\ell_{*}^2$. Guidance on how to choose these quantities is given in Section~\ref{sec:regularizedOneStepTuning}.

Theorem~\ref{thm:alReg} relies the negligibility of regularized remainder and drift terms $\mathcal{R}_n^{j,\beta_n}$ and $\mathcal{D}_n^{j,\beta_n}$ and bias terms $\mathcal{B}_n^{j,\beta_n}$. In Appendix~\ref{app:cdRegRem}, we use Lemma~\ref{lem:regRemBd} and the strong positivity assumption to show there exists a constant $C$ that does not depend on $\beta$ such that, for all $P\in\mathcal{P}$,
\begin{align}
   \|\mathcal{R}_P^\beta\|_{\mathcal{H}}&\le C\|\beta\|_{\ell^2}\left\|g_P(1\mid \cdot)-g_0(1\mid \cdot)\right\|_{L^2(P_{0,X})}\left\|p_{Y\mid A=1,X}-p_{0,Y\mid A=1,X}\right\|_{L^2(\tau_0)}, \label{eq:cfdNonpRem}
\end{align}
where $\tau_0$ denotes the product measure $\lambda_Y\times P_{0,X}$ and, within the $L^2(\tau_0)$ norm, $p_{Y\mid A=1,X}-p_{0,Y\mid A=1,X}$ denotes the function $(y,x)\mapsto p_{Y\mid A,X}(y\mid 1,x)-p_{0,Y\mid A,X}(y\mid 1,x)$. 
The upper bound in \eqref{eq:cfdNonpRem} depends on three quantities: the $\ell^2$-magnitude of $\beta$, the $L^2(P_{0,X})$-distance between the propensities $g_P(1\mid \cdot\,)$ and $g_0(1\mid \cdot\,)$, and a root-MISE of the conditional distribution of $Y\mid A=1,X$ under $P$ relative to that under $P_0$, where the mean is taken across values of $X\sim P_0$. Applying the above inequality to study the remainder term $\mathcal{R}_n^{j,\beta_n}$ that Theorem~\ref{thm:alReg} requires to be $O_p[\|\beta_n\|_{\ell^2}/n^{1/2}]$, we see that $\mathcal{R}_n^{j,\beta_n}$ will satisfy this condition provided typical $n^{-1/4}$-rate conditions are satisfied by the estimators of two nuisance functions, namely the propensity to receive treatment and the conditional density of the outcome given treatment and covariates. Such conditions have been discussed extensively in the literature across a variety of problems \citep[e.g,.][]{van2006targeted,chernozhukov2018double}, and tend to hold when the needed nuisance functions are sufficiently smooth or parsimonious relative to the dimension of $X$ and an appropriate estimation strategy is used. For example, suppose that $X$ is continuous and $\mathbb{R}^d$-valued, $g_0(1\mid \cdot\,)$ and $(x,y)\mapsto p_{0,Y\mid A,X}(y\mid 1,x)$ are H\"{o}lder smooth with H\"{o}lder exponents $b$ and $c$, respectively \citep{robins2008higher}. If $g_0(1\mid \cdot\,)$ is estimated via a kernel regression and $(x,y)\mapsto p_{0,Y\mid A,X}(y\mid 1,x)$ is estimated via conditional kernel density estimation, each using kernels of sufficiently high orders, then the above can be used to show that $\mathcal{R}_n^{j,\beta_n}=O_p[\|\beta_n\|_{\ell^2}n^{-\frac{b}{2b+d}}n^{-\frac{c}{2c+d+1}}]$, and so $\mathcal{R}_n^{j,\beta_n}$ achieves the desired $O_p[\|\beta_n\|_{\ell^2}/n^{1/2}]$ rate provided $bc\ge d(d+1)/4$. Alternative estimation strategies that often perform well in practice even when these smoothness assumptions fail, such as those based on random forests \citep{ho1995random} or gradient boosting \citep{friedman2001greedy}, could also be used.

For the regularized drift terms, Lemma~\ref{lem:driftTermRegularized} shows that $\mathcal{D}_n^{j,\beta_n}$ is $o_p(\|\beta_n\|_{\ell^2}/n^{1/2})$ whenever $\|\phi_n^{j,\beta}-\phi_0^\beta\|_{L^2(P_0;\mathcal{H})}=o_p(\|\beta_n\|_{\ell^2})$. To provide conditions under which this is true, we use that there exists a constant $C>0$ that does not depend on $\beta$ such that, for all $P\in\mathcal{P}$, $\|\phi_P^{\beta}-\phi_0^\beta\|_{L^2(P_0;\mathcal{H})}$ is upper bounded by
\begin{align*}
C\|\beta\|_{\ell^2}\left(\left\|g_P(1\mid \cdot)-g_0(1\mid \cdot)\right\|_{L^2(P_{0,X})} + \left\|p_{Y\mid A=1,X}-p_{0,Y\mid A=1,X}\right\|_{L^2(\tau_0)}\right).
\end{align*}
Hence, $\|\phi_n^{j,\beta_n}-\phi_0^{\beta_n}\|_{L^2(P_0;\mathcal{H})}=o_p(\|\beta_n\|_{\ell^2})$ whenever the propensity and conditional density of $Y\,|\, A=1,X$ under $\widehat{P}_n^j$ are consistent according to the norms above. Consistency is a weaker requirement than the rate conditions imposed to ensure the negligibility of $\mathcal{R}_n^{j,\beta_n}$, so it is reasonable to expect that $\mathcal{D}_n^{j,\beta_n}$ will be negligible when $\mathcal{R}_n^{j,\beta_n}$ is negligible.

From Lemma~\ref{lem:biasTerm}, an upper bound on the rate at which the bias terms $\mathcal{B}_n^{j,\beta_n}$ will decay to zero can be derived by bounding either $\|\nu(\widehat{P}_n^j)-\nu(P_0)\|_u$ or $\sup_{P\in\mathcal{P}}\|\nu(P)\|_u$ for some $u\ge 0$. The latter of these quantities is no more than $c<\infty$ if the parameter space $\{\nu(P) : P\in\mathcal{P}\}$ is a subset of the Sobolev ellipsoid $\{h\in\mathcal{H} : \|h\|_u\le c\}$. In this case, when the first $K_n$ entries of $\beta_n$ are one and all others are zero, Lemma~\ref{lem:biasTerm} shows that $\|\mathcal{B}_n^{j,\beta_n}\|_{\mathcal{H}}\le c/(K_n+1)^u$; if the earlier-discussed regularity conditions hold so that the regularized remainder and drift terms are $O_p(\|\beta_n\|_{\ell^2}/n^{1/2})$, then this yields that, when $K_n$ is of the order $n^{1/(2u+1)}$,
\begin{align}
\bar{\nu}_n^{\beta_n}-\nu(P_0)&= O_p(n^{-u/(2u+1)}). \label{eq:cdRate}
\end{align}
This analysis bears similarity to the study of projection estimators \citep[Theorem~1.9 of][]{tsybakov2009nonparametric}, but with the added requirement that drift and remainder terms must be considered.

The rate of convergence in \eqref{eq:cdRate} was derived based on the looser of the two bounds in \eqref{eq:biasK}. While the former bound would give tighter bounds on the bias term when $\|\nu(\widehat{P}_n^j)-\nu(P_0)\|_u$ converges to zero in probability at some rate, it is unclear whether there are initial estimators $\widehat{P}_n^j$ of $P_0$ that would achieve this. Indeed, since $\|\cdot\|_u$ is a stronger norm than $\|\cdot\|_{\mathcal{H}}$, probabilistic convergence relative to $\|\cdot\|_{\mathcal{H}}$ is insufficient to guarantee convergence relative to $\|\cdot\|_u$. Looking to identify or develop initial estimators of $P_0$ for which $\|\nu(\widehat{P}_n^j)-\nu(P_0)\|_u\overset{p}{\rightarrow} 0$ is an interesting area for future study, since, when such an initial estimator is used, faster rates of convergence for $\bar{\nu}_n^{\beta_n}$ than that given in \eqref{eq:cdRate} may be established.

While the discussion above focused on the regularized one-step estimator, similar arguments can be used to analyze the confidence sets introduced in Section~\ref{sec:confSetsRegularized} for fixed $\beta\in \ell_{*}^2$. Indeed, Corollary~\ref{cor:nuTildeAL} in the appendix shows that the key quantities to bound to establish the validity of these confidence sets are $\mathcal{R}_n^{j,\beta}$ and $\mathcal{D}_n^{j,\beta}$ --- in particular, both of these quantities should be $o_p(n^{-1/2})$. We have already bounded these quantities above when studying the regularized one-step estimator of $\nu(P_0)$. In particular, these conditions will hold if each of $p_{0,Y\mid A,X}$ and $g_0(1\mid\cdot\,)$ is estimated at a faster-than-$n^{-1/4}$ rate according to the norms in \eqref{eq:cfdNonpRem}. 

In the special case where $\beta$ is a vector whose first $K$ entries are 1 and whose remaining entries are 0, the estimator $\widetilde{\nu}_n^\beta$ used to construct our confidence sets coincides with the $L^2(\lambda_Y)$ projection estimator studied in Corollary~2 of \cite{kennedy2021semiparametric}. In our notation, that estimator can be viewed as estimating the parameter $\Gamma_\beta\circ\nu(P_0)$, though if $K$ grows with $n$, as would typically occur under the model selection strategy described by Kennedy et al., then it can be viewed as estimating $\nu(P_0)$ as well. This estimator differs from the regularized one-step estimator $\bar{\nu}_n^\beta$ that we have recommended using for estimation of $\nu(P_0)$, with the estimators differing by the $L^2(\lambda_Y)$ projection of $\frac{1}{2}\sum_{j=1}^2 \nu(\widehat{P}_n^j)$ onto the orthogonal complement of the linear span of the first $K$ elements of the chosen basis for $L^2(\lambda_Y)$. It is not immediately clear whether one of these two estimators should be preferred over the other in general, though our upcoming simulation study supports using $\bar{\nu}_n^\beta$, especially when $n$ is small. The decision between using these estimators of $\nu(P_0)$ can be summarized as follows: $\bar{\nu}_n^\beta$ should be used to estimate $\nu(P_0)$ if $\frac{1}{2}\sum_{j=1}^2 \Gamma_\beta\circ \nu(\widehat{P}_n^j)$ attains a lower MISE for estimating $\Gamma_\beta\circ \nu(P_0)$ than does the zero function, and $\widetilde{\nu}_n^\beta$ should be preferred otherwise. 

\cite{kennedy2021semiparametric} also proposes an approach for making inference about the difference between two counterfactual densities using any of several distance metrics. For the $L^2(\lambda_Y)$ metric, this inference is based upon first-order asymptotics the parameter $\psi(P_0):=\|\nu_1(P_0)-\nu_0(P_0)\|_{L^2(\lambda_Y)}^2$, where $\nu_1$ is equal to the counterfactual density parameter $\nu$ defined in \eqref{eq:cdDef} and $\nu_0$ takes the same form but with $p_{Y\mid A,X}(y\mid 1,x)$ replaced by $p_{Y\mid A,X}(y\mid 0,x)$. There, they note an oft-confronted difficulty \citep{luedtke2019omnibus,williamson2021general} wherein their estimator of $\psi(P_0)$ converges to zero at a faster-than-$n^{-1/2}$ rate under the null hypothesis that $\nu_1(P_0)=\nu_0(P_0)$, leading them to propose a conservative threshold to test this null based on the maximum of the estimated standard error of their estimator and $n^{-1/2}$. Since the pathwise differentiability of $\nu_1$ and $\nu_0$ implies the pathwise differentiability of $\nu_1-\nu_0$ --- with efficient influence operator equal to the difference of the efficient influence operators of $\nu_1$ and $\nu_0$ --- our regularized one-step estimation framework provides an alternative, non-conservative means to test this hypothesis by constructing a confidence set for this parameter for fixed $\beta\in \ell^2\cap (0,1]^{\mathbb{N}}$ and checking whether it contains zero.

We now compare our inferential procedure in this example to that of \cite{kennedy2021semiparametric}. We start by comparing the size of the dual confidence sets. The method from Kennedy et al. can be used to construct a confidence set by inverting tests of whether $\psi_h(P_0):=\|\nu_1(P_0)-\nu_0(P_0)-h\|_{L^2(\lambda_Y)}^2$ is equal to zero across values of $h\in L^2(\lambda_Y)$; the threshold for each $h$-dependent test is determined using the same conservative threshold methodology as when $h=0$. The $L^2(\lambda_Y)$ and $\|\cdot\|_\beta$ diameters of this confidence set both decay at rates no faster than $n^{-1/4}$. In contrast, the $\|\cdot\|_\beta$ diameter of our confidence set decays at a quadratically-faster rate of $n^{-1/2}$, while the $L^2(\lambda_Y)$ diameter does not decay at all. Nevertheless, since $\|h\|_\beta=0$ if and only if $\|h\|_{L^2(\lambda_Y)}=0$, our confidence set will exclude any particular $h\not=\nu_1(P_0)-\nu_0(P_0)$ with probability tending to one. As a practical matter, our confidence set will exclude functions $h$ that differ smoothly from $\nu_1(P_0)-\nu_0(P_0)$ relative to the basis $(h_k)_{k=1}^\infty$ at smaller sample sizes than will the $n^{-1/4}$-rate confidence set, and will otherwise require larger sample sizes; here, smoothness is characterized by the decay rate of $\langle \nu_1(P_0)-\nu_0(P_0)-h,h_k\rangle_{\mathcal{H}}$ as $k\rightarrow\infty$. Our dual hypothesis test of whether $\nu_1(P_0)-\nu_0(P_0)=0$ will also satisfy the local power guarantee from Theorem~\ref{thm:localAlt}. We investigate the properties of this test and compare it to the test proposed in \cite{kennedy2021semiparametric} in our upcoming simulation study.
\end{example}

\begin{example}[name=Bandlimited counterfactual density function,continues=ex:cfdBandlimited]
Since there is an EIF in this example, we study a (non-regularized) one-step estimator. Theorem~\ref{thm:al} relies on the negligibility of the remainder and drift terms, namely that they are $o_p(n^{-1/2})$. Let $\underline{\mathcal{R}}_P:= \underline{\nu}(P) + P_0 \underline{\phi}_P -\underline{\nu}(P_0)$ denote the remainder term for a generic $P\in\mathcal{P}$. In Appendix~\ref{app:cdRem} we show that there exists a $C<\infty$ that does not depend on $P\in\mathcal{P}$ such that
\begin{align}
\|\underline{\mathcal{R}}_P\|_{\underline{\mathcal{H}}}&\le C\left\|g_P(1\mid \cdot)-g_0(1\mid \cdot)\right\|_{L^2(P_{0,X})}\left\|p_{Y\mid A=1,X}-p_{0,Y\mid A=1,X}\right\|_{L^2(\tau_0)}. \label{eq:cdRemBandlimitedBound}
\end{align}
Hence, for $\underline{\mathcal{R}}_n^j:=\underline{\mathcal{R}}_{\widehat{P}_n^j}$ to be $o_p(n^{-1/2})$, the products of the rate of convergence of $g_{\widehat{P}_n^j}(1\mid \cdot\,)$ to $g_0(1\mid \cdot\,)$ and $p_{Y\mid A=1,X}$ to $p_{0,Y\mid A=1,X}$ according to the norms above must be faster than $n^{-1/2}$. This results in the same $n^{-1/4}$-type requirement that we discussed below \eqref{eq:cfdNonpRem} for Example~\ref{ex:cfdNonparametric}, except, because we only focus on rates of convergence for regularized one-step estimators (rather than weak convergence), there we only required this product to be at least as fast as $n^{-1/2}$ rather than faster, as we require here. Also similarly to Example~\ref{sec:regularizedOneStepTuning}, for each $j\in\{1,2\}$, $\|\underline{\phi}_n^j-\underline{\phi}_0\|_{L^2(P_0;\underline{\mathcal{H}})}$ can be shown to be $o_p(1)$ provided the propensity and conditional density of $Y\mid A=1,X$ under $\widehat{P}_n^j$ converge to in probability $g_0(1\mid \cdot\,)$ and $p_{0,Y\mid A=1,X}$ according to the norms in \eqref{eq:cdRemBandlimitedBound}. Hence, Lemma~\ref{lem:driftTerm} ensures that the drift term $\underline{\mathcal{D}}_n^j$ is $o_p(n^{-1/2})$ under this condition, and so the conditions of Theorem~\ref{thm:al} hold under reasonable conditions. If the operator $\Omega_0$ used to construct a confidence set for $\underline{\nu}(P_0)$ is fixed, then the conditions of Theorem~\ref{thm:threshEst} are also satisfied, justifying the use of the bootstrap in confidence set construction. If instead the regularized covariance operator described in Appendix~\ref{app:regOmega0} is used, then the bootstrap will still yield an asymptotically valid confidence set for $\underline{\nu}(P_0)$ provided the estimator of $\Omega_0$ described in that appendix is used (see Lemma~\ref{lem:Sigman} for details).
\end{example}

\section{Simulation study}\label{sec:sims}

\subsection{Overview}

We conduct a simulation study to evaluate the finite-sample properties of our one-step estimation framework, both in settings where an EIF exists and in ones where it does not. 
All of these settings involve drawing inferences about the distributions or densities of counterfactual outcomes (Examples~\ref{ex:cfd} and \ref{ex:gkmed}). Our implemented methods are available in the HilbertOneStep R package \citep{HilbertOneStep}.

We consider multiple data-generating processes, each indexed by real-valued probability distributions $Q(0)$ and $Q(1)$. Sampling from a generic such process involves drawing $n$ iid samples from $P_0$, where $n$ takes values in $\{250,500,1000,2000,4000\}$. An observation $Z=(X,A,Y)$ from $P_0$ is sampled as follows:
\begin{align*}
    &(Y(0),Y(1))\sim Q(0)\times Q(1), \hspace{1em} V\,|\, Y(0),Y(1)\sim N(0_5,\mathrm{Id}_5), \\
    &X=\tfrac{V}{2} + (V+1)1\{Y(1)>0\}; \hspace{1em} A\mid Y(0),Y(1),V,X\sim \mathrm{Bern}\left[\tfrac{1}{20} + \tfrac{9}{10}\cdot \mathrm{expit}(X_1)\right],
\end{align*}
and then letting $Y = AY(1) + (1-A)Y(0)$. 
For $a\in \{0,1\}$, $Y(a)$ is the counterfactual outcome if treatment $A=a$ were assigned. Since $Y(a)\indep A\mid X$ and the positivity assumption is satisfied, the density of $Y(a)$ takes the form in \eqref{eq:cdDef} when $a=1$ and otherwise is the same but with $p_{Y\mid A,X}(y\mid 1,x)$ replaced by $p_{Y\mid A,X}(y\mid 0,x)$. Unless otherwise specified, estimates of performance are based on 1000 Monte Carlo repetitions.

We estimate all needed nuisance functions using the same approaches as \cite{kennedy2021semiparametric}. In particular, we estimate the marginal of $X$ with the empirical distribution, the conditional distribution of $A$ given $X$ using the ranger package \citep{ranger}, and the conditional density of $Y$ given $(A,X)$ using ranger and a Gaussian kernel weighted outcome with bandwidth selected by Silverman's rule. When implementing the quadratic forms used to define our confidence sets, we use grids of 500 points on the support of $Y$ --- see Appendix~\ref{app:practicalNonRegCS} for details.

\subsection{Performance of the regularized one-step estimator in Example~\ref{ex:cfdNonparametric}}

We evaluate the performance of the regularized one-step estimator of the counterfactual density of $Y(1)$ from Example~\ref{ex:cfdNonparametric} when $\mathcal{H}=L^2([0,1])$. 
We consider three choices of $Q(1)$, which are displayed in Figure~\ref{fig:Y1dens} in the appendix. The behavior of $Y(1)$ on the boundaries of its support, namely 0 and 1, differs across the three settings; to emphasize this, we label them `zero on both sides', `nonzero on both sides', and `spike on left side'. 
The cross-validation strategy outlined in Section~\ref{sec:regularizedOneStepTuning} is used to select the regularization parameter $\beta$ over the elements of $\ell^2$ that take the value 1 in their first $K\le 16$ entries and 0 in all others. To evaluate sensitivity to the choice of basis, we evaluate our estimator based on the cosine basis, with $h_k(y)= 2^{1/2}\mathrm{cos}[\pi(k-1)y]$, and a rescaled Legendre basis, with $h_k(y)$ proportional to the $k$-th Legendre function applied to $2y-1$. We also evaluate the use of cross-validation to select between these bases.

Performance is compared to that of a plugin estimator and also the estimation strategy implemented in the npcausal package \citep{kennedy2021semiparametric}, which is a series estimator of the $L^2([0,1])$ projection of the counterfactual density onto the first $K$ terms of the cosine basis. We use the cross-validation scheme implemented in that package to select a value of $K\le 16$. 
Though npcausal does not return the estimated density function, we tweaked its open-source code to extract this information.

\begin{figure}[tb]
    \centering
    \includegraphics[width=\textwidth]{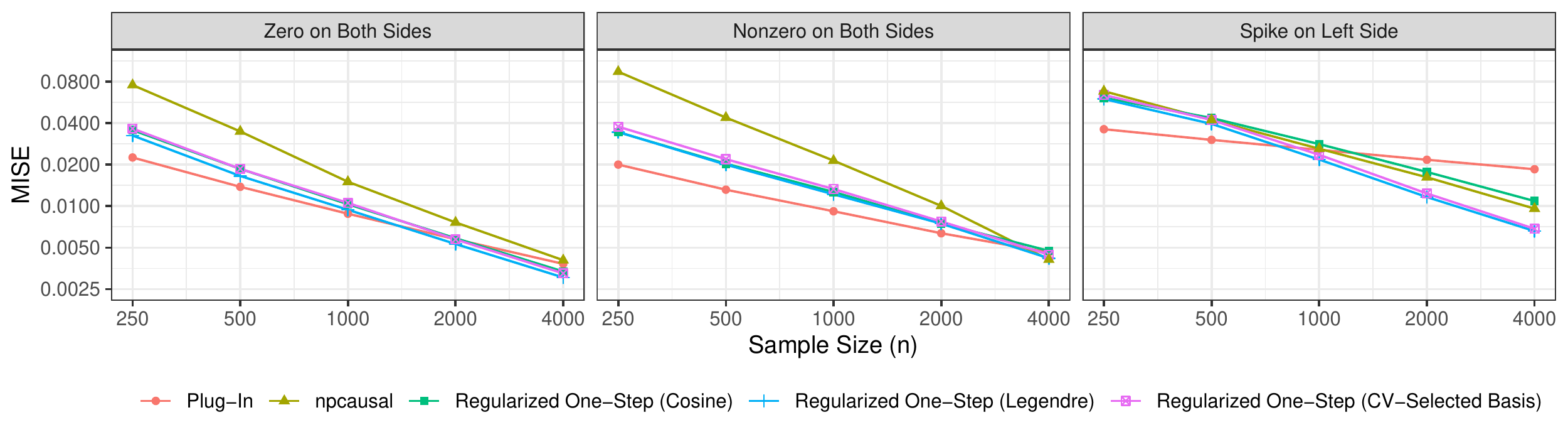}
    \caption{Mean integrated squared error (MISE) versus sample size ($n$) in Example~\ref{ex:cfdNonparametric} for five estimators across the three data generating processes considered. Both axes are log-transformed.
    }
    \label{fig:nonparametricEst}
\end{figure}

Figure~\ref{fig:nonparametricEst} displays the estimators' MISEs. In all settings, the regularized one-step estimators are outperformed by the plug-in estimator at small sample sizes, but their relative performances improve as $n$ grows and eventually exceed or trend towards exceeding that of the plugin. 
Compared to the regularized one-step estimator with the cosine basis, npcausal has MISE that is twice as large at small sample sizes in two of the three scenarios. 
In one of these scenarios, npcausal's performance improves with $n$, but is still worse than all the other estimators. In the other, its performance dramatically improves between $n=2000$ to $4000$ from the worst of all the estimators to slightly better than the others. In the remaining scenario, npcausal and the regularized one-step estimator with the cosine basis perform similarly. Among the regularized one-step estimators, using the Legendre basis outperforms using the cosine basis in one scenario, while the two perform similarly otherwise. Selecting the basis via cross-validation yields an estimator that is about as good as the one based on the Legendre basis in all scenarios.

\subsection{Properties of hypothesis tests from Examples~\ref{ex:cfdNonparametric} and \ref{ex:gkmed}}

We evaluate 5\% level tests of the null hypothesis that $Q(1) = Q(0)$ against the complementary alternative. The first class of tests uses the results from Example~\ref{ex:cfdNonparametric} to check if zero is included in an $L^2([0,1])$ confidence set for the difference $\nu(P_0) = \nu_1(P_0) - \nu_0(P_0)$ of the densities of $Q(1)$ and $Q(0)$. We obtain spherical and Wald-type $L^2([0,1])$ confidence sets for the regularized parameter $\nu^\beta(P_0)$ using cosine and Legendre bases. Both are then transformed into elliptical confidence sets for $\nu(P_0)$ using the approach from Section~\ref{sec:confSetsRegularized}. The Wald-type confidence sets are defined with the correlation-based standardization operator $\Omega_n$ from Appendix~\ref{app:practicalNonRegCS} with $\lambda=0.5$. For the regularization parameter $\beta$, we let $\beta_k = 1/[1+(k/c)^2]$ and consider values of $c\in{2.5, 5, 10}$. Results for $c=5$ are reported in the main text, while others appear in the appendix. Additionally, we examine a test based on the Gaussian kernel MMD between $Q(1)$ and $Q(0)$, which depends on a bandwidth choice of $0.5$, $1$, and $2$ times $\textnormal{median}\{|Y_i-Y_j| : 1\le i<j\le n\}$ \citep{garreau2017large}. We report results for the middle value in the main text and others in the appendix. We compare performance to the asymptotically conservative test from \cite{kennedy2021semiparametric}, implemented using npcausal.

We set $Q(1)$ to its value from the `nonzero on both sides' simulation setting and consider different values of $Q(0)$. We explore the null hypothesis with $Q(0)=Q(1)$, and, for $k\in\{1,2,\ldots,7\}$, the alternative hypothesis with $\nu_1(P_0)(y)-\nu_0(P_0)(y)=\cos(k^2\pi y)$, denoted as `Alt $k^2$'. By examining these alternatives, we assess the power decay of our tests for $\nu_1(P_0)-\nu_0(P_0)=0$ as the direction of the alternative corresponds to that of a higher-frequency function in the cosine basis.

\begin{figure}[tb]
    \centering
    \includegraphics[width=\textwidth]{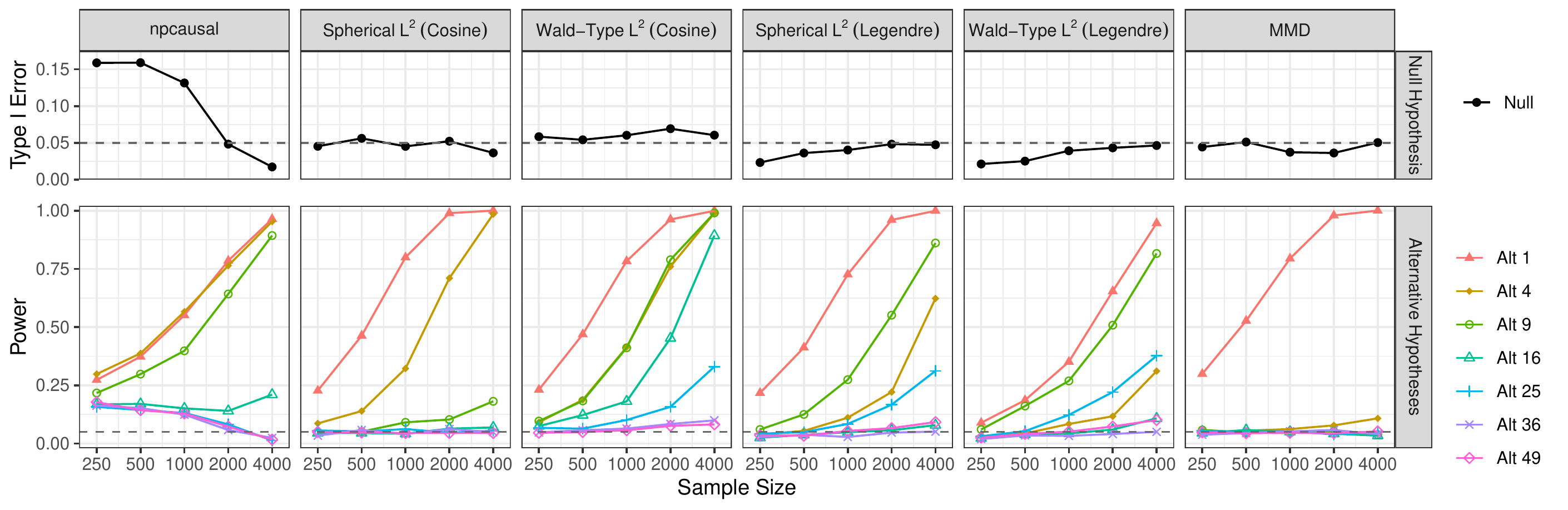}
    \caption{Type I error and power of tests from Examples~\ref{ex:cfdNonparametric} and \ref{ex:gkmed}.
    }
    \label{fig:test_rejProb_medium}
\end{figure}

Figure~\ref{fig:test_rejProb_medium} presents the type I error and power of the tests. In terms of type I error, the five tests based on our one-step estimation framework achieve or nearly achieve the nominal 5\% level. Surprisingly, the npcausal test has a type I error over three times the nominal level at smaller sample sizes, despite having an asymptotic rejection probability of zero. However, as $n$ increases, the type I error converges towards its expected conservative asymptotic behavior. As for power, our tests that regularize using the cosine basis display the anticipated power decay for rejecting Alt $k^2$ as $k$ increases. Tests using the Legendre basis do not exhibit the same monotonic dependence on $k$. The Wald-type test with the cosine basis demonstrates noticeably higher power for larger $k$ alternatives than the spherical test, while this trend is less apparent for the Legendre basis. The MMD test has high power for detecting the smoothest alternative, Alt 1, but low or no power against all others. It is important to note that the alternatives we have considered become quite nonsmooth as $k$ increases, making MMD's poor finite-sample performance for detecting them potentially acceptable. Figure~\ref{fig:test_rejProb_smallLarge} in the appendix illustrates test performance with various tuning parameter choices. Overall, the results align with expectations: for tests based on Example~\ref{ex:cfdNonparametric}, enhanced power against rougher alternatives ($k$ larger) comes at the cost of reduced power against smoother alternatives when later entries of the regularization parameter $\beta$ are increased, and vice versa. For MMD, modifying the bandwidth results in the same tradeoff.

Appendix~\ref*{app:simCfdBandlimited} presents simulation results evaluating our confidence sets for a bandlimited counterfactual density. Nominal coverage is achieved for all sample sizes considered. There, we also highlight the disadvantage of using our $L^2$ confidence sets when a point evaluation of the counterfactual density, rather than the function itself, is the true quantity of interest.

\section{Discussion}

The lack of existence of an EIF that we have confronted in parts of this work bears resemblance to the lack of existence of higher-order influence functions for many real-valued parameters \citep{robins2008higher,van2014higher,robins2017higher}. There, the nonexistence of these objects owes to the lack of a suitable Riesz-representation-type theorem for multilinear forms. In our case, it owes to the fact that the efficient influence operator $\dot{\nu}_P(\cdot)(z) : \mathcal{H}\rightarrow\mathbb{R}$ typically fails to be ($P$-a.s.) bounded and linear, even though $\dot{\nu}_P : \mathcal{H}\rightarrow \dot{\mathcal{P}}_P$ is. Though the technical details of the two problems differ, similar solutions work for both: replace the operator that does not satisfy a Riesz-type representation by an approximation that does. In our case, this involved studying the $\beta$-regularized efficient influence operator $r_P^\beta$. In future work, it would be interesting to investigate the possibility of defining higher-order influence functions for Hilbert-valued parameters. We have shown that, under mild conditions, a first-order EIF exists when the Hilbert space is an RKHS, making this case a natural starting point for exploration.

Another interesting area for future work is to develop a systematic means to select the tuning parameter $\beta$ needed to define our confidence sets when an EIF does not exist. Although our asymptotic guarantees hold for any fixed choice of $\beta\in\ell_{*}^2$ and our numerical studies shed light on how different choices of $\beta$ improve power against different alternatives, it would be desirable to have an automated means of selecting this parameter. One possible approach would involve drawing inspiration from the choice of kernel for two-sample tests based on the MMD \citep{gretton2012kernel}. Like our approach, these tests rely on selecting a fixed tuning parameter --- in their case, a kernel --- as $n$ grows. In that context, an appealing heuristic choice of the bandwidth parameter indexing the kernel has been developed, provided a radial basis function kernel is used \citep{garreau2017large}. It would be of interest to develop a similar heuristic in our setting.

Another area for future study involves extending our results to cases where the Hilbert space depends on the data-generating distribution. As a simple example, in the regression setting from Example~\ref{ex:reg} in the appendix, we may want to evaluate performance relative to $L^2(\lambda_X)$ with $\lambda_X$ equal to the marginal distribution $P_{0,X}$ of $X$ under $P_0$, rather than some fixed known measure, such as the Lebesgue measure. The definition of pathwise differentiability at $P_0$ goes through unchanged in that case. However, because the efficient influence operator at an initial estimate $\widehat{P}_n^j$ of $P_0$ depends on $P_0$ when $\mathcal{H}=L^2(P_{0,X})$, the regularized one-step estimator we have presented in this work cannot be evaluated. A natural workaround would be to modify the definition of this estimator to use the efficient influence operator of $\nu$ at $\widehat{P}_n^j$ relative to a Hilbert space that is indexed by $\widehat{P}_n^j$, rather than $P_0$, and replace $(h_k)_{k=1}^\infty$ by a basis of this space. We leave the study of this estimator to future work.

After establishing that many Hilbert-valued parameters of interest are pathwise differentiable, we focused on developing and studying a particular estimation framework that leverages this property, namely one-step estimation. This framework has the benefit that there is a closed-form expression for the resulting estimators, which simplifies the study of their convergence properties and construction of corresponding confidence sets. While this approach has advantages, it would be worth considering alternative frameworks in future work. As one example, an M-estimator based on the loss that we introduced in \eqref{eq:lossDef} could also be considered. \cite{foster2019orthogonal} offers a general method for determining the convergence rates of these estimators. Additional research is needed to investigate their weak convergence properties and the possibility of using them to construct confidence sets.

\section*{Acknowledgements}

This work was supported by the National Institutes of Health under award number DP2-LM013340 and the National Science Foundation under award number DMS-2210216. The content is solely the responsibility of the authors and does not necessarily represent the official views of the funding agencies.

{\singlespacing
\bibliography{References}

\begin{thebibliography}{}

\bibitem[\protect\citeauthoryear{Agarwal, Chen, and Sarma}{Agarwal
  et~al.}{2015}]{agarwal2015nonparametric}
Agarwal, R., Z.~Chen, and S.~V. Sarma (2015).
\newblock Nonparametric estimation of band-limited probability density
  functions.
\newblock {\em arXiv preprint arXiv:1503.06236\/}.

\bibitem[\protect\citeauthoryear{Aronszajn}{Aronszajn}{1950}]{aronszajn1950theory}
Aronszajn, N. (1950).
\newblock Theory of reproducing kernels.
\newblock {\em Transactions of the American mathematical society\/}~{\em
  68\/}(3), 337--404.

\bibitem[\protect\citeauthoryear{Baiardi and Naghi}{Baiardi and
  Naghi}{2021}]{baiardi2021value}
Baiardi, A. and A.~A. Naghi (2021).
\newblock The value added of machine learning to causal inference: Evidence
  from revisited studies.
\newblock {\em arXiv preprint arXiv:2101.00878\/}.

\bibitem[\protect\citeauthoryear{Berlinet and Thomas-Agnan}{Berlinet and
  Thomas-Agnan}{2011}]{berlinet2011reproducing}
Berlinet, A. and C.~Thomas-Agnan (2011).
\newblock {\em Reproducing kernel Hilbert spaces in probability and
  statistics}.
\newblock Springer Science \& Business Media.

\bibitem[\protect\citeauthoryear{Bickel, Klaassen, Bickel, Ritov, Klaassen,
  Wellner, and Ritov}{Bickel et~al.}{1993}]{bickel1993efficient}
Bickel, P.~J., C.~A. Klaassen, P.~J. Bickel, Y.~Ritov, J.~Klaassen, J.~A.
  Wellner, and Y.~Ritov (1993).
\newblock {\em Efficient and adaptive estimation for semiparametric models},
  Volume~4.
\newblock Johns Hopkins University Press Baltimore.

\bibitem[\protect\citeauthoryear{Bogachev}{Bogachev}{1996}]{bogachev1996gaussian}
Bogachev, V. (1996).
\newblock Gaussian measures on linear spaces.
\newblock {\em Journal of Mathematical Sciences\/}~{\em 79\/}(2), 933--1034.

\bibitem[\protect\citeauthoryear{Bongioanni and Torrea}{Bongioanni and
  Torrea}{2006}]{bongioanni2006sobolev}
Bongioanni, B. and J.~L. Torrea (2006).
\newblock Sobolev spaces associated to the harmonic oscillator.
\newblock In {\em Proceedings of the Indian Academy of Sciences-Mathematical
  Sciences}, Volume 116, pp.\  337--360. Springer.

\bibitem[\protect\citeauthoryear{Bongioanni and Torrea}{Bongioanni and
  Torrea}{2008}]{bongioanni2008sobolev}
Bongioanni, B. and J.~L. Torrea (2008).
\newblock What is a sobolev space for the laguerre function systems?
\newblock {\em Cuadernos de Matem{\'a}tica y Mec{\'a}nica\/}.

\bibitem[\protect\citeauthoryear{Cencov}{Cencov}{1962}]{cencov1962estimation}
Cencov, N.~N. (1962).
\newblock Estimation of an unknown distribution density from observations.
\newblock {\em Doklady Mathematics\/}, 1559--1562.

\bibitem[\protect\citeauthoryear{Chen}{Chen}{2007}]{chen2007large}
Chen, X. (2007).
\newblock Large sample sieve estimation of semi-nonparametric models.
\newblock {\em Handbook of econometrics\/}~{\em 6}, 5549--5632.

\bibitem[\protect\citeauthoryear{Chernozhukov, Chetverikov, Demirer, Duflo,
  Hansen, and Newey}{Chernozhukov et~al.}{2017}]{chernozhukov2017double}
Chernozhukov, V., D.~Chetverikov, M.~Demirer, E.~Duflo, C.~Hansen, and W.~Newey
  (2017).
\newblock Double/debiased/{N}eyman machine learning of treatment effects.
\newblock {\em American Economic Review\/}~{\em 107\/}(5), 261--265.

\bibitem[\protect\citeauthoryear{Chernozhukov, Chetverikov, Demirer, Duflo,
  Hansen, Newey, and Robins}{Chernozhukov
  et~al.}{2018}]{chernozhukov2018double}
Chernozhukov, V., D.~Chetverikov, M.~Demirer, E.~Duflo, C.~Hansen, W.~Newey,
  and J.~Robins (2018).
\newblock Double/debiased machine learning for treatment and structural
  parameters.

\bibitem[\protect\citeauthoryear{Chernozhukov, Newey, and Singh}{Chernozhukov
  et~al.}{2018}]{chernozhukov2018biased}
Chernozhukov, V., W.~Newey, and R.~Singh (2018).
\newblock De-biased machine learning of global and local parameters using
  regularized riesz representers.
\newblock {\em arXiv preprint arXiv:1802.08667\/}.

\bibitem[\protect\citeauthoryear{Chernozhukov, Newey, and Singh}{Chernozhukov
  et~al.}{2021}]{chernozhukov2021simple}
Chernozhukov, V., W.~K. Newey, and R.~Singh (2021).
\newblock A simple and general debiased machine learning theorem with finite
  sample guarantees.
\newblock {\em arXiv preprint arXiv:2105.15197\/}.

\bibitem[\protect\citeauthoryear{Colangelo and Lee}{Colangelo and
  Lee}{2020}]{colangelo2020double}
Colangelo, K. and Y.-Y. Lee (2020).
\newblock Double debiased machine learning nonparametric inference with
  continuous treatments.
\newblock {\em arXiv preprint arXiv:2004.03036\/}.

\bibitem[\protect\citeauthoryear{Da~Prato}{Da~Prato}{2006}]{da2006introduction}
Da~Prato, G. (2006).
\newblock {\em An introduction to infinite-dimensional analysis}.
\newblock Springer Science \& Business Media.

\bibitem[\protect\citeauthoryear{D{\'i}az and van~der Laan}{D{\'i}az and
  van~der Laan}{2013}]{diaz2013targeted}
D{\'i}az, I. and M.~J. van~der Laan (2013).
\newblock Targeted data adaptive estimation of the causal dose--response curve.
\newblock {\em Journal of Causal Inference\/}~{\em 1\/}(2), 171--192.

\bibitem[\protect\citeauthoryear{Efron}{Efron}{1979}]{efron1979bootstrap}
Efron, B. (1979).
\newblock Bootstrap methods: Another look at the jackknife.
\newblock {\em The Annals of Statistics\/}~{\em 7\/}(1), 1--26.

\bibitem[\protect\citeauthoryear{Fawkes, Hu, Evans, and Sejdinovic}{Fawkes
  et~al.}{2022}]{fawkes2022doubly}
Fawkes, J., R.~Hu, R.~J. Evans, and D.~Sejdinovic (2022).
\newblock Doubly robust kernel statistics for testing distributional treatment
  effects even under one sided overlap.
\newblock {\em arXiv preprint arXiv:2212.04922\/}.

\bibitem[\protect\citeauthoryear{Fernique}{Fernique}{1970}]{fernique1970integrabilite}
Fernique, X. (1970).
\newblock Int{\'e}grabilit{\'e} des vecteurs gaussiens.
\newblock {\em CR Acad. Sci. Paris Serie A\/}~{\em 270}, 1698--1699.

\bibitem[\protect\citeauthoryear{Foster and Syrgkanis}{Foster and
  Syrgkanis}{2019}]{foster2019orthogonal}
Foster, D.~J. and V.~Syrgkanis (2019).
\newblock Orthogonal statistical learning.
\newblock {\em arXiv preprint arXiv:1901.09036\/}.

\bibitem[\protect\citeauthoryear{Friedman}{Friedman}{2001}]{friedman2001greedy}
Friedman, J.~H. (2001).
\newblock Greedy function approximation: a gradient boosting machine.
\newblock {\em Annals of statistics\/}, 1189--1232.

\bibitem[\protect\citeauthoryear{Garreau, Jitkrittum, and Kanagawa}{Garreau
  et~al.}{2017}]{garreau2017large}
Garreau, D., W.~Jitkrittum, and M.~Kanagawa (2017).
\newblock Large sample analysis of the median heuristic.
\newblock {\em arXiv preprint arXiv:1707.07269\/}.

\bibitem[\protect\citeauthoryear{Gin{\'e} and Zinn}{Gin{\'e} and
  Zinn}{1990}]{gine1990bootstrapping}
Gin{\'e}, E. and J.~Zinn (1990).
\newblock Bootstrapping general empirical measures.
\newblock {\em The Annals of Probability\/}, 851--869.

\bibitem[\protect\citeauthoryear{Grenander}{Grenander}{1963}]{grenander1963probabilities}
Grenander, U. (1963).
\newblock {\em Probabilities on algebraic structures}.
\newblock Wiley, New York.

\bibitem[\protect\citeauthoryear{Gretton, Borgwardt, Rasch, Sch{\"o}lkopf, and
  Smola}{Gretton et~al.}{2012}]{gretton2012kernel}
Gretton, A., K.~M. Borgwardt, M.~J. Rasch, B.~Sch{\"o}lkopf, and A.~Smola
  (2012).
\newblock A kernel two-sample test.
\newblock {\em The Journal of Machine Learning Research\/}~{\em 13\/}(1),
  723--773.

\bibitem[\protect\citeauthoryear{Hill}{Hill}{2011}]{hill2011bayesian}
Hill, J.~L. (2011).
\newblock Bayesian nonparametric modeling for causal inference.
\newblock {\em Journal of Computational and Graphical Statistics\/}~{\em
  20\/}(1), 217--240.

\bibitem[\protect\citeauthoryear{Hines, Dukes, Diaz-Ordaz, and
  Vansteelandt}{Hines et~al.}{2022}]{hines2022demystifying}
Hines, O., O.~Dukes, K.~Diaz-Ordaz, and S.~Vansteelandt (2022).
\newblock Demystifying statistical learning based on efficient influence
  functions.
\newblock {\em The American Statistician\/}, 1--13.

\bibitem[\protect\citeauthoryear{Ho}{Ho}{1995}]{ho1995random}
Ho, T.~K. (1995).
\newblock Random decision forests.
\newblock In {\em Proceedings of 3rd international conference on document
  analysis and recognition}, Volume~1, pp.\  278--282. IEEE.

\bibitem[\protect\citeauthoryear{Hudson, Carone, and Shojaie}{Hudson
  et~al.}{2021}]{hudson2021inference}
Hudson, A., M.~Carone, and A.~Shojaie (2021).
\newblock Inference on function-valued parameters using a restricted score
  test.
\newblock {\em arXiv preprint arXiv:2105.06646\/}.

\bibitem[\protect\citeauthoryear{Ibragimov and Khas'minskii}{Ibragimov and
  Khas'minskii}{1983}]{ibragimov1983estimation}
Ibragimov, I. and R.~Khas'minskii (1983).
\newblock Estimation of distribution density belonging to a class of entire
  functions.
\newblock {\em Theory of Probability \& Its Applications\/}~{\em 27\/}(3),
  551--562.

\bibitem[\protect\citeauthoryear{Jung, Tian, and Bareinboim}{Jung
  et~al.}{2021}]{jung2021double}
Jung, Y., J.~Tian, and E.~Bareinboim (2021).
\newblock Double machine learning density estimation for local treatment
  effects with instruments.
\newblock {\em Advances in Neural Information Processing Systems\/}~{\em 34},
  21821--21833.

\bibitem[\protect\citeauthoryear{Kennedy}{Kennedy}{2020}]{kennedy2020optimal}
Kennedy, E.~H. (2020).
\newblock Optimal doubly robust estimation of heterogeneous causal effects.
\newblock {\em arXiv preprint arXiv:2004.14497\/}.

\bibitem[\protect\citeauthoryear{Kennedy}{Kennedy}{2022}]{kennedy2022semiparametric}
Kennedy, E.~H. (2022).
\newblock Semiparametric doubly robust targeted double machine learning: a
  review.
\newblock {\em arXiv preprint arXiv:2203.06469\/}.

\bibitem[\protect\citeauthoryear{Kennedy, Balakrishnan, and Wasserman}{Kennedy
  et~al.}{2021}]{kennedy2021semiparametric}
Kennedy, E.~H., S.~Balakrishnan, and L.~Wasserman (2021).
\newblock Semiparametric counterfactual density estimation.
\newblock {\em arXiv preprint arXiv:2102.12034\/}.

\bibitem[\protect\citeauthoryear{Kennedy, Balakrishnan, and Wasserman}{Kennedy
  et~al.}{2022}]{kennedy2022minimax}
Kennedy, E.~H., S.~Balakrishnan, and L.~Wasserman (2022).
\newblock Minimax rates for heterogeneous causal effect estimation.
\newblock {\em arXiv preprint arXiv:2203.00837\/}.

\bibitem[\protect\citeauthoryear{Kennedy, Ma, McHugh, and Small}{Kennedy
  et~al.}{2017}]{kennedy2017non}
Kennedy, E.~H., Z.~Ma, M.~D. McHugh, and D.~S. Small (2017).
\newblock Non-parametric methods for doubly robust estimation of continuous
  treatment effects.
\newblock {\em Journal of the Royal Statistical Society: Series B (Statistical
  Methodology)\/}~{\em 79\/}(4), 1229--1245.

\bibitem[\protect\citeauthoryear{Klaassen}{Klaassen}{1987}]{klaassen1987consistent}
Klaassen, C.~A. (1987).
\newblock Consistent estimation of the influence function of locally
  asymptotically linear estimators.
\newblock {\em The Annals of Statistics\/}~{\em 15\/}(4), 1548--1562.

\bibitem[\protect\citeauthoryear{K{\"u}nzel, Sekhon, Bickel, and Yu}{K{\"u}nzel
  et~al.}{2019}]{kunzel2019metalearners}
K{\"u}nzel, S.~R., J.~S. Sekhon, P.~J. Bickel, and B.~Yu (2019).
\newblock Metalearners for estimating heterogeneous treatment effects using
  machine learning.
\newblock {\em Proceedings of the national academy of sciences\/}~{\em
  116\/}(10), 4156--4165.

\bibitem[\protect\citeauthoryear{Lewandowski, Ryznar, and {\.Z}ak}{Lewandowski
  et~al.}{1995}]{lewandowski1995anderson}
Lewandowski, M., M.~Ryznar, and T.~{\.Z}ak (1995).
\newblock Anderson inequality is strict for gaussian and stable measures.
\newblock {\em Proceedings of the American Mathematical Society\/}~{\em
  123\/}(12), 3875--3880.

\bibitem[\protect\citeauthoryear{Luedtke}{Luedtke}{2023}]{HilbertOneStep}
Luedtke, A. (2023).
\newblock Hilbertonestep r package.
\newblock \url{www.github.com/alexluedtke12/HilbertOneStep}.

\bibitem[\protect\citeauthoryear{Luedtke, Carone, and van~der Laan}{Luedtke
  et~al.}{2019}]{luedtke2019omnibus}
Luedtke, A., M.~Carone, and M.~J. van~der Laan (2019).
\newblock An omnibus non-parametric test of equality in distribution for
  unknown functions.
\newblock {\em Journal of the Royal Statistical Society: Series B (Statistical
  Methodology)\/}~{\em 81\/}(1), 75--99.

\bibitem[\protect\citeauthoryear{Luedtke and Wu}{Luedtke and
  Wu}{2020}]{luedtke2020efficient}
Luedtke, A. and J.~Wu (2020).
\newblock Efficient principally stratified treatment effect estimation in
  crossover studies with absorbent binary endpoints.
\newblock {\em Journal de la soci{\'e}t{\'e} fran{\c{c}}aise de
  statistique\/}~{\em 161\/}(1), 176--200.

\bibitem[\protect\citeauthoryear{Luedtke and van~der Laan}{Luedtke and van~der
  Laan}{2016}]{luedtke2016super}
Luedtke, A.~R. and M.~J. van~der Laan (2016).
\newblock Super-learning of an optimal dynamic treatment rule.
\newblock {\em The international journal of biostatistics\/}~{\em 12\/}(1),
  305--332.

\bibitem[\protect\citeauthoryear{Marcell{\'a}n and Xu}{Marcell{\'a}n and
  Xu}{2015}]{marcellan2015sobolev}
Marcell{\'a}n, F. and Y.~Xu (2015).
\newblock On sobolev orthogonal polynomials.
\newblock {\em Expositiones Mathematicae\/}~{\em 33\/}(3), 308--352.

\bibitem[\protect\citeauthoryear{Muandet, Kanagawa, Saengkyongam, and
  Marukatat}{Muandet et~al.}{2021}]{muandet2021counterfactual}
Muandet, K., M.~Kanagawa, S.~Saengkyongam, and S.~Marukatat (2021).
\newblock Counterfactual mean embeddings.
\newblock {\em J. Mach. Learn. Res.\/}~{\em 22}, 162--1.

\bibitem[\protect\citeauthoryear{Newey and McFadden}{Newey and
  McFadden}{1994}]{newey1994large}
Newey, W.~K. and D.~McFadden (1994).
\newblock Large sample estimation and hypothesis testing.
\newblock {\em Handbook of econometrics\/}~{\em 4}, 2111--2245.

\bibitem[\protect\citeauthoryear{Nie and Wager}{Nie and
  Wager}{2021}]{nie2021quasi}
Nie, X. and S.~Wager (2021).
\newblock Quasi-oracle estimation of heterogeneous treatment effects.
\newblock {\em Biometrika\/}~{\em 108\/}(2), 299--319.

\bibitem[\protect\citeauthoryear{Pfanzagl}{Pfanzagl}{1982}]{pfanzagl1982lecture}
Pfanzagl, J. (1982).
\newblock Lecture notes in statistics.
\newblock {\em Contributions to a general asymptotic statistical theory\/}~{\em
  13}.

\bibitem[\protect\citeauthoryear{Pfanzagl}{Pfanzagl}{1990}]{pfanzagl1990estimation}
Pfanzagl, J. (1990).
\newblock Estimation in semiparametric models.
\newblock In {\em Estimation in Semiparametric Models}, pp.\  17--22. Springer.

\bibitem[\protect\citeauthoryear{Robins}{Robins}{1986}]{robins1986new}
Robins, J. (1986).
\newblock A new approach to causal inference in mortality studies with a
  sustained exposure period application to control of the healthy worker
  survivor effect.
\newblock {\em Mathematical modelling\/}~{\em 7\/}(9-12), 1393--1512.

\bibitem[\protect\citeauthoryear{Robins, Li, Mukherjee, Tchetgen, and van~der
  Vaart}{Robins et~al.}{2017}]{robins2017higher}
Robins, J., L.~Li, R.~Mukherjee, E.~T. Tchetgen, and A.~van~der Vaart (2017).
\newblock Higher order estimating equations for high-dimensional models.
\newblock {\em Annals of statistics\/}~{\em 45\/}(5), 1951.

\bibitem[\protect\citeauthoryear{Robins, Li, Tchetgen, van~der Vaart,
  et~al.}{Robins et~al.}{2008}]{robins2008higher}
Robins, J., L.~Li, E.~Tchetgen, A.~van~der Vaart, et~al. (2008).
\newblock Higher order influence functions and minimax estimation of nonlinear
  functionals.
\newblock {\em Probability and statistics: essays in honor of David A.
  Freedman\/}~{\em 2}, 335--421.

\bibitem[\protect\citeauthoryear{Rudin}{Rudin}{1987}]{rudin1987real}
Rudin, W. (1987).
\newblock {\em Real and Complex Analysis}.
\newblock Mathematics series. McGraw-Hill.

\bibitem[\protect\citeauthoryear{Schick}{Schick}{1986}]{schick1986asymptotically}
Schick, A. (1986).
\newblock On asymptotically efficient estimation in semiparametric models.
\newblock {\em The Annals of Statistics\/}, 1139--1151.

\bibitem[\protect\citeauthoryear{Schwartz}{Schwartz}{1967}]{schwartz1967estimation}
Schwartz, S.~C. (1967).
\newblock Estimation of probability density by an orthogonal series.
\newblock {\em The Annals of Mathematical Statistics\/}, 1261--1265.

\bibitem[\protect\citeauthoryear{Sz{\'e}kely and Bakirov}{Sz{\'e}kely and
  Bakirov}{2003}]{szekely2003extremal}
Sz{\'e}kely, G.~J. and N.~K. Bakirov (2003).
\newblock Extremal probabilities for gaussian quadratic forms.
\newblock {\em Probability theory and related fields\/}~{\em 126\/}(2),
  184--202.

\bibitem[\protect\citeauthoryear{Takatsu and Westling}{Takatsu and
  Westling}{2022}]{takatsu2022debiased}
Takatsu, K. and T.~Westling (2022).
\newblock Debiased inference for a covariate-adjusted regression function.
\newblock {\em arXiv preprint arXiv:2210.06448\/}.

\bibitem[\protect\citeauthoryear{Tikhonov, Goncharsky, Stepanov, and
  Yagola}{Tikhonov et~al.}{1995}]{tikhonov1995numerical}
Tikhonov, A.~N., A.~Goncharsky, V.~Stepanov, and A.~G. Yagola (1995).
\newblock {\em Numerical methods for the solution of ill-posed problems},
  Volume 328.
\newblock Springer Science \& Business Media.

\bibitem[\protect\citeauthoryear{Tsiatis}{Tsiatis}{2006}]{tsiatis2006semiparametric}
Tsiatis, A.~A. (2006).
\newblock Semiparametric theory and missing data.

\bibitem[\protect\citeauthoryear{Tsybakov}{Tsybakov}{2009}]{tsybakov2009nonparametric}
Tsybakov, A.~B. (2009).
\newblock {\em Introduction to Nonparametric Estimation}.
\newblock Springer.

\bibitem[\protect\citeauthoryear{van~der Laan}{van~der
  Laan}{2006}]{van2006statistical}
van~der Laan, M.~J. (2006).
\newblock Statistical inference for variable importance.
\newblock {\em The International Journal of Biostatistics\/}~{\em 2\/}(1).

\bibitem[\protect\citeauthoryear{van~der Laan, Bibaut, and Luedtke}{van~der
  Laan et~al.}{2018}]{van2018cv}
van~der Laan, M.~J., A.~Bibaut, and A.~R. Luedtke (2018).
\newblock Cv-tmle for nonpathwise differentiable target parameters.
\newblock In {\em Targeted Learning in Data Science}, pp.\  455--481. Springer.

\bibitem[\protect\citeauthoryear{van~der Laan and Dudoit}{van~der Laan and
  Dudoit}{2003}]{van2003unifiedCV}
van~der Laan, M.~J. and S.~Dudoit (2003).
\newblock Unified cross-validation methodology for selection among estimators
  and a general cross-validated adaptive epsilon-net estimator: Finite sample
  oracle inequalities and examples.

\bibitem[\protect\citeauthoryear{van~der Laan, Laan, and Robins}{van~der Laan
  et~al.}{2003}]{van2003unified}
van~der Laan, M.~J., M.~Laan, and J.~M. Robins (2003).
\newblock {\em Unified methods for censored longitudinal data and causality}.
\newblock Springer Science \& Business Media.

\bibitem[\protect\citeauthoryear{van~der Laan, Rose, et~al.}{van~der Laan
  et~al.}{2011}]{van2011targeted}
van~der Laan, M.~J., S.~Rose, et~al. (2011).
\newblock {\em Targeted learning: causal inference for observational and
  experimental data}, Volume~10.
\newblock Springer.

\bibitem[\protect\citeauthoryear{van~der Laan and Rubin}{van~der Laan and
  Rubin}{2006}]{van2006targeted}
van~der Laan, M.~J. and D.~Rubin (2006).
\newblock Targeted maximum likelihood learning.
\newblock {\em The international journal of biostatistics\/}~{\em 2\/}(1).

\bibitem[\protect\citeauthoryear{van~der Vaart}{van~der
  Vaart}{1991}]{van1991differentiable}
van~der Vaart, A. (1991).
\newblock On differentiable functionals.
\newblock {\em The Annals of Statistics\/}, 178--204.

\bibitem[\protect\citeauthoryear{van~der Vaart}{van~der
  Vaart}{2014}]{van2014higher}
van~der Vaart, A. (2014).
\newblock Higher order tangent spaces and influence functions.
\newblock {\em Statistical Science\/}, 679--686.

\bibitem[\protect\citeauthoryear{van~der Vaart and Wellner}{van~der Vaart and
  Wellner}{1989}]{van1989prohorov}
van~der Vaart, A. and J.~A. Wellner (1989).
\newblock Prohorov and continuous mapping theorems in the hoffmann-j{\o}rgensen
  weak convergence theory, with application to convolution and asymptotic
  minimax theorems.
\newblock {\em Tech. Rep. 157\/}.

\bibitem[\protect\citeauthoryear{van~der Vaart}{van~der
  Vaart}{2000}]{van2000asymptotic}
van~der Vaart, A.~W. (2000).
\newblock {\em Asymptotic statistics}, Volume~3.
\newblock Cambridge university press.

\bibitem[\protect\citeauthoryear{van~der Vaart, Dudoit, and van~der
  Laan}{van~der Vaart et~al.}{2006}]{vaart2006oracle}
van~der Vaart, A.~W., S.~Dudoit, and M.~J. v.~d. van~der Laan (2006).
\newblock Oracle inequalities for multi-fold cross validation.
\newblock {\em Statistics \& Decisions\/}~{\em 24\/}(3), 351--371.

\bibitem[\protect\citeauthoryear{van~der Vaart and Wellner}{van~der Vaart and
  Wellner}{1996}]{van1996weak}
van~der Vaart, A.~W. and J.~Wellner (1996).
\newblock {\em Weak convergence and empirical processes: with applications to
  statistics}.
\newblock Springer Science \& Business Media.

\bibitem[\protect\citeauthoryear{{v}on Mises}{{v}on
  Mises}{1947}]{mises1947asymptotic}
{v}on Mises, R. (1947).
\newblock On the asymptotic distribution of differentiable statistical
  functions.
\newblock {\em The annals of mathematical statistics\/}~{\em 18\/}(3),
  309--348.

\bibitem[\protect\citeauthoryear{Williamson, Gilbert, Simon, and
  Carone}{Williamson et~al.}{2021}]{williamson2021general}
Williamson, B.~D., P.~B. Gilbert, N.~R. Simon, and M.~Carone (2021).
\newblock A general framework for inference on algorithm-agnostic variable
  importance.
\newblock {\em Journal of the American Statistical Association\/}, 1--14.

\bibitem[\protect\citeauthoryear{Wright and Ziegler}{Wright and
  Ziegler}{2017}]{ranger}
Wright, M.~N. and A.~Ziegler (2017).
\newblock {ranger}: A fast implementation of random forests for high
  dimensional data in {C++} and {R}.
\newblock {\em Journal of Statistical Software\/}~{\em 77\/}(1), 1--17.

\bibitem[\protect\citeauthoryear{Yao}{Yao}{1967}]{yao1967applications}
Yao, K. (1967).
\newblock Applications of reproducing kernel hilbert spaces--bandlimited signal
  models.
\newblock {\em Information and Control\/}~{\em 11\/}(4), 429--444.

\bibitem[\protect\citeauthoryear{Zheng and Laan}{Zheng and
  Laan}{2011}]{zheng2011cross}
Zheng, W. and M.~J. Laan (2011).
\newblock Cross-validated targeted minimum-loss-based estimation.
\newblock In {\em Targeted Learning}, pp.\  459--474. Springer.

\end{thebibliography}
}


\appendix

\setcounter{equation}{0}
\renewcommand{\theequation}{S\arabic{equation}}
\setcounter{theorem}{0}
\setcounter{figure}{0}
\setcounter{table}{0}
\setcounter{lemma}{0}
\setcounter{corollary}{0}
\renewcommand{\thetheorem}{S\arabic{theorem}}
\renewcommand{\thecorollary}{S\arabic{corollary}}
\renewcommand{\thelemma}{S\arabic{lemma}}
\renewcommand{\thefigure}{S\arabic{figure}}
\renewcommand{\thetable}{S\arabic{table}}

\section*{\LARGE Appendices}

\DoToC

\section{Additional examples of pathwise differentiable parameters} \label{app:exClassical}

All derivations are given in Appendix~\ref{app:exDerivations}.

\begin{example}[Root-density function] \label{ex:den} Nonparametric density estimation is a well-studied problem. Estimating the (square root of) the density as an $L^2$ parameter has been done in \cite{cencov1962estimation}. In our setting, we suppose that $Z \sim P$ for $P \in \mathcal{P}$, where there is a $\sigma$-finite measure $\lambda$ that dominates all distributions in a locally nonparametric model $\mathcal{P}$ and we wish to estimate the square root of the density of $Z$, that is, $\nu(P)(z) := \frac{dP}{d\lambda}(z)^{1/2}$, where $\lambda$ denotes the Lebesgue measure. The parameter $\nu$ takes values in $\mathcal{H}:=L^2(\lambda)$. The local parameter takes the form $\dot{\nu}_P(s)(z) = \frac{1}{2}s(z)\nu(P)(z)$, and the efficient influence operator takes the form $$\dot{\nu}_P^\ast(h)(z) = \frac{h(z)}{2\nu(P)(z)} - E_P \left[\frac{h(Z)}{2\nu(P)(Z)} \right].$$ 
Similarly to Example~\ref{ex:cfdNonparametric}, $\dot{\nu}_P^*$ is not a bounded operator when $\lambda$ is not discrete, and so an EIF will not exist in these cases.

\end{example}

\begin{example}[Regression function] \label{ex:reg} We suppose that $Z:=(X,Y) \sim P$ for $P$ in a locally nonparametric model $\mathcal{P}$. We assume that for all $P, P' \in \mathcal{P}$, $P$ is equivalent to $P'$ in that $P \ll P'$ and $P' \ll P$, and also that
\begin{align}
\sup_{P\in\mathcal{P}}\esssup_x E_{P} [Y^2 \mid X=x]<\infty, \label{eq:regSecondMoment}
\end{align}
where $\esssup$ denotes a $P_X$-essential supremum with $P_X$ denoting the marginal distribution of $X$ under $P\in\mathcal{P}$. Let $\lambda_X$ be a measure that is dominated by the marginal distribution of $X$ under some (and, by their equivalence, all) distributions in $\mathcal{P}$. 
We wish to estimate the conditional mean $\nu(P)(x) := E_P[Y \mid X=x]$, where $\nu: \mathcal{P} \to \mathcal{H}$ with $\mathcal{H}:=L^2(\lambda_X)$. 
We establish pathwise differentiability at any $P\in\mathcal{P}$ that is such that $\frac{d\lambda_X}{dP_X}$ is bounded $P_X$-almost surely. We note that we do not require that $P_X\ll \lambda_X$; in such cases, 
 estimation error quantified via the $L^2(\lambda_X)$-norm only measures performance on a strict subset of the support of $P_X$.

The local parameter is $\dot{\nu}_P(s)(x) = \int [y-\nu(P)(x)]s(x,y) P_{Y\mid X}(dy \mid x)$ and the efficient influence operator is $\dot{\nu}_P^\ast(h)(x,y) = \frac{d\lambda_X}{dP_X}(x)[y - \nu(P)(x)]h(x)$. Similarly to Example~\ref{ex:cfdNonparametric}, $\dot{\nu}_P^*$ is not a bounded operator when $\lambda_X$ is not discrete, and so an EIF will not exist in these cases.
\end{example}

\begin{example}[Kernel mean embedding of distributions] \label{ex:kmed}
Let $\kappa: \mathcal{Z} \times \mathcal{Z} \to \mathbb{R}$ be a bounded, symmetric, positive definite function. Let $\mathcal{H}$ be the unique RKHS associated with the kernel $\kappa$ \citep{aronszajn1950theory}, and let $K_z:=\kappa(z,\cdot)$ be the associated feature map. 
Assume that $\mathcal{P}$ is a model on $\mathcal{Z}$ such that all $P \in \mathcal{P}$ are equivalent and $P \ll \lambda$ for all $P \in \mathcal{P}$.
The target of estimation is the evaluation of the kernel mean embedding $\nu: \mathcal{P} \to \mathcal{H}$ at $P$ \citep{gretton2012kernel}, where 
$$\nu(P) := \int K_z\, P(dz).$$
When the model is locally nonparametric,
the local parameter takes the form $\dot{\nu}_P(s) = \int K_z\, s(z)\, P(dz)$. The efficient influence operator takes the form $\dot{\nu}_P^\ast(h)(y) = h(y) - Ph$, and the efficient influence function is $P$-Bochner square integrable and takes the form $\phi_P(z) = K_z - \nu(P)$. Regardless of the initial estimator of $P_0$, the one-step estimator is given by $\bar{\nu}_n = \frac{1}{n} \sum_{i=1}^n K_{Z_i}$. In other words, $\bar{\nu}_n$ is the empirical kernel mean embedding as defined in \cite{gretton2012kernel}.
\end{example}

\begin{example}[Conditional average treatment effect] \label{ex:cate}
There has recently been much interest in various fields regarding the estimation of the conditional average treatment effect function \citep{hill2011bayesian,luedtke2016super,kunzel2019metalearners}.
Under conditions, this parameter corresponds to an additive causal effect between the mean outcome that would be observed among individuals with a given covariate value if, possibly contrary to fact, treatment 1 had been administered versus not administered. 
For the setting, suppose that $Z:=(X,A,Y) \sim P$ for $P \in \mathcal{P}$, where $X$ is a vector of covariates, $A$ is a binary treatment, and $Y$ is an outcome. As in Example~\ref{ex:den}, suppose that $\mathcal{P}$ is a locally nonparametric model consisting of equivalent measures. 
Define the propensity to receive treatment $a$ as $g_P(a\mid x) := P(A=a \mid X=x)$ and the outcome regression as $\mu_{P,a}(x) := E_P[Y \mid A=a,X=x]$. 
Suppose that all distributions $P\in\mathcal{P}$ are such that $\max_{a \in \{0,1\}} \esssup_{x \in \mathcal{X}} E_P(Y^2 \mid A=a, X=x) < \infty$ and $\max_{a \in \{0,1\}} \esssup_{x \in \mathcal{X}} g_P(a\mid x)^{-1}<\infty$. 
Let $\lambda_X$ be a measure that is dominated by the marginal of $X$ under the distributions in $\mathcal{P}$, and define $\mathcal{H}:=L^2(\lambda_X)$. 
The target of estimation is the conditional average treatment effect $\nu: \mathcal{P} \to \mathcal{H}$, defined as
\begin{align*}
\nu(P)(x) := \mu_{P,1}(x)- \mu_{P,0}(x).
\end{align*}
Similarly to Example~\ref{ex:den}, we establish pathwise differentiability at any $P\in\mathcal{P}$ that is such that $\frac{d\lambda_X}{dP_X}$ is bounded $P_X$-almost surely.
The local parameter takes the form
\begin{align}
\dot{\nu}_P(s)(x) = \int \frac{(2a-1)}{g_P(a\mid x)}[y-\mu_{P,a}(x)] s(y,a,x) P(dy,da \mid x). \label{eq:cateLocalP}
\end{align}
The efficient influence operator takes the form
\begin{align}
   \dot{\nu}_P^\ast(h)(y,a,x) = \frac{h(x)}{p_X(x)} \frac{(2a-1)}{g_P(a\mid x)}[y-\mu_{P,a}(x)]. \label{eq:cateEIO}
\end{align}
Similarly to Example~\ref{ex:cfdNonparametric}, $\dot{\nu}_P^*$ is not a bounded operator when $\lambda_X$ is not discrete, and so an EIF will not exist in these cases.
\end{example}

\section{Derivations for examples} \label{app:exDerivations}

\subsection{Example \ref{ex:cfdNonparametric}: counterfactual density function}

\subsubsection{Pathwise differentiability}\label{app:cdfPD}

We now show that $\nu$ is pathwise differentiable relative to a locally nonparametric model $\mathcal{P}$ at any $P\in\mathcal{P}$. To do this, we break our argument into two parts. First, we use Lemma~\ref{lem:suffCondsPD} to establish that $\nu$ is pathwise differentiable relative to a model $\mathcal{P}_g$ that is nonparametric up to the fact that the propensity to receive treatment is known to be equal to a fixed function $g$. Specifically, we consider the model $\mathcal{P}_g$ that consists of all distributions $P'$ that are such that $g_{P'}=g$ and for which there exists $P\in\mathcal{P}$ such that $P'_{Y\mid A,X}=P_{Y\mid A,X}$ and $P'_{X}=P_{X}$. 
Second, we use the fact that $\nu$ does not depend on the propensity to receive treatment to extend this pathwise differentiability result to the locally nonparametric model $\mathcal{P}$.

Throughout this subappendix we suppose that, at each $P\in\mathcal{P}$ and for a fixed $\delta>0$, $\mathcal{P}$ is large enough to contain submodels of the form $\{P_{\epsilon} : \epsilon\in [0,\delta)\}$, where $\frac{dP_{\epsilon,X}}{dP_X}(x)= 1 + \epsilon s_X(x)$, $\frac{dP_{\epsilon,A \mid X}}{dP_{A \mid X}}(a\mid x) = 1$, and $\frac{dP_{\epsilon,Y \mid A,X}}{dP_{Y \mid A,X}}(y\mid a,x) = 1 + \epsilon s_{Y \mid A,X}(y\mid a,x)$, where $s_X$ and $s_{Y\mid A,X}$ are arbitrary functions bounded in $(-\delta^{-1},\delta^{-1})$ that are such that $E_P[s_X(X)]=0$ and $E_P[s_{Y\mid A,X}(Y\mid A,X)\mid A,X] = 0$ $P$-almost surely. There is no loss in generality in assuming that $\mathcal{P}$ is this large since pathwise differentiability relative to a larger model also implies pathwise differentiability relative to a smaller model, and, when both the larger and the smaller models are locally nonparametric, the local parameters and efficient influence operators in the two models necessarily agree.

We now use Lemma~\ref{lem:suffCondsPD} to prove that $\nu$ is pathwise differentiable relative to $\mathcal{P}_g$, where for now we take $g$ to be any fixed function that is such that $g=g_{P'}$ for at least one $P'\in\mathcal{P}$. Fix two distributions $P$ and $\tilde{P}$ in $\mathcal{P}_g$. 
Let $\lambda_X$ denote a $\sigma$-finite measure that dominates the marginals in $X$ of $P$ and $\tilde{P}$, that is, $P_X \ll \lambda_X$ and $\tilde{P}\ll \lambda_X$, and let $q_X$ and $\tilde{q}_X$ denote the square root of the marginal density of $X$ relative to $\lambda_X$ under $P$ and $\tilde{P}$, respectively. For any $P'\in\mathcal{P}$, we also define $q_{P'}(\,\cdot\mid x)$ to be the square root of the conditional density of $Y$ given $(A,X)=(1,x)$ under $P'$. For brevity we let $q:=q_P$ and $\tilde{q}:=q_{\tilde{P}}$. We have that
\begin{align*}
    &\left\|\nu(\tilde{P})-\nu(P)\right\|_{\mathcal{H}}^2 \\
    &= \int \left[ \int \left\{\tilde{q}^2(y\mid x) \tilde{q}_{X}^2(x) -q^2(y\mid x) q_X^2(x)\right\} d\lambda_X(x)\right]^2 d\lambda_Y(y) \\
    &= \int \Bigg[ \int \left\{\tilde{q}(y\mid x) \tilde{q}_{X}(x)+q(y\mid x) q_X(x)\right\} \\
    &\hspace{4em}\cdot\left\{\tilde{q}(y\mid x) \tilde{q}_{X}(x) -q(y\mid x) q_X(x)\right\} d\lambda_X(x)\Bigg]^2 d\lambda_Y(y) \\
    &\le \int \left[ \int \left\{\tilde{q}(y\mid x) \tilde{q}_{X}(x)+q(y\mid x) q_X(x)\right\}^2 d\lambda_X(x)\right] \\
    &\hspace{2.5em}\cdot\left[ \int \left\{\tilde{q}(y\mid x) \tilde{q}_{X}(x) -q(y\mid x) q_X(x)\right\}^2 d\lambda_X(x)\right] d\lambda_Y(y) \\
    &\le 2\int \left[\nu(\tilde{P})(y)+\nu(P)(y)\right]\left[ \int \left\{\tilde{q}(y\mid x) \tilde{q}_{X}(x) -q(y\mid x) q_X(x)\right\}^2 d\lambda_X(x)\right] d\lambda_Y(y),
\end{align*}
where the first inequality holds by Cauchy-Schwarz and the second by the fact that $(b+c)^2\le 2(b^2+c^2)$. Using \eqref{cond:cfDensAndstrPos} and the fact that $\nu$ does not depend on the propensity to receive treatment, this shows that, for $C_1=4\sup_{P'\in\mathcal{P}}\esssup_y \nu(P')(y)$,
\begin{align*}
    &\left\|\nu(\tilde{P})-\nu(P)\right\|_{\mathcal{H}}^2 \\
    &\le C_1\iint \left\{\tilde{q}(y\mid x) \tilde{q}_{X}(x) -q(y\mid x) q_X(x)\right\}^2 d\lambda_X(x)d\lambda_Y(y) \\
    &=  C_1\iint \frac{1}{g(1\mid x)}\left\{\tilde{q}(y\mid x)g^{1/2}(1\mid x)\tilde{q}_{X}(x) -q(y\mid x)g^{1/2}(1\mid x) q_X(x)\right\}^2 d\lambda_X(x)d\lambda_Y(y).
\end{align*}
Again using \eqref{cond:cfDensAndstrPos} and letting $C_2=C_1/\inf_{P'\in\mathcal{P}}\essinf_{x \in \mathcal{X}} g_{P'}(1\mid x)$, we find that
\begin{align*}
    &\left\|\nu(\tilde{P})-\nu(P)\right\|_{\mathcal{H}}^2 \\
    &\le  C_2\iint\left\{\tilde{q}(y\mid x)g^{1/2}(1\mid x)\tilde{q}_{X}(x) -q(y\mid x)g^{1/2}(1\mid x) q_X(x)\right\}^2 d\lambda_X(x)d\lambda_Y(y).
\end{align*}
Finally, noting that the double integral on the right-hand side upper bounds by $H^2(P,\tilde{P})$, we have shown that $\|\nu(\tilde{P})-\nu(P)\|_{\mathcal{H}}\le C_2^{1/2}H(P,\tilde{P})$, which establishes \ref{it:localLip} of Lemma~\ref{lem:suffCondsPD} when the model is $\mathcal{P}_g$, where $g$ is an arbitrary value of the propensity to receive treatment for which there exists some $P\in\mathcal{P}$ such that $g=g_P$.

Hereafter we fix $P\in\mathcal{P}$ and suppose that $g=g_P$. We now establish \ref{it:closure} of Lemma~\ref{lem:suffCondsPD} at $P$ for the model $\mathcal{P}_g$ with $\eta_P(s)(y)$ equal to the right-hand side of \eqref{eq:countDenLocalP} from the main text. To do this, we use the following model: $\{P_{\epsilon} : \epsilon\in [0,\delta)\}$, where $\frac{dP_{\epsilon,X}}{dP_X}(x)= 1 + \epsilon s_X(x)$, $\frac{dP_{\epsilon,A \mid X}}{dP_{A \mid X}}(a\mid x) = 1$, and $\frac{dP_{\epsilon,Y \mid A,X}}{dP_{Y \mid A,X}}(y\mid a,x) = 1 + \epsilon s_{Y \mid A,X}(y\mid a,x)$, where $s_X$ and $s_{Y\mid A,X}$ are bounded in $[-\delta^{-1}/2,\delta^{-1}/2]$ and $E_P[s_X(X)]=0$ and $E_P[s_{Y\mid A,X}(Y\mid A,X)\mid A,X] = 0$ $P$-almost surely. The model $\{P_{\epsilon} : \epsilon\in [0,\delta)\}$ is a submodel of $\mathcal{P}$ by assumption and, due to the fact that $P_{\epsilon,A\mid X}=P_{A\mid X}$ for all $\epsilon$, is therefore also a submodel of $\mathcal{P}_g$. It can be verified that this submodel has score $s(x,a,y)=s_X(x) + s_{Y\mid A,X}(y\mid a,x)$ at $\epsilon=0$ and that the $L^2(P)$-closure of the set containing such scores corresponds to the tangent space of $\mathcal{P}_g$ at $P$. In what follows we will show that $\| \nu(P_\epsilon) - \nu(P) - \epsilon\, \eta_P(s) \|_{\mathcal{H}} = o(\epsilon)$. To this end, observe that
\begin{align*}
&\| \nu(P_\epsilon) - \nu(P) - \epsilon \eta_P(s) \|_{\mathcal{H}}^2 \\
&= \int \Bigg[ \int \left\{\left(\frac{dP_{\epsilon,Y \mid A,X}}{dP_{Y \mid A,X}}(y\mid 1,x)\frac{dP_{\epsilon,X}}{dP_{X}}(x)-1\right)q^2(y\mid x)q_X^2(x)\right\} d\lambda_X(x) \\
&\quad\quad- \epsilon\eta_P(s)(y)\Bigg]^2 d\lambda_Y(y) \\
&= \int \Bigg[ \int \big\{\big(\epsilon s_{Y\mid A,X}(y\mid 1,x) \\
&\quad\quad+ \epsilon s_X(x) + \epsilon^2 s_{Y\mid A,X}(y\mid 1,x)s_X(x)\big)q^2(y\mid x)q_X^2(x)\big\} d\lambda_X(x)  - \epsilon\eta_P(s)(y)\Bigg]^2 d\lambda_Y(y).
\end{align*}
Using that $s_{Y\mid A,X}(y\mid a,x) = s(x,a,y)-E_P[s(X,A,Y)\mid A=a,X=x]$ and $s_{X}(x) = E_P[s(X,A,Y)\mid X=x]$ and plugging in the definition of $\eta_P$, we see that
\begin{align*}
\| &\nu(P_\epsilon) - \nu(P) - \epsilon \eta_P(s) \|_{\mathcal{H}}^2 \\
&= \int \left[ \int \epsilon^2 s_{Y\mid A,X}(y\mid 1,x)s_X(x)q^2(y\mid x)q_X^2(x) d\lambda_X(x)\right]^2 d\lambda_Y(y) \\
&\le \epsilon^4\delta^{-4}\int \left[ \int q^2(y\mid x)q_X^2(x) d\lambda_X(x)\right]^2 d\lambda_Y(y) \\
&= \epsilon^4\delta^{-4}\int \nu(P)(y)^2 d\lambda_Y(y) \\
&\le\epsilon^4\delta^{-4}\left[\int \nu(P)(y) d\lambda_Y(y)\right] \sup_{P\in\mathcal{P}}\esssup_{y} \nu(P)(y) \\
&= \epsilon^4\delta^{-4}\sup_{P\in\mathcal{P}}\esssup_{y} \nu(P)(y),
\end{align*}
where the first inequality used that $s_{Y\mid A,X}$ and $s_X$ both have ranges bounded in $(-\delta^{-1},\delta^{-1})$. By \eqref{cond:cfDensAndstrPos}, the right-hand side is $O(\epsilon^4)$, and therefore is $o(\epsilon^2)$ with much to spare. Hence, $\| \nu(P_\epsilon) - \nu(P) - \epsilon\, \eta_P(s) \|_{\mathcal{H}} = o(\epsilon)$.

We now verify that $\eta_P$ is a bounded operator, which will then show that \ref{it:closure} of Lemma~\ref{lem:suffCondsPD} holds at $P$ for the model $\mathcal{P}_g$. Take any $s$ in the tangent space of $\mathcal{P}_g$ at $P$. Let $s_{Y\mid A,X}(y\mid a,x) := s(x,a,y)-E_P[s(X,A,Y)\mid A=a,X=x]$ and $s_{X}(x) := E_P[s(X,A,Y)\mid X=x]$. It can be verified that, $E_P[s(X,A,Y)\mid A,X]-E_P[s(X,A,Y)\mid X]=0$ $P$-a.s., and so $s=s_{Y\mid A,X}+s_{X}$. We write $p(\,\cdot \mid a,x) $ to denote the conditional density under $P$ of $Y$ given $(A,X)=(a,x)$. Observe that
\begin{align*}
    &\|\eta_P(s)\|_{L^2(\lambda_Y)}^2 \\
    &\quad= \int \left\{ \int [s_{Y \mid A,X}(y \mid 1,x) + s_X(x)]p(y \mid 1,x) P_X(dx)\right\}^2 \lambda_Y(dy) \\
    &\quad\le \int \left\{  [s_{Y \mid A,X}(y \mid 1,x) + s_X(x)]p(y \mid 1,x) \right\}^2 \lambda_Y(dy)P_X(dx) \\
    &\quad= \int [s_{Y \mid A,X}(y \mid 1,x) + s_X(x)] ^2p(y \mid 1,x) P_{Y\mid A,X}(dy\mid 1,x)P_X(dx) \\
    &\quad= \int [s_{Y \mid A,X}(y \mid 1,x) + s_X(x)] ^2p(y \mid 1,x) \frac{a}{P(A=1\mid X=x)} P(dz) \\
    &\quad= \int [s_{Y \mid A,X}(y \mid a,x) + s_X(x)] ^2p(y \mid a,x) \frac{a}{P(A=1\mid X=x)} P(dz) \\
    &\quad\le \left(\int [s_{Y \mid A,X}(y \mid a,x) + s_X(x)] ^2 P(dz)\right) \sup_{x,y} \left(p(y \mid 1,x) \frac{1}{P(A=1\mid X=x)}\right) \\
    &\quad\le \|s\|_{L^2(P)}^2 \sup_{x,y} \left(p(y \mid 1,x) \frac{1}{P(A=1\mid X=x)}\right).
\end{align*}
By \eqref{cond:cfDensAndstrPos}, the supremum is finite and so $\eta_P$ is a bounded linear operator. Hence, by Lemma~\ref{lem:suffCondsPD}, $\nu$ is pathwise differentiable at $P$ relative to $\mathcal{P}_g$ with $\dot{\nu}_P=\eta_P$.

We now show that $\nu$ is pathwise differentiable at $P$ relative to $\mathcal{P}$. To do this, we consider an arbitrary $s\in L_0^2(P)$ and $\{P_\epsilon: \epsilon\}\in \mathscr{P}(P,\mathcal{P},s)$. Let $s_{Y\mid A,X}(y\mid a,x) := s(x,a,y)-E_P[s(X,A,Y)\mid A=a,X=x]$ and $s_{X}(x) := E_P[s(X,A,Y)\mid X=x]$. Also define $\{P_\epsilon': \epsilon\}$ to be the submodel consisting of distributions $P_\epsilon'$ that are such that $P_{\epsilon,Y\mid A,X}'=P_{\epsilon,Y\mid A,X}$, $g_{P_\epsilon'}=g_P$, and $P_{\epsilon,X}'=P_{\epsilon,X}$. Lemma~\ref{lem:preserveDQM} can be used to verify that $\{P_\epsilon': \epsilon\}\in \mathscr{P}(P,\mathcal{P}_g,s_{Y\mid A,X}+s_X)$. Also, since $\nu$ does not depend on the propensity to receive treatment, $\nu(P_\epsilon')=\nu(P_\epsilon)$ and, by the definition of $\dot{\nu}_P$ in \eqref{eq:countDenLocalP}, $\dot{\nu}_P(s)=\dot{\nu}_P(s_{Y\mid A,X}+s_X)$. Hence,
\begin{align}
\left\|\nu(P_\epsilon)-\nu(P)-\epsilon \dot{\nu}_P(s)\right\|_{L^2(\lambda_Y)}&= \left\|\nu(P_\epsilon')-\nu(P)-\epsilon \dot{\nu}_P(s_{Y\mid A,X}+s_X)\right\|_{L^2(\lambda_Y)}. \label{eq:extendPD}
\end{align}
By the pathwise differentiability of $\nu$ at $P$ relative to $\mathcal{P}_g$, the right-hand side is $o(\epsilon)$. Recalling the left-hand side above, this shows that $\nu$ is pathwise differentiable at $P$ relative to $\mathcal{P}$ with local parameter $\dot{\nu}_P$.

\subsubsection{Efficient influence operator}
Take any $s\in L_0^2(P)$. Let $s_{Y\mid A,X}(y\mid a,x) := s(x,a,y)-E_P[s(X,A,Y)\mid A=a,X=x]$ and $s_{X}(x) := E_P[s(X,A,Y)\mid X=x]$. To compute the adjoint of $\dot{\nu}_P$, note that, for $h \in L^2(\lambda_Y)$,
$$\langle \dot{\nu}_P (s), h \rangle_{L^2(\lambda_Y)} = \int [s_{Y \mid A,X}(y \mid 1, x) + s_X(x)] h(y) P_{Y\mid A,X}(dy \mid 1, x) P_X(dx).$$
Note that
\begin{align*}
\int &s_{Y \mid A,X}(y \mid 1, x)h(y) P_{Y\mid A,X}(dy \mid 1, x) P_X(dx) \\
&= \int \frac{1\{a=1\}}{g_P(a\mid x)}\left\{h(y)-E_P\left[h(Y) \mid A=a, X=x\right]\right\} s(z)P(dz), \\
\int &s_X(x)h(y) P_{Y\mid A,X}(dy \mid 1, x) P_X(dx) \\
&= \int \left\{E_P\left[h(Y) \mid A=1, X=x\right] - E_P E_P\left[h(Y) \mid A=1, X\right]\right\} s(z) P(dz).
\end{align*}
Thus, \eqref{eq:countDenAdjoint} holds.

\subsubsection{Bounding the regularized remainder term}\label{app:cdRegRem}

For $k\in\mathbb{N}$, let $m_{P,k} : x\mapsto E_P[h_k(Y)\mid A=1,X=x]$. We now use Lemma~\ref{lem:regRemBd} to study $\mathcal{R}_n^{j,\beta_n}$. Towards this, note that, for any $P\in\mathcal{P}$ and $k\in\mathbb{N}$,
\begin{align*}
\left\langle \nu(P)-\nu(P_0),h_k\right\rangle_{\mathcal{H}} + P_0 \dot{\nu}_P^*(h_k)&= E_0\left[\left(1-\frac{g_0(1\mid X)}{g_P(1\mid X)}\right)\left(m_{P,k}(X)-m_{0,k}(X)\right)\right].
\end{align*}
Hence, by Lemma~\ref{lem:regRemBd} and the Cauchy-Schwarz inequality,
\begin{align*}
\|\mathcal{R}_P^\beta\|_{\mathcal{H}}^2&\le \left\|1-\frac{g_0(1\mid \cdot)}{g_P(1\mid \cdot)}\right\|_{L^2(P_{0,X})}^2 \sum_{k=1}^\infty \beta_k^2\left\|m_{P,k}-m_{0,k}\right\|_{L^2(P_{0,X})}^2.
\end{align*}
Combining this with \eqref{cond:cfDensAndstrPos} and letting $C:=1/\inf_{P\in\mathcal{P}}\essinf_{x \in \mathcal{X}} g_P(1\mid x)<\infty$, we then obtain
\begin{align*}
    \|\mathcal{R}_P^\beta\|_{\mathcal{H}}^2&\le C\left\|g_P(1\mid \cdot)-g_0(1\mid \cdot)\right\|_{L^2(P_{0,X})}^2 \sum_{k=1}^\infty \beta_k^2\left\|m_{P,k}-m_{0,k}\right\|_{L^2(P_{0,X})}^2.
\end{align*}
Further observe that, for any $k\in\mathbb{N}$, Cauchy-Schwarz and the fact that $h_k$ has unit length in $\mathcal{H}=L^2(\lambda_Y)$ yield that
\begin{align*}
&\left\|m_{P,k}-m_{0,k}\right\|_{L^2(P_{0,X})}^2 \\
&= \int \left(\int h_k(y) [p_{Y\mid A,X}(y\mid 1,x)-p_{0,Y\mid A,X}(y\mid 1,x)] \lambda_Y(dy)\right)^2 P_{0,X}(dx) \\
&\le \int \left(\int h_k(y)^2 \lambda_Y(dy)\right)\left(\int [p_{Y\mid A,X}(y\mid 1,x)-p_{0,Y\mid A,X}(y\mid 1,x)]^2 \lambda_Y(dy)\right) P_{0,X}(dx) \\
&= \iint [p_{Y\mid A,X}(y\mid 1,x)-p_{0,Y\mid A,X}(y\mid 1,x)]^2 \lambda_Y(dy) P_{0,X}(dx).
\end{align*}
Combining the preceding two displays gives \eqref{eq:cfdNonpRem}.

\subsection{Example~\ref{ex:cfdBandlimited}: bandlimited counterfactual density function}\label{lem:bandLimDen}

\subsubsection{Preliminaries}

Many of our arguments rely on the following result, which shows the sense in which $\mathscr{B}$ can be viewed as an $L^2(\lambda_Y)$ projection onto $\underline{\mathcal{H}}$. In this lemma and in this lemma only, we are careful to distinguish between elements of $L^2(\lambda_Y)$, which are equivalence classes of functions of the form $[f]$, and elements of $\underline{\mathcal{H}}$, which are functions. After this lemma, we return to following the usual conventions that (i) a generic $\lambda_Y$-square integrable function $h : \mathbb{R}\rightarrow\mathbb{R}$ can be treated as an element of $L^2(\lambda_Y)$ by simply replacing $h$ by $[h]$, and (ii) for a generic equivalence class $[f]$ of $L^2(\lambda_Y)$, the function $y\mapsto f(y)$ corresponds to any function belonging to the equivalence class $[f]$. In all contexts where the convention (ii) is used, the particular element of the equivalence class that is selected will not matter.

\begin{lemma} \label{lem:bandlimitedproj}
For a $\lambda_Y$-square integrable function $h : \mathbb{R}\rightarrow\mathbb{R}$, let $[h]$ denote the equivalence class of functions that are $\lambda_Y$-a.e. equal to $h$. Denoting a generic element of $L^2(\lambda_Y)$ by $[h]$, the operator $[h]\mapsto [\mathscr{B}(h)]$ is an orthogonal projection in $L^2(\lambda_Y)$ onto the closed subspace $\underline{\mathcal{H}}^\ddagger:=\{[h] : h\in \underline{\mathcal{H}}\}$.
\end{lemma}
\begin{proof}
Let $\mathscr{B}^\ddagger : [h]\mapsto [\mathscr{B}(h)]$. 
For any function $v : \mathbb{R}\rightarrow\mathbb{C}$, we define $1_{[-b,b]} \cdot v$ as the function $\xi\mapsto 1_{[-b,b]}(\xi) \cdot v(\xi)$. 

We first show that $\mathscr{B}^\ddagger$ is a linear operator. We show this using that $[f + g]=[f]+[g]$ for any $[f],[g]\in L^2(\lambda_Y)$ and also that the Fourier and inverse Fourier transforms are linear. In particular, for any $[f],[g]\in L^2(\lambda_Y)$ and $c\in\mathbb{R}$, we have that
\begin{align*}
\mathscr{B}^\ddagger([cf+g])&= \mathscr{B}^\ddagger([cf]+[g]) = [\mathscr{B}(cf+g)] = [\mathcal{F}^{-1}(1_{[-b,b]} \cdot \mathcal{F}(cf+g))] \\
&= [\mathcal{F}^{-1}(1_{[-b,b]} \cdot \mathcal{F}(cf)+1_{[-b,b]}\cdot\mathcal{F}(g))] \\
&= [\mathcal{F}^{-1}(1_{[-b,b]} \cdot \mathcal{F}(cf))+\mathcal{F}^{-1}(1_{[-b,b]}\cdot\mathcal{F}(g))] \\
&= [c\mathcal{F}^{-1}(1_{[-b,b]} \cdot \mathcal{F}(f))+\mathcal{F}^{-1}(1_{[-b,b]}\cdot\mathcal{F}(g))] \\
&= [c\mathscr{B}(f)+\mathscr{B}(g)] = c[\mathscr{B}(f)]+[\mathscr{B}(g)] = c\mathscr{B}^\ddagger([f])+\mathscr{B}^\ddagger([g]).
\end{align*}
The operator $\mathscr{B}^\ddagger$ is idempotent since, for any $[f]\in L^2(\lambda_Y)$,
\begin{align*}
\mathscr{B}^\ddagger\circ\mathscr{B}^\ddagger([f]) &= \mathscr{B}^\ddagger([\mathscr{B}(f)]) =  [\mathscr{B}\circ \mathscr{B}(f)] \\
&= [\mathcal{F}^{-1}(1_{[-b,b]} \cdot \mathcal{F} \circ \mathscr{B}(f))] = [\mathcal{F}^{-1}(1_{[-b,b]} \cdot \mathcal{F} \circ \mathcal{F}^{-1}(1_{[-b,b]} \cdot \mathcal{F}(f)))] \\
&= [\mathcal{F}^{-1}(1_{[-b,b]} \cdot 1_{[-b,b]} \cdot \mathcal{F}(f))] = [\mathcal{F}^{-1}(1_{[-b,b]} \cdot \mathcal{F}(f))] = [\mathscr{B}(f)] = \mathscr{B}^\ddagger([f]).
\end{align*}
We now show that $\mathscr{B}^\ddagger$ is self-adjoint. For any $[f], [g] \in L^2(\lambda_Y)$, the definitions of $\mathscr{B}$ and $\mathscr{B}^\ddagger$ show that
\begin{align*}
    \langle [g], \mathscr{B}^\ddagger([f]) \rangle_{L^2(\lambda_Y)} &=  \langle [g], [\mathscr{B}(f)] \rangle_{L^2(\lambda_Y)} = \int g(y)\,\mathscr{B}(f)(y)\, \lambda_Y(dy) \\
    &= \int g(y)\,\mathcal{F}^{-1} (1_{[-b,b]} \cdot \mathcal{F}(f))(y)\, \lambda_Y(dy) \\
    \intertext{Applying Plancherel's theorem, the above display continues as follows:}
    &= \int \mathcal{F} (g)(y)\,(1_{[-b,b]} \cdot \mathcal{F}(f))(y)\, \lambda_Y(dy) \\
    &= \int (1_{[-b,b]} \cdot \mathcal{F}) (g)(y)\,\mathcal{F}(f)(y)\, \lambda_Y(dy) \\
    &= \int \mathcal{F}^{-1}(1_{[-b,b]} \cdot \mathcal{F}) (g)(y)\,f(y)\, \lambda_Y(dy) = \int \mathscr{B}(g)(y)\,f(y)\, \lambda_Y(dy) \\
    &= \langle [\mathscr{B}(g)], [f] \rangle_{L^2(\lambda_Y)} = \langle \mathscr{B}^\ddagger([g]), [f] \rangle_{L^2(\lambda_Y)}.
\end{align*}
Hence, $\mathscr{B}^\ddagger$ is an orthogonal projection. Furthermore, the image $\textnormal{Im}(\mathscr{B}^\ddagger)$ of $\mathscr{B}^\ddagger$ can be seen to be equal to $\underline{\mathcal{H}}^\ddagger$. Indeed, (i) $\underline{\mathcal{H}}^\ddagger\subseteq\textnormal{Im}(\mathscr{B}^\ddagger)$ since, for any $[h]\in\underline{\mathcal{H}}^\ddagger$, $[h]=[\mathscr{B}(h)]=\mathscr{B}^\ddagger([h])\in \textnormal{Im}(\mathscr{B}^\ddagger)$ and (ii) $\textnormal{Im}(\mathscr{B}^\ddagger)\subseteq \underline{\mathcal{H}}^\ddagger$ since, for any $[h]\in\textnormal{Im}(\mathscr{B}^\ddagger)$, the idempotency of $\mathscr{B}^\ddagger$ shows that $\mathcal{F}(h)=\mathcal{F}\circ \mathscr{B}(h) = 1_{[-b,b]} \cdot \mathcal{F}(h)$, and so $h\in \underline{\mathcal{H}}$ and $[h]\in\underline{\mathcal{H}}^\ddagger$. As the image of an orthogonal projection in a Hilbert space is closed, $\underline{\mathcal{H}}^\ddagger$ is a closed subspace of $L^2(\lambda_Y)$. This completes the proof.
\end{proof}

\subsubsection{Pathwise differentiability}
Lemma \ref{lem:bandlimitedproj} and the pathwise differentiability of $\nu$, established in Example~\ref{ex:cfdNonparametric}, implies the pathwise differentiability of $\underline{\nu}$ by the following display, which holds for any $\{P_\epsilon : \epsilon \in [0,\delta)\} \in \mathscr{P}(P,\mathcal{P},s)$:
\begin{align*}
    \|\underline{\nu}(P_\epsilon)-\underline{\nu}(P_0)-\epsilon \mathscr{B}(\dot{\nu}_P(s))\|_{L^2(\lambda_Y)} &= \|\mathscr{B}(\nu(P_\epsilon)-\nu(P_0)-\epsilon \dot{\nu}_0(s))\|_{L^2(\lambda_Y)} \\
    &\leq \|\nu(P_\epsilon)-\nu(P_0)-\epsilon \dot{\nu}_0(s)\|_{L^2(\lambda_Y)} = o(\epsilon).
\end{align*}
Hence, $\underline{\dot{\nu}}_P(s) = \mathscr{B}\circ \dot{\nu}_P(s)$. 
Because $\mathscr{B}$ and $\dot{\nu}_P$ are both linear operators, $\underline{\dot{\nu}}_P : \dot{\mathcal{P}}_P\rightarrow\underline{\mathcal{H}}$ is a linear operator. This operator is also bounded since, by Lemma~\ref{lem:bandlimitedproj} and the boundedness of $\dot{\nu}_P$, the following holds for any $s\in\dot{\mathcal{P}}_P$:
\begin{align*}
    \|\underline{\dot{\nu}}_P(s)\|_{L^2(\lambda_Y)}&= \|\mathscr{B}\circ \dot{\nu}_P(s)\|_{L^2(\lambda_Y)}\le \|\dot{\nu}_P(s)\|_{L^2(\lambda_Y)}\le \|s\|_{L^2(P)} \|\dot{\nu}_P\|_{\textnormal{op}}.
\end{align*}

\subsubsection{Efficient influence operator}
Since $\mathscr{B}$ is self-adjoint, for any $h\in\underline{\mathcal{H}}$,
$$\langle \mathscr{B}\circ \dot{\nu}_P(s), h\rangle_{L^2(\lambda_Y)} = \langle \dot{\nu}_P(s), \mathscr{B}(h) \rangle_{L^2(\lambda_Y)} = \langle s, \dot{\nu}_P^\ast \circ \mathscr{B}(h) \rangle_{L^2(\lambda_Y)}.$$
Thus, $\underline{\dot{\nu}}_P^\ast (h) = \dot{\nu}_P^\ast \circ \mathscr{B} (h)$. Furthermore, as $h\in\underline{\mathcal{H}}$ and $\mathscr{B}$ is a projection onto $\underline{\mathcal{H}}$, $\underline{\dot{\nu}}_P^\ast (h) = \dot{\nu}_P^\ast(h)$.

\subsubsection{Efficient influence function}\label{app:bandLimDenEIF}

To compute the efficient influence function, we let
\begin{align*}
  &\underline{\phi}_P (y,a,x)(\tilde{y}) \\
  &\quad:= \underline{\dot{\nu}}_P^\ast (\underline{K}_{\tilde{y}})(y,a,x) \\
  &\quad= \frac{1\{a=1\}}{g_P(a\mid x)}\left\{\underline{K}_{\tilde{y}}(y)-E_P\left[\underline{K}_{\tilde{y}}(Y) \mid A=a, X=x \right]\right\} \nonumber \\
  &\quad\quad+ \left(E_P\left[\underline{K}_{\tilde{y}}(Y)\mid A=1, X=x\right] - \int E_P\left[\underline{K}_{\tilde{y}}(Y) \mid A=1, X=\tilde{x}\right]P_X(d\tilde{x})\right).
\end{align*}
By the symmetry of the kernel, $\underline{K}_{\tilde{y}}(y)=\underline{K}_y(\tilde{y})$. Plugging this into the above and noting that $\underline{\nu}(P)=\int E_P\left[\underline{K}_Y \mid A=1, X=\tilde{x}\right]P_X(d\tilde{x})$, we find that
\begin{align}
&\underline{\phi}_P(y,a,x) \nonumber\\
&= \frac{1\{a=1\}}{g_P(a\mid x)}\left\{\underline{K}_y-E_P\left[\underline{K}_Y \mid A=a, X=x \right]\right\} + E_P\left[\underline{K}_Y\mid A=1, X=x\right] - \underline{\nu}(P). \label{eq:cfDensEIF}
\end{align}
Combining \eqref{cond:cfDensAndstrPos} with the square integrability of the sinc function shows that $\underline{\phi}_P$ belongs to $L^2(P;\mathcal{H})$. Hence, Theorem~\ref{thm:eifProperties} shows that $\underline{\phi}_P$ is the EIF of $\nu$.

\subsubsection{Bounding the remainder term}\label{app:cdRem}

Fix $P\in\mathcal{P}$. Observe that, for any $y\in\mathbb{R}$, the definition of $\underline{\nu}$ and the expression in \eqref{eq:cfDensEIF} yields that
\begin{align*}
    \underline{\mathcal{R}}_P&:= \underline{\nu}(P) + P_0 \underline{\phi}_P -\underline{\nu}(P_0) \\
    &= E_0\left[\left(1-\frac{g_0(1\mid X)}{g_P(1\mid X)}\right)\left(E_P[\underline{K}_Y\mid A=1,X]-E_0[\underline{K}_Y\mid A=1,X]\right)\right] \\
    &= E_0\left[\left(1-\frac{g_0(1\mid X)}{g_P(1\mid X)}\right)\int K_y [p_{Y\mid A,X}(y\mid 1,X)-p_{0,Y\mid A,X}(y\mid 1,X)]\lambda_Y(dy) \right].
\end{align*}
Hence, letting $X_1,X_2$ denote independent draws from $P_{0,X}^2$, we find that
\begin{align*}
&\|\underline{\mathcal{R}}_P\|_{\underline{\mathcal{H}}}^2 \\
&= E_0\Bigg[\iint K_y(y')\left\{\prod_{j=1}^2 [p_{Y\mid A,X}(y'\mid 1,X_j)-p_{0,Y\mid A,X}(y'\mid 1,X_j)]\left(1-\frac{g_0(1\mid X_j)}{g_P(1\mid X_j)}\right)\right\} \\
&\hspace{5em}\lambda_Y(dy)\lambda_Y(dy')\Bigg] \\
&\le \left(\sup_{y,y'\in\mathbb{R}}|K_y(y')|\right) E_0\Bigg[\iint \Bigg|\prod_{j=1}^2 [p_{Y\mid A,X}(y'\mid 1,X_j)-p_{0,Y\mid A,X}(y'\mid 1,X_j)] \\
&\hspace{14em}\cdot\left(1-\frac{g_0(1\mid X_j)}{g_P(1\mid X_j)}\right)\Bigg|\lambda_Y(dy)\lambda_Y(dy')\Bigg].
\end{align*}
Using that $\sup_{y,y'\in\mathbb{R}}|K_y(y')|=b/\pi$ and applying Fubini's theorem to the expectation above shows that $\|\underline{\mathcal{R}}_P\|_{\underline{\mathcal{H}}}^2$ is equal to
\begin{align*}
\frac{b}{\pi}E_0\left[\int \left|[p_{Y\mid A,X}(y'\mid 1,X_1)-p_{0,Y\mid A,X}(y'\mid 1,X_1)]\left(1-\tfrac{g_0(1\mid X_1)}{g_P(1\mid X_1)}\right)\right|\lambda_Y(dy)\right]^2.
\end{align*}
Letting $C^2:=b/[\pi\inf_{P'\in\mathcal{P}}\essinf_{x \in \mathcal{X}} g_{P'}(1\mid x)^2]$ and applying Cauchy-Schwarz,
\begin{align*}
\|\underline{\mathcal{R}}_P\|_{\underline{\mathcal{H}}}^2&\le C^2\left\|g_P(1\mid \cdot)-g_0(1\mid \cdot)\right\|_{L^2(P_{0,X})}^2\left\|p_{Y\mid A=1,X}-p_{0,Y\mid A=1,X}\right\|_{L^2(\lambda_Y\times P_{0,X})}^2,
\end{align*}
where $p_{Y\mid A=1,X}-p_{0,Y\mid A=1,X}$ denotes the function $(y,x)\mapsto p_{Y\mid A,X}(y\mid 1,x)-p_{0,Y\mid A,X}(y\mid 1,x)$. 
We conclude by noting that $C^2<\infty$ by the strong positivity assumption. Hence, \eqref{eq:cdRemBandlimitedBound} holds.

\subsection{Example \ref{ex:conac}: counterfactual mean outcome under a continuous treatment}

\subsubsection{Pathwise differentiability}

We now show that $\nu$ is pathwise differentiable relative to a locally nonparametric model $\mathcal{P}$ at any $P\in\mathcal{P}$. To do this, we follow similar arguments to those used in Appendix~\ref{app:cdfPD}. In particular, we first use Lemma~\ref{lem:suffCondsPD} to establish that $\nu$ is pathwise differentiable relative to a model $\mathcal{P}_g$ that is nonparametric up to the fact that the propensity to receive treatment $g_P$ is known to be equal to a fixed function $g$. Specifically, we consider the model $\mathcal{P}_g$ that consists of all distributions $P'$ that are such that $g_{P'}=g$ and for which there exists $P\in\mathcal{P}$ such that $P'_{Y\mid A,X}=P_{Y\mid A,X}$ and $P'_X=P_X$. 
Second, we use the fact that $\nu$ does not depend on the propensity to receive treatment to extend this pathwise differentiability result to the locally nonparametric model $\mathcal{P}$.

Let $g$ be such that $g=g_{P'}$ for some fixed $P'\in\mathcal{P}$. We first show that $\nu$ is Lipschitz over $\mathcal{P}_g$. Fix $P, \tilde{P} \in \mathcal{P}_g$. For each $a\in\mathcal{A}$, let $P_a$ and $\tilde{P}_a$ denote the distributions on $\mathbb{R}$ defined so that, for any Borel set $B$, $P_a(B)=\int_{\mathcal{X}}\int_B P_{Y\mid A,X}(dy\mid a,x)P_X(dx)$ and $\tilde{P}_a(B)=\int_{\mathcal{X}}\int_B \tilde{P}_{Y\mid A,X}(dy\mid a,x)\tilde{P}_X(dx)$. We have that
\begin{align*}
  &\|\nu(\tilde{P})-\nu(P)\|^2_{L^2(\lambda_A)} \\
  &= \int \left[\int y \{P_a(dy)-\tilde{P}_a(dy)\}\right]^2 \lambda_A(da) \\
  &\le \int \left[\int y^2 \{P_a^{1/2}(dy)+\tilde{P}_a^{1/2}(dy)\}^2\right] \left[\int \{P_a^{1/2}(dy)-\tilde{P}_a^{1/2}(dy)\}^2\right]\lambda_A(da) \\
  &\le \int \left[2\int y^2 \{P_a(dy)+\tilde{P}_a(dy)\}\right] \left[\int \{P_a^{1/2}(dy)-\tilde{P}_a^{1/2}(dy)\}^2\right]\lambda_A(da) \\
  &\le \left(2\sup_{P' \in \mathcal{P}} \esssup_{a, x} E_{P'}[Y^2 \mid A=a,X=x]\right)\int \left[\int \{P_a^{1/2}(dy)-\tilde{P}_a^{1/2}(dy)\}^2\right]\lambda_A(da) \\
  &= \left(2\sup_{P' \in \mathcal{P}} \esssup_{a, x} E_{P'}[Y^2 \mid A=a,X=x]\right) \\
  &\quad \cdot\int \left[\int \frac{1}{g(a\mid x)}\{g^{1/2}(a\mid x)P_a^{1/2}(dy)-g^{1/2}(a\mid x)\tilde{P}_a^{1/2}(dy)\}^2\right]\lambda_A(da) \\
  &\le \frac{2\sup_{P' \in \mathcal{P}} \esssup_{a, x} E_{P'}[Y^2 \mid A=a,X=x]}{\inf_{P'\in\mathcal{P}}\essinf_{(a,x) \in \mathcal{X}} g_{P'}(a\mid x)}\int [P^{1/2}(dz)-\tilde{P}^{1/2}(dz)]^2 \\
  &= \frac{2\sup_{P' \in \mathcal{P}} \esssup_{a, x} E_{P'}[Y^2 \mid A=a,X=x]}{\inf_{P'\in\mathcal{P}}\essinf_{(a,x) \in \mathcal{X}} g_{P'}(a\mid x)}\,H^2(P,\tilde{P}).
\end{align*}
The first inequality above holds by Cauchy-Schwarz, the second by the fact that $(b+c)^2\le 2(b^2+c^2)$, the third by the H\"{o}lder's inequality with exponents $(p,q)=(1,\infty)$, and the fourth by the strong positivity assumption. The constant in front of $H^2(P,\tilde{P})$ above is finite by the assumptions regarding the uniform boundedness of the conditional second moment of $Y$ across distributions in $\mathcal{P}$ and the strong positivity assumption. Hence, $\nu$ is Lipschitz over $\mathcal{P}_g$. This establishes \ref{it:localLip} of Lemma~\ref{lem:suffCondsPD} when the model is $\mathcal{P}_g$, where $g$ is an arbitrary value of the propensity to receive treatment for which there exists some $P'\in\mathcal{P}$ such that $g=g_{P'}$.

Hereafter we fix $P\in\mathcal{P}$ and suppose that $g=g_P$. We now establish \ref{it:closure} of Lemma~\ref{lem:suffCondsPD} at $P$ for the model $\mathcal{P}_g$ with $\eta_P(s)(a)$ as defined on the right-hand side of \eqref{eq:conacLocalP}. To do this, we use the following model: $\{P_{\epsilon} : \epsilon\in [0,\delta)\}$, where $\frac{dP_{\epsilon,X}}{dP_X}(x)= 1 + \epsilon s_X(x)$, $\frac{dP_{\epsilon,A \mid X}}{dP_{A \mid X}}(a\mid x) = 1$, and $\frac{dP_{\epsilon,Y \mid A,X}}{dP_{Y \mid A,X}}(y\mid a,x) = 1 + \epsilon s_{Y \mid A,X}(y\mid a,x)$, where $s_X$ and $s_{Y\mid A,X}$ are bounded in $[-\delta^{-1}/2,\delta^{-1}/2]$ and $E_P[s_X(X)]=0$ and $E_P[s_{Y\mid A,X}(Y\mid A,X)\mid A,X] = 0$ $P$-almost surely. As in Appendix~\ref{app:cdfPD}, we assume that $\{P_{\epsilon} : \epsilon\in [0,\delta)\}$ is a submodel of $\mathcal{P}_g$ without loss of generality. This submodel has score $s(x,a,y)=s_X(x) + s_{Y\mid A,X}(y\mid a,x)$ at $\epsilon=0$ and the $L^2(P)$-closure of the set containing such scores corresponds to the tangent space of $\mathcal{P}_g$ at $P$. It holds that
\begin{align*}
&\left\|\nu(P_\epsilon)-\nu(P)-\epsilon\eta_P(s)\right\|_{L^2(\lambda_A)}^2 \\
&= \int\left[\nu(P_\epsilon)(a)-\nu(P)(a)-\epsilon\eta_P(s)(a)\right]^2\lambda_A(da) \\
&= \int\Bigg[\iint y[1+\epsilon s_{Y\mid A,X}(y\mid a,x)][1+\epsilon s_X(x)]P_{Y\mid A,X}(dy\mid a,x)P_X(dx) \\
&\hspace{3em}-\iint y P_{Y\mid A,X}(dy\mid a,x)P_X(dx)-\epsilon\eta_P(s)(a)\Bigg]^2\lambda_A(da) \\
&= \epsilon^4\int\Bigg[\iint y s_{Y\mid A,X}(y\mid a,x) s_X(x) P_{Y\mid A,X}(dy\mid a,x)P_X(dx)\Bigg]^2\lambda_A(da) \\
&\le \epsilon^4\delta^{-4}\int\Bigg[\iint |y| P_{Y\mid A,X}(dy\mid a,x)P_X(dx)\Bigg]^2\lambda_A(da) \\
&\le \epsilon^4\delta^{-4}\iiint y^2 P_{Y\mid A,X}(dy\mid a,x)P_X(dx)\lambda_A(da),
\end{align*}
where the first inequality holds by the bounds on the ranges of $s_{Y\mid A,X}$ and $s_X$, and the second holds by Jensen's inequality. By the bounds on the conditional second moment of $Y$ under $P$, the right-hand side is $O(\epsilon^4)$, and so is $o(\epsilon^2)$ with much to spare. Hence, $\| \nu(P_\epsilon) - \nu(P) - \epsilon\, \eta_P(s) \|_{\mathcal{H}} = o(\epsilon)$. 

We now verify that $\eta_P$ is a bounded operator. When combined with the linearity of $\eta_P$, this will then show that \ref{it:closure} of Lemma~\ref{lem:suffCondsPD} holds at $P$ for the model $\mathcal{P}_g$. Take any $s$ in the tangent space of $\mathcal{P}_g$ at $P$. Let $s_{Y\mid A,X}(y\mid a,x) := s(x,a,y)-E_P[s(X,A,Y)\mid A=a,X=x]$ and $s_{X}(x) := E_P[s(X,A,Y)\mid X=x]$. It can be verified that, $E_P[s(X,A,Y)\mid A,X]-E_P[s(X,A,Y)\mid X]=0$ $P$-a.s., and so $s=s_{Y\mid A,X}+s_{X}$. Using that $(b+c)^2\le 2(b^2+c^2)$, applying Cauchy-Schwarz and H\"{o}lder's inequalities, and leveraging Fubini's theorem, we find that
\begin{align*}
&\|\eta_P(s)\|_{L^2(\lambda_A)}^2 \\
&\le 2 \int \left[ \iint y s_{Y \mid A,X}(y\mid a,x)P_{Y \mid A,X}(dy \mid a, x) P_X(dx) \right]^2 \lambda_A(da) \\
    &\quad + 2 \int \left[\int \mu_P(a,x) s_X(x) P_X(dx)\right]^2 \lambda_A(da) \\
    &\le 2 \int \left[ \iint y^2 P_{Y \mid A,X}(dy \mid a, x) P_X(dx) \right] \\
    &\quad\quad\quad\cdot\left[ \iint  s_{Y \mid A,X}^2(y\mid a,x)P_{Y \mid A,X}(dy \mid a, x) P_X(dx) \right] \lambda_A(da) \\
    &\quad + 2 \int \left[\int \mu_P^2(a,x) P_X(dx)\right]\left[\int s_X^2(x) P_X(dx)\right] \lambda_A(da) \\
    &\le 2 \left\{\esssup_{a,x}E_P[Y^2\mid A=a,X=x]\right\} \\
    &\quad\cdot \iint \left[\int s_{Y \mid A,X}^2(y\mid a,x)P_{Y \mid A,X}(dy \mid a, x) + s_X^2(x)\right] P_X(dx) \lambda_A(da) \\
    &= 2 \left\{\esssup_{a,x}E_P[Y^2\mid A=a,X=x]\right\} \\
    &\quad\cdot \iint \frac{1}{g_P(a\mid x)}\left[\int s_{Y \mid A,X}^2(y\mid a,x)P_{Y \mid A,X}(dy \mid a, x) + s_X^2(x)\right] \\
    &\quad\quad\quad\cdot P_X(dx)g_P(a\mid x) \lambda_A(da) \\
    &\le 2 \frac{\esssup_{a,x}E_P[Y^2\mid A=a,X=x]}{\essinf_{a,x}g_P(a\mid x)} \\
    &\quad\cdot \iint \left[\int s_{Y \mid A,X}^2(y\mid a,x)P_{Y \mid A,X}(dy \mid a, x) + s_X^2(x)\right] P_X(dx)g_P(a\mid x) \lambda_A(da) \\
    &= 2 \frac{\esssup_{a,x}E_P[Y^2\mid A=a,X=x]}{\essinf_{a,x}g_P(a\mid x)}\int \left[s_{Y \mid A,X}^2(y\mid a,x) + s_X^2(x)\right]P(dz) \\
    &= 2 \frac{\esssup_{a,x}E_P[Y^2\mid A=a,X=x]}{\essinf_{a,x}g_P(a\mid x)}\|s\|_{L^2(P)}^2.
\end{align*}
Above all essential suprema and infima are under the joint distribution of $(A,X)$ implied by $P$. The fraction above is finite by the strong positivity assumption and the assumed bound on the conditional second moment of $Y$. Hence, $\eta_P$ is a bounded operator. By Lemma~\ref{lem:suffCondsPD}, $\nu$ is pathwise differentiable at $P$ relative to $\mathcal{P}_g$ with $\dot{\nu}_P=\eta_P$. In the same way as was done in \eqref{eq:extendPD} for Example~\ref{ex:cfdNonparametric}, this pathwise differentiability over $\mathcal{P}_g$ can be extended to show that $\nu$ is pathwise differentiable over the locally nonparametric model $\mathcal{P}$.

\subsubsection{Efficient influence operator}
Take any $s\in L_0^2(P)$. Let $s_{Y\mid A,X}(y\mid a,x) := s(x,a,y)-E_P[s(X,A,Y)\mid A=a,X=x]$ and $s_{X}(x) := E_P[s(X,A,Y)\mid X=x]$. To compute the adjoint of $\dot{\nu}_P$, note that, for any $h \in L^2(\lambda_A)$,
\begin{align*}
    \langle h, \dot{\nu}_P(s) \rangle_{L^2(\lambda_A)} &= \iiint \{y-\mu_P(a,x)\}s_{Y \mid A,X}(y,a,x) h(a) P_{Y \mid A,X}(dy \mid a,x) P_X(dx) \lambda_A(da) \\
    &\quad + \iint [\mu_P(a,x)-\nu(P)(a)] h(a) s_X(x) P_X(dx) \lambda_A(da).
\end{align*}
The first term may be rearranged as follows:
\begin{align*}
    &\iiint \{y-\mu_P(a,x)\}s_{Y \mid A,X}(y,a,x)h(a) P_{Y \mid A,X}(dy \mid a,x) P_X(dx) \lambda_A(da) \\
    &= \int \frac{y-\mu_P(a,x)}{g_P(a \mid x)} h(a) s_{Y \mid A,X}(y\mid a,x)P(dz) \\
    &= \int \frac{y-\mu_P(a,x)}{g_P(a \mid x)} h(a) s(z) P(dz).
\end{align*}
For the second term:
\begin{align*}
    &\iint [\mu_P(a,x)-\nu(P)(a)] h(a) s_X(x) P_X(dx) \lambda_A(da) \\
    &= \int \left[ \int [\mu_P(a,x)-\nu(P)(a)] h(a) \lambda_A(da) \right] s_X(x) P_X(dx) \\
    &= \int \left[ \int [\mu_P(a',x)-\nu(P)(a')] h(a') \lambda_A(da') \right] s(z) P(dz).
\end{align*}
Thus,
$$\dot{\nu}_P^\ast(h)(y,a,x) = \frac{y-\mu_P(a,x)}{g_P(a \mid x)}h(a) + \int [\mu_P(a',x)-\nu(P)(a')]h(a') d\lambda_A(a').$$

\subsubsection{Study of regularized one-step estimator}\label{app:conacOS}

Since there is no EIF in this example, we study a regularized one-step estimator $\bar{\nu}_n^{\beta_n}$. This estimator is defined based on an orthonormal basis $(h_k)_{k=1}^\infty$ --- guidance for selecting this basis is given in Section~\ref{sec:regularizedOneStepTuning}. We study the regularized remainder, regularized drift, and bias terms appearing in Theorem~\ref{thm:alReg} and establish a rate of convergence of $\bar{\nu}_n^{\beta_n}$ for an appropriately chosen sequence of regularization parameters $\beta_n$. In what follows, $C$ denotes a generic finite constant whose value may differ from display to display.

We begin by bounding the regularized remainder terms. We use Lemma~\ref{lem:regRemBd} to derive our bound. To this end, we note that, for any $P\in\mathcal{P}$ and $k\in\mathbb{N}$,
\begin{align*}
    \big\langle\nu(P) &- \nu(P_0),h_k\big\rangle_{\mathcal{H}} + P_0 \dot{\nu}_{P}^*(h_k) \\
    &= E_0\left[\frac{Y-\mu_P(A,X)}{g_P(A\mid X)}h_k(A) + \int [\mu_P(a,X)-\nu(P_0)(a)]h_k(a)\lambda_A(da)\right] \\
    &= E_0\left[\frac{\mu_0(A,X)-\mu_P(A,X)}{g_P(A\mid X)}h_k(A) + \int [\mu_P(a,X)-\mu_0(a,X)]h_k(a)\lambda_A(da)\right] \\
    &= E_0\left[\int \left[1-\frac{g_0(a\mid X)}{g_P(a\mid X)}\right][\mu_P(a,X)-\mu_0(a,X)]h_k(a)\lambda_A(da)\right].
\end{align*}
From here, different bounds are possible, depending on the basis $(h_k)_{k=1}^\infty$. If the functions in $h_k$ are uniformly bounded --- that is, $\sup_{a\in\mathcal{A},k\in\mathbb{N}}|h_k(a)|<\infty$ --- then the strong positivity assumption and the Cauchy-Schwarz inequality together show that there is a finite constant $C$ that does not depend on $P\in\mathcal{P}$ or $k\in\mathbb{N}$ such that
\begin{align*}
\left|\left\langle\nu(P) - \nu(P_0),h_k\right\rangle_{\mathcal{H}} + P_0 \dot{\nu}_{P}^*(h_k)\right|&\le C\|g_P-g_0\|_{L^2(\lambda_A\times P_{0,X})}\|\mu_P-\mu_0\|_{L^2(\lambda_A\times P_{0,X})},
\end{align*}
where, for $f : (a,x)\mapsto \mathbb{R}$ and $q\ge 1$, $\|f\|_{L^q(\lambda_A\times P_{0,X})}^q:=\iint f(a,x)^q\, \lambda_A(da)P_{0,X}(dx)$ and $g_P-g_0$ denotes the function $(a,x)\mapsto g_P(a\mid x)-g_0(a\mid x)$. Requiring functions in $(h_k)_{k=1}^\infty$ to be uniformly bounded is not such a strong condition, with this condition being satisfied by both the trigonometric and cosine bases for $L^2([0,1])$. If a basis is used that does not satisfy this assumption, then the following alternative bound can be derived by twice applying Cauchy-Schwarz, using Jensen's inequality, invoking the strong positivity assumption, and leveraging the fact that all elements of $(h_k)_{k=1}^\infty$ have unit length in $L^2([0,1])$: there exists a $C$ that does not depend on $P\in\mathcal{P}$ or $k\in\mathbb{N}$ such that
\begin{align*}
    \left|\left\langle\nu(P) - \nu(P_0),h_k\right\rangle_{\mathcal{H}} + P_0 \dot{\nu}_{P}^*(h_k)\right|&\le C\|g_P-g_0\|_{L^4(\lambda_A\times P_{0,X})}\|\mu_P-\mu_0\|_{L^4(\lambda_A\times P_{0,X})}.
\end{align*}
Plugging the above bounds into Lemma~\ref{lem:regRemBd} shows that, depending on whether or not the functions in $(h_k)_{k=1}^\infty$ are uniformly bounded ($q=2$) or not ($q=4$), there exists a $C<\infty$ such that
\begin{align*}
    \|\mathcal{R}_P^\beta\|_{\mathcal{H}}\le C\|\beta\|_{\ell^2}\|g_P-g_0\|_{L^q(\lambda_A\times P_{0,X})}\|\mu_P-\mu_0\|_{L^q(\lambda_A\times P_{0,X})}.
\end{align*}
Hence, for each $j\in\{1,2\}$, $\mathcal{R}_n^{j,\beta_n}:=\mathcal{R}_{\widehat{P}_n^j}^{\beta_n}$ will be $O_p(\|\beta_n\|_{\ell^2}/n^{1/2})$ provided the product of the rates of convergence in probability of $g_{\widehat{P}_n^j}$ and $\mu_{\widehat{P}_n^j}$ to $g_0$ and $\mu_0$ under the $L^q(\lambda_A\times P_{0,X})$ norm is at least $n^{-1/2}$.

We now turn to the regularized drift terms. We will bound them via Lemma~\ref{lem:driftTermRegularized}. We begin by noting that, for any $P\in\mathcal{P}$ and $\beta\in\ell_*^2$,
\begin{align*}
    \|\phi_P^\beta-\phi_0^\beta\|_{L^2(P_0;\mathcal{H})}^2&=\sum_{k=1}^\infty \beta_k^2 \left\|\dot{\nu}_P(h_k)-\dot{\nu}_0(h_k)\right\|_{L^2(P_0)}^2 \\
    &\le \|\beta\|_{\ell^2}^2\sup_{k\in\mathbb{N}}\left\|\dot{\nu}_P(h_k)-\dot{\nu}_0(h_k)\right\|_{L^2(P_0)}^2.
\end{align*}
It can further be shown that there exists a constant $C<\infty$ that does not depend on $P\in\mathcal{P}$ or $k\in\mathbb{N}$ such that
\begin{align*}
    \left\|\dot{\nu}_P(h_k)-\dot{\nu}_0(h_k)\right\|_{L^2(P_0)}&\le C\left(\|g_P-g_0\|_{L^2(\lambda_A\times P_{0,X})} + \|\mu_P-\mu_0\|_{L^2(\lambda_A\times P_{0,X})}\right).
\end{align*}
Combining the preceding two displays shows that
\begin{align*}
 \|\phi_P^\beta-\phi_0^\beta\|_{L^2(P_0;\mathcal{H})}&\le C\|\beta\|_{\ell^2}\left(\|g_P-g_0\|_{L^2(\lambda_A\times P_{0,X})} + \|\mu_P-\mu_0\|_{L^2(\lambda_A\times P_{0,X})}\right).
\end{align*}
Hence, for each $j\in\{1,2\}$, $\|\phi_n^{j,\beta_n}-\phi_0^{\beta_n}\|_{L^2(P_0;\mathcal{H})}=o_p(\|\beta_n\|_{\ell^2})$ provided $g_{\widehat{P}_n^j}$ and $\mu_{\widehat{P}_n^j}$ converge in probability to $g_0$ and $\mu_0$ under the $L^2(\lambda_A\times P_{0,X})$ norm. Since no requirement is made on the rate of convergence and the $L^4(\lambda_A\times P_{0,X})$ is stronger than the $L^2(\lambda_A\times P_{0,X})$ norm, this condition will typically be weaker than the condition required above to make the regularized remainder term negligible. In any case, under this consistency condition, Lemma~\ref{lem:driftTermRegularized} shows that $\mathcal{D}_n^{j,\beta_n}$ is $o_p(\|\beta_n\|_{\ell^2}/n^{1/2})$, as desired.

The analysis of the bias term is nearly identical to the one given for Example~\ref{ex:cfdNonparametric} in the main text. In particular, if the first $K_n$ entries of $\beta_n$ are one and all the others are zero, then, provided $\sup_{P\in\mathcal{P}}\|\nu(P)\|_u<\infty$, Lemma~\ref{lem:biasTerm} shows that $\|\mathcal{B}_n^{j,\beta_n}\|_{\mathcal{H}}\le c/(K_n+1)^u$. Hence, if $K_n$ is of the order $n^{1/(2u+1)}$ and the regularized remainder and drift terms are $O_p(\|\beta_n\|_{\ell^2}/n^{1/2})$, then
\begin{align*}
\bar{\nu}_n^{\beta_n}-\nu(P_0)&= O_p(n^{-u/(2u+1)}).
\end{align*}

\subsection{Example~\ref{ex:gkmed}: counterfactual kernel mean embedding}\label{app:gkmed}

\subsubsection{Pathwise differentiability}

We now show that $\nu$ is pathwise differentiable relative to a locally nonparametric model $\mathcal{P}$ at any $P\in\mathcal{P}$. To do this, we follow similar arguments to those used in Appendix~\ref{app:cdfPD}. In particular, first we use Lemma~\ref{lem:suffCondsPD} to establish that $\nu$ is pathwise differentiable relative to the same model $\mathcal{P}_g$ considered in Appendix~\ref{app:cdfPD}. Second, we use the fact that $\nu$ does not depend on the propensity to receive treatment to extend this pathwise differentiability result to the locally nonparametric model $\mathcal{P}$.

Let $g$ be such that $g=g_{P'}$ for some fixed $P'\in\mathcal{P}$. We first show that $\nu$ is Lipschitz over $\mathcal{P}_g$. Fix $P, \tilde{P} \in \mathcal{P}_g$. For each $a\in\mathcal{A}$, let $P_a$ and $\tilde{P}_a$ denote the distributions on $\mathbb{R}$ defined so that, for any Borel set $B$, $P_1(B)=\int_{\mathcal{X}}\int_B P_{Y\mid A,X}(dy\mid 1,x)P_X(dx)$ and $\tilde{P}_1(B)=\int_{\mathcal{X}}\int_B \tilde{P}_{Y\mid A,X}(dy\mid 1,x)\tilde{P}_X(dx)$. Observe that
\begin{align*}
    \|&\nu(P)-\nu(\tilde{P})\|_{\mathcal{H}}^2 \\
    &= \iint \kappa(y_1,y_2) \prod_{i=1}^2 (P_1 - \tilde{P}_1) (dy_i) \\
    &= \iint \kappa(y_1,y_2) \prod_{i=1}^2 \frac{a_i}{g(a_i\mid x)}(P - \tilde{P}) (dz_i) \\
    &= \iint \kappa(y_1,y_2) \prod_{i=1}^2 \frac{a_i}{g(a_i\mid x)}\left[\sqrt{dP(z_i)} + \sqrt{d\tilde{P}(z_i)}\right] \left[\sqrt{dP(z_i)} - \sqrt{d\tilde{P}(z_i)}\right]  \\
    &\le \left(\iint \kappa^2(y_1,y_2) \prod_{i=1}^2 \frac{a_i}{g^2(a_i\mid x)}\left[\sqrt{dP(z_i)} + \sqrt{d\tilde{P}(z_i)}\right]^2\right)^{1/2}  \\
    &\quad\cdot \left(\iint \prod_{i=1}^2 \left[\sqrt{dP(z_i)} - \sqrt{d\tilde{P}(z_i)}\right]^2\right)^{1/2},
\end{align*}
where the inequality holds by Cauchy-Schwarz. The latter of the two terms in the product on the right-hand side is equal to $H^2(P,\tilde{P})$. Using the inequality $(b+c)^2\le 2(b^2+c^2)$ and then applying H\"{o}lder's inequality with exponents $(p,q)=(1,\infty)$, the square of the former term in this product bounds as follows:
\begin{align*}
&\iint \kappa^2(y_1,y_2) \prod_{i=1}^2 \frac{a_i}{g^2(a_i\mid x)}\left[\sqrt{dP(z_i)} + \sqrt{d\tilde{P}(z_i)}\right]^2 \\
&\quad\le 2\iint \kappa^2(y_1,y_2) \prod_{i=1}^2 \frac{a_i}{g^2(a_i\mid x)}(P+\tilde{P})(dz_i) \\
&\quad\le \frac{2\sup_{y_1,y_2\in\mathcal{Y}}\kappa^2(y_1,y_2)}{\inf_{P'\in\mathcal{P}}\essinf_x g_{P'}^2(1\mid x)}\ .
\end{align*}
The right-hand side above is finite by the strong positivity assumption and the fact that $\kappa$ is bounded. Hence, $\nu$ is Lipschitz over $\mathcal{P}_g$. Combining the preceding two displays establishes \ref{it:localLip} of Lemma~\ref{lem:suffCondsPD} when the model is $\mathcal{P}_g$, where $g$ is an arbitrary value of the propensity to receive treatment for which there exists some $P'\in\mathcal{P}$ such that $g=g_{P'}$.

Hereafter we fix $P\in\mathcal{P}$ and suppose that $g=g_P$. We now establish \ref{it:closure} of Lemma~\ref{lem:suffCondsPD} at $P$ for the model $\mathcal{P}_g$ with $\eta_P(s)$ as defined on the right-hand side of \eqref{eq:gkmedLocalP}. To do this, we use the following model: $\{P_{\epsilon} : \epsilon\in [0,\delta)\}$, where $\frac{dP_{\epsilon,X}}{dP_X}(x)= 1 + \epsilon s_X(x)$, $\frac{dP_{\epsilon,A \mid X}}{dP_{A \mid X}}(a\mid x) = 1$, and $\frac{dP_{\epsilon,Y \mid A,X}}{dP_{Y \mid A,X}}(y\mid a,x) = 1 + \epsilon s_{Y \mid A,X}(y\mid a,x)$, where $s_X$ and $s_{Y\mid A,X}$ are bounded in $[-\delta^{-1}/2,\delta^{-1}/2]$ and $E_P[s_X(X)]=0$ and $E_P[s_{Y\mid A,X}(Y\mid A,X)\mid A,X] = 0$ $P$-almost surely. As in Appendix~\ref{app:cdfPD}, we assume that $\{P_{\epsilon} : \epsilon\in [0,\delta)\}$ is a submodel of $\mathcal{P}_g$ without loss of generality. This submodel has score $s(x,a,y)=s_X(x) + s_{Y\mid A,X}(y\mid a,x)$ at $\epsilon=0$ and the $L^2(P)$-closure of the set containing such scores corresponds to the tangent space of $\mathcal{P}_g$ at $P$. It holds that
\begin{align*}
&\left\|\nu(P_\epsilon)-\nu(P)-\epsilon\eta_P(s)\right\|_{\mathcal{H}}^2 \\
&\quad=\epsilon^4 \left\|\iint K_y s_{Y\mid A,X}(y\mid a,x)s_X(x) P_{Y\mid A,X}(dy\mid 1,x) P_X(dx)\right\|_{\mathcal{H}}^2 \\
&\quad=\epsilon^4\left\|\int \frac{a}{g_P(a\mid x)}K_y s_{Y\mid A,X}(y\mid a,x)s_X(x) P(dz)\right\|_{\mathcal{H}}^2.
\end{align*}
The right-hand side is certainly $o(\epsilon^2)$ if the squared $\mathcal{H}$-norm on that side is finite. To see that this is the case, note first that, by the strong positivity assumption and the fact that $\kappa$, $s_{Y\mid A,X}$, and $s_X$ are all bounded functions, $(x,a,y)\mapsto \frac{a}{g_P(a\mid x)}K_y s_{Y\mid A,X}(y\mid a,x)s_X(x)$ belongs to $L^2(P;\mathcal{H})$. Hence, that term satisfies the following:
\begin{align*}
&\left\|\int \frac{a}{g_P(a\mid x)}K_y s_{Y\mid A,X}(y\mid a,x)s_X(x) P(dz)\right\|_{\mathcal{H}}^2 \\
&=\iint \frac{a}{g_P(a\mid x)}\frac{a'}{g_P(a'\mid x')} \kappa(y,y')  s_{Y\mid A,X}(y\mid a,x)s_X(x) P(dz)P(dz') <\infty,
\end{align*}
where $z'=(x',a',y')$. This establishes that $\| \nu(P_\epsilon) - \nu(P) - \epsilon\, \eta_P(s) \|_{\mathcal{H}} = o(\epsilon)$.

We now verify that $\eta_P$ is a bounded operator. When combined with the linearity of $\eta_P$, this will then show that \ref{it:closure} of Lemma~\ref{lem:suffCondsPD} holds at $P$ for the model $\mathcal{P}_g$. Take any $s$ in the tangent space of $\mathcal{P}_g$ at $P$. Let $s_{Y\mid A,X}(y\mid a,x) := s(x,a,y)-E_P[s(X,A,Y)\mid A=a,X=x]$ and $s_{X}(x) := E_P[s(X,A,Y)\mid X=x]$. It can be verified that, $E_P[s(X,A,Y)\mid A,X]-E_P[s(X,A,Y)\mid X]=0$ $P$-a.s., and so $s=s_{Y\mid A,X}+s_{X}$. Since $s$ is $P$-square integrable, $s_{Y\mid A,X}$ and $s_X$ are as well. By rewriting the right-hand side of \eqref{eq:gkmedLocalP}, we see that $\eta_P$ satisfies:
\begin{align}
\eta_P(s) = \int \frac{a}{g_P(a\mid x)}K_y\,[s_{Y \mid A,X}(y \mid a,x) + s_X(x)]\,P(dz). \label{eq:gkmedAltLocalP}
\end{align}
By the strong positivity assumption, the fact that $\kappa$ is a bounded function, and the fact that $s_{Y\mid A,X}$ and $s_X$ belong to $L^2(P)$, $(x,a,y)\mapsto \frac{a}{g_P(a\mid x)}K_y\,[s_{Y \mid A,X}(y \mid a,x) + s_X(x)]$ belongs to $L^2(P;\mathcal{H})$. Hence,
\begin{align}
\|\eta_P(s)\|_{\mathcal{H}}^2&= \iint \frac{a}{g_P(a\mid x)}\frac{a'}{g_P(a'\mid x')}\kappa(y,y')\,[s_{Y \mid A,X}(y \mid a,x) + s_X(x)] \nonumber \\
&\hspace{3em}\cdot [s_{Y \mid A,X}(y' \mid a',x') + s_X(x')]\,P^2(dz,dz') \nonumber \\
&\le \iint \frac{a}{g_P(a\mid x)}\frac{a'}{g_P(a'\mid x')}\sqrt{\kappa(y,y)\kappa(y',y')}\,|s_{Y \mid A,X}(y \mid a,x) + s_X(x)| \nonumber \\
&\hspace{3em}\cdot |s_{Y \mid A,X}(y' \mid a',x') + s_X(x')|\,P^2(dz,dz') \nonumber \\
&= \left[\int \frac{a}{g_P(a\mid x)}\sqrt{\kappa(y,y)}\,|s_{Y \mid A,X}(y \mid a,x) + s_X(x)|\,P(dz)\right]^2 \nonumber \\
&\le \left[\int \frac{a}{g_P^2(a\mid x)}|\kappa(y,y)|\,P(dz)\right]\left[\int [s_{Y \mid A,X}(y \mid a,x) + s_X(x)]^2\,P(dz)\right] \nonumber \\
&\le \frac{\sup_{y\in\mathcal{Y}}|\kappa(y,y)|}{\inf_{P'\in\mathcal{P}} \essinf_x g_{P'}(1\mid x)}\left[\int [s_{Y \mid A,X}(y \mid a,x) + s_X(x)]^2\,P(dz)\right] \nonumber \\
&= \frac{\sup_{y\in\mathcal{Y}}|\kappa(y,y)|}{\inf_{P'\in\mathcal{P}} \essinf_x g_{P'}(1\mid x)}\|s\|_{L^2(P)}^2, \label{eq:bddLocalPGkmed}
\end{align}
where the first inequality holds by applying Jensen's inequality to bring the absolute value function inside the integral and then applying Cauchy-Schwarz to the (positive semidefinite) kernel $\kappa$, the second by Cauchy-Schwarz, and the third by H\"{o}lder's inequality with exponents $(p,q)=(1,\infty)$. The fraction on the right-hand side of \eqref{eq:bddLocalPGkmed} is finite by the strong positivity assumption and the fact that $\kappa$ is bounded, and so $\eta_P$ is a bounded operator. Hence, by Lemma~\ref{lem:suffCondsPD}, $\nu$ is pathwise differentiable relative to $\mathcal{P}_g$ with $\dot{\nu}_P=\eta_P$. In the same way as was done in \eqref{eq:extendPD} for Example~\ref{ex:cfdNonparametric}, this pathwise differentiability over $\mathcal{P}_g$ can be extended to show that $\nu$ is pathwise differentiable over the locally nonparametric model $\mathcal{P}$.

\subsubsection{Efficient influence operator}
Let $s \in \dot{\mathcal{P}}_P$ and $h \in \mathcal{H}$, and let $s_{Y\mid A,X}$ and $s_X$ be as defined above \eqref{eq:gkmedAltLocalP}. We have that
\begin{align*}
    \langle \dot{\nu}_P(s), h \rangle_{\mathcal{H}}&= \iint h(y) [s_{Y \mid A,X}(y\mid 1,x) + s_X(x)] P_{Y \mid A,X}(dy \mid 1,x) P_X(dx) \\
    &= \int \frac{a}{g_P(1\mid x)} \left\{h(y)-E[h(Y) \mid A=a,X=x]\right\} s(z) P(dz) \\ 
    &\quad+ \int \left(E_P[h(Y) \mid A=1,X=x]-E_P E_P[h(Y) \mid A=1,X]\right)s(z) P(dz).
\end{align*}
Hence,
\begin{align*}
    \dot{\nu}_P^\ast (h)(z)&= \frac{a}{g_P(1\mid x)}\left\{h(y)-E[h(Y) \mid A=1,X=x]\right\} \\
    &\quad+ E_P[h(Y) \mid A=1,X=x] - E_P E_P[h(Y) \mid A=1,X].
\end{align*}

\subsubsection{Efficient influence function}\label{app:gkmedEIF}
By Theorem~\ref{thm:eifProperties}, the EIF will take the form
\begin{align*}
    \phi_P(z)(y') = \dot{\nu}_P^\ast (K_{y'})(z) &= \frac{a}{g_P(1\mid x)}\left\{\kappa(y,y')-E[\kappa(Y,y') \mid A=a,X=x]\right\} \\
    &\quad+ E_P[\kappa(Y,y') \mid A=1,X=x] - E_P E_P[\kappa(Y,y') \mid A=1,X].
\end{align*}
provided we can show that this function belongs to $L^2(P;\mathcal{H})$. Defining $\mu_P^K(x) = E_P[K_Y \mid A=1,X=x]$ and noting that $E_P \mu_P^K(X) = \nu(P)$, we can rewrite the above as follows:
$$\phi_P(z) = \frac{a}{g_P(1\mid x)} [K_y - \mu_P^K (x)] + \mu_P^K (x) -  \nu(P).$$
The fact that $\phi_P\in L^2(P;\mathcal{H})$ follows from the strong positivity assumption and the fact that the kernel $\kappa$ is a bounded function.

\subsubsection{Study of one-step estimator}\label{app:gkmedOS}

The one-step estimator $\bar{\nu}_n:= \frac{1}{2}\sum_{j=1}^2 [\nu(\widehat{P}_n^j) + P_n^j \phi_n^j]$ that we study is a cross-fitted version of the estimator of the counterfactual kernel mean embedding introduced in Eq.~10 of \cite{fawkes2022doubly}. Our general results provide several new results about this estimator that did not appear in that earlier work. First, Theorem~\ref{thm:al} provides a set of conditions under which this estimator converges weakly to a tight limit. Second, when the conditions of Theorem~\ref{eq:convolution} hold, \eqref{eq:convolution} provides a precise sense in which $\bar{\nu}_n$ outperforms the inverse probability weighted estimator that was earlier introduced in \cite{muandet2021counterfactual}. The earlier work suggested that this estimator would be more efficient but did not provide any theoretical guarantees establishing this. Third, Theorems~\ref{thm:CIcoverage} and \ref{thm:threshEst} provide a means to construct bootstrap-based confidence sets and hypothesis tests regarding the counterfactual mean embedding, with accompanying theoretical guarantees. 
\cite{fawkes2022doubly} proposed using an alternative, permutation-based procedure for making inference, but no theoretical guarantees were provided ensuring type I error control, consistency, or local power of the resulting test.

The calculations needed to establish that the conditions of our Theorem~\ref{thm:al} hold are similar to those used to prove Theorem~1 in \cite{fawkes2022doubly} and those in our Appendix~\ref{app:cdRem}, and therefore we only summarize the main findings here. For the remainder term $\mathcal{R}_P:=\nu(P)+P\phi_0 - \nu(P_0)$, it holds that, for finite constants $C_1$ and $C_2$ that do not depend on $P\in\mathcal{P}$,
\begin{align*}
    &\|\mathcal{R}_P\|_{\mathcal{H}}\le C_1\|g_P(1\mid \cdot\,)-g_0(1\mid \cdot\,)\|_{L^2(P_{0,X})}\left[\int \|\mu_P^K-\mu_0^K\|_{\mathcal{H}}^2 P_{0,X}(dx)\right]^{1/2}, \\
    &\|\phi_P-\phi_0\|_{L^2(P_0;\mathcal{H})} \\
    &\quad\le C_2\left(\|g_P(1\mid \cdot\,)-g_0(1\mid \cdot\,)\|_{L^2(P_{0,X})} + \left[\int \|\mu_P^K-\mu_0^K\|_{\mathcal{H}}^2 P_{0,X}(dx)\right]^{1/2}\right).
\end{align*}
Taken together, these bounds show that the conditions of Theorem~\ref{thm:al} will be satisfied in this example when, for $j\in\{1,2\}$, $\widehat{P}_n^j$ is such that $g_{\widehat{P}_n^j}$ and $\mu_{\widehat{P}_n^j}^K$ converge to $g_0$ and $\mu_0^K$ in probability according to the norms above and, moreover, the product of their rates of convergence is faster than $n^{-1/2}$.

\subsection{Example~\ref{ex:den}: root-density function}

Fix a distribution $P\in\mathcal{P}$, score $s$ in the tangent set of $\mathcal{P}$ at $P$, and submodel $\{P_\epsilon: \epsilon\in [0,\delta)\}\in \mathscr{P}(P,\mathcal{P},s)$. Since $\nu$ is the square root of the density function in this example, the quadratic mean differentiability of $\{P_\epsilon: \epsilon\in [0,\delta)\}$ in \eqref{eq:qmd} is, by definition, equivalent to the pathwise differentiability of $\nu$ as defined in \eqref{eq:pdA}, with the local parameter $\dot{\nu}_P(s)$ equal to $\epsilon s \nu(P)/2$. This local parameter is a bounded operator since, for any $s$, $$\|\dot{\nu}_P(s)\|_{L^2(\lambda)}^2=\frac{1}{4}\int s^2\nu(P)^2 \,d\lambda = \frac{1}{4}\|s\|_{L^2(P)}^2.$$ To verify the claimed form of the efficient influence operator given in Appendix~\ref{app:exClassical}, we note that, for any $s\in \dot{\mathcal{P}}_P$ and $h\in L^2(\lambda)$,
\begin{align*}
\left\langle \dot{\nu}_P(s), h\right\rangle_{L^2(\lambda)}&= \frac{1}{2}\int h(z) s(z)\nu(P)(z) \,d\lambda(z) = \int \frac{h(z)}{2\nu(P)(z)} s(z)\,dP(z) \\
&= \int \left(\frac{h(z)}{2\nu(P)(z)} -E_P\left[\frac{h(Z)}{2\nu(P)(Z)}\right]\right)s(z)\,dP(z).
\end{align*}
As $s$ and $h$ were arbitrary, $\dot{\nu}_P^\ast(h)(z) = \frac{h(z)}{2\nu(P)(z)} - E_P \left[\frac{h(Z)}{2\nu(P)(Z)} \right]$.

\subsection{Example~\ref{ex:reg}: regression function}

\subsubsection{Pathwise differentiability}

Fix a distribution $P\in\mathcal{P}$ and suppose that $\frac{d\lambda_X}{dP_X}$ is bounded $P_X$-almost surely. We prove that $\nu$ is pathwise differentiable at $P$ relative to a locally nonparametric model and that $\dot{\nu}_P=\eta_P$, where
$$\eta_P(s)(x) = \int [y - \nu(P)(x)]s(x,y) P_{Y\mid X}(dy \mid x).$$
Fix a score function $s\in L_0^2(P)$. Let $\{P_{\epsilon}: \epsilon\}\in\mathscr{P}(P,\mathcal{P},s)$ and $q_\epsilon:= \sqrt{\frac{dP_{\epsilon}}{dP}}$. Let $q_{\epsilon,X}:= \sqrt{\frac{dP_{\epsilon,X}}{dP_X}}$ denote the square root of the marginal density, $q_{\epsilon, Y \mid X}(\cdot\mid x):=\sqrt{\frac{dP_{\epsilon,Y\mid X}}{dP_{Y\mid X}}(\,\cdot\mid x)}$ the square root of the conditional density, $s_X(x) = E_P[s(X,Y) \mid X=x]$ and  $s_{Y \mid X}(y\mid x) = s(x,y) - s_X(x)$. Observe that the following holds for $P_X$-almost all $x$:
\begin{align*}
    [\nu(P_\epsilon) &- \nu(P) - \epsilon \eta_P(s)](x) \\
    &= \int y [q_{\epsilon,Y \mid X}^2(y \mid x) - 1 - \epsilon s_{Y \mid X}(y \mid x)] P_{Y\mid X}(dy \mid x) \\
    &= \int y \left\{[q_{\epsilon,Y \mid X}(y \mid x) - 1][q_{\epsilon,Y \mid X}(y \mid x) + 1] - \epsilon s_{Y \mid X}(y \mid x)\right\} P_{Y\mid X}(dy \mid x) \\
    &= \int y \left\{\left[q_{\epsilon,Y \mid X}(y \mid x) - 1 - \frac{\epsilon}{2} s_{Y \mid X}(y \mid x)\right][q_{\epsilon,Y \mid X}(y \mid x) + 1] \right\} P_{Y\mid X}(dy \mid x) \\
    &\quad + \frac{\epsilon}{2} \int y s_{Y \mid X}(y \mid x)[q_{\epsilon,Y \mid X}(y \mid x) - 1]P_{Y\mid X}(dy \mid x).
\end{align*}
For shorthand, we refer to the first term on the right as $A_\epsilon(x)$ and the second as $\frac{\epsilon}{2}B_\epsilon(x)$. We will show that $\|A_{\epsilon}\|_{L^2(\lambda_X)} = o(\epsilon)$ and $\|B_{\epsilon}\|_{L^2(\lambda_X)} = o(1)$.
Combining this with the triangle inequality for the $L^2(\lambda_X)$ norm and the relation above will then give the result. 
For the first term, we note that the Cauchy-Schwarz inequality and the inequality $(a+b)^2 \leq 2a^2+2b^2$, we have, for $P_X$-almost all $x$,
$$|A_{\epsilon}(x)|^2 \leq \left\|q_{\epsilon, Y \mid X} - 1 - \frac{\epsilon}{2} s_{Y \mid X} \right\|_{L^2(P_{Y \mid X=x})}^2 \left( 2E_{P_{\epsilon}} [Y^2 \mid X=x] + 2 E_P [Y^2 \mid X=x]\right).$$
Integrating both sides above against $\lambda_X$, applying H\"{o}lder's inequality with exponents $(1,\infty)$, and applying Lemma \ref{lem:preserveDQM}, we find that
\begin{align*}
\|A_{\epsilon}\|_{L^2(\lambda_X)}^2 &\leq 2 \esssup_{x} \frac{d\lambda_X}{dP_X}(x)\left(E_{P_\epsilon}[Y^2 \mid X=x] + E_P [Y^2 \mid X=x]\right) \\
&\quad \cdot\left\| q_{\epsilon,Y \mid X} - 1 - \frac{\epsilon}{2} s_{Y \mid X} \right\|_{L^2(P)}^2 = o(\epsilon^2),
\end{align*}
where the essential supremum is over $P_X$. 
Above we used \eqref{eq:regSecondMoment} and the assumption that $\frac{d\lambda_X}{dP_X}$ is bounded with $P_X$-probability one. 

We now show that $\|B_\epsilon\|_{L^2(\lambda_X)}=o(1)$. Let $B_{\epsilon,1}(x) = \int 1\{|ys_{Y\mid X}(y\mid x)| \leq \epsilon^{-1/2}\} y s_{Y \mid X}(y \mid x)[q_{\epsilon,Y \mid X}(y \mid x) - 1] P_{Y\mid X}(dy \mid x)$ and $B_{\epsilon,2}(x) = B_{\epsilon}(x)-B_{\epsilon,1}(x)$. By the triangle inequality, it suffices to show that $\|B_{\epsilon,j}\|_{L^2(\lambda_X)}=o(1)$, $j\in\{1,2\}$. Using that $y^2 s_{Y\mid X}^2(y\mid x) 1\{|ys_{Y\mid X}(y\mid x)|\le \epsilon^{-1/2}\}\le \epsilon^{-1}$, Jensen's inequality, H\"{o}lder's inequality with exponents $(1,\infty)$, and Lemma \ref{lem:preserveDQM},
\begin{align*}
\|B_{\epsilon,1}\|_{L^2(\lambda_X)}^2 &\leq \epsilon^{-1} \left[\esssup_x\frac{d\lambda_X}{dP_X}(x)\right]\|q_{\epsilon,Y \mid X}-1\|_{L^2(P)}^2 = O(\epsilon),
\end{align*}
where the essential supremum is over $P_X$. 
By the Cauchy-Schwartz inequality and the inequality $(a-b)^2 \leq 2(a^2 + b^2)$, the following holds for $P_X$-almost all $x$:
\begin{align*}
    |B_{\epsilon,2}(x)|^2 &\leq 2 (E_{P_\epsilon} [Y^2 \mid X=x] + E_P [Y^2 \mid X=x]) \\
    &\quad\cdot \int s_{Y \mid X}^2(y\mid x) 1\{y > \epsilon^{-1/2}\} P_{Y\mid X}(dy \mid x).
\end{align*}
Integrating both sides over $\lambda_X$ and applying H\"{o}lder's inequality with exponents $(1,\infty)$ gives that
\begin{align*}
\|B_{\epsilon,2}\|_{L^2(\lambda_X)}^2 &\leq 2 \left[\esssup_{x} \frac{d\lambda_X}{dP_X}(x)(E_{P_\epsilon}[Y^2 \mid X=x] + E_P [Y^2 \mid X=x])\right] \\
&\quad\cdot\int s_{Y \mid X}^2(y\mid x) 1\{y > \epsilon^{-1/2}\} dP(z) \\
&= o(1),
\end{align*}
where the essential supremum is over $P_X$. Thus, $\|\nu(P_\epsilon) - \nu(P) - \epsilon \eta_P(s)\|_{L^2(\lambda_X)} = o(\epsilon)$.

We now prove that $\eta_P$ is a bounded operator. For any $s \in \dot{\mathcal{P}}_P$, applying Cauchy-Schwarz followed by H\"{o}lder's inequality with exponents $(1,\infty)$ shows that
\begin{align*}
    \|\eta_P(s)\|_{L^2(\lambda_X)}^2 &= \int \frac{d\lambda_X}{dP_X}(x)\left[ \int [y-\nu(P)(x)]s_{Y \mid X}(y \mid x) P_{Y\mid X}(dy \mid x) \right]^2 P_X(dx) \\
    &\leq \int \frac{d\lambda_X}{dP_X}(x)\text{Var}_P(Y \mid X=x) \left[\int |s_{Y \mid X}(y \mid x)|^2 P_{Y\mid X}(dy \mid x) \right] P_X(dx) \\
    &\leq \left[\esssup_{x} \frac{d\lambda_X}{dP_X}(x)\text{Var}_P(Y \mid X=x)\right]\|s\|_{L^2(P)}^2,
\end{align*}
where the essential supremum is over $P_X$. 
Hence, $$\|\eta_P\|_{\mathrm{op}}\le \esssup_{x} \sqrt{\frac{d\lambda_X}{dP_X}(x)\text{Var}_P(Y \mid X=x)},$$ which is finite by \eqref{eq:regSecondMoment} and the assumption that $\frac{d\lambda_X}{dP_X}$ is $P_X$-a.s. bounded. Since $\eta_P$ is also linear, $\nu$ is pathwise differentiable with local parameter $\dot{\nu}_P=\eta_P$.

\subsubsection{Efficient influence operator}
For any $h \in L^2(\lambda_X)$ and $s \in \dot{\mathcal{P}}_P$, 
\begin{align*}
\langle \dot{\nu}_P(s),h \rangle_{L^2(\lambda_X)}&= \iint [y - \nu(P)(x)]h(x)s(x,y) P_{Y\mid X}(dy \mid x) \lambda_X(dx) \\
&= \iint \frac{d\lambda_X}{dP_X}(x)[y - \nu(P)(x)]h(x)s(x,y) P_{Y\mid X}(dy \mid x) P_X(dx) \\
&= \int \frac{d\lambda_X}{dP_X}(x)[y - \nu(P)(x)]h(x)s(x,y) P(dz) = \langle \dot{\nu}_P^*(h),s\rangle_{L^2(P)},
\end{align*}
where $\dot{\nu}_P^\ast(h)(z) = \frac{d\lambda_X}{dP_X}(x)[y - \nu(P)(x)]h(x)$. Hence, $\dot{\nu}_P^\ast$ is the efficient influence operator.

\subsection{Example~\ref{ex:kmed}: kernel mean embedding} \label{app:kme}

The parameter considered in this example is a special case of the counterfactual kernel mean embedding parameter considered in Example~\ref{ex:gkmed} when $A=1$ almost surely. Consequently, the proof of the pathwise differentiability of the parameter in this example, and also the calculation of its efficient influence operator and EIF, follow directly from those in Appendix~\ref{app:gkmed}.

\subsection{Example~\ref{ex:cate}: conditional average treatment effect}\label{app:cate}

The proof we provide does not require a new application of Lemma~\ref{lem:suffCondsPD}, but instead leverages the application of that lemma that we already worked out in Example~\ref{ex:reg}. First, we establish that $\nu$ is pathwise differentiable with the claimed local parameter relative to a semiparametric model where the propensity is known. We do this by leveraging the result from Example~\ref{ex:reg} to establish the pathwise differentiability of a regression of a certain pseudo-outcome against the covariates. Second, we establish that working in the larger, locally nonparametric model where this quantity is not known does not change this result: $\nu$ is still pathwise differentiable with the same local parameter. An alternative argument, which we do not give here, would entail directly applying Lemma~\ref{lem:suffCondsPD} when establishing pathwise differentiability in the semiparametric model considered in the first step.

Fix a distribution $P_0$ in the locally nonparametric model $\mathcal{P}$ that is such that $\frac{d\lambda_X}{dP_{0,X}}$ is bounded $P_{0,X}$-a.s. and let $g:= g_{P_0}$. We will establish pathwise differentiability at $P_0$ with the claimed efficient influence operator, and, since $P_0$ was arbitrary, this will establish the desired result. For a distribution $P\in\mathcal{P}$, let $P_g$ denote the distribution of $Z$ that has the same conditional distribution of $Y\mid A,X$ and marginal distribution of $X$ as $P$, but has propensity equal to $g$; in other words, for all bounded, continuous functions $f : \mathcal{Z}\rightarrow\mathbb{R}$, $E_{P_g}[f(Z)]=\int \sum_{a=0}^1 \int f(x,a,y) P_{Y\mid A,X}(dy\mid a,x) g(a\mid x) P_X(dx)$. The semiparametric model that we study is given by $\mathcal{P}_g:=\{P_g : P\in\mathcal{P}\}$. Without loss of generality, we suppose that $\mathcal{P}_g\subseteq \mathcal{P}$; if this is not the case, then we can simply extend the definition of $\nu$ to $\mathcal{P}_g$ by letting $\nu(P_g)=\nu(P)$ for any $P\in\mathcal{P}$. For any $P_g\in\mathcal{P}_g$, it can be verified that, for $\lambda_X$-almost all $X$,
\begin{align*}
    \nu(P_g)(x)&= E_{P_g}\left[W\,\middle|\,X=x\right],
\end{align*}
where $W:=(2A-1)Y/g(A\mid X)$. 
The above suggests that we can use our results from the regression setting in Example~\ref{ex:reg} to derive the efficient influence operator of $\nu$ relative to $\mathcal{P}_g$. To this end, we define the model $\widetilde{\mathcal{P}}_g:=\{P_g\circ f_g^{-1} : P_g\in\mathcal{P}_g\}$, where $P_g\circ f_g^{-1}$ is the pushforward measure of $P_g$ under $f_g(x,a,y):=(x,w):=(x,[2a-1]y/g(a\mid x)])$. We also define the parameter $\widetilde{\nu} : \widetilde{\mathcal{P}}_g\rightarrow \mathcal{H}$ so that $\widetilde{\nu}(P_g\circ f_g^{-1})=\nu(P_g)$, where this definition is valid even if there are two distinct distributions $P_g,P_g'\in\mathcal{P}_g$ that make it so that $P_g\circ f_g^{-1}=P_g'\circ f_g^{-1}$ since the preceding display shows that $\nu(P_g)=\nu(P_g')$ in this case. The parameter $\widetilde{\nu}$ takes as input a distribution of features $X$ and an outcome $W$ from a locally nonparametric model and outputs a regression function. Consequently, the results of Example~\ref{ex:reg} imply that, for any $P_g$ such that $\frac{dP_{g,X}}{d\lambda_X}$ is bounded, this parameter is pathwise differentiable at $\widetilde{P}_g:=P_g\circ f_g^{-1}$ with local parameter
$$\dot{\widetilde{\nu}}_{\widetilde{P}_g}(\tilde{s})(x) = \int \left[w - \widetilde{\nu}(\widetilde{P}_g)(x) \right] \tilde{s}(x,w) \,\widetilde{P}_g(dw \mid x).$$

We now use the pathwise differentiability of $\widetilde{\nu}$ relative to $\widetilde{\mathcal{P}}_g$ to establish the pathwise differentiability of $\nu$ relative to $\mathcal{P}_g$. To this end, let $\{P_{g,\epsilon} : \epsilon\in [0,\delta)\}\in \mathscr{P}(P_g,\mathcal{P}_g,s_g)$, where $P_g\in\mathcal{P}_g$ and $s_g$ belong to the tangent set of $\mathcal{P}_g$ at $P_g$. Similar arguments to those used  to establish Lemma~\ref{lem:preserveDQM} can be used to show that $\{P_{g,\epsilon}\circ f_g^{-1} : \epsilon\in [0,\delta)\}\in \mathscr{P}(\widetilde{P}_g,\widetilde{\mathcal{P}}_g,\tilde{s}_g)$, where $\tilde{s}_g(x,w):=E_{P_g}[s_g(X,A,Y)\mid X=x,W=w]$. Combining this with the pathwise differentiability of $\widetilde{\nu}$ relative to $\widetilde{\mathcal{P}}_g$ and the definition of $\widetilde{\nu}$ shows that
\begin{align*}
    &\left\|\nu(P_{g,\epsilon})-\nu(P_g) - \epsilon \dot{\widetilde{\nu}}_{P_g\circ f_g^{-1}}(\tilde{s}_g)\right\|_{\mathcal{H}} \\
    &\quad= \left\|\widetilde{\nu}(P_{g,\epsilon}\circ f_g^{-1})-\widetilde{\nu}(P_g\circ f_g^{-1}) - \epsilon \dot{\widetilde{\nu}}_{P_g\circ f_g^{-1}}(\tilde{s}_g)\right\|_{\mathcal{H}}=o(\epsilon).
\end{align*}
Since the operator $\dot{\nu}_{P_g} : \dot{\mathcal{P}}_{g,P_g}\rightarrow \mathcal{H}$ defined by $\dot{\nu}_{P_g}(s_g):=\dot{\nu}_{\widetilde{P}_g}(\tilde{s}_g)$ is bounded and linear, where $\dot{\mathcal{P}}_{g,P_g}$ denotes the tangent space of $\mathcal{P}_g$ at $P_g$, this shows that $\nu$ is pathwise differentiable at $P_g$ relative to $\mathcal{P}_g$ with local parameter $\dot{\nu}_{P_g}$.

We now use the pathwise differentiability of $\nu$ relative to $\mathcal{P}_g$ to establish its pathwise differentiability at $P_0$ relative to $\mathcal{P}$. Let $\{P_\epsilon : \epsilon\in [0,\delta)\}\in\mathscr{P}(P_0,\mathcal{P},s)$, where $s$ is the tangent set of $\mathcal{P}$ at $P_0$. Letting $P_{g,\epsilon}$ be the distribution that has the same conditional distribution of $Y\mid A,X$ and marginal distribution of $X$ as under $P_\epsilon$ but with propensity $g$, it can be shown that $\{P_{g,\epsilon} : \epsilon\in [0,\delta)\}\in\mathscr{P}(P_0,\mathcal{P}_g,s_g)$, where $s_g(z)=s(z)-E_0[s(Z)\mid A=a,X=x] + E_0[s(Z)\mid X=x]$. Combining this with the facts that $P_{g,0}=P_0$ and $\nu$ is invariant to changes in the propensity of its input, we find that
\begin{align*}
    \left\|\nu(P_\epsilon)-\nu(P_0) - \epsilon \dot{\nu}_0(s_g)\right\|_{\mathcal{H}}&= \left\|\nu(P_{g,\epsilon})-\nu(P_g) - \epsilon \dot{\nu}_0(s_g)\right\|_{\mathcal{H}}=o(\epsilon).
\end{align*}
The above establishes that $\nu$ is pathwise differentiable at $P_0$ relative to $\mathcal{P}$, with local parameter
\begin{align*}
    s\mapsto \dot{\nu}_0(s_g) &= \dot{\widetilde{\nu}}_{P_0\circ f_g^{-1}}(\tilde{s}_g) \\
    &=\int \left[w - \nu(P_0)(x) \right] E_0[s_g(X,A,Y)\mid X=x,W=w] \,(P_0\circ f_g^{-1})(dw \mid x) \\
    &=E_0\left\{\left[W-\nu(P_0)(X)\right]E_0[s_g(X,A,Y)\mid X,W]\,\middle|\,X=x\right\} \\
    &=E_0\left\{\left[W-\nu(P_0)(X)\right]s_g(X,A,Y)\,\middle|\,X=x\right\} \\
    &=E_0\left\{\left[W-E_0(W\mid A,X) + E_0(W\mid X)-\nu(P_0)(X)\right]s_g(X,A,Y)\,\middle|\,X=x\right\} \\
    &=E_0\left\{\left[W-E_0(W\mid A,X)\right]s_g(X,A,Y)\,\middle|\,X=x\right\} \\
    &=E_0\left\{\left[W-E_0(W\mid A,X)\right]s(X,A,Y)\,\middle|\,X=x\right\} \\
    &=E_0\left\{\left[\frac{2A-1}{g_{P_0}(A\mid X)}\left\{Y-\mu_{P_0,A}(X)\right\}\right]s(X,A,Y)\,\middle|\,X=x\right\},
\end{align*}
which matches the claimed form of the local parameter from \eqref{eq:cateLocalP}. To verify that the efficient influence operator takes the form in \eqref{eq:cateEIO}, it can be directly established that $\langle \dot{\nu}_0^\ast (h),s \rangle_{L^2(P_0)} = \langle h, \dot{\nu}_0(s) \rangle_{\mathcal{H}}$ for all $h\in\mathcal{H}$ and $s\in L_0^2(P_0)$. The calculations to establish this are straightforward and so are omitted.

\section{Proofs of results from the main text, and supporting lemmas} \label{app:lem}

\subsection{Proofs for Section~\ref{sec:pd}}

\subsubsection{Proofs for Section~\ref{sec:onestep}}

\begin{lemma} \label{lem:repProp}
Suppose $\dot{\mathcal{H}}_P$ is an RKHS, $\nu : \mathcal{P}\rightarrow\mathcal{H}$ is pathwise differentiable at $P$, and $\tilde{\phi}_P$ as defined in \eqref{eq:eifDef} is $P$-Bochner square integrable. For all $h \in \mathcal{H}$, define $\langle \tilde{\phi}_P, h \rangle_{\mathcal{H}}: \mathcal{Z} \to \mathbb{R}$ so that $\langle \tilde{\phi}_P, h \rangle_{\mathcal{H}} (z) = \langle \tilde{\phi}_P(z), h \rangle_{\mathcal{H}}$. Then, $\langle \tilde{\phi}_P, h \rangle_{\mathcal{H}} \in \dot{\mathcal{P}}_P$ and $\langle \tilde{\phi}_P, h \rangle_{\mathcal{H}} = \dot{\nu}_P^\ast(h)$ $P$-almost surely.
\end{lemma}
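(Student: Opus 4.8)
The plan is to prove the two claimed assertions for every fixed $h\in\mathcal{H}$ by first reducing to the case $h\in\dot{\mathcal{H}}_P$, then verifying the identity $\langle\tilde{\phi}_P,h\rangle_{\mathcal{H}}=\dot{\nu}_P^\ast(h)$ on the feature maps $K_t$, extending it by linearity to their span, and finally passing to the $\dot{\mathcal{H}}_P$-limit through a continuity argument carried out entirely in $L^2(P)$. For the reduction, I would first record that the image of $\tilde{\phi}_P$ lies in $\dot{\mathcal{H}}_P$ for $P$-a.e.\ $z$, so that $\langle\tilde{\phi}_P(z),h\rangle_{\mathcal{H}}=\langle\tilde{\phi}_P(z),\Pi_{\mathcal{H}}[h\mid\dot{\mathcal{H}}_P]\rangle_{\mathcal{H}}$; combined with the already-established property $\dot{\nu}_P^\ast(h)=\dot{\nu}_P^\ast(\Pi_{\mathcal{H}}[h\mid\dot{\mathcal{H}}_P])$, this shows it suffices to treat $h\in\dot{\mathcal{H}}_P$.

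For the base case, fix $t\in\mathcal{T}$. Since $\tilde{\phi}_P(z)\in\dot{\mathcal{H}}_P$ and $K_t$ is the reproducing feature of $\dot{\mathcal{H}}_P$, the reproducing property together with the defining relation \eqref{eq:eifDef} give $\langle\tilde{\phi}_P(z),K_t\rangle_{\mathcal{H}}=\tilde{\phi}_P(z)(t)=\dot{\nu}_P^\ast(K_t)(z)$ for $P$-a.e.\ $z$, so that $\langle\tilde{\phi}_P,K_t\rangle_{\mathcal{H}}=\dot{\nu}_P^\ast(K_t)$ as elements of $L^2(P)$, and this element lies in $\dot{\mathcal{P}}_P$ by definition of the adjoint. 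Linearity of $h\mapsto\langle\tilde{\phi}_P(z),h\rangle_{\mathcal{H}}$ and of $\dot{\nu}_P^\ast$ then extends the identity to every finite linear combination $\sum_i c_iK_{t_i}$, which again lies in $\dot{\mathcal{P}}_P$.

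To reach a general $h\in\dot{\mathcal{H}}_P$, I would choose a sequence $h_m$ of such finite combinations with $\|h_m-h\|_{\mathcal{H}}\to0$, which exists because the span of $\{K_t:t\in\mathcal{T}\}$ is dense in the RKHS $\dot{\mathcal{H}}_P$. On the left side, Cauchy--Schwarz in $\mathcal{H}$ gives $\|\langle\tilde{\phi}_P,h_m\rangle_{\mathcal{H}}-\langle\tilde{\phi}_P,h\rangle_{\mathcal{H}}\|_{L^2(P)}\le\|h_m-h\|_{\mathcal{H}}\,\|\tilde{\phi}_P\|_{L^2(P;\mathcal{H})}\to0$, where finiteness of $\|\tilde{\phi}_P\|_{L^2(P;\mathcal{H})}$ is exactly the $P$-Bochner square integrability hypothesis. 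On the right side, $\dot{\nu}_P^\ast$ is bounded, being the adjoint of the bounded operator $\dot{\nu}_P$, so $\|\dot{\nu}_P^\ast(h_m)-\dot{\nu}_P^\ast(h)\|_{L^2(P)}\le\|\dot{\nu}_P^\ast\|_{\mathrm{op}}\|h_m-h\|_{\mathcal{H}}\to0$. Since $\langle\tilde{\phi}_P,h_m\rangle_{\mathcal{H}}=\dot{\nu}_P^\ast(h_m)$ in $L^2(P)$ for every $m$ and both sides converge in $L^2(P)$, their limits agree, yielding $\langle\tilde{\phi}_P,h\rangle_{\mathcal{H}}=\dot{\nu}_P^\ast(h)$ $P$-almost surely; membership in $\dot{\mathcal{P}}_P$ then follows because $\dot{\nu}_P^\ast(h)\in\dot{\mathcal{P}}_P$, or equivalently because $\dot{\mathcal{P}}_P$ is closed in $L^2(P)$ and contains each $\dot{\nu}_P^\ast(h_m)$.

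The main obstacle I anticipate is the left-side continuity in the limiting step: it is precisely here that the hypothesis $\|\tilde{\phi}_P\|_{L^2(P;\mathcal{H})}<\infty$ is indispensable, since without Bochner square integrability the map $h\mapsto\langle\tilde{\phi}_P,h\rangle_{\mathcal{H}}$ need not be $L^2(P)$-continuous and the limit could fail to exist in $L^2(P)$. A secondary point requiring care is justifying that the image of $\tilde{\phi}_P$ indeed lies in $\dot{\mathcal{H}}_P$, so that the reproducing property applies in the base case; this is where the RKHS structure of $\dot{\mathcal{H}}_P$, rather than merely of $\mathcal{H}$, is genuinely used.
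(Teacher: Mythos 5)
Your proof is correct under the reading of \eqref{eq:eifDef} on which it relies, but it takes a genuinely different route from the paper's. The paper argues by duality in $L^2(P)$: for fixed $h$ it shows that $\langle \tilde{\phi}_P, h\rangle_{\mathcal{H}} - \dot{\nu}_P^\ast(h)$ has zero $L^2(P)$-inner product with every $s^\perp$ in the orthogonal complement of $\dot{\mathcal{P}}_P$ and with every $s \in \dot{\mathcal{P}}_P$; the key steps there are an interchange of the Bochner integral $\int \tilde{\phi}_P(z) s(z)\, P(dz)$ with the inner product (justified by Bochner square integrability) and the identification of that integral with $\dot{\nu}_P(s)$ through its pointwise values, using the reproducing property applied to $\dot{\nu}_P(s) \in \dot{\mathcal{H}}_P$. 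You instead verify the identity directly on the kernel sections $K_t$ (where it is essentially the definition of $\tilde{\phi}_P$ plus the reproducing property), extend it by linearity to $\mathrm{span}\{K_t : t \in \mathcal{T}\}$, and reach general $h \in \dot{\mathcal{H}}_P$ by an $L^2(P)$-continuity argument, with Cauchy--Schwarz and $\|\tilde{\phi}_P\|_{L^2(P;\mathcal{H})}<\infty$ controlling the left side and $\|\dot{\nu}_P^\ast\|_{\mathrm{op}}<\infty$ controlling the right. Your route is more elementary in that it involves only scalar integrals --- no vector-valued integration or interchange steps --- and it isolates exactly where the Bochner square integrability enters; notably, its core never invokes the adjoint identity $\langle \dot{\nu}_P^\ast(h), s\rangle_{L^2(P)} = \langle h, \dot{\nu}_P(s)\rangle_{\mathcal{H}}$, which is the engine of the paper's proof (it enters your argument only through the paper's already-established fact that $\dot{\nu}_P^\ast(h)=\dot{\nu}_P^\ast(\Pi_{\mathcal{H}}[h\mid \dot{\mathcal{H}}_P])$).

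The one place where the two proofs differ in robustness is your reduction to $h\in\dot{\mathcal{H}}_P$, which, together with your base case, requires that $\tilde{\phi}_P(z)\in\dot{\mathcal{H}}_P$ for $P$-almost every $z$. You flag this but do not prove it. Under the lemma's stated hypothesis --- only $\dot{\mathcal{H}}_P$ is assumed to be an RKHS over $\mathcal{T}$ --- the evaluation notation $\tilde{\phi}_P(z)(t)$ in \eqref{eq:eifDef} only makes sense when $\tilde{\phi}_P(z)\in\dot{\mathcal{H}}_P$, so the claim is definitional and your proof is complete. In the paper's other covered setting, however, where $\mathcal{H}$ is itself an RKHS over $\mathcal{T}$ and $\dot{\mathcal{H}}_P$ is a proper closed subspace, $\tilde{\phi}_P(z)$ is pinned down by its pointwise values as an element of $\mathcal{H}$, and nothing in the definition forces that element into $\dot{\mathcal{H}}_P$: membership is a \emph{consequence} of the lemma (apply the conclusion to $h\perp\dot{\mathcal{H}}_P$, for which $\dot{\nu}_P^\ast(h)=0$, and use separability of $\mathcal{H}$), so assuming it at the outset would be circular there. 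The paper's duality argument is insensitive to this distinction because it treats arbitrary $h\in\mathcal{H}$ directly, needing only that pointwise evaluation is continuous and determining on the space where $\tilde{\phi}_P$ and the relevant Bochner integrals live. If you want your argument to cover that case as well, supplement it by showing $\langle\tilde{\phi}_P,h\rangle_{\mathcal{H}}=0$ $P$-a.s. for each $h$ orthogonal to $\dot{\mathcal{H}}_P$, which is most naturally done by exactly the paper's orthogonality computation.
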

\begin{proof}
Fix $h\in\mathcal{H}$. 
The fact that $\langle \tilde{\phi}_P, h \rangle_{\mathcal{H}} \in L^2(P)$ follows from Cauchy-Schwarz and the fact that $\tilde{\phi}_P\in L^2(P;\mathcal{H})$. In particular,
$$ \int |\langle \tilde{\phi}_P(z), h \rangle_{\mathcal{H}}|^2 P(dz) \leq \|h\|_{\mathcal{H}}^2 \|\tilde{\phi}_P\|_{L^2(P;\mathcal{H})}^2 < \infty. $$
Let $s^\perp$ be an element of the orthogonal complement of the tangent space $\dot{\mathcal{P}}_P$. Then,
\begin{align*}
    \int \langle \tilde{\phi}_P(z), h \rangle_{\mathcal{H}} s^\perp(z) P(dz) &= \int \langle t \mapsto \dot{\nu}_P^\ast (K_t)(z), h \rangle_{\mathcal{H}} s^\perp(z) P(dz) \\
    &= \left\langle t \mapsto \int \dot{\nu}_P^\ast (K_t)(z) s^\perp(z) P(dz), h \right\rangle_{\mathcal{H}} \\
    &= \langle t \mapsto \langle \dot{\nu}_P^\ast (K_t), s^\perp \rangle_{L^2(P)}, h \rangle_{\mathcal{H}} = 0,
\end{align*}
where we use the $P$-Bochner square integrability of $\tilde{\phi}_P$ to interchange the integral and the inner product and use that $\dot{\nu}_P^\ast (K_t) \in \dot{\mathcal{P}}_P$ for all $t\in\mathcal{T}$. Note also that $\langle \dot{\nu}_P^\ast (h), s^\perp \rangle_{L^2(P)} = 0$
since $\dot{\nu}_P^\ast (h) \in \dot{\mathcal{P}}_P$. Hence, for any $s^\perp$ in the orthogonal complement of $\dot{\mathcal{P}}_P$,
\begin{align}
    \left\langle \langle \tilde{\phi}_P, h \rangle_{\mathcal{H}} - \dot{\nu}_P^\ast (h), s^\perp \right\rangle_{L^2(P)} = 0. \label{eq:sperp}
\end{align}
Let $s \in \dot{\mathcal{P}}_P$. Then, following some of the same calculations as earlier,
\begin{align*}
    \int \langle \tilde{\phi}_P(z), h \rangle_{\mathcal{H}} s(z) P(dz) 
    &= \langle t \mapsto \langle \dot{\nu}_P^\ast (K_t), s \rangle_{L^2(P)}, h \rangle_{\mathcal{H}}.
\end{align*}
Furthermore, for any $t\in\mathcal{T}$, $\langle \dot{\nu}_P^\ast (K_t), s \rangle_{L^2(P)} = \langle K_t, \dot{\nu}_P(s) \rangle_{\mathcal{H}}$. Since $\dot{\nu}_P(s)$ belongs to the RKHS $\dot{\mathcal{H}}_P$, $\langle K_t, \dot{\nu}_P(s) \rangle_{\mathcal{H}}=\dot{\nu}_P(s)(t)$. Plugging these observations into the above shows that the right-hand side is equal to $\langle \dot{\nu}_P(s), h \rangle_{\mathcal{H}}  = \langle \dot{\nu}_P^\ast (h), s \rangle_{L^2(P)}$, and so
\begin{align*}
    \left\langle \langle \tilde{\phi}_P, h \rangle_{\mathcal{H}} - \dot{\nu}_P^\ast (h), s \right\rangle_{L^2(P)} = 0.
\end{align*}
As the above holds for all $s$ in the tangent space $\dot{\mathcal{P}}_P$ and \eqref{eq:sperp} holds for all $s^\perp$ in its orthogonal complement, $\langle \tilde{\phi}_P, h \rangle_{\mathcal{H}} = \dot{\nu}_P^\ast (h)$ $P$-almost surely.
\end{proof}

\begin{lemma}\label{lem:RieszRep}
In the setting of Lemma~\ref{lem:repProp}, it is $P$-a.s. true that $\sup_{h\in\mathcal{H}}|\langle \tilde{\phi}_P, h \rangle_{\mathcal{H}} - \dot{\nu}_P^\ast(h)|=0$. Hence, for $P$-almost all $z$, $\dot{\nu}_P^*(\cdot)(z) : \mathcal{H}\rightarrow\mathbb{R}$ is a bounded linear functional with Riesz representation $\tilde{\phi}_P(z)$. 
In other words, $\tilde{\phi}_P$ is the EIF of $\nu$ at $P$.
\end{lemma}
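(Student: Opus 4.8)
The plan is to upgrade the $h$-by-$h$ almost sure identity furnished by Lemma~\ref{lem:repProp} into a single null set outside of which the identity holds simultaneously for every $h\in\mathcal{H}$; the separability of the efficient influence process is exactly the device that makes this possible. First I would fix the countable dense subset $\mathcal{H}'\subseteq\mathcal{H}$ and the $P$-probability-one set $\mathcal{Z}'$ supplied by that separability assumption. For each $h\in\mathcal{H}'$, Lemma~\ref{lem:repProp} yields a $P$-null set $N_h$ off of which $\langle\tilde{\phi}_P(z),h\rangle_{\mathcal{H}}=\dot{\nu}_P^*(h)(z)$. Because $\mathcal{H}'$ is countable, $N:=\bigcup_{h\in\mathcal{H}'}N_h$ is still $P$-null, and intersecting its complement with $\mathcal{Z}'$ and with the $P$-probability-one set on which $\tilde{\phi}_P(z)\in\mathcal{H}$ is well defined produces a $P$-probability-one set $\mathcal{Z}''$ on which the identity holds for all $h\in\mathcal{H}'$ at once.

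Next I would extend the identity from $\mathcal{H}'$ to all of $\mathcal{H}$, pointwise in $z\in\mathcal{Z}''$. Fixing such a $z$, note that $h\mapsto\langle\tilde{\phi}_P(z),h\rangle_{\mathcal{H}}$ is a bounded linear functional on $\mathcal{H}$, since $\tilde{\phi}_P(z)$ is a fixed element of $\mathcal{H}$. Given an arbitrary $h\in\mathcal{H}$, the separability property provides a sequence $(h_j)_{j=1}^\infty\subseteq\mathcal{H}'$ with $h_j\to h$ in $\mathcal{H}$ and $\dot{\nu}_P^*(h_j)(z)\to\dot{\nu}_P^*(h)(z)$. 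For each $j$ we have $\langle\tilde{\phi}_P(z),h_j\rangle_{\mathcal{H}}=\dot{\nu}_P^*(h_j)(z)$ because $h_j\in\mathcal{H}'$ and $z\in\mathcal{Z}''$; letting $j\to\infty$, the left-hand side converges to $\langle\tilde{\phi}_P(z),h\rangle_{\mathcal{H}}$ by continuity of the inner product in its second argument, while the right-hand side converges to $\dot{\nu}_P^*(h)(z)$ by the choice of sequence. Hence $\langle\tilde{\phi}_P(z),h\rangle_{\mathcal{H}}=\dot{\nu}_P^*(h)(z)$ for every $h\in\mathcal{H}$ and every $z\in\mathcal{Z}''$, which gives $\sup_{h\in\mathcal{H}}|\langle\tilde{\phi}_P,h\rangle_{\mathcal{H}}-\dot{\nu}_P^*(h)|=0$ $P$-almost surely.

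To finish, I would observe that on $\mathcal{Z}''$ the functional $\dot{\nu}_P^*(\cdot)(z)$ coincides with $h\mapsto\langle\tilde{\phi}_P(z),h\rangle_{\mathcal{H}}$, which is bounded and linear with operator norm $\|\tilde{\phi}_P(z)\|_{\mathcal{H}}$; by the Riesz representation theorem, its Riesz representation is precisely $\tilde{\phi}_P(z)$. Comparing with \eqref{eq:EIFRiesz} then identifies $\tilde{\phi}_P$ as the EIF of $\nu$ at $P$. The only genuine obstacle is the first step: Lemma~\ref{lem:repProp} controls a single exceptional null set \emph{per} $h$, and an uncountable union of such sets need not be null, so one cannot naively quantify over all $h$. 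The separability of the efficient influence process, together with the continuity of the fixed functional $\langle\tilde{\phi}_P(z),\cdot\rangle_{\mathcal{H}}$, is what lets a countable dense set of $h$'s suffice and thereby closes this gap.
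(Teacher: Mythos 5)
Your proof is correct and follows essentially the same route as the paper's: invoke Lemma~\ref{lem:repProp} for each $h$ in the countable dense set $\mathcal{H}'$, take a countable intersection to obtain a single $P$-probability-one set, and then use the separability of the efficient influence process together with continuity of $\langle\tilde{\phi}_P(z),\cdot\rangle_{\mathcal{H}}$ to extend the identity to all $h\in\mathcal{H}$. The only (cosmetic) difference is that the paper runs the extension through a near-supremizing $h_\epsilon$ and an $\epsilon$-argument, whereas you pass to the limit directly for each fixed $h$, which is if anything slightly cleaner.
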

\begin{proof}
Since we have assumed throughout that a separable version of the efficient influence process is used, there exists a countable dense subset $\mathcal{H}'$ of $\mathcal{H}$ and a $P$-probability one subset $\mathcal{Z}'$ of $\mathcal{Z}$ such that, for all $h\in\mathcal{H}$ and $z\in\mathcal{Z}'$, there exists an $\mathcal{H}'$-valued sequence $(h_j')_{j=1}^\infty$ that converges to $h$ and satisfies $\dot{\nu}_P^*(h_j')(z)\rightarrow \dot{\nu}_P^*(h)(z)$ as $j\rightarrow\infty$.
For each $h\in\mathcal{H}'$, we let
\begin{align*}
    \mathcal{Z}_h''=\{z\in\mathcal{Z}' : \langle \tilde{\phi}_P(z), h \rangle_{\mathcal{H}} - \dot{\nu}_P^\ast(h)(z)=0\},
\end{align*}
and we define $\mathcal{Z}'':=\cap_{h\in\mathcal{H}'}\,\mathcal{Z}_h''$. By Lemma~\ref{lem:repProp} and the fact that $\mathcal{Z}'$ is a $P$-probability one set, $P(\mathcal{Z}_h'')=1$ for each $h\in\mathcal{H}'$ and, as $\mathcal{H}'$ is countable, $P(\mathcal{Z}'')=1$ as well. In what follows we will show that, for all $z\in\mathcal{Z}''$, $\sup_{h\in\mathcal{H}}|\langle \tilde{\phi}_P(z), h \rangle_{\mathcal{H}} - \dot{\nu}_P^\ast(h)(z)|=0$. To this end, fix $z\in\mathcal{Z}''$ and $\epsilon>0$ and let $h_\epsilon\in \mathcal{H}$ be such that
\begin{align*}
\left|\langle \tilde{\phi}_P(z), h_\epsilon \rangle_{\mathcal{H}} - \dot{\nu}_P^\ast(h_\epsilon)(z)\right|\ge \sup_{h\in\mathcal{H}}\left|\langle \tilde{\phi}_P(z), h \rangle_{\mathcal{H}} - \dot{\nu}_P^\ast(h)(z)\right| - \epsilon.
\end{align*}
By the separability of the efficient influence process, there exists an $\mathcal{H}'$-valued sequence $(h_{\epsilon,j})$ that converges to $h_\epsilon$ that is such that $\dot{\nu}_P^\ast(h_{\epsilon,j})(z)\rightarrow \dot{\nu}_P^\ast(h_\epsilon)(z)$ as $j\rightarrow\infty$. By the continuity of the inner product $\langle \tilde{\phi}_P(z),\,\cdot\,\rangle_{\mathcal{H}}$, it also holds that $\langle \tilde{\phi}_P(z),h_{\epsilon,j}\rangle_{\mathcal{H}}\rightarrow \langle \tilde{\phi}_P(z),h_\epsilon\rangle_{\mathcal{H}}$ as $j\rightarrow\infty$. Consequently, $|\langle \tilde{\phi}_P(z), h_{\epsilon,j} \rangle_{\mathcal{H}} - \dot{\nu}_P^\ast(h_{\epsilon,j})(z)|$ converges to the left-hand side above as $j\rightarrow\infty$, and so there exists a sufficiently large $j$ such that
\begin{align*}
\left|\langle \tilde{\phi}_P(z), h_{\epsilon,j} \rangle_{\mathcal{H}} - \dot{\nu}_P^\ast(h_{\epsilon,j})(z)\right|\ge \sup_{h\in\mathcal{H}}\left|\langle \tilde{\phi}_P(z), h \rangle_{\mathcal{H}} - \dot{\nu}_P^\ast(h)(z)\right| - 2\epsilon.
\end{align*}
As $z$ is in $\mathcal{Z}''$, $z$ is in $\mathcal{Z}_{h_{\epsilon,j}}''$ as well. Hence, the left-hand side above is zero, which shows that
\begin{align*}
    \sup_{h\in\mathcal{H}}\left|\langle \tilde{\phi}_P(z), h \rangle_{\mathcal{H}} - \dot{\nu}_P^\ast(h)(z)\right|\le 2\epsilon.
\end{align*}
As $\epsilon>0$ was arbitrary, the left-hand side above is equal to zero. This proves the first claim of the lemma. The second claim follows directly from the fact that $h\mapsto \langle \tilde{\phi}_P(z), h \rangle$ is a bounded linear functional and by the definition of the Riesz representation of such a functional.
\end{proof}

\begin{lemma}\label{lem:eifMustBephiTilde}
Let $\nu : \mathcal{P}\rightarrow\mathcal{H}$ be pathwise differentiable at $P$ and suppose that $\dot{\mathcal{H}}_P$ is an RKHS. If $\nu$ has EIF $\phi_P$ at $P$, then $\phi_P=\tilde{\phi}_P$ $P$-a.s., where $\tilde{\phi}_P$ is as defined in \eqref{eq:eifDef}.
\end{lemma}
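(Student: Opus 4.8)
The plan is to read off the form of any EIF directly from the reproducing property of $\dot{\mathcal{H}}_P$, using the single $P$-null exceptional set supplied by the definition of an EIF. Suppose $\nu$ has an EIF $\phi_P$ at $P$. By \eqref{eq:EIFRiesz}, there is a $P$-probability-one set $\mathcal{Z}'$ such that $\dot{\nu}_P^*(h)(z) = \langle h, \phi_P(z)\rangle_{\mathcal{H}}$ for every $(h,z)\in\mathcal{H}\times\mathcal{Z}'$; moreover, as noted below \eqref{eq:EIFRiesz}, the image of $\phi_P$ is contained in $\dot{\mathcal{H}}_P$, so $\phi_P(z)\in\dot{\mathcal{H}}_P$ for each $z\in\mathcal{Z}'$.

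First I would fix $z\in\mathcal{Z}'$ and specialize the displayed identity to $h=K_t$, a legitimate choice since the feature $K_t$ of $\dot{\mathcal{H}}_P$ lies in $\dot{\mathcal{H}}_P\subseteq\mathcal{H}$. This gives $\dot{\nu}_P^*(K_t)(z) = \langle K_t, \phi_P(z)\rangle_{\mathcal{H}}$ for every $t\in\mathcal{T}$. Because $\dot{\mathcal{H}}_P$ carries the inner product $\langle\cdot,\cdot\rangle_{\mathcal{H}}$ and $\phi_P(z)\in\dot{\mathcal{H}}_P$, the reproducing property yields $\langle K_t,\phi_P(z)\rangle_{\mathcal{H}} = \phi_P(z)(t)$. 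Combining these with the defining relation \eqref{eq:eifDef} for $\tilde{\phi}_P$ shows that $\tilde{\phi}_P(z)(t)=\dot{\nu}_P^*(K_t)(z)=\phi_P(z)(t)$ for all $t\in\mathcal{T}$ and all $z\in\mathcal{Z}'$.

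To finish, I would invoke the fact that an element of the RKHS $\dot{\mathcal{H}}_P$ is uniquely determined by its pointwise values: since $\tilde{\phi}_P(z)$ and $\phi_P(z)$ agree at every $t\in\mathcal{T}$ and both lie in $\dot{\mathcal{H}}_P$, they coincide as elements of $\dot{\mathcal{H}}_P$, hence as elements of $\mathcal{H}$. As this holds for every $z$ in the $P$-probability-one set $\mathcal{Z}'$, we conclude that $\phi_P=\tilde{\phi}_P$ $P$-almost surely.

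The step requiring the most care is the bookkeeping of null sets. The evaluations in \eqref{eq:eifDef} are only asserted to hold $P$-a.s. for each fixed $t$, with a potentially $t$-dependent exceptional set, so one cannot naively assemble them across the uncountable index set $\mathcal{T}$. The resolution is that the EIF definition \eqref{eq:EIFRiesz} furnishes a single null set $\mathcal{Z}'$ valid simultaneously for all $h$, in particular all $K_t$; on $\mathcal{Z}'$ the candidate $\tilde{\phi}_P(z)$ is forced to coincide with the genuine RKHS element $\phi_P(z)$, and identifying $\tilde{\phi}_P(z)$ with this element over all of $\mathcal{Z}'$ is precisely what makes the pointwise agreement hold for every $t$ at once.
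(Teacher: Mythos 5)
Your proof is correct and follows essentially the same route as the paper's: both arguments fix the single $P$-probability-one set $\mathcal{Z}'$ furnished by the EIF's Riesz-representation property \eqref{eq:EIFRiesz}, specialize the identity to $h=K_t$, and invoke the reproducing property of $\dot{\mathcal{H}}_P$ (legitimate because $\phi_P(z)\in\dot{\mathcal{H}}_P$) to conclude $\phi_P(z)(t)=\langle \phi_P(z),K_t\rangle_{\mathcal{H}}=\dot{\nu}_P^*(K_t)(z)=\tilde{\phi}_P(z)(t)$ for every $t$, hence $\phi_P=\tilde{\phi}_P$ on $\mathcal{Z}'$. Your closing remark on null-set bookkeeping — that the uniform-in-$h$ exceptional set from \eqref{eq:EIFRiesz} is what permits assembling the pointwise identities across the uncountable index set $\mathcal{T}$ — is exactly the mechanism the paper's proof relies on as well.
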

\begin{proof}
Since $\phi_P$ is the EIF of $\nu$ at $P$, there exists a $P$-probability-one set $\mathcal{Z}'$ such that, for all $z\in\mathcal{Z}'$, $\dot{\nu}_P^*(\cdot)(z) : \mathcal{H}\rightarrow\mathbb{R}$ is a bounded linear functional with Riesz representation $\phi_P$. In other words, for all $z\in\mathcal{Z}'$,
\begin{align*}
    \sup_{h\in\mathcal{H}}|\langle \phi_P(z), h \rangle_{\mathcal{H}} - \dot{\nu}_P^*(h)(z)|=0.
\end{align*}
Fix $z\in\mathcal{Z}'$. Since $\dot{\mathcal{H}}_P\subseteq\mathcal{H}$, the above shows that $\langle \phi_P(z), K_t \rangle_{\mathcal{H}} - \dot{\nu}_P^*(K_t)(z)=0$ for all $t\in\mathcal{T}$. Since $\phi_P(z)\in \dot{\mathcal{H}}_P$, $\phi_P(z)(t)=\langle \phi_P(z), K_t \rangle_{\mathcal{H}}$ for all $t\in\mathcal{T}$. Combining this with the fact that $\tilde{\phi}_P(t):=\dot{\nu}_P^*(K_t)(z)$, this shows that $\phi_P(z)=\tilde{\phi}_P(z)$. As $z$ is an arbitrary element of the $P$-probability one set $\mathcal{Z}'$, this shows that $\phi_P=\tilde{\phi}_P$ $P$-almost surely.
\end{proof}

\begin{proof}[Proof of Theorem~\ref{thm:eifProperties}]
The first statement, \ref{it:eifForm}, was established in Lemma~\ref{lem:eifMustBephiTilde}. The second statement, \ref{it:eifSuffCond}, was established in Lemma~\ref{lem:RieszRep}.
\end{proof}

\begin{lemma}\label{lem:localParamToMean}
Let $\mathcal{P}$ be a statistical model of distributions that are equivalent in that, for all $P_1,P_2\in\mathcal{P}$, $P_1\ll P_2$ and $P_2\ll P_1$. Let $\{P_\epsilon : \epsilon\in [0,\delta)\} \in \mathscr{P}(P,\mathcal{P},s)$ be a quadratic mean differentiable submodel. Let $\nu : \mathcal{P}\rightarrow\mathcal{H}$ be pathwise differentiable at $P\in\mathcal{P}$ with a $P$-almost surely bounded EIF $\phi_P$, in the sense that $\|\phi_P(Z)\|_{\mathcal{H}}$ is a bounded random variable when $Z\sim P$. Under these conditions,
\begin{align}
(P_\epsilon - P)\phi_P - \epsilon \dot{\nu}_P(s) = o(\epsilon). \label{eq:EIFexpansion}
\end{align}
\end{lemma}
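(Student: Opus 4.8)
The plan is to first convert the local parameter $\dot{\nu}_P(s)$ into a Bochner integral against $P$ using the EIF, and then to expand $P_\epsilon\phi_P$ using the quadratic-mean-differentiability of the submodel and bound the resulting error term by term.

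First I would record two consequences of $\phi_P$ being the EIF of $\nu$ at $P$. Because $\dot{\nu}_P^*(h)(z)=\langle h,\phi_P(z)\rangle_{\mathcal{H}}$ $P$-a.s.\ and $\dot{\nu}_P^*(h)\in\dot{\mathcal{P}}_P\subseteq L^2_0(P)$, taking $P$-expectations gives $\langle h,P\phi_P\rangle_{\mathcal{H}}=P\dot{\nu}_P^*(h)=0$ for every $h$, whence $P\phi_P=0$ as a Bochner integral. Moreover, the adjoint identity $\langle\dot{\nu}_P(s),h\rangle_{\mathcal{H}}=\langle s,\dot{\nu}_P^*(h)\rangle_{L^2(P)}=\int s(z)\langle h,\phi_P(z)\rangle_{\mathcal{H}}\,dP(z)$, combined with interchanging the bounded linear functional $\langle h,\cdot\rangle_{\mathcal{H}}$ with the Bochner integral, yields $\dot{\nu}_P(s)=P(s\phi_P)$. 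The boundedness hypothesis $\|\phi_P\|_{\mathcal{H}}\le M$ $P$-a.s.\ guarantees Bochner integrability of $\phi_P$ against both $P$ and $P_\epsilon$ (the latter since $P_\epsilon\sim P$), so all of these integrals are well defined.

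Using $P\phi_P=0$, I would rewrite the target as $(P_\epsilon-P)\phi_P-\epsilon\dot{\nu}_P(s)=\int\phi_P(p_\epsilon-p-\epsilon s p)\,d\lambda$, where $p_\epsilon,p$ are the $\lambda$-densities; here I note the integrand vanishes $\lambda$-a.e.\ on $\{p=0\}$ because equivalence of the measures forces $p_\epsilon=0$ there too, so the integral only sees the region where $\phi_P$ is defined and essentially bounded. Then I would introduce the remainder $r_\epsilon:=p_\epsilon^{1/2}-p^{1/2}-\tfrac{\epsilon}{2}sp^{1/2}$, which satisfies $\|r_\epsilon\|_{L^2(\lambda)}=o(\epsilon)$ by \eqref{eq:qmd}, and expand $p_\epsilon=(p^{1/2}+\tfrac{\epsilon}{2}sp^{1/2}+r_\epsilon)^2$ to obtain the pointwise identity
$$p_\epsilon-p-\epsilon s p=\tfrac{\epsilon^2}{4}s^2 p+r_\epsilon^2+2p^{1/2}r_\epsilon+\epsilon s\,p^{1/2}r_\epsilon.$$
Finally I would pull the $\mathcal{H}$-norm inside the integral and use $\|\phi_P\|_{\mathcal{H}}\le M$ to get $\|(P_\epsilon-P)\phi_P-\epsilon\dot{\nu}_P(s)\|_{\mathcal{H}}\le M\int|p_\epsilon-p-\epsilon s p|\,d\lambda$, and bound the four terms separately: the first is $\tfrac{\epsilon^2}{4}\|s\|_{L^2(P)}^2=O(\epsilon^2)$; the second is $\|r_\epsilon\|_{L^2(\lambda)}^2=o(\epsilon^2)$; the third is at most $2\|p^{1/2}\|_{L^2(\lambda)}\|r_\epsilon\|_{L^2(\lambda)}=2\|r_\epsilon\|_{L^2(\lambda)}=o(\epsilon)$ by Cauchy--Schwarz and $\int p\,d\lambda=1$; and the fourth is at most $\epsilon\|sp^{1/2}\|_{L^2(\lambda)}\|r_\epsilon\|_{L^2(\lambda)}=\epsilon\|s\|_{L^2(P)}o(\epsilon)=o(\epsilon^2)$, again by Cauchy--Schwarz. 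Summing, the whole expression is $o(\epsilon)$, establishing \eqref{eq:EIFexpansion}.

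The steps requiring the most care are the measure-theoretic bookkeeping rather than the estimates: justifying $\dot{\nu}_P(s)=P(s\phi_P)$ (interchanging the inner product with the Bochner integral, which needs integrability from the boundedness of $\phi_P$), and confirming that the integrand $\phi_P(p_\epsilon-p-\epsilon s p)$ is genuinely supported where $\phi_P$ is defined and essentially bounded, which relies on the equivalence of all measures in $\mathcal{P}$. The analytic core---the square-root expansion and the term-by-term $o(\epsilon)$ bounds via Cauchy--Schwarz---is routine once these identities are in place.
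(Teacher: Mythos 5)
Your proof is correct, and it takes a genuinely different route from the paper's. You first establish the vector-valued identities $P\phi_P=0$ and $\dot{\nu}_P(s)=\int s(z)\phi_P(z)\,P(dz)$ (both justified by Bochner integrability from the a.s.\ bound on $\|\phi_P\|_{\mathcal{H}}$), which collapses the whole problem to the scalar estimate $\big\|(P_\epsilon-P)\phi_P-\epsilon\dot{\nu}_P(s)\big\|_{\mathcal{H}}\le M\int|p_\epsilon-p-\epsilon s p|\,d\lambda$; you then control this by the square expansion $p_\epsilon=(p^{1/2}+\tfrac{\epsilon}{2}sp^{1/2}+r_\epsilon)^2$ and term-by-term Cauchy--Schwarz. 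The paper instead never forms the Bochner integral $P(s\phi_P)$: it normalizes the error, setting $g_\epsilon=h_\epsilon/\|h_\epsilon\|_{\mathcal{H}}$ with $h_\epsilon=(P_\epsilon-P)\phi_P-\epsilon\dot{\nu}_P(s)$, writes $\|h_\epsilon\|_{\mathcal{H}}=\langle h_\epsilon,g_\epsilon\rangle_{\mathcal{H}}$, factors $p_\epsilon-p=(q_\epsilon+q)(q_\epsilon-q)$, and cancels the linear term through the $P$-a.s.\ identity $\langle\phi_P(z),g_\epsilon\rangle_{\mathcal{H}}=\dot{\nu}_P^*(g_\epsilon)(z)$ combined with the adjoint relation $\langle\dot{\nu}_P(s),g_\epsilon\rangle_{\mathcal{H}}=\langle s,\dot{\nu}_P^*(g_\epsilon)\rangle_{L^2(P)}$; the two leftover integrals are then bounded exactly as yours are, using $\|q_\epsilon-q\|_{L^2(\lambda)}=O(\epsilon)$ and $\|q_\epsilon-q-\tfrac{\epsilon}{2}sq\|_{L^2(\lambda)}=o(\epsilon)$. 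What each buys: your route is more direct and yields the slightly stronger total-variation-type bound $M\int|p_\epsilon-p-\epsilon sp|\,d\lambda=o(\epsilon)$, uniform over all directions at once, at the price of the up-front interchange arguments identifying $\dot{\nu}_P(s)$ as a Bochner integral; the paper's dual pairing keeps every integral scalar from the outset and sidesteps that identification, needing only the pointwise Riesz representation of $\dot{\nu}_P^*(\cdot)(z)$. Both hinge on the same two ingredients---a.s.\ boundedness of the EIF and quadratic mean differentiability---and your handling of the measure-theoretic fine points (the integrand vanishing on $\{p=0\}$ via $P_\epsilon\ll P$, and integrability of $s\phi_P$) is sound.
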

The above lemma requires that $\|\phi_P(Z)\|_{\mathcal{H}}$ be a bounded random variable in order to show that \eqref{eq:EIFexpansion} holds. Lemma~\ref{lem:LinearQMDRemainder} will provide an alternative condition under which \eqref{eq:EIFexpansion} holds. In particular, rather than impose a boundedness condition on the EIF, that lemma will require that the submodel be approximately linear, in the sense that $\frac{dP_\epsilon}{dP}\approx 1 + \epsilon s$ in an appropriate sense.
\begin{proof}[Proof of Lemma~\ref{lem:localParamToMean}]
Fix a quadratic mean differentiable submodel $\{P_\epsilon : \epsilon\in [0,\delta)\} \in \mathscr{P}(P,\mathcal{P},s)$. 
For $\epsilon\in [0,\delta)$, let $h_\epsilon:=(P_\epsilon - P)\phi_P - \epsilon \dot{\nu}_P(s)$. We will show that $\|h_\epsilon\|_{\mathcal{H}}=o(\epsilon)$. Let $g_\epsilon:=h_\epsilon/\|h_\epsilon\|_{\mathcal{H}}$, where we use the convention that $g_\epsilon=0$ when $h_\epsilon=0$. Observe that
\begin{align}
    \|h_\epsilon\|_{\mathcal{H}}&= \langle (P_\epsilon - P)\phi_P - \epsilon \dot{\nu}_P(s), h_\epsilon\rangle_{\mathcal{H}}= \langle (P_\epsilon - P)\phi_P, g_\epsilon\rangle_{\mathcal{H}} - \epsilon \langle s, \dot{\nu}_P^*(g_\epsilon)\rangle_{L^2(P)}. \label{eq:hEpsNorm}
\end{align}
We now study the inner product $\langle (P_\epsilon - P)\phi_P, h_\epsilon\rangle_{\mathcal{H}}$ that appears above. Since $\phi_P(Z)$ is bounded under sampling from $P$ and $P_\epsilon\ll P$, $\phi_P : \mathcal{Z}\rightarrow\mathcal{H}$ is Bochner integrable both under sampling from $P$ and $P_\epsilon$. Consequently, $ \langle (P_\epsilon - P) \phi_P, g_\epsilon \rangle_{\mathcal{H}} = \int \langle \phi_P(z), g_\epsilon \rangle_{\mathcal{H}}d(P_\epsilon-P)(z)$. Adding and subtracting terms from this identity and letting $q_\epsilon:=p_\epsilon^{1/2}$ and $q:=p^{1/2}$ yields that
\begin{align*}
    \langle &(P_\epsilon - P)\phi_P, g_\epsilon\rangle_{\mathcal{H}} \\
    &= \int \langle \phi_P(z), g_\epsilon \rangle_{\mathcal{H}}[q_\epsilon(z)+q(z)][q_\epsilon(z)-q(z)]d\lambda(z)  \\
    &= \epsilon \int \langle \phi_P(z), g_\epsilon \rangle_{\mathcal{H}}s(z) q^2(z)d\lambda(z) + \frac{1}{2}\epsilon \int \langle \phi_P(z), g_\epsilon \rangle_{\mathcal{H}}[q_\epsilon(z)-q(z)]s(z) q(z)d\lambda(z) \\
    &\quad+ \int \langle \phi_P(z), g_\epsilon \rangle_{\mathcal{H}}[q_\epsilon(z)+q(z)]\left[q_\epsilon(z)-q(z) - \frac{1}{2}\epsilon s(z) q(z)\right]d\lambda(z).
\end{align*}
By the definition of the EIF, $\langle \phi_P(z), g_\epsilon \rangle_{\mathcal{H}}=\dot{\nu}_P^*(g_\epsilon)(z)$ $P$-almost surely. Hence, the first term on the right-hand side above is equal to $\epsilon \langle s, \dot{\nu}_P^*(g_\epsilon)\rangle_{L^2(P)}$, and so \eqref{eq:hEpsNorm} shows that
\begin{align*}
\|h_\epsilon\|_{\mathcal{H}}&= \frac{1}{2}\epsilon \int \langle \phi_P(z), g_\epsilon \rangle_{\mathcal{H}}[q_\epsilon(z)-q(z)]s(z) q(z)d\lambda(z) \\
    &\quad+ \int \langle \phi_P(z), g_\epsilon \rangle_{\mathcal{H}}[q_\epsilon(z)+q(z)]\left[q_\epsilon(z)-q(z) - \frac{1}{2}\epsilon s(z) q(z)\right]d\lambda(z).
\end{align*}
By Jensen's inequality and Cauchy-Schwarz, this yields that
\begin{align}
    \|h_\epsilon\|_{\mathcal{H}}&\le \frac{1}{2}\epsilon \|g_\epsilon\|_{\mathcal{H}} \int \| \phi_P(z)\|_{\mathcal{H}}|q_\epsilon(z)-q(z)|s(z) q(z)d\lambda(z) \nonumber \\
    &\quad+ \|g_\epsilon\|_{\mathcal{H}}\int \|\phi_P(z)\|_{\mathcal{H}}[q_\epsilon(z)+q(z)]\left|q_\epsilon(z)-q(z) - \frac{1}{2}\epsilon s(z) q(z)\right|d\lambda(z). \label{eq:hEpsTwoInts}
\end{align}
Since $\|g_\epsilon\|_{\mathcal{H}}$ is either equal to 1, if $h_\epsilon\not=0$, or is equal to zero otherwise, we can establish that $\|h_\epsilon\|_{\mathcal{H}}=o(\epsilon)$ by showing that the first integral above is $o(1)$ and the second is $o(\epsilon)$. To show that the first integral is $o(1)$, we use (i) Cauchy-Schwarz, (ii) the fact that $\phi_P$ is essentially bounded, (iii) $s\in L^2(P)$, and (iv) the fact that the quadratic mean differentiability of $\{P_\epsilon: \epsilon\in [0,\delta)\}$ implies that $\|q_\epsilon-q\|_{L^2(\lambda)}=O(\epsilon)$. Combining these observations yields the display
\begin{align*}
\int &\| \phi_P(z)\|_{\mathcal{H}}|q_\epsilon(z)-q(z)|s(z) q(z)d\lambda(z) \\
&\le \big\|\|\phi_P(\cdot)\|_{\mathcal{H}}s(\cdot)q(\cdot)\big\|_{L^2(\lambda)}\|q_\epsilon-q\|_{L^2(\lambda)}= O(\epsilon)=o(1).
\end{align*}
For the second integral in \eqref{eq:hEpsTwoInts}, (i), (ii), and (iii), together with the inequality $(a+b)^2\le 2(a^2+b^2)$ and the quadratic mean differentiability of $\{P_\epsilon: \epsilon\in [0,\delta)\}$, yield that
\begin{align*}
&\int \|\phi_P(z)\|_{\mathcal{H}}[q_\epsilon(z)+q(z)]\left|q_\epsilon(z)-q(z) - \frac{1}{2}\epsilon s(z) q(z)\right|d\lambda(z) \\
&\le \big\|\|\phi_P(\cdot)\|_{\mathcal{H}}(q_\epsilon+q)\big\|_{L^2(\lambda)}\left\|q_\epsilon-q - \frac{1}{2}\epsilon s q\right\|_{L^2(\lambda)} \\
&\le 2^{1/2}\left[\int\|\phi_P(z)\|_{\mathcal{H}}^2 d(P_\epsilon+P)(z)\right]^{1/2}\left\|q_\epsilon+q\right\|_{L^2(\lambda)}\left\|q_\epsilon-q - \frac{1}{2}\epsilon s q\right\|_{L^2(\lambda)} \\
&= o(\epsilon).
\end{align*}
Plugging the preceding two displays into \eqref{eq:hEpsTwoInts} completes the proof.
\end{proof}

\begin{lemma}\label{lem:LinearQMDRemainder}
Fix a score $s$ in the tangent set of $\mathcal{P}$ at $P$ and let $\{P_\epsilon: \epsilon\in [0,\delta)\}\in \mathscr{P}(P,\mathcal{P},s)$ be such that 
\begin{align}
\|dP_\epsilon/dP - 1 - \epsilon s\|_{L^2(P)} = o(\epsilon). \label{eq:approxLinearSubmodel}
\end{align}
If $\nu$ is pathwise differentiable at $P$, then
\begin{align}
    \sup_{h\in\mathcal{H}_1}\left|P_\epsilon \dot{\nu}_P^*(h)+ \langle \nu(P) - \nu(P_\epsilon), h\rangle_{\mathcal{H}}\right| = \sup_{h\in\mathcal{H}_1}\left|P_\epsilon \dot{\nu}_P^*(h)+ \epsilon\langle \dot{\nu}_P(s), h\rangle_{\mathcal{H}}\right| + o(\epsilon) = o(\epsilon), \label{eq:biasOperatorQMD}
\end{align}
where $\mathcal{H}_1$ denotes the unit ball in $\mathcal{H}$. Moreover, if $\nu$ has $P$-Bochner square integrable EIF $\phi_P$ at $P$, then
\begin{align*}
(P_\epsilon-P) \phi_P + \epsilon \dot{\nu}_P(s)&= o(\epsilon).
\end{align*}
\end{lemma}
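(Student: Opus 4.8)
The plan is to prove \eqref{eq:biasOperatorQMD} first and obtain the final ``Moreover'' identity as a short corollary once $\nu$ has an EIF. It is convenient to introduce the two first-order remainders that the hypotheses control: $r_\epsilon := \nu(P_\epsilon)-\nu(P)-\epsilon\,\dot{\nu}_P(s)$, for which \eqref{eq:pdA} gives $\|r_\epsilon\|_{\mathcal{H}}=o(\epsilon)$, and $\rho_\epsilon := dP_\epsilon/dP - 1 - \epsilon s$, for which \eqref{eq:approxLinearSubmodel} gives $\|\rho_\epsilon\|_{L^2(P)}=o(\epsilon)$. I will also use repeatedly that $\sup_{h\in\mathcal{H}_1}|\langle v,h\rangle_{\mathcal{H}}|=\|v\|_{\mathcal{H}}$ and that $P\,\dot{\nu}_P^*(h)=0$ because $\dot{\nu}_P^*(h)\in\dot{\mathcal{P}}_P$.

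For the first equality in \eqref{eq:biasOperatorQMD}, I would replace $\nu(P_\epsilon)-\nu(P)$ inside the inner product by its first-order term $\epsilon\,\dot{\nu}_P(s)$; the induced error, uniformly over $h\in\mathcal{H}_1$, is $\sup_{h\in\mathcal{H}_1}|\langle r_\epsilon,h\rangle_{\mathcal{H}}|=\|r_\epsilon\|_{\mathcal{H}}=o(\epsilon)$, so the triangle inequality for the supremum yields the equality. The second equality is where \eqref{eq:approxLinearSubmodel} does the work. Writing the expectation under $P_\epsilon$ as an $L^2(P)$ pairing against the density ratio and substituting $dP_\epsilon/dP=1+\epsilon s+\rho_\epsilon$,
\begin{align*}
P_\epsilon\,\dot{\nu}_P^*(h) = P\!\left[\dot{\nu}_P^*(h)\,\tfrac{dP_\epsilon}{dP}\right] = P\,\dot{\nu}_P^*(h) + \epsilon\,P[s\,\dot{\nu}_P^*(h)] + P[\rho_\epsilon\,\dot{\nu}_P^*(h)].
\end{align*}
The first term vanishes, and the adjoint relation gives $P[s\,\dot{\nu}_P^*(h)]=\langle s,\dot{\nu}_P^*(h)\rangle_{L^2(P)}=\langle\dot{\nu}_P(s),h\rangle_{\mathcal{H}}$, so only the remainder $P[\rho_\epsilon\,\dot{\nu}_P^*(h)]$ is left to control. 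Cauchy--Schwarz bounds it by $\|\rho_\epsilon\|_{L^2(P)}\,\|\dot{\nu}_P^*(h)\|_{L^2(P)}\le\|\rho_\epsilon\|_{L^2(P)}\,\|\dot{\nu}_P^*\|_{\mathrm{op}}$ for $h\in\mathcal{H}_1$, which is $o(\epsilon)$ uniformly; this gives $\sup_{h\in\mathcal{H}_1}|P_\epsilon\,\dot{\nu}_P^*(h)-\epsilon\langle\dot{\nu}_P(s),h\rangle_{\mathcal{H}}|=o(\epsilon)$, the $o(\epsilon)$ estimate needed for the second equality in \eqref{eq:biasOperatorQMD}.

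To deduce the final identity, I would specialize to the case where $\nu$ has a $P$-Bochner square integrable EIF $\phi_P$, so that $\dot{\nu}_P^*(h)(z)=\langle h,\phi_P(z)\rangle_{\mathcal{H}}$ holds $P$-a.s., hence $P_\epsilon$-a.s.\ since $P_\epsilon\ll P$. The one genuine technical point is to pass the Bochner integral through the inner product, i.e.\ to justify $P_\epsilon\,\dot{\nu}_P^*(h)=\langle h,P_\epsilon\phi_P\rangle_{\mathcal{H}}$. This is legitimate once $\phi_P$ is $P_\epsilon$-Bochner integrable, which I would check via $\int\|\phi_P\|_{\mathcal{H}}\,dP_\epsilon = P[\|\phi_P\|_{\mathcal{H}}\,dP_\epsilon/dP]\le\|\phi_P\|_{L^2(P;\mathcal{H})}\,\|dP_\epsilon/dP\|_{L^2(P)}<\infty$, the last factor being finite because \eqref{eq:approxLinearSubmodel} and $s\in L^2(P)$ force $dP_\epsilon/dP\in L^2(P)$. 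Since $\langle h,P\phi_P\rangle_{\mathcal{H}}=P\,\dot{\nu}_P^*(h)=0$ for all $h$, we have $P\phi_P=0$ and thus $P_\epsilon\,\dot{\nu}_P^*(h)=\langle h,(P_\epsilon-P)\phi_P\rangle_{\mathcal{H}}$. Substituting this into the second equality just established and again using $\sup_{h\in\mathcal{H}_1}|\langle v,h\rangle_{\mathcal{H}}|=\|v\|_{\mathcal{H}}$ collapses the supremum to $\|(P_\epsilon-P)\phi_P-\epsilon\,\dot{\nu}_P(s)\|_{\mathcal{H}}$, yielding the claimed $o(\epsilon)$ bound.

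I expect the main obstacle to be the integrability bookkeeping against the perturbed measure $P_\epsilon$ rather than $P$: both the expansion of $P_\epsilon\,\dot{\nu}_P^*(h)$ and the Bochner-integral/inner-product interchange rest on $dP_\epsilon/dP\in L^2(P)$ and on its remainder $\rho_\epsilon$ being $o(\epsilon)$ in $L^2(P)$, which is precisely what the approximate-linearity hypothesis \eqref{eq:approxLinearSubmodel} provides. Everything else is routine: uniformity over the unit ball $\mathcal{H}_1$ is handed to us by the operator-norm bound on $\dot{\nu}_P^*$ and by the duality identity $\sup_{h\in\mathcal{H}_1}|\langle v,h\rangle_{\mathcal{H}}|=\|v\|_{\mathcal{H}}$.
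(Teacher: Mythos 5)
Your proposal is correct and takes essentially the same approach as the paper's proof: both expand $P_\epsilon\dot{\nu}_P^*(h)$ by writing $dP_\epsilon/dP$ as $1+\epsilon s$ plus the $L^2(P)$-remainder in \eqref{eq:approxLinearSubmodel}, use the adjoint identity $\langle s,\dot{\nu}_P^*(h)\rangle_{L^2(P)}=\langle\dot{\nu}_P(s),h\rangle_{\mathcal{H}}$ together with Cauchy--Schwarz and $\|\dot{\nu}_P^*\|_{\mathrm{op}}$ to get uniformity over $\mathcal{H}_1$, and settle the EIF claim by checking $P_\epsilon$-Bochner integrability of $\phi_P$ (via $dP_\epsilon/dP\in L^2(P)$) so that $P_\epsilon\dot{\nu}_P^*(h)=\langle h,(P_\epsilon-P)\phi_P\rangle_{\mathcal{H}}$ and the supremum collapses to a Hilbert norm. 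The one point worth noting is that what you actually (and correctly) establish are the minus-sign identities $\sup_{h\in\mathcal{H}_1}\left|P_\epsilon\dot{\nu}_P^*(h)-\epsilon\langle\dot{\nu}_P(s),h\rangle_{\mathcal{H}}\right|=o(\epsilon)$ and $(P_\epsilon-P)\phi_P-\epsilon\dot{\nu}_P(s)=o(\epsilon)$; the plus signs in the printed statement are typos (the minus-sign version is what Lemma~\ref{lem:localParamToMean} asserts and what the main text invokes), and the paper's own proof likewise establishes the minus-sign versions.
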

When $s$ is bounded and $\mathcal{P}$ is nonparametric, there is a quadratic mean differentiable submodel $\{P_\epsilon: \epsilon\in [0,\delta)\}\in \mathscr{P}(P,\mathcal{P},s)$ that is such that $\frac{dP_\epsilon}{dP} = 1+\epsilon s$. The condition in \eqref{eq:approxLinearSubmodel} holds trivially for this submodel, since $\|dP_\epsilon/dP - 1 - \epsilon s\|_{L^2(P)}=0$. This condition will generally also hold for many other quadratic mean differentiable submodels.
\begin{proof}[Proof of Lemma~\ref{lem:LinearQMDRemainder}]
Suppose that $s$ and $\{P_\epsilon : \epsilon\in [0,\delta)\}$ are as in the statement of the lemma and that $\nu$ is pathwise differentiable at $P$. For any $h\in\mathcal{H}$, the fact that $P\dot{\nu}_P^*(g)=0$ implies that
\begin{align*}
P_\epsilon \dot{\nu}_P^*(h)&= \epsilon \langle s, \dot{\nu}_P^*(h)\rangle_{L^2(P)} + P\left[\left(\frac{dP_\epsilon}{dP} - 1 - \epsilon s\right) \dot{\nu}_P^*(h)\right].
\end{align*}
Combining this with the fact that $\langle s, \dot{\nu}_P^*(h)\rangle_{L^2(P)}  = \langle \dot{\nu}_P(s), h\rangle_{\mathcal{H}}$ shows that
\begin{align*}
P_\epsilon \dot{\nu}_P^*(h)&+ \langle \nu(P) - \nu(P_\epsilon), h\rangle_{\mathcal{H}} \\
&= \langle \nu(P) - \nu(P_\epsilon) - \epsilon \dot{\nu}_P(s), h\rangle_{\mathcal{H}} + P\left[\left(\frac{dP_\epsilon}{dP} - 1 - \epsilon s\right) \dot{\nu}_P^*(h)\right].
\end{align*}
Taking an absolute value and then a supremum over $h\in\mathcal{H}_1$ and subsequently applying the triangle inequality and Cauchy-Schwarz yields that
\begin{align*}
\sup_{h\in\mathcal{H}_1}&\left|P_\epsilon \dot{\nu}_P^*(h)+ \langle \nu(P) - \nu(P_\epsilon), h\rangle_{\mathcal{H}}\right| \\
&\le \sup_{h\in\mathcal{H}_1}\left|\langle \nu(P) - \nu(P_\epsilon) - \epsilon \dot{\nu}_P(s), h\rangle_{\mathcal{H}}\right| + \sup_{h\in\mathcal{H}_1}\left|P\left[\left(\frac{dP_\epsilon}{dP} - 1 - \epsilon s\right) \dot{\nu}_P^*(h)\right]\right| \\
&\le \left\|\nu(P) - \nu(P_\epsilon) - \epsilon \dot{\nu}_P(s)\right\|_{\mathcal{H}} + \left\|\frac{dP_\epsilon}{dP} - 1 - \epsilon s\right\|_{L^2(P)}\|\dot{\nu}_P^*\|_{\mathrm{op}}
\end{align*}
The first term on the right-hand side is $o(\epsilon)$ by the pathwise differentiability of $\nu$, and the second is $o(\epsilon)$ by \eqref{eq:approxLinearSubmodel} and the fact that $\dot{\nu}_P^*$ is a bounded operator. Eq.~\ref{eq:biasOperatorQMD} follows by combining the above with the fact that, by the pathwise differentiability of $\nu$,
\begin{align*}
\sup_{h\in\mathcal{H}_1}\left|P_\epsilon \dot{\nu}_P^*(h)+ \langle \nu(P) - \nu(P_\epsilon), h\rangle_{\mathcal{H}}\right| = \sup_{h\in\mathcal{H}_1}\left|P_\epsilon \dot{\nu}_P^*(h)+ \epsilon\langle \dot{\nu}_P(s), h\rangle_{\mathcal{H}}\right| + o(\epsilon).
\end{align*}

Now suppose that $\nu$ has a $P$-Bochner square integrable EIF $\phi_P$ at $P$. We have that
\begin{align*}
&\|(P_\epsilon-P) \phi_P + \epsilon\dot{\nu}_P(s)\|_{\mathcal{H}} \\
&= \|(P_\epsilon-P) \phi_P + \nu(P_\epsilon) - \nu(P)\|_{\mathcal{H}} + o(\epsilon) \\
&= \sup_{h\in\mathcal{H}_1}\left[\langle P_\epsilon \phi_P + \nu(P_\epsilon) - \nu(P),h\rangle_{\mathcal{H}}\right] + o(\epsilon) \\
&\le \sup_{h\in\mathcal{H}_1}\left|\langle P_\epsilon \phi_P,h\rangle_{\mathcal{H}} - P_\epsilon\dot{\nu}_P^*(h)\right| + \sup_{h\in\mathcal{H}_1}\left|\langle \nu(P_\epsilon) - \nu(P),h\rangle_{\mathcal{H}} + P_\epsilon\dot{\nu}_P^*(h)\right| + o(\epsilon).
\end{align*}
The second term on the right is $o(\epsilon)$ by \eqref{eq:biasOperatorQMD}, and so it remains to show that the leading term is also $o(\epsilon)$. In fact, we will have shown that the leading term is zero if we can show that $\phi_P$ is $P_\epsilon$-Bochner integrable, since that would imply that $\langle P_\epsilon \phi_P,h\rangle_{\mathcal{H}}=P_\epsilon \langle \phi_P,h\rangle_{\mathcal{H}}=P_\epsilon\dot{\nu}_P^*(h)$. To see that $\phi_P$ is indeed $P_\epsilon$-Bochner integrable, note that
\begin{align*}
\int \|\phi_P(z)\|_{\mathcal{H}} P_\epsilon(dz)&= \int \|\phi_P(z)\|_{\mathcal{H}} \frac{dP_\epsilon}{dP}(z) P(dz) \\
&\le \left|\int \|\phi_P(z)\|_{\mathcal{H}} \left[\frac{dP_\epsilon}{dP}(z)-1\right] P(dz)\right| + \int \|\phi_P(z)\|_{\mathcal{H}} P(dz) \\
&\le  \|\phi_P\|_{L^2(P;\mathcal{H})}\left\|\frac{dP_\epsilon}{dP}-1\right\|_{L^2(P)} + \|\phi_P\|_{L^2(P;\mathcal{H})} \\
&\le \|\phi_P\|_{L^2(P;\mathcal{H})}\left(\left\|\frac{dP_\epsilon}{dP}-1-\epsilon s\right\|_{L^2(P)} + \epsilon \|s\|_{L^2(P)}\right) + \|\phi_P\|_{L^2(P;\mathcal{H})},
\end{align*}
where the first inequality holds by the triangle inequality, the second by Cauchy Schwarz and Jensen's inequality, and the third by the triangle inequality. The right-hand side is finite since $\phi_P$ is $P$-Bochner square integrable and \eqref{eq:approxLinearSubmodel} holds.
\end{proof}

\subsubsection*{Proofs for Section~\ref{sec:twostep}}

\begin{proof}[Proof of Lemma~\ref{lem:approximateEIF}]
We begin by showing that $\sum_{k=1}^\infty \beta_k^2 P\dot{\nu}_P(h_k)^2<\infty$ and that $(\beta_k \dot{\nu}_P^*(h_k)(z))_{k=1}^\infty$ is $P$-a.s. a square summable sequence. To see why this is the case, note that, by the monotone convergence theorem,
\begin{align}
E_P\left[\sum_{k=1}^\infty \beta_k^2 \dot{\nu}_P^*(h_k)(Z)^2\right]&= \lim_{K\rightarrow\infty} E_P\left[\sum_{k=1}^K \beta_k^2 \dot{\nu}_P^*(h_k)(Z)^2\right] = \lim_{K\rightarrow\infty} \sum_{k=1}^K \beta_k^2 \|\dot{\nu}_P^*(h_k)\|_{L^2(P)}^2 \nonumber \\
&\le \lim_{K\rightarrow\infty} \sum_{k=1}^K \beta_k^2 \|\dot{\nu}_P^*\|_{\mathrm{op}}^2 \|h_k\|_{\mathcal{H}}^2 \le \|\dot{\nu}_P^*\|_{\mathrm{op}}^2 \lim_{K\rightarrow\infty} \sum_{k=1}^K \beta_k^2 < \infty, \label{eq:mctSqSum}
\end{align}
where the final inequality holds because $\dot{\nu}_P^* : \mathcal{H}\rightarrow L^2(P)$ is a bounded operator and $(\beta_k)_{k=1}^\infty$ is square summable. The above implies that $\sum_{k=1}^\infty \beta_k^2 \dot{\nu}_P^*(h_k)(z)^2$ 
is finite on a $P$-probability one set $\mathcal{Z}^\beta$. Hence, $\phi_P^\beta(z)\in\mathcal{H}$ $P$-a.s. To see that $\phi_P^\beta\in L^2(P;\mathcal{H})$, note that, by the continuity and linearity of inner products and the orthonormality of $(h_k)_{k=1}^\infty$,
\begin{align*}
&\int \langle \phi_P^\beta(z), \phi_P^\beta(z) \rangle_{\mathcal{H}} P(dz) \\
&\quad= \int_{\mathcal{Z}^\beta} \left\langle \lim_{K\rightarrow\infty}\sum_{k=1}^K \beta_k \dot{\nu}_P^*(h_k)(z) h_k, \lim_{K'\rightarrow\infty}\sum_{k'=1}^{K'} \beta_{k'} \dot{\nu}_P^*(h_{k'})(z) h_{k'} \right\rangle_{\mathcal{H}} P(dz) \\
&\quad= \int_{\mathcal{Z}^\beta} \lim_{K\rightarrow\infty}\lim_{K'\rightarrow\infty}\left\langle \sum_{k=1}^K \beta_k \dot{\nu}_P^*(h_k)(z) h_k, \sum_{k'=1}^{K'} \beta_{k'} \dot{\nu}_P^*(h_{k'})(z) h_{k'} \right\rangle_{\mathcal{H}} P(dz) \\
&\quad= \int_{\mathcal{Z}^\beta} \lim_{K\rightarrow\infty}\lim_{K'\rightarrow\infty}\sum_{k=1}^K \sum_{k'=1}^{K'} \beta_k \dot{\nu}_P^*(h_k)(z)  \beta_{k'} \dot{\nu}_P^*(h_{k'})(z) \left\langle h_k,  h_{k'} \right\rangle_{\mathcal{H}} P(dz) \\
&\quad= \int_{\mathcal{Z}^\beta} \lim_{K\rightarrow\infty}\sum_{k=1}^K \beta_k^2 \dot{\nu}_P^*(h_k)(z)^2 P(dz) = E_P\left[\sum_{k=1}^\infty \beta_k^2 \dot{\nu}_P^*(h_k)(Z)^2\right],
\end{align*}
which is finite by \eqref{eq:mctSqSum}.
It remains to show that, for all $z\in\mathcal{Z}^\beta$ and 
$h\in\mathcal{H}$, $r_P^\beta(h)(z)=\langle \phi_P^\beta(z),h\rangle_{\mathcal{H}}$. This can be seen by noting that, for any $z\in\mathcal{Z}^\beta$ and 
$h\in\mathcal{H}$,
\begin{align*}
    &\langle \phi_P^\beta(z),h \rangle_{\mathcal{H}} \\
    &\quad= \left\langle \phi_P^\beta(z),\sum_{k'=1}^\infty \langle h,h_{k'}\rangle_{\mathcal{H}} h_{k'} \right\rangle_{\mathcal{H}} =  \left\langle \sum_{k=1}^\infty \beta_k \dot{\nu}_P^*(h_k)(z) h_k,\sum_{k'=1}^\infty \langle h,h_{k'}\rangle_{\mathcal{H}} h_{k'} \right\rangle_{\mathcal{H}} \\
    &\quad=  \sum_{k=1}^\infty \sum_{k'=1}^\infty \langle h,h_{k'}\rangle_{\mathcal{H}}\beta_k \dot{\nu}_P^*(h_k)(z) \left\langle h_k, h_{k'} \right\rangle_{\mathcal{H}} =  \sum_{k=1}^\infty \langle h,h_k\rangle_{\mathcal{H}}\beta_k \dot{\nu}_P^*(h_k)(z) = r_P^\beta(h)(z).
\end{align*}
As $h\in\mathcal{H}$ and $z\in\mathcal{Z}^\beta$ were arbitrary, $r_P^\beta(\cdot)(z)$ is a bounded linear functional with Riesz representation $\phi_P^\beta(z)$ for all $z\in\mathcal{Z}^\beta$.
\end{proof}

\begin{lemma}\label{lem:LinearQMDApproxRemainder}
Fix a score $s$ in the tangent set of $\mathcal{P}$ at $P$ and let $\{P_\epsilon: \epsilon\in [0,\delta)\}\in \mathscr{P}(P,\mathcal{P},s)$ satisfy \eqref{eq:approxLinearSubmodel}. If $\nu$ is pathwise differentiable at $P$, then
\begin{align*}
&\left\|\nu(P)-\nu(P_\epsilon) + P_\epsilon \phi_P^\beta - \sum_{k=1}^\infty (1-\beta_k) \langle\nu(P) - \nu(P_\epsilon),h_k\rangle_{\mathcal{H}} h_k\right\|_{\mathcal{H}} \\
&\quad= \left(1 + \|\phi_P^\beta\|_{L^2(P;\mathcal{H})}\right)\cdot o(\epsilon),
\end{align*}
where the $o(\epsilon)$ terms do not depend on the choice of $\beta$.
\end{lemma}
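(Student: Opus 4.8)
The plan is to expand the quantity inside the norm by inserting and subtracting $P_\epsilon\dot\nu_P^*$-type terms against the basis $(h_k)_{k=1}^\infty$, and then to bound each resulting piece using the regularized remainder identity established in Lemma~\ref{lem:LinearQMDRemainder}. The key observation is that $\phi_P^\beta$ is the Riesz representation of the $\beta$-regularized efficient influence operator $r_P^\beta$ (by Lemma~\ref{lem:approximateEIF}), so that for each basis vector $h_k$ we have $\langle P_\epsilon\phi_P^\beta,h_k\rangle_{\mathcal{H}}=\beta_k\,P_\epsilon\dot\nu_P^*(h_k)$, provided $\phi_P^\beta$ is $P_\epsilon$-Bochner integrable. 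Establishing this integrability is routine given $\phi_P^\beta\in L^2(P;\mathcal{H})$ (from Lemma~\ref{lem:approximateEIF}) and the approximate-linearity condition \eqref{eq:approxLinearSubmodel}, by the same Cauchy--Schwarz argument used at the end of the proof of Lemma~\ref{lem:LinearQMDRemainder}.

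First I would write the target element $E_\epsilon:=\nu(P)-\nu(P_\epsilon)+P_\epsilon\phi_P^\beta-\sum_{k=1}^\infty(1-\beta_k)\langle\nu(P)-\nu(P_\epsilon),h_k\rangle_{\mathcal{H}}h_k$ in terms of its generalized Fourier coefficients. Expanding in the basis and using the integrability just discussed, the $k$-th coefficient $\langle E_\epsilon,h_k\rangle_{\mathcal{H}}$ equals
\begin{align*}
\langle\nu(P)-\nu(P_\epsilon),h_k\rangle_{\mathcal{H}}+\beta_k\,P_\epsilon\dot\nu_P^*(h_k)-(1-\beta_k)\langle\nu(P)-\nu(P_\epsilon),h_k\rangle_{\mathcal{H}},
\end{align*}
which simplifies to $\beta_k\bigl[\langle\nu(P)-\nu(P_\epsilon),h_k\rangle_{\mathcal{H}}+P_\epsilon\dot\nu_P^*(h_k)\bigr]$. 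Thus $E_\epsilon=\Gamma_\beta(\Delta_\epsilon)$, where $\Delta_\epsilon$ is the $\mathcal{H}$-element with $k$-th coefficient $\langle\nu(P)-\nu(P_\epsilon),h_k\rangle_{\mathcal{H}}+P_\epsilon\dot\nu_P^*(h_k)$, i.e. the object whose coordinatewise supremum norm is controlled by \eqref{eq:biasOperatorQMD}. The crucial point is that the factor $\beta_k\in[0,1]$ lets me bound $\|E_\epsilon\|_{\mathcal{H}}$ by a $\beta$-free quantity.

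Next I would bound $\|E_\epsilon\|_{\mathcal{H}}=\|\Gamma_\beta(\Delta_\epsilon)\|_{\mathcal{H}}$. Since each $\beta_k\le 1$ but the sum $\sum_k\beta_k^2\langle\Delta_\epsilon,h_k\rangle_{\mathcal{H}}^2$ need not be directly controlled by the operator-norm bound in \eqref{eq:biasOperatorQMD} (which only bounds $\sup_{h\in\mathcal{H}_1}|\langle\Delta_\epsilon,h\rangle_{\mathcal{H}}|=\|\Delta_\epsilon\|_{\mathcal{H}}$), I would exploit that $\|\Gamma_\beta(\Delta_\epsilon)\|_{\mathcal{H}}\le\|\Delta_\epsilon\|_{\mathcal{H}}$ because $\Gamma_\beta$ is a contraction (its coefficients lie in $[0,1]$). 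Then \eqref{eq:biasOperatorQMD} of Lemma~\ref{lem:LinearQMDRemainder} gives $\|\Delta_\epsilon\|_{\mathcal{H}}=\sup_{h\in\mathcal{H}_1}|P_\epsilon\dot\nu_P^*(h)+\langle\nu(P)-\nu(P_\epsilon),h\rangle_{\mathcal{H}}|=o(\epsilon)$, with an $o(\epsilon)$ term that does not depend on $\beta$. This immediately yields $\|E_\epsilon\|_{\mathcal{H}}=o(\epsilon)$, which is even stronger than the stated bound since the multiplicative factor $(1+\|\phi_P^\beta\|_{L^2(P;\mathcal{H})})$ can simply be absorbed (it is at least $1$).

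I expect the main obstacle to be the careful justification of the coordinatewise identity $\langle P_\epsilon\phi_P^\beta,h_k\rangle_{\mathcal{H}}=\beta_k\,P_\epsilon\dot\nu_P^*(h_k)$, which requires interchanging the Bochner integral under $P_\epsilon$ with the inner product and invoking the $P_\epsilon$-Bochner integrability of $\phi_P^\beta$. This mirrors the analogous step in Lemma~\ref{lem:LinearQMDRemainder}, so I would reuse that integrability computation essentially verbatim, noting that $\|\phi_P^\beta\|_{L^2(P;\mathcal{H})}<\infty$ by Lemma~\ref{lem:approximateEIF} and that $dP_\epsilon/dP$ is close to $1+\epsilon s$ by \eqref{eq:approxLinearSubmodel}. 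A secondary subtlety is confirming that the $o(\epsilon)$ remainder is genuinely $\beta$-independent; this is inherited directly from \eqref{eq:biasOperatorQMD}, whose right-hand side involves only $\nu$, $\dot\nu_P$, $s$, and the submodel, none of which depend on $\beta$.
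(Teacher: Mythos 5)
Your proof is correct, but it takes a genuinely different route from the paper's, and it actually proves something slightly stronger. The paper's proof is self-contained: after the same initial rewriting of the target as $P_\epsilon \phi_P^\beta - \sum_{k}\beta_k\langle \nu(P_\epsilon)-\nu(P),h_k\rangle_{\mathcal{H}}h_k$, it applies the triangle inequality around the anchors $\epsilon\int s(z)\phi_P^\beta(z)P(dz)$ and $\sum_k\beta_k\langle\epsilon\dot{\nu}_P(s),h_k\rangle_{\mathcal{H}}h_k$, shows the middle difference vanishes identically, bounds the first piece by $\|\phi_P^\beta\|_{L^2(P;\mathcal{H})}\cdot o(\epsilon)$ via Cauchy--Schwarz and \eqref{eq:approxLinearSubmodel} (this is precisely where the multiplicative factor in the statement comes from), and bounds the third piece by $\|\nu(P_\epsilon)-\nu(P)-\epsilon\dot{\nu}_P(s)\|_{\mathcal{H}}=o(\epsilon)$ using that the coefficients $\beta_k$ lie in $[0,1]$. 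You instead identify the entire expression as $\Gamma_\beta(\Delta_\epsilon)$, with $\Delta_\epsilon$ the Riesz representer of the functional $h\mapsto P_\epsilon\dot{\nu}_P^*(h)+\langle\nu(P)-\nu(P_\epsilon),h\rangle_{\mathcal{H}}$, and then combine the contraction property of $\Gamma_\beta$ with \eqref{eq:biasOperatorQMD} of Lemma~\ref{lem:LinearQMDRemainder}, which holds under exactly the hypotheses assumed here. This is more modular, and it yields a $\beta$-free $o(\epsilon)$ bound with no $\left(1+\|\phi_P^\beta\|_{L^2(P;\mathcal{H})}\right)$ factor at all, since the bound is routed through $\|\Delta_\epsilon\|_{\mathcal{H}}$ rather than through $\phi_P^\beta$; the paper's statement then follows because that factor is at least $1$. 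Two details you should spell out, both routine: (i) $h\mapsto P_\epsilon\dot{\nu}_P^*(h)$ is a bounded linear functional on $\mathcal{H}$ --- since $P\dot{\nu}_P^*(h)=0$, it equals $\langle\dot{\nu}_P^*(h),dP_\epsilon/dP-1\rangle_{L^2(P)}$, and $dP_\epsilon/dP-1\in L^2(P)$ by \eqref{eq:approxLinearSubmodel} --- which is what guarantees that $\Delta_\epsilon$ is a genuine element of $\mathcal{H}$ and that $\|\Delta_\epsilon\|_{\mathcal{H}}$ equals the supremum over $\mathcal{H}_1$ that you quote; and (ii) the coordinate identity $\langle P_\epsilon\phi_P^\beta,h_k\rangle_{\mathcal{H}}=\beta_k P_\epsilon\dot{\nu}_P^*(h_k)$ needs $P_\epsilon\ll P$ (implicit in \eqref{eq:approxLinearSubmodel}) so that the $P$-probability-one set from Lemma~\ref{lem:approximateEIF} is also a $P_\epsilon$-probability-one set, in addition to the $P_\epsilon$-Bochner integrability you already flag and correctly borrow from the end of the proof of Lemma~\ref{lem:LinearQMDRemainder}.
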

\begin{proof}[Proof of Lemma~\ref{lem:LinearQMDApproxRemainder}]
Suppose that $s$ and $\{P_\epsilon : \epsilon\in [0,\delta)\}$ are as in the statement of the lemma and that $\nu$ is pathwise differentiable at $P$. Let $\mathcal{H}_1$ denote the unit ball of $\mathcal{H}$. 
Since $\nu(P_\epsilon)-\nu(P_0)= \sum_{k=1}^\infty \langle\nu(P_\epsilon)-\nu(P),h_k\rangle_{\mathcal{H}} h_k$, it holds that
\begin{align*}
\nu(P)&-\nu(P_\epsilon) + P_\epsilon \phi_P^\beta - \sum_{k=1}^\infty (1-\beta_k) \langle\nu(P) - \nu(P_\epsilon),h_k\rangle_{\mathcal{H}} h_k \\
&= P_\epsilon \phi_P^\beta - \sum_{k=1}^\infty \beta_k \langle\nu(P_\epsilon) - \nu(P),h_k\rangle_{\mathcal{H}} h_k.
\end{align*}
The remainder of our analysis bounds the terms on the right of the following decomposition, which holds by the triangle inequality:
\begin{align}
    &\left\|P_\epsilon \phi_P^\beta - \sum_{k=1}^\infty \beta_k \langle\nu(P_\epsilon) - \nu(P),h_k\rangle_{\mathcal{H}} h_k\right\|_{\mathcal{H}} \label{eq:approxLinearExpansionBd} \\
    &\le \left\|P_\epsilon\phi_P^\beta - \epsilon \int s(z) \phi_P^\beta(z)P(dz)\right\|_{\mathcal{H}} \nonumber \\
    &\quad+ \epsilon \left\|\int s(z) \phi_P^\beta(z)P(dz) - \sum_{k=1}^\infty \beta_k \langle \dot{\nu}_P(s),h_k\rangle_{\mathcal{H}} h_k\right\|_{\mathcal{H}} \nonumber \\
    &\quad+ \left\|\sum_{k=1}^\infty \beta_k \langle\epsilon\dot{\nu}_P(s),h_k\rangle_{\mathcal{H}} h_k - \sum_{k=1}^\infty \beta_k \langle\nu(P_\epsilon) - \nu(P),h_k\rangle_{\mathcal{H}} h_k\right\|_{\mathcal{H}}. \nonumber
\end{align}
To bound the leading term, we use that $\phi_P^\beta\in L^2(P;\mathcal{H})$ by Lemma~\ref{lem:approximateEIF} and that $P\phi_P^\beta=0$, which give that
\begin{align*}
\left\|P_\epsilon\phi_P^\beta - \epsilon \int s(z) \phi_P^\beta(z)P(dz)\right\|_{\mathcal{H}}&= \sup_{h\in\mathcal{H}_1} \left\langle \int \left(\frac{dP_\epsilon}{dP}(z) - 1 - \epsilon s(z)\right)\phi_P^\beta(z) P(dz),h\right\rangle_{\mathcal{H}} \\
&= \sup_{h\in\mathcal{H}_1} \int \left(\frac{dP_\epsilon}{dP}(z) - 1 - \epsilon s(z)\right)\left\langle \phi_P^\beta(z),h\right\rangle_{\mathcal{H}} P(dz).
\end{align*}
By twice applying Cauchy-Schwarz, once in $\mathcal{H}$ and once in $L^2(P)$, and recalling \eqref{eq:approxLinearSubmodel} and that $\mathcal{H}_1$ is the unit ball of $\mathcal{H}$, the right-hand side can be seen to be upper bounded by $\|\phi_P^\beta\|_{L^2(P)}\cdot o(\epsilon)$, where the $o(\epsilon)$ term denotes the behavior of the term on the left-hand side of \eqref{eq:approxLinearSubmodel}, which does not depend on $\beta$.

We will show that the second term on the right of \eqref{eq:approxLinearExpansionBd} is zero. To do this, we recall that (i) $\phi_P^\beta\in L^2(P;\mathcal{H})$ by Lemma~\ref{lem:approximateEIF}, (ii) $\phi_P^\beta:= \sum_{k=1}^\infty \beta_k \dot{\nu}_P^*(h_k)(z) h_k$, (iii) inner products are continuous, (iv) $(h_k)_{k=1}^\infty$ is an orthonormal basis of $\mathcal{H}_k$, and (v) $\dot{\nu}_P^*$ is the adjoint of $\dot{\nu}_P$. Applying these facts in sequence justifies the following for any basis element $h_k$:
\begin{align*}
&\left\langle \int s(z) \phi_P^\beta(z)P(dz),h_k\right\rangle_{\mathcal{H}} \\
&\quad= \int s(z) \langle \phi_P^\beta(z),h_k\rangle_{\mathcal{H}}P(dz) 
= \int s(z) \left\langle \sum_{k'=1}^\infty \beta_{k'} \dot{\nu}_P^*(h_{k'})(z) h_{k'},h_k\right\rangle_{\mathcal{H}}P(dz) \\
&\quad= \int s(z) \left[\sum_{k'=1}^\infty \beta_{k'} \dot{\nu}_P^*(h_{k'})(z) \left\langle h_{k'},h_k\right\rangle_{\mathcal{H}}\right] P(dz) \\
&\quad= \beta_{k} \langle h_k,h_k\rangle_{\mathcal{H}}\int s(z) \dot{\nu}_P^*(h_{k})(z)P(dz) = \beta_{k} \langle h_k,h_k\rangle_{\mathcal{H}}\langle \dot{\nu}_P(s),h_k\rangle_{\mathcal{H}} \\
&\quad= \left\langle\sum_{k'=1}^\infty \beta_{k'} \langle \dot{\nu}_P(s),h_{k'}\rangle_{\mathcal{H}} h_{k'},h_k\right\rangle_{\mathcal{H}}
\end{align*}
As $h_k$ was an arbitrary element of an orthonormal basis of $\mathcal{H}$, this shows that the second term on the right of \eqref{eq:approxLinearExpansionBd} is zero.

We will show that the third term on the right of \eqref{eq:approxLinearExpansionBd} is $o(\epsilon)$. We begin by noting that
\begin{align*}
&\left\|\sum_{k=1}^\infty \beta_k \langle\epsilon\dot{\nu}_P(s),h_k\rangle_{\mathcal{H}} h_k - \sum_{k=1}^\infty \beta_k \langle\nu(P_\epsilon) - \nu(P),h_k\rangle_{\mathcal{H}} h_k\right\|_{\mathcal{H}} \\
&= \left\|\sum_{k=1}^\infty \beta_k \langle\nu(P_\epsilon) - \nu(P) - \epsilon \dot{\nu}_P(s),h_k\rangle_{\mathcal{H}} h_k\right\|_{\mathcal{H}} \\
&= \sup_{h\in\mathcal{H}_1}\left\langle\sum_{k=1}^\infty \beta_k \langle\nu(P_\epsilon) - \nu(P) - \epsilon \dot{\nu}_P(s),h_k\rangle_{\mathcal{H}} h_k,h\right\rangle_{\mathcal{H}} \\
&= \sup_{h\in\mathcal{H}_1}\sum_{k=1}^\infty \beta_k \langle\nu(P_\epsilon) - \nu(P) - \epsilon \dot{\nu}_P(s),h_k\rangle_{\mathcal{H}} \left\langle h_k,h\right\rangle_{\mathcal{H}}.
\end{align*}
Combining the above with the fact that all $\beta_k$ belong to $[0,1]$, the Cauchy-Schwarz inequality in $\ell^2$, and Parseval's identity, the above shows that
\begin{align*}
&\left\|\sum_{k=1}^\infty \beta_k \langle\epsilon\dot{\nu}_P(s),h_k\rangle_{\mathcal{H}} h_k - \sum_{k=1}^\infty \beta_k \langle\nu(P_\epsilon) - \nu(P),h_k\rangle_{\mathcal{H}} h_k\right\|_{\mathcal{H}} \\
&\le \sup_{h\in\mathcal{H}_1}\sum_{k=1}^\infty \left|\langle\nu(P_\epsilon) - \nu(P) - \epsilon \dot{\nu}_P(s),h_k\rangle_{\mathcal{H}} \left\langle h_k,h\right\rangle_{\mathcal{H}}\right| \\
&\le \left(\sum_{k=1}^\infty \langle\nu(P_\epsilon) - \nu(P) - \epsilon \dot{\nu}_P(s),h_k\rangle_{\mathcal{H}}^2\right)^{1/2}\left( \sup_{h\in\mathcal{H}_1}\sum_{k=1}^\infty \left\langle h_k,h\right\rangle_{\mathcal{H}}^2\right)^{1/2} \\
&= \left\|\nu(P_\epsilon) - \nu(P) - \epsilon \dot{\nu}_P(s)\right\|_{\mathcal{H}}.
\end{align*}
The right-hand side does not depend on $\beta$ and, by the pathwise differentiability of $\nu$, is $o(\epsilon)$. This completes the proof.
\end{proof}

\subsection{Proofs for Section~\ref{sec:exPD}}

We now prove the sufficient condition for pathwise differentiability that we presented in the main text. We refer the interested reader to Remark~2 in Appendix A.5 of \cite{bickel1993efficient} for an alternative characterization of pathwise differentiability that may also be useful in some contexts.

\begin{proof}[Proof of Lemma~\ref{lem:suffCondsPD}]
Suppose that \ref{it:closure} and \ref{it:localLip} hold. Let $\{P_\epsilon: \epsilon\in [0,\delta)\}\in \mathscr{P}(P,\mathcal{P},s)$ for some $s$ in the tangent set of $\mathcal{P}$ at $P$. Since $\mathcal{S}(P)$ is dense in $\dot{\mathcal{P}}_P$, there exists a $\mathcal{S}(P)$-valued sequence $(s_n)_{n=1}^\infty$ such that $s_n\rightarrow s$ in $L^2(P)$. For each $n$, let $\{P_\epsilon^{[n]} : \epsilon\}$ be the element of $\mathscr{P}(P,\mathcal{P},s_n)$ satisfying $\| \nu(P_\epsilon^{[n]}) - \nu(P) - \epsilon\, \eta_P(s) \|_{\mathcal{H}} = o(\epsilon)$ that is guaranteed to exist by \ref{it:closure}. Observe that, for any $\epsilon>0$ and any map $N : (0,\delta]\rightarrow\mathbb{N}$, the triangle inequality and the linearity of $\eta_P$ show that
\begin{align}
    &\left\|\epsilon^{-1}\left[\nu(P_{\epsilon})-\nu(P)\right]-\eta_P(s)\right\|_{\mathcal{H}} \nonumber \\
    &\le \left\|\epsilon^{-1}\left[\nu(P_{\epsilon}^{[N(\epsilon)]})-\nu(P)\right]-\eta_P(s_{N(\epsilon)})\right\|_{\mathcal{H}} \nonumber \\
    &\quad + \epsilon^{-1}\left\|\nu(P_{\epsilon}^{[N(\epsilon)]})-\nu(P_\epsilon)\right\|_{\mathcal{H}}+ \left\|\eta_P(s_{N(\epsilon)}-s)\right\|_{\mathcal{H}}. \label{eq:pdViaSn}
\end{align}
Our argument will be based on the above with a map $N  : (0,\delta]\rightarrow\mathbb{N}$ that we construct to satisfy all of the following properties as $\epsilon\rightarrow 0$:
\begin{enumerate}[label=\alph*)]
    \item\label{it:Ndiverges} $N(\epsilon)\rightarrow\infty$;
    \item\label{it:Nqmd} $\|\epsilon^{-1}[\sqrt{dP_{\epsilon}^{[N(\epsilon)]}}-\sqrt{dP}] - \frac{1}{2} s_{N(\epsilon)}\sqrt{dP}\|_{L^2(\lambda)}\rightarrow 0$;
    \item\label{it:Npd} $\|\epsilon^{-1}[\nu(P_{\epsilon}^{[N(\epsilon)]})-\nu(P)]-\eta_P(s_{N(\epsilon)})\|_{\mathcal{H}}\rightarrow 0$.
\end{enumerate}
We will first establish that such a map exists, and then show that, when combined with \eqref{eq:pdViaSn}, this establishes that $\nu$ is pathwise differentiable at $P$ with local parameter $\dot{\nu}_P=\eta_P$.

Now, for any fixed $n\in\mathbb{N}$, the quadratic mean differentiability of $\{P_\epsilon^{[n]} : \epsilon\}$ paired with the fact that $\| \nu(P_\epsilon^{[n]}) - \nu(P) - \epsilon\, \eta_P(s) \|_{\mathcal{H}} = o(\epsilon)$ implies that $\lim_{\epsilon\rightarrow 0}f_n(\epsilon)=0$, where
\begin{align*}
    f_n(\epsilon)&:= \left\|\epsilon^{-1}\left[\sqrt{dP_{\epsilon}^{[n]}}-\sqrt{dP}\right] - \frac{1}{2} s_{n}\sqrt{dP}\right\|_{L^2(\lambda)} \\
    &\quad+ \left\|\epsilon^{-1}\left[\nu(P_{\epsilon}^{[n]})-\nu(P)\right]-\eta_P(s_n)\right\|_{\mathcal{H}}.
\end{align*}
We use this fact to define $N : (0,\delta]\rightarrow\mathbb{N}$ via a recursive formulation. For a strictly decreasing positive sequence $(\epsilon_\ell)_{\ell=1}^\infty$ that we will define momentarily, we let $N(\epsilon):=\max\{\ell\in\mathbb{N} : \epsilon\le \epsilon_\ell\}$. This sequence will be constructed so that $\epsilon_\ell\downarrow 0$ as $\ell\uparrow\infty$, which ensures that the maximum used to define $N(\epsilon)$ is well-defined and that $N(\epsilon)\rightarrow \infty$ as $\epsilon\rightarrow 0$ or, in other words, condition \ref{it:Ndiverges} holds. The construction of the sequence will also ensure that $f_{N(\epsilon)}(\epsilon)\rightarrow 0$ as $\epsilon\rightarrow 0$, which will guarantee that conditions \ref{it:Nqmd} and \ref{it:Npd} hold as well. We now construct this sequence. We first let $\epsilon_1:=\delta$. Then, recursively from $\ell=1,2,\ldots$, we let $\epsilon_{\ell+1}:=\frac{1}{2}\sup\{\epsilon\in (0,\epsilon_\ell] : \sup_{\epsilon'\in (0,\epsilon]}f_{\ell+1}(\epsilon')\le 2^{-(\ell+1)}\}$; since $f_{\ell+1}(\epsilon)\rightarrow 0$ as $\epsilon\rightarrow 0$, $\epsilon_{\ell+1}$ is well-defined and positive. Also, by its definition, $\epsilon_{\ell+1}\le \epsilon_\ell/2$ and $f_{\ell+1}(\epsilon)\le 2^{-(\ell+1)}$ for all $\epsilon\le \epsilon_{\ell+1}$. As a consequence, $f_{N(\epsilon)}(\epsilon)\le 2^{-N(\epsilon)}$ for all $\epsilon\le\epsilon_2$. Since $N(\epsilon)\rightarrow\infty$ as $\epsilon\rightarrow 0$, this implies that $f_{N(\epsilon)}(\epsilon)\rightarrow 0$ as $\epsilon\rightarrow 0$, which implies that \ref{it:Nqmd} and \ref{it:Npd} hold.

Having now defined $N : (0,\delta]\rightarrow\mathbb{N}$, we return to \eqref{eq:pdViaSn}. Because $s$ was an arbitrary element of the tangent set, we will have established that $\nu$ is pathwise differentiable at $P$ with local parameter $\dot{\nu}_P=\eta_P$ if we can show that the right-hand side of that display converges to zero as $\epsilon\rightarrow 0$. Since the choice of $N$ ensured that \ref{it:Npd} holds, the first term on the right-hand side of that display goes to zero as $\epsilon\rightarrow 0$. 
Since $\eta_P : \dot{\mathcal{P}}_P\rightarrow \mathcal{H}$ is a bounded linear operator by \ref{it:closure} and $\lim_{\epsilon\rightarrow 0}s_{N(\epsilon)}= s$ by virtue of the fact that $\lim_{n\rightarrow\infty} s_n=s$ and and $N(\epsilon)\rightarrow \infty$ as $\epsilon\rightarrow 0$, the third term on the right-hand side of that display goes to zero as $\epsilon\rightarrow 0$. 
It remains to study the second term. For this term, we will first show that the Hellinger distance between $P_\epsilon^{[N(\epsilon)]}$ and $P_\epsilon$ is $o(\epsilon)$, and then we will leverage the local Lipschitz property of $\nu$ that holds by \ref{it:localLip}. Beginning by studying the Hellinger distance, we use the triangle inequality to show that
\begin{align*}
    H\left(P_\epsilon^{[N(\epsilon)]},P_\epsilon\right)&:= \left\|\sqrt{dP_{\epsilon}^{[N(\epsilon)]}}-\sqrt{dP_\epsilon}\right\|_{L^2(\lambda)} \\
    &\le \left\|\sqrt{dP_{\epsilon}}-\sqrt{dP} - \frac{1}{2}\epsilon s\sqrt{dP}\right\|_{L^2(\lambda)} \\
    &\quad + \left\|\sqrt{dP_{\epsilon}^{[N(\epsilon)]}}-\sqrt{dP} - \frac{1}{2}\epsilon s_{N(\epsilon)}\sqrt{dP}\right\|_{L^2(\lambda)} + \frac{1}{2}\epsilon \left\|s_{N(\epsilon)}-s\right\|_{L^2(P)}.
\end{align*}
The first term on the right is $o(\epsilon)$ by the quadratic mean differentiability of $\{P_\epsilon : \epsilon\in [0,\delta)\}$, the second is $o(\epsilon)$ by \ref{it:Nqmd}, and the third is $o(\epsilon)$ by the fact that $s_{N(\epsilon)}\rightarrow s$ in $L^2(P)$. Hence, $H(P_\epsilon^{[N(\epsilon)]},P_\epsilon)$ is $o(\epsilon)$. Letting $c$ be the constant from \ref{it:localLip}, this implies that, for all $\epsilon$ small enough, the second term in \eqref{eq:pdViaSn} bounds as follows:
\begin{equation*}
    \epsilon^{-1}\left\|\nu(P_{\epsilon}^{[N(\epsilon)]})-\nu(P_\epsilon)\right\|_{\mathcal{H}}\le \epsilon^{-1} c H(P_{\epsilon}^{[N(\epsilon)]},P_\epsilon) = o(1).
\end{equation*}
\end{proof}

We now establish a lemma concerning the preservation of quadratic mean differentiability, which we used several times in the derivations for the examples provided in Appendix~\ref{app:exDerivations}.

\begin{lemma} \label{lem:preserveDQM}
Let $\mathcal{P}$ be a statistical model of distributions of $Z=(X,Y)$ that are equivalent in that, for all $P_1, P_2 \in \mathcal{P}$, $P_1 \ll P_2$ and $P_2 \ll P_1$. Let $\{P_{\epsilon}: \epsilon\in [0,\delta)\}\in\mathscr{P}(P,\mathcal{P},s)$, where $s$ is in the tangent set of $\mathcal{P}$ at $P$. Let $q_\epsilon = \sqrt{\frac{dP_{\epsilon}}{dP}}$ and $q_{\epsilon,X}$, $q_{\epsilon,Y \mid X}$ be the square root of the marginal density and conditional density of $P_\epsilon$ relative to $P$, respectively. Let $s_X(x) = E_P[s(Z) \mid X=x]$ and $s_{Y \mid X}(y\mid x) = s(z) - s_X(x)$. Then, both of the following hold:
\[
    \left\|q_{\epsilon,X} - 1 - \frac{\epsilon}{2} s_X\right\|_{L^2(P)} = o(\epsilon), \quad
    \left\|q_{\epsilon,Y \mid X} - 1 - \frac{\epsilon}{2} s_{Y \mid X} \right\|_{L^2(P)} = o(\epsilon).
\]
\end{lemma}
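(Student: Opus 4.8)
The plan is to pass to the $L^2(P)$ formulation of quadratic mean differentiability and then exploit the multiplicative factorization of the joint root-density ratio into its marginal and conditional parts. Writing $q_\epsilon=\sqrt{dP_\epsilon/dP}$, which is well defined since the submodel consists of distributions equivalent to $P$, the defining condition \eqref{eq:qmd} is equivalent to $\|\epsilon^{-1}(q_\epsilon-1)-\tfrac12 s\|_{L^2(P)}=o(1)$. The factorization $q_\epsilon=q_{\epsilon,X}\,q_{\epsilon,Y\mid X}$ holds with $q_{\epsilon,X}(x)^2=E_P[q_\epsilon^2\mid X=x]=\tfrac{dP_{\epsilon,X}}{dP_X}(x)$ and $E_P[q_{\epsilon,Y\mid X}^2\mid X=x]=1$, and the score splits orthogonally in $L^2(P)$ as $s=s_X+s_{Y\mid X}$ along the decomposition of $L^2(P)$ into functions of $x$ and conditional-mean-zero functions, with $E_P[\,\cdot\mid X]$ the associated projection. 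First I would dispose of the ``linear'' part by projection: setting $m_\epsilon:=E_P[q_\epsilon\mid X]$, conditional Jensen gives $\|m_\epsilon-1-\tfrac\epsilon2 s_X\|_{L^2(P_X)}=\|E_P[q_\epsilon-1-\tfrac\epsilon2 s\mid X]\|_{L^2(P_X)}\le\|q_\epsilon-1-\tfrac\epsilon2 s\|_{L^2(P)}=o(\epsilon)$.

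For the marginal claim it then remains to show $\|q_{\epsilon,X}-m_\epsilon\|_{L^2(P_X)}=o(\epsilon)$. The key identity is $0\le q_{\epsilon,X}-m_\epsilon$ together with $(q_{\epsilon,X}-m_\epsilon)^2\le q_{\epsilon,X}^2-m_\epsilon^2=\mathrm{Var}_P(q_\epsilon\mid X)$. Writing $w_\epsilon:=\epsilon^{-1}(q_{\epsilon,X}-m_\epsilon)$, this bounds $w_\epsilon^2$ by $E_P[(\epsilon^{-1}(q_\epsilon-m_\epsilon))^2\mid X]$, where $\epsilon^{-1}(q_\epsilon-m_\epsilon)\to\tfrac12 s_{Y\mid X}$ in $L^2(P)$ because $q_\epsilon-m_\epsilon$ is the conditional-mean-zero part of $q_\epsilon-1$. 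Since only second-moment information on $s$ is available, the naive pointwise bound only shows that $\{w_\epsilon\}$ is bounded in $L^2(P_X)$; the crux is to upgrade this to $w_\epsilon\to0$ in $L^2(P_X)$. I would do so by the Vitali convergence theorem: convergence in $L^2(P)$ makes $\{(\epsilon^{-1}(q_\epsilon-m_\epsilon))^2\}$ uniformly integrable, and conditional Jensen with the de la Vall\'ee--Poussin criterion transfers this to $\{E_P[(\epsilon^{-1}(q_\epsilon-m_\epsilon))^2\mid X]\}$ and hence to the dominated family $\{w_\epsilon^2\}$, while $w_\epsilon=\mathrm{Var}_P(q_\epsilon\mid X)/[\epsilon(q_{\epsilon,X}+m_\epsilon)]\to0$ in $P_X$-probability because $\epsilon^{-2}\mathrm{Var}_P(q_\epsilon\mid X)$ is bounded in $L^1(P_X)$, hence tight, and $m_\epsilon\to1$. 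The triangle inequality with the projection bound then yields the first displayed claim and, in particular, $\|q_{\epsilon,X}-1\|_{L^2(P_X)}=O(\epsilon)$.

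The conditional claim I would reduce to the joint and marginal expansions through the factorization: since $\epsilon^{-1}(q_{\epsilon,Y\mid X}-1)=\epsilon^{-1}(q_\epsilon-q_{\epsilon,X})/q_{\epsilon,X}$ and $\epsilon^{-1}(q_\epsilon-q_{\epsilon,X})=\epsilon^{-1}(q_\epsilon-1)-\epsilon^{-1}(q_{\epsilon,X}-1)\to\tfrac12 s-\tfrac12 s_X=\tfrac12 s_{Y\mid X}$ in $L^2(P)$, it suffices to transfer this convergence across division by $q_{\epsilon,X}\to1$. Convergence in probability is immediate; the remaining work is to control the region where $q_{\epsilon,X}$ is small, which I would handle by splitting on $\{q_{\epsilon,X}\ge\tfrac12\}$, where $1/q_{\epsilon,X}$ is bounded and a Vitali argument applies directly, and its complement, on which the conditional normalization $\mathrm{Var}_P(q_{\epsilon,Y\mid X}\mid X)=1-(E_P[q_{\epsilon,Y\mid X}\mid X])^2\le1$ keeps the integrand bounded and whose probability is $O(\epsilon^2)$ by Chebyshev and the marginal rate. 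The uniform integrability of $\{\epsilon^{-2}(q_{\epsilon,X}-1)^2\}$ supplied by the marginal result is what turns this $O(\epsilon^2)$ estimate into a genuinely negligible contribution, since the relevant truncation level diverges. The main obstacle throughout is precisely this repeated upgrade from $O(\epsilon)$ to $o(\epsilon)$ past the square-root and quotient nonlinearities using only $L^2$ (rather than higher-moment) control of the score, for which uniform integrability, not a direct moment computation, is the essential device.
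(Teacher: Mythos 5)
Your proposal is correct, but it takes a genuinely different route from the paper's. For the marginal claim the paper does not argue at all: it simply invokes Proposition A.5.5 of \cite{bickel1993efficient} (preservation of quadratic mean differentiability under a statistic, here $(x,y)\mapsto x$), whereas you reprove it from scratch via the projection $m_\epsilon:=E_P[q_\epsilon\mid X]$, the pointwise bound $(q_{\epsilon,X}-m_\epsilon)^2\le q_{\epsilon,X}^2-m_\epsilon^2=\mathrm{Var}_P(q_\epsilon\mid X)$ (valid since $m_\epsilon\le q_{\epsilon,X}$ by conditional Cauchy--Schwarz), and a Vitali upgrade; this is sound and makes the lemma self-contained. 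For the conditional claim the two arguments diverge more sharply: the paper never divides by $q_{\epsilon,X}$, but instead multiplies out the factorization as
\begin{align*}
q_{\epsilon,Y\mid X}-1-\tfrac{\epsilon}{2}s_{Y\mid X}=q_{\epsilon,Y\mid X}\left[1-q_{\epsilon,X}\right]+q_\epsilon-1-\tfrac{\epsilon}{2}s_{Y\mid X},
\end{align*}
inserts the truncated score $s_X^{(\epsilon)}:=s_X1\{|s_X|\le\epsilon^{-1/2}\}$, bounds four resulting terms using $E_P[q_{\epsilon,Y\mid X}^2\mid X]=1$, the two QMD expansions, and dominated convergence for the truncation error, and then closes with a self-bounding absorption step of the form $f(\epsilon)\le o(\epsilon^2)+\epsilon f(\epsilon)+O(\epsilon^3)$, where $f(\epsilon)$ is the squared error being controlled. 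You instead divide by $q_{\epsilon,X}$ and split on $\{q_{\epsilon,X}\ge 1/2\}$: on the good set, uniform integrability of $\{\epsilon^{-2}(q_\epsilon-q_{\epsilon,X})^2\}$ (which follows from its $L^1(P)$-convergence, itself a consequence of the joint QMD and the marginal claim) plus convergence in probability gives the conclusion by Vitali; on the bad set, your Chebyshev-plus-UI refinement correctly upgrades the naive bound $P_X\{q_{\epsilon,X}<1/2\}=O(\epsilon^2)$ to $o(\epsilon^2)$ because the relevant truncation level $\epsilon^{-2}/4$ diverges, and the identity $E_P[(q_{\epsilon,Y\mid X}-1)^2\mid X]=2(1-E_P[q_{\epsilon,Y\mid X}\mid X])\le 2$ keeps that region's integrand under control. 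Both routes succeed using only $s\in L^2(P)$: the paper's truncation-and-absorption device keeps everything at the level of explicit integral inequalities with no bad-set analysis and no division by random quantities, while your UI/Vitali device is more structural, avoids the slightly delicate self-referential inequality, and does not lean on the external Bickel et al.\ proposition, at the price of a careful treatment of the quotient near $q_{\epsilon,X}=0$ and of several uniform-integrability bookkeeping steps.
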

\begin{proof}[Proof of Lemma~\ref{lem:preserveDQM}]
Applying Proposition A.5.5 in \cite{bickel1993efficient} and taking $(x,y) \mapsto x$ as the statistic, we have the first inequality, namely that $\|q_{\epsilon,X} - 1 - \frac{\epsilon}{2} s_X\|_{L^2(P)} = o(\epsilon)$.

We now establish the second equality. Let $s_X^{(\epsilon)}:= s_X 1\{|s_X| \leq \epsilon^{-1/2}\}$ and $f(\epsilon):= \|q_{\epsilon,Y \mid X} - 1 - \frac{\epsilon}{2} s_{Y \mid X}\|_{L^2(P)}^2$. We have that
\begin{align}
    &f(\epsilon)= \int \left(q_{\epsilon,Y\mid X}[1-q_{\epsilon,X}] + q_\epsilon-1 - \frac{\epsilon}{2}s_{Y\mid X}\right)^2 dP \nonumber \\
    &= \int \Bigg(q_{\epsilon,Y\mid X}\left[1-q_{\epsilon,X}+\frac{\epsilon}{2}s_X^{(\epsilon)}\right] + q_\epsilon-1 - \frac{\epsilon}{2}[s_{Y\mid X}+s_X^{(\epsilon)}] \nonumber \\
    &\hspace{5em}- \frac{\epsilon}{2} \left[q_{\epsilon,Y\mid X}-1-\frac{\epsilon}{2}s_{Y\mid X}\right]s_X^{(\epsilon)} - \frac{\epsilon^2}{4} s_{Y\mid X}s_X^{(\epsilon)}\Bigg)^2 dP \nonumber \\
    &\le 4\int \left(q_{\epsilon,Y\mid X}\left[1-q_{\epsilon,X}+\frac{\epsilon}{2}s_X^{(\epsilon)}\right]\right)^2 dP + 4 \int \left(q_\epsilon-1 - \frac{\epsilon}{2}[s_{Y\mid X}+s_X^{(\epsilon)}]\right)^2 dP \nonumber \\
    &\quad+ \epsilon^2\int \left[q_{\epsilon,Y\mid X}-1-\frac{\epsilon}{2}s_{Y\mid X}\right]^2 (s_X^{(\epsilon)})^2 dP + \frac{\epsilon^4}{4}\int s_{Y\mid X}^2 (s_X^{(\epsilon)})^2 dP,
    \label{eq:qmdCond}
\end{align}
where the final inequality uses that $(a+b+c+d)^2 \leq 4(a^2+b^2+c^2+d^2)$. We consider each of the four terms above separately, showing that the first two are $o(\epsilon^2)$, the third is no more than $\epsilon f(\epsilon)$ and the last is $O(\epsilon^3)$. Subtracting the third term from both sides and dividing both sides by $1-\epsilon$ will then show that $f(\epsilon) = o(\epsilon^2)$. For the first term, note that
\begin{align*}
    4&\int \left(q_{\epsilon,Y\mid X}\left[1-q_{\epsilon,X}+\frac{\epsilon}{2}s_X^{(\epsilon)}\right]\right)^2 dP \\
    &= 4 \iint \left(1-q_{\epsilon,X}(x)+\frac{\epsilon}{2}s_X^{(\epsilon)}(x)\right)^2 P_{\epsilon,Y\mid X}(dy\mid x)P_X(dx) \\
    &= 4\int \left(1-q_{\epsilon,X}+\frac{\epsilon}{2}s_X^{(\epsilon)}\right)^2 dP_X \\
    &\le 8\int \left(1-q_{\epsilon,X}+\frac{\epsilon}{2}s_X\right)^2 dP_X + 8\epsilon^2\int s_X^21\{|s_X|>\epsilon^{-1/2}\} dP,
\end{align*}
where we used that $(a+b)^2\le 2(a^2+b^2)$. The first term on the right is $o(\epsilon^2)$ since quadratic mean differentiability is preserved under marginalization as proved earlier, and the second is $o(\epsilon^2)$ by the dominated convergence theorem. For the second term in \eqref{eq:qmdCond}, similar arguments show that
\begin{align*}
    4 \int \left(q_\epsilon-1 - \frac{\epsilon}{2}[s_{Y\mid X}+s_X^{(\epsilon)}]\right)^2 dP&= o(\epsilon^2).
\end{align*}
For the third integral in \eqref{eq:qmdCond},
\begin{align*}
    \epsilon^2\int \left[q_{\epsilon,Y\mid X}-1-\frac{\epsilon}{2}s_{Y\mid X}\right]^2 (s_X^{(\epsilon)})^2 dP&\le \epsilon \int \left[q_{\epsilon,Y\mid X}-1-\frac{\epsilon}{2}s_{Y\mid X}\right]^2 dP = \epsilon f(\epsilon)
\end{align*}
and, for the final term in \eqref{eq:qmdCond},
\begin{align*}
    \frac{\epsilon^4}{4}\int s_{Y\mid X}^2 (s_X^{(\epsilon)})^2 dP&\le \frac{\epsilon^3}{4}\int s_{Y\mid X}^2 dP.
\end{align*}
This completes the proof.
\end{proof}

\subsection{Proofs for Section~\ref{sec:perfGuarantees}}

\subsubsection{Proofs for Section~\ref{sec:oneStepGuarantee}}

\begin{proof}[Proof of Lemma~\ref{lem:driftTerm}]
Fix $\delta>0$. Suppose that $\|\phi_n^1-\phi_0\|_{L^2(P_0;\mathcal{H})}=o_p(1)$. We will show that
\begin{align}
\lim_{n\rightarrow\infty}P_0^n\{\|\mathcal{D}_n^1\|_{\mathcal{H}}>n^{-1/2}\delta\}= 0. \label{eq:Dn1Delta}
\end{align}
As $\delta$ was arbitrary, this will show that $\mathcal{D}_n^1=o_p(n^{-1/2})$. An analogous argument can be used to show that $\|\phi_n^2-\phi_0\|_{L^2(P_0;\mathcal{H})}=o_p(1)$ implies that $\mathcal{D}_n^2=o_p(n^{-1/2})$.

Let $1_{\mathcal{E}_n}$ denote the indicator that the event $\mathcal{E}_n$ that $\|\phi_n^1-\phi_0\|_{L^2(P_0;\mathcal{H})}^2\le \delta^2/2$ and let $\mathcal{E}_n^c$ denote the complement of $\mathcal{E}_n$.  
We will leverage the following decomposition when showing \eqref{eq:Dn1Delta}:
\begin{align}
P_0^n\{\|\mathcal{D}_n^1\|_{\mathcal{H}}>n^{-1/2}\delta\}&\le P_0^n\left(\{\|\mathcal{D}_n^1\|_{\mathcal{H}}>n^{-1/2}\delta\}\cap\mathcal{E}_n\right) + P_0^n(\mathcal{E}_n^c) \nonumber \\
&= E_0^n\left[1_{\mathcal{E}_n}P_0^n\left\{\|\mathcal{D}_n^1\|_{\mathcal{H}}>n^{-1/2}\delta\,\middle|\, Z_1,\ldots,Z_{n/2}\right\}\right] + o(1), \label{eq:Dn1Prob}
\end{align}
where $E_{0}^n$ denotes an expectation under sampling from the $n$-fold product measure $P_0^n$ and $o(1)$ denotes a deterministic term that goes to zero as $n\rightarrow\infty$. The equality above holds by the law of total probability, the fact that $\mathcal{E}_n$ is measurable with respect to the $\sigma$-field generated by $Z_1,\ldots,Z_{n/2}$, and the assumption that $\|\phi_n^1-\phi_0\|_{L^2(P_0;\mathcal{H})}=o_p(1)$ implies that $P_0^n(\mathcal{E}_n^c)=o(1)$. To show \eqref{eq:Dn1Delta}, the above shows that it suffices to show that the first term on the right-hand side is $o(1)$. To this end, note that Chebyshev's inequality for Hilbert-valued random variables \citep{grenander1963probabilities} and the bilinearity of inner products shows that
\begin{align}
&1_{\mathcal{E}_n}P_0^n\left\{\|\mathcal{D}_n^1\|_{\mathcal{H}}>n^{-1/2}\delta\,\middle|\, Z_1,\ldots,Z_{n/2}\right\} \label{eq:Dn1Event} \\
&\le 1_{\mathcal{E}_n}\frac{E_{P_0^n}[\|(P_n^1-P_0)(\phi_n^1-\phi_0)\|_{\mathcal{H}}^2\mid Z_1,\ldots,Z_{n/2}]}{n^{-1}\delta^2} \nonumber \\
&= 1_{\mathcal{E}_n}\frac{(n/2)^{-1}E_{P_0^n}[P_n^1\|(I-P_0)(\phi_n^1-\phi_0)\|_{\mathcal{H}}^2\mid Z_1,\ldots,Z_{n/2}]}{n^{-1}\delta^2} \nonumber \\
&\quad+ 1_{\mathcal{E}_n}\frac{4}{n^2}\sum_{(i,j)\in \{n/2+1,\ldots,n\}^2 : i\not=j}\tfrac{E_{P_0^n}[\langle (I-P_0)(\phi_n^1-\phi_0)(Z_i),(I-P_0)(\phi_n^1-\phi_0)(Z_k)\rangle_{\mathcal{H}}\mid Z_1,\ldots,Z_{n/2}]}{n^{-1}\delta^2}, \nonumber
\end{align}
where $(I-P_0)(\phi_n^1-\phi_0)$ denotes the map $z\mapsto (\phi_n^1-\phi_0)(z) - P_0(\phi_n^1-\phi_0)$ and each expectation in the summand on the right-hand side above is well-defined since $(\phi_n^1-\phi_0)\in L^2(P_0;\mathcal{H})$ implies that $(I-P_0)(\phi_n^1-\phi_0)\in L^2(P_0;\mathcal{H})$ as well. In fact, each expectation in the summand on the right-hand side is zero since, by the fact that $(I-P_0)(\phi_n^1-\phi_0)\in L^2(P_0;\mathcal{H})$ and Fubini's theorem,
\begin{align*}
&E_{P_0^n}[\langle (I-P_0)(\phi_n^1-\phi_0)(Z_i),(I-P_0)(\phi_n^1-\phi_0)(Z_k)\rangle_{\mathcal{H}}\mid Z_1,\ldots,Z_{n/2}] \\
&= \int \langle (I-P_0)(\phi_n^1-\phi_0)(z_1),(I-P_0)(\phi_n^1-\phi_0)(z_2)\rangle_{\mathcal{H}} P_0^2(dz_1,dz_2) \\
&= \iint \langle (I-P_0)(\phi_n^1-\phi_0)(z_1),(I-P_0)(\phi_n^1-\phi_0)(z_2)\rangle_{\mathcal{H}} P_0(dz_1)P_0(dz_2) \\
&= \int \left\langle \int (I-P_0)(\phi_n^1-\phi_0)(z_1)P_0(dz_1),(I-P_0)(\phi_n^1-\phi_0)(z_2)\right\rangle_{\mathcal{H}} P_0(dz_2) \\
&= \int \left\langle 0,(I-P_0)(\phi_n^1-\phi_0)(z_2)\right\rangle_{\mathcal{H}} P_0(dz_2) = 0.
\end{align*}
Returning to \eqref{eq:Dn1Event} and simplifying the first term on the right-hand side of that expression, this shows that
\begin{align*}
&1_{\mathcal{E}_n}P_0^n\left\{\|\mathcal{D}_n^1\|_{\mathcal{H}}>n^{-1/2}\delta\,\middle|\, Z_1,\ldots,Z_{n/2}\right\}\le 1_{\mathcal{E}_n}\frac{2\|(I-P_0)(\phi_n^1-\phi_0)\|_{L^2(P_0;\mathcal{H})}^2}{\delta^2}.
\end{align*}
Using that $P_0(\phi_n^1-\phi_0)$ is a minimizer over $h\in\mathcal{H}$ of $\|\phi_n^1-\phi_0 - h\|_{L^2(P;\mathcal{H})}^2$ and subsequently leveraging the definition of the event $\mathcal{E}_n$, this shows that
\begin{align*}
&1_{\mathcal{E}_n}P_0^n\left\{\|\mathcal{D}_n^1\|_{\mathcal{H}}>n^{-1/2}\delta\,\middle|\, Z_1,\ldots,Z_{n/2}\right\} \\
&\quad\le 1_{\mathcal{E}_n}\frac{2\|\phi_n^1-\phi_0\|_{L^2(P_0;\mathcal{H})}^2}{\delta^2}\le \min\left\{1,\frac{2\|\phi_n^1-\phi_0\|_{L^2(P_0;\mathcal{H})}^2}{\delta^2}\right\}.
\end{align*}
Note that the right-hand side above is no larger than $1$. 
Taking an expectation of both sides over $Z_1,\ldots,Z_{n/2}\iidsim P_0$ and recalling that $\|\phi_n^1-\phi_0\|_{L^2(P_0;\mathcal{H})}=o_p(1)$, the dominated convergence theorem shows that the first term on the right-hand side of \eqref{eq:Dn1Prob} is $o(1)$. This completes the proof.
\end{proof}

\begin{proof}[Proof of Theorem~\ref{thm:al}]
In this argument, we will let $\widetilde{\mathcal{H}}$ denote the Hilbert space of elements $(h,r)\in\mathcal{H}\times \mathbb{R}$ that is equipped with inner product $\langle (h_1,r_1),(h_2,r_2)\rangle_{\widetilde{\mathcal{H}}} = \langle h_1, h_2\rangle_{\mathcal{H}} + r_1r_2$. Fix $s\in\dot{\mathcal{P}}_P$. Since $\mathcal{R}_n^j = o_P(n^{-1/2})$ and $\mathcal{D}_n^j = o_P(n^{-1/2})$ for $j\in\{1,2\}$, \eqref{eq:cfExpansion} shows that
\begin{align}
\begin{pmatrix}
\bar{\nu}_n - \nu(P_0) \\
P_n s
\end{pmatrix}
= 
\begin{pmatrix}
P_n\phi_0 \\
P_n s
\end{pmatrix} +
o_p(n^{-1/2}) 
= P_n\begin{pmatrix}
\phi_0 \\
 s
\end{pmatrix} + o_p(n^{-1/2}). \label{eq:regularityJoint}
\end{align}
Moreover, $\|(\phi_0,s)\|_{L^2(P;\widetilde{\mathcal{H}})}^2 = \|\phi_0\|_{L^2(P;\mathcal{H})}^2 + \|s\|_{L^2(P)}^2 <\infty$. Hence, by Slutsky's lemma and a central limit theorem for Hilbert-valued random variables \citep[see Examples~1.4.7 and 1.8.5 in][]{van1996weak}, it holds that
\begin{align}
n^{1/2}\begin{pmatrix}
\bar{\nu}_n - \nu(P_0) \\
P_n s
\end{pmatrix}&\rightsquigarrow \begin{pmatrix}
\mathbb{H} \\
S
\end{pmatrix}, \label{eq:nunHatRegJoint}
\end{align}
where $(\mathbb{H},S)$ is a tight $\widetilde{\mathcal{H}}$-valued Gaussian random variable that is such that
\begin{align*}
\langle (h,r) ,(\mathbb{H},S)\rangle_{\widetilde{\mathcal{H}}}\sim N\left(0,E_0\left[\left\{\langle \phi_0(Z),h \rangle_{\mathcal{H}} + rs(Z)\right\}^2\right]\right).
\end{align*}
Marginalizing the $P_n s$ term on the left-hand side of \eqref{eq:regularityJoint} shows that \eqref{eq:oneStepWeakConv} holds.

We will use \eqref{eq:nunHatRegJoint} along with Theorem~3 in Chapter 5.2 of \cite{bickel1993efficient} to establish the regularity of $\bar{\nu}_n$. To use this result, it suffices to show that $E[S\mathbb{H}]=\dot{\nu}_P(s)$. We will establish this by showing that $\langle h,E[S\mathbb{H}]\rangle_{\mathcal{H}}=\langle h,\dot{\nu}_P(s)\rangle_{\mathcal{H}}$ for all $h\in\mathcal{H}$. To see that this holds, first note that $\langle h,E[S\mathbb{H}]\rangle_{\mathcal{H}} = E[S\langle h,\mathbb{H}\rangle_{\mathcal{H}}]$ since 
\begin{align*}
E[\|S\mathbb{H}\|_{\mathcal{H}}^2] = E[S^2\|\mathbb{H}\|_{\mathcal{H}}^2]\le E[S^4]^{1/2}E[\|\mathbb{H}\|_{\mathcal{H}}^4]^{1/2}<\infty,
\end{align*}
where the first inequality holds by Cauchy-Schwarz and the second by Fernique's theorem \citep{fernique1970integrabilite}. 
Hence, it suffices to show that $E[S\langle h,\mathbb{H}\rangle_{\mathcal{H}}]=\langle h,\dot{\nu}_P(s)\rangle_{\mathcal{H}}$. To show that this is the case, we let $f_h : \widetilde{\mathcal{H}}\rightarrow\mathbb{R}^2$ be defined so that $f_h(h_1,r_1)= (\langle h_1,h \rangle_{\mathcal{H}},r_1)$. For any $(a,b)\in\mathbb{R}^2$, the dot product $(a,b)\cdot f_h(\mathbb{H},S)$ is equal to $a \langle h, \mathbb{H}\rangle_{\mathcal{H}} + bS = \langle (ah,b),(\mathbb{H},S)\rangle_{\widetilde{\mathcal{H}}}$, which follows a mean-zero normal distribution with variance
\begin{align*}
E_0\left[\left\{a\langle \phi_0(Z),h \rangle_{\mathcal{H}} + bs(Z)\right\}^2\right] = (a,b)^\top \Sigma\,(a,b),
\end{align*}
where
\begin{align*}
\Sigma:=\begin{pmatrix}E_0\left[\langle \phi_0(Z),h \rangle_{\mathcal{H}}^2\right] & E_0\left[s(Z)\langle \phi_0(Z),h \rangle_{\mathcal{H}}\right] \\
E_0\left[s(Z)\langle \phi_0(Z),h \rangle_{\mathcal{H}}\right] & E_0[s(Z)^2]\end{pmatrix}.
\end{align*}
As $(a,b)\in\mathbb{R}^2$ was arbitrary, it follows that $(\langle h, \mathbb{H} \rangle_{\mathcal{H}},S)\sim N((0,0),\Sigma)$. Hence, $E[S\langle h, \mathbb{H} \rangle_{\mathcal{H}}]$ is equal to $E_0\left[s(Z)\langle \phi_0(Z),h \rangle_{\mathcal{H}}\right]$. Finally, note that $\langle \phi_0(Z),h \rangle_{\mathcal{H}} = \dot{\nu}_P^*(h)(Z)$ $P$-a.s. since $\phi_0$ is the EIF of $\nu$, and so
\begin{align*}
E_0\left[s(Z)\langle \phi_0(Z),h \rangle_{\mathcal{H}}\right] = E_0\left[s(Z)\dot{\nu}_P^*(h)(Z)\right] = \langle s, \dot{\nu}_P^*(h)\rangle_{L^2(P)} = \langle h, \dot{\nu}_P(s)\rangle_{\mathcal{H}}.
\end{align*}
\end{proof}

\subsubsection*{Proofs for Section~\ref{sec:confSets}}

\begin{proof}[Proof of Theorem~\ref{thm:CIcoverage}]
Suppose the conditions of Theorem~\ref{thm:al} hold and that $\|\phi_0\|_{L^2(P_0;\mathcal{H})}$ is strictly positive, $\Omega_n\in\mathcal{O}$, $\Omega_0\in\mathcal{O}$, and $\|\Omega_n - \Omega_0\|_{\mathrm{op}}=o_p(1)$. For brevity, we will let $w_n(\cdot):=w(\,\cdot\,;\Omega_n)$ and $w_0(\cdot):=w(\,\cdot\,;\Omega_0)$ in this proof. 
By Theorem~\ref{thm:al}, $n^{1/2}[\bar{\nu}_n - \nu(P_0)]\rightsquigarrow \mathbb{H}$, where $\mathbb{H}$ is as defined in that theorem. Slutsky's lemma and the continuous mapping theorem can further be used to show that $n\cdot w_n[\bar{\nu}_n-\nu(P_0)]\rightsquigarrow w_0(\mathbb{H})$. To see this, first note that
\begin{align}
n\cdot w_n[\bar{\nu}_n-\nu(P_0)]&= n\cdot w_0[\bar{\nu}_n-\nu(P_0)] + n\cdot w[\bar{\nu}_n-\nu(P_0);\Omega_n - \Omega_0]. \label{eq:Slutsky}
\end{align}
The first of the two terms on the right converges weakly to $w_0(\mathbb{H})$ by the continuous mapping theorem, where we have used that, by virtue of belonging to $\mathcal{O}$, $\Omega_0$ is a continuous operator, and therefore $w_0 : \mathcal{H}\rightarrow\mathbb{R}$ is a continuous functional. The second term on the right is $o_p(1)$, since, by Cauchy-Schwarz, the definition of the operator norm, and the continuous mapping theorem,
\begin{align*}
|n\cdot w[\bar{\nu}_n-\nu(P_0);\Omega_n - \Omega_0]|&\le n\|(\Omega_n - \Omega_0)[\bar{\nu}_n-\nu(P_0)]\|_{\mathcal{H}}\|\bar{\nu}_n-\nu(P_0)\|_{\mathcal{H}} \\
&\le \|\Omega_n - \Omega_0\|_{\mathrm{op}}\|n^{1/2}[\bar{\nu}_n-\nu(P_0)]\|_{\mathcal{H}}^2 = o_p(1) O_p(1) = o_p(1).
\end{align*}
Plugging this into \eqref{eq:Slutsky} and applying Slutsky's lemma shows that $n\cdot w_n[\bar{\nu}_n-\nu(P_0)]\rightsquigarrow w_0(\mathbb{H})$. 

We apply Corollary~3.3 of \cite{bogachev1996gaussian} to show that $w_0(\mathbb{H})$ is absolutely continuous. To apply this corollary, it suffices to show that $w_0 : \mathcal{H}\rightarrow\mathbb{R}$ is locally Lipschitz and that
the image of the Gateaux derivative $dw_0(h;\,\cdot\,)$ is $\mathbb{P}$-a.s. equal to $\mathbb{R}$, where $\mathbb{P}$ is the distribution of $\mathbb{H}$. 
For the Gateaux derivative condition, we note that, for any $g\in\mathcal{H}$, $dw_0(h;g)=\frac{d}{d\epsilon}w(h+\epsilon g;\Omega_0)=\langle\Omega_0(g),h\rangle_{\mathcal{H}} + \langle\Omega_0(h),g\rangle_{\mathcal{H}} = 2\langle\Omega_0(g),h\rangle_{\mathcal{H}}$, where the latter equality used that $\Omega_0$ is self-adoint. Since $\Omega_0$ is positive definite, $\langle\Omega_0(h),h\rangle_{\mathcal{H}}>0$ for all $h\in\mathcal{H}$. Hence, for any $h\in\mathcal{H}\backslash\{0\}$, the image of $dw_0(h;\cdot)$ is equal to $\mathbb{R}$; this can be seen by considering $dw_0(h;ch)$ with $c$ varying over $\mathbb{R}$. As $\|\phi_0\|_{L^2(P_0;\mathcal{H})}>0$, $\mathcal{H}\backslash\{0\}$ is a $\mathbb{P}$-probability one set. 
The locally Lipschitz property follows from the fact that, for all $h\in\mathcal{H}$ and all $g_1$ and $g_2$ in the unit ball $\mathcal{H}_1$ of $\mathcal{H}$,
\begin{align*}
&\left|w_0(h+g_1)-w_0(h+g_2)\right| \\
&\quad= \left|\int_0^1 [dw_0(h;\epsilon g_1)-dw_0(h;\epsilon g_2)] d\epsilon\right|\le \int_0^1 \left|dw_0(h;\epsilon g_1)-dw_0(h;\epsilon g_2)\right| d\epsilon \\
&\quad= 2 \left|\langle\Omega_0(g_1-g_2),h\rangle_{\mathcal{H}}\right|\int_0^1 \epsilon\, d\epsilon = \left|\langle\Omega_0(g_1-g_2),h\rangle_{\mathcal{H}}\right|\le \|\Omega_0(g_1-g_2)\|_{\mathcal{H}}\|h\|_{\mathcal{H}} \\
&\quad\le \|\Omega_0\|_{\mathrm{op}}\|g_1-g_2\|_{\mathcal{H}}\|h\|_{\mathcal{H}} = \|\Omega_0\|_{\mathrm{op}}\|(h+g_1)-(h+g_2)\|_{\mathcal{H}}\|h\|_{\mathcal{H}}.
\end{align*}
Hence, $w_0$ is $\|\Omega_0\|_{\mathrm{op}}\|h\|_{\mathcal{H}}$-Lipschitz continuous in the radius-one ball centered at $h$. As $h$ was arbitrary, $w_0$ is locally Lipschitz. 
Corollary~3.3 of \cite{bogachev1996gaussian} thus shows that $w_0(\mathbb{H})$ is an absolutely continuous random variable. 

Because convergence in distribution implies convergence of cumulative distribution functions at continuity points, $n\cdot w_n[\bar{\nu}_n-\nu(P_0)]\rightsquigarrow w_0(\mathbb{H})$ implies that
\begin{align}
P_0^n\left\{n\cdot w_n[\bar{\nu}_n - \nu(P_0)]\le \zeta_{1-\alpha}+\delta\right\} \overset{n\rightarrow\infty}{\longrightarrow} \mathbb{F}(\zeta_{1-\alpha}+\delta) \label{eq:cdfContPoints}
\end{align}
for all $\delta\in\mathbb{R}$, where $\mathbb{F}$ is the cumulative distribution function of $w_0(\mathbb{H})$. In what follows, we will use this fact twice when establishing the asymptotic validity of the $(1-\alpha)$-confidence sets $\mathcal{C}_n(\widehat{\zeta}_n)$. When doing so, we will also use that the event $\{\nu(P_0)\in\mathcal{C}_n(\widehat{\zeta}_n)\}$ is the same as the event $\{n\cdot w_n[\bar{\nu}_n - \nu(P_0)]\le \widehat{\zeta}_n\}$.

We now establish \ref{it:consCoverage}. To do this, we use that, for any $\delta>0$,
\begin{align*}
P_0^n&\left\{n\cdot w_n[\bar{\nu}_n - \nu(P_0)]> \widehat{\zeta}_n\right\} \\
&\le P_0^n\left\{n\cdot w_n[\bar{\nu}_n - \nu(P_0)]> \widehat{\zeta}_n,\widehat{\zeta}_n-\zeta_{1-\alpha}\ge -\delta\right\} + P_0^n\left\{\widehat{\zeta}_n-\zeta_{1-\alpha}< -\delta\right\} \\
&\le P_0^n\left\{n\cdot w_n[\bar{\nu}_n - \nu(P_0)]> \zeta_{1-\alpha}-\delta\right\} + P_0^n\left\{\widehat{\zeta}_n-\zeta_{1-\alpha}< -\delta\right\}.
\end{align*}
Subtracting both sides from $1$ yields that
\begin{align*}
P_0^n&\left\{n\cdot w_n[\bar{\nu}_n - \nu(P_0)]\le \widehat{\zeta}_n\right\} \\
&\ge P_0^n\left\{n\cdot w_n[\bar{\nu}_n - \nu(P_0)] \le \zeta_{1-\alpha}-\delta\right\} - P_0^n\left\{\widehat{\zeta}_n-\zeta_{1-\alpha}< -\delta\right\}.
\end{align*}
Taking $n\rightarrow\infty$, applying \eqref{eq:cdfContPoints}, and then taking $\delta\downarrow 0$ shows that, if $\widehat{\zeta}_n$ is an asymptotically conservative estimator of $\zeta_{1-\alpha}$ in the sense stated in \ref{it:consCoverage}, then $$\liminf_n P_0^n\left\{n\cdot w_n[\bar{\nu}_n - \nu(P_0)]\le \widehat{\zeta}_n\right\}\ge \mathbb{F}(\zeta_{1-\alpha})=1-\alpha.$$ This establishes \ref{it:consCoverage}.

We now establish \ref{it:exactCoverage}. To do this, we use that, for any $\delta>0$,
\begin{align*}
P_0^n&\left\{n\cdot w_n[\bar{\nu}_n - \nu(P_0)]\le \widehat{\zeta}_n\right\} \\
&\le P_0^n\left\{n\cdot w_n[\bar{\nu}_n - \nu(P_0)]\le \widehat{\zeta}_n,\widehat{\zeta}_n-\zeta_{1-\alpha}\le \delta\right\} + P_0^n\left\{\widehat{\zeta}_n-\zeta_{1-\alpha}> \delta\right\} \\
&\le P_0^n\left\{n\cdot w_n[\bar{\nu}_n - \nu(P_0)]\le \zeta_{1-\alpha}+\delta\right\} + P_0^n\left\{\widehat{\zeta}_n-\zeta_{1-\alpha}> \delta\right\}.
\end{align*}
Taking $n\rightarrow\infty$, applying \eqref{eq:cdfContPoints}, and then taking $\delta\downarrow 0$ shows that, if $\widehat{\zeta}_n$ is a consistent estimator of $\zeta_{1-\alpha}$, then $\limsup_n P_0^n\{n\cdot w_n[\bar{\nu}_n - \nu(P_0)]\le \widehat{\zeta}_n\}\le \mathbb{F}(\zeta_{1-\alpha})=1-\alpha$. Combining this with \ref{it:consCoverage} gives \ref{it:exactCoverage}.
\end{proof}

In the following result, we write $o_p(1)$ to denote a term that converges to zero in probability marginally over the randomness both in the original sample $(Z_1,\ldots,Z_n)$ and the bootstrap sample $(Z_1^\#,\ldots,Z_n^\#)$.
\begin{lemma}\label{lem:bootstraDriftTerm}
If $\|\phi_n^j-\phi_0\|_{L^2(P_0;\mathcal{H})}=o_p(1)$ for each $j\in\{1,2\}$, then $\|\mathbb{H}_n^\#-\mathbb{H}_{n,0}^\#\|_{\mathcal{H}}=o_p(1)$, where $\mathbb{H}_{n,0}^\#:= n^{1/2}\frac{1}{2}\sum_{j=1}^2 (P_n^{j,\#}-P_n^j) \phi_0$.
\end{lemma}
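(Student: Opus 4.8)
The plan is to adapt the two-level conditioning argument from the proof of Lemma~\ref{lem:driftTerm}, now keeping track of the extra randomness from the bootstrap resampling. Writing $\psi_n^j:=\phi_n^j-\phi_0$, the quantity of interest decomposes as
\[
\mathbb{H}_n^\#-\mathbb{H}_{n,0}^\# \;=\; \tfrac{1}{2}\textstyle\sum_{j=1}^2 n^{1/2}(P_n^{j,\#}-P_n^j)\psi_n^j,
\]
so by the triangle inequality it suffices to show $T_n^j:=\|n^{1/2}(P_n^{j,\#}-P_n^j)\psi_n^j\|_{\mathcal{H}}=o_p(1)$ for each $j\in\{1,2\}$, where $o_p(1)$ is understood marginally over both the original and the bootstrap samples. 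Fixing $j$, I would first condition on the full original sample $\mathcal{F}_n:=\sigma(Z_1,\ldots,Z_n)$. Under this conditioning, $\psi_n^j$ is a fixed function, $P_n^{j,\#}$ is the empirical measure of $n/2$ iid draws from $P_n^j$ whose conditional mean is $P_n^j$, and $\psi_n^j$ evaluated at a bootstrap draw takes only finitely many values and is thus Bochner square integrable. Hence $n^{1/2}(P_n^{j,\#}-P_n^j)\psi_n^j=\tfrac{2}{n^{1/2}}\sum_{i=1}^{n/2}W_i$ is a scaled sum of conditionally iid, conditionally centered $\mathcal{H}$-valued terms $W_i$.

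The second step is a conditional second-moment computation. Because the $W_i$ are conditionally centered and independent, the cross terms vanish, giving
\[
E\!\left[\,\|n^{1/2}(P_n^{j,\#}-P_n^j)\psi_n^j\|_{\mathcal{H}}^2\,\middle|\,\mathcal{F}_n\right]
= 2\,E\!\left[\|W_1\|_{\mathcal{H}}^2\mid\mathcal{F}_n\right]
= 2\left(P_n^j\|\psi_n^j\|_{\mathcal{H}}^2 - \|P_n^j\psi_n^j\|_{\mathcal{H}}^2\right)
\le 2\,P_n^j\|\psi_n^j\|_{\mathcal{H}}^2 .
\]
I would then invoke the cross-fitting structure: $\psi_n^j$ depends on the data only through $\widehat{P}_n^j$, which is fit from the half of the sample disjoint from the half defining $P_n^j$, so $\psi_n^j$ and $P_n^j$ are independent. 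Taking a further expectation conditional on $\widehat{P}_n^j$ and using the tower property (with $\sigma(\widehat{P}_n^j)\subseteq\mathcal{F}_n$) yields
\[
E\!\left[\,\|n^{1/2}(P_n^{j,\#}-P_n^j)\psi_n^j\|_{\mathcal{H}}^2\,\middle|\,\widehat{P}_n^j\right]
\le 2\,P_0\|\psi_n^j\|_{\mathcal{H}}^2
= 2\,\|\phi_n^j-\phi_0\|_{L^2(P_0;\mathcal{H})}^2 .
\]

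Finally, I would convert this moment bound into the stated convergence in probability. By Markov's inequality applied to $T_n^j$ conditionally on $\widehat{P}_n^j$ (equivalently, the Hilbert-valued Chebyshev inequality of \citealp{grenander1963probabilities}), together with the trivial bound that any probability is at most one,
\[
P\{T_n^j>\delta\} = E\!\left[P\{T_n^j>\delta\mid\widehat{P}_n^j\}\right]
\le E\!\left[\min\!\left\{1,\ \tfrac{2\,\|\phi_n^j-\phi_0\|_{L^2(P_0;\mathcal{H})}^2}{\delta^2}\right\}\right].
\]
Since the hypothesis gives $\|\phi_n^j-\phi_0\|_{L^2(P_0;\mathcal{H})}=o_p(1)$ and the integrand is bounded by $1$, the dominated convergence theorem shows the right-hand side tends to $0$ for every $\delta>0$, whence $T_n^j=o_p(1)$; summing the two terms finishes the proof. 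The main obstacle is purely organizational: correctly threading the two independent sources of randomness through nested conditioning and verifying that the bootstrap summands are conditionally centered and square integrable so that the cross terms genuinely vanish. Once those points are secured, the argument is a direct adaptation of the proof of Lemma~\ref{lem:driftTerm}.
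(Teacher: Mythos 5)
Your proposal is correct and follows essentially the same route as the paper's proof: conditioning on the original sample, observing that the bootstrap cross terms vanish so the conditional second moment is bounded by $2\,P_n^j\|\phi_n^j-\phi_0\|_{\mathcal{H}}^2$ (your explicit variance identity is the same bound the paper obtains via the projection-minimizer property of $P_n^j(\phi_n^j-\phi_0)$), exploiting the cross-fitting independence of $\widehat{P}_n^j$ from $P_n^j$ to pass to $\|\phi_n^j-\phi_0\|_{L^2(P_0;\mathcal{H})}^2$, and finishing with the $\min\{1,\cdot\}$ bound plus dominated convergence. The only difference is cosmetic — you apply Markov after integrating out the half-sample defining $P_n^j$, whereas the paper applies the Hilbert-valued Chebyshev inequality first and then integrates — so no further comment is needed.
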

\begin{proof}[Proof of Lemma~\ref{lem:bootstraDriftTerm}]
This proof bears resemblance to that of Lemma~\ref{lem:driftTerm}. In what follows we use $Z_1^n$ as shorthand for the sample $(Z_1,Z_2,\ldots,Z_n)$. Note that, for any $\delta>0$,
\begin{align*}
\mathrm{Pr}\left\{\left\|\mathbb{H}_n^\#-\mathbb{H}_{n,0}^\#\right\|_{\mathcal{H}} > \delta\,\middle|\,Z_1^n\right\}&= \mathrm{Pr}\left\{\left\|\frac{1}{2}\sum_{j=1}^2 (P_n^{j,\#}-P_n^j) [\phi_n^j - \phi_0]\right\|_{\mathcal{H}}>n^{-1/2}\delta\,\middle|\,Z_1^n\right\} \\
&\le \sum_{j=1}^2 \mathrm{Pr}\left\{\left\|(P_n^{j,\#}-P_n^j) [\phi_n^j - \phi_0]\right\|_{\mathcal{H}}>n^{-1/2}\delta\,\middle|\,Z_1^n\right\}.
\end{align*}
Taking an expectation of both sides over $Z_1,Z_2,\ldots,Z_n\iidsim P_0$,
\begin{align*}
\mathrm{Pr}\left\{\left\|\mathbb{H}_n^\#-\mathbb{H}_{n,0}^\#\right\|_{\mathcal{H}} > \delta\right\}&= \sum_{j=1}^2 \mathrm{Pr}\left\{\left\|(P_n^{j,\#}-P_n^j) [\phi_n^j - \phi_0]\right\|_{\mathcal{H}}>n^{-1/2}\delta\right\}.
\end{align*}
In what follows we show that $\mathrm{Pr}\left\{\left\|(P_n^{j,\#}-P_n^j) [\phi_n^j - \phi_0]\right\|_{\mathcal{H}}>n^{-1/2}\delta\right\}=o(1)$ when $j=1$. An analogous argument holds for $j=2$. As $\delta>0$ was arbitrary, this will complete the proof.

We begin by noting that
\begin{align*}
&\mathrm{Pr}\left\{\left\|(P_n^{1,\#}-P_n^1) [\phi_n^1 - \phi_0]\right\|_{\mathcal{H}}>n^{-1/2}\delta\,\middle|\,Z_1^n\right\} \\
&\le \frac{n}{\delta^2}E\left[\left\|(P_n^{1,\#}-P_n^1) (\phi_n^1 - \phi_0)\right\|_{\mathcal{H}}^2\,\middle|\,Z_1^n\right] \\
&= \frac{2}{\delta^2}E\left[P_n^{1,\#}\left\|(I-P_n^1) (\phi_n^1 - \phi_0)\right\|_{\mathcal{H}}^2\,\middle|\,Z_1^n\right] \\
&\quad+ 4 \sum_{(i,j)\in \{n/2+1,\ldots,n\}^2 : i\not=j}\tfrac{E\left[\langle (I-P_n^1)(\phi_n^1-\phi_0)(Z_i^\#),(I-P_n^1)(\phi_n^1-\phi_0)(Z_k^\#)\rangle_{\mathcal{H}}\mid Z_1^n\right]}{n\delta^2},
\end{align*}
where $(I-P_n^1)(\phi_n^1 - \phi_0)$ denotes the map $z\mapsto (\phi_n^1 - \phi_0)(z) - P_n^1(\phi_n^1 - \phi_0)$. 
Because $Z_i^\#$ and $Z_k^\#$ are independent draws from $P_n^1$ conditional on $Z_1^n$ when $i\not=k$, each term in the summand on the right-hand side is exactly equal to zero. Since $P_n^{1,\#}$ is the empirical distribution of an iid sample from $P_n^1$, the first term on the right rewrites as $(2/\delta^2)\|(I-P_n^1)(\phi_n^1-\phi_0)\|_{L^2(P_n^1;\mathcal{H})}^2$. As $P_n^1(\phi_n^1-\phi_0)$ is a minimizer over $h\in\mathcal{H}$ of $\|\phi_n^1-\phi_0 - h\|_{L^2(P_n^1;\mathcal{H})}^2$, this term upper bounds by $(2/\delta^2)\|\phi_n^1-\phi_0\|_{L^2(P_n^1;\mathcal{H})}^2$. Plugging this bound into the above yields that
\begin{align*}
\mathrm{Pr}\left\{\left\|(P_n^{1,\#}-P_n^1) [\phi_n^1 - \phi_0]\right\|_{\mathcal{H}}>n^{-1/2}\delta\,\middle|\,Z_1^n\right\}&\le \frac{2}{\delta^2} \|\phi_n^1-\phi_0\|_{L^2(P_n^1;\mathcal{H})}^2.
\end{align*}
Taking the mean of both sides over the sample $Z_{n/2+1},Z_{n/2+2},\ldots,Z_n\iidsim P_0$ used to define $P_n^1$ shows that 
\begin{align*}
\mathrm{Pr}\left\{\left\|(P_n^{1,\#}-P_n^1) [\phi_n^1 - \phi_0]\right\|_{\mathcal{H}}>n^{-1/2}\delta\,\middle|\,Z_1,\ldots,Z_{n/2}\right\}&\le \frac{2}{\delta^2} \|\phi_n^1-\phi_0\|_{L^2(P_0;\mathcal{H})}^2.
\end{align*}
Using the trivial bound that probabilities are no more than $1$ and subsequently taking an expectation on both sides over $Z_1,Z_2,\ldots,Z_{n/2}\iidsim P_0$ yields that
\begin{align*}
\mathrm{Pr}\left\{\left\|(P_n^{1,\#}-P_n^1) [\phi_n^1 - \phi_0]\right\|_{\mathcal{H}}>n^{-1/2}\delta\right\}&\le E\left[\min\left\{1,\frac{2}{\delta^2} \|\phi_n^1-\phi_0\|_{L^2(P_0;\mathcal{H})}^2\right\}\right].
\end{align*}
Using that $\|\phi_n^1-\phi_0\|_{L^2(P_0;\mathcal{H})}=o_p(1)$ by assumption and applying the dominated convergence theorem shows that the right-hand side is $o(1)$, which gives the result.
\end{proof}

\begin{proof}[Proof of Theorem~\ref{thm:threshEst}]
By Lemma~\ref{lem:bootstraDriftTerm}, $\|\mathbb{H}_n^\#-\mathbb{H}_{n,0}^\#\|_{\mathcal{H}}=o_p(1)$. By Remark~2.5 of \cite{gine1990bootstrapping} and the fact that $\phi_0\in L^2(P_0;\mathcal{H})$, $\mathbb{H}_{n,0}^\#\rightsquigarrow \mathbb{H}$ weakly a.s., that is, weakly conditionally on the iid sequence $(Z_i)_{i=1}^\infty$ with $P_0$-probability one. By the continuous mapping theorem, this implies that $w(\mathbb{H}_{n,0}^\#;\Omega_0)\rightsquigarrow w(\mathbb{H};\Omega_0)$ weakly a.s. as well. In what follows we will use these facts, along with the fact that $\|\Omega_n-\Omega_0\|_{\mathrm{op}}=o_p(1)$, to show that $w(\mathbb{H}_n^\#;\Omega_n)\rightsquigarrow w(\mathbb{H};\Omega_0)$ weakly, conditionally on $(Z_i)_{i=1}^\infty$, in probability, in the sense defined in Chapter~23.2.1 of \cite{van2000asymptotic}. To show this, we begin by noting that, as $\Omega_n$ and $\Omega_0$ belong to $\mathcal{O}$,
\begin{align}
w(\mathbb{H}_n^\#;\Omega_n) - w(\mathbb{H}_{n,0}^\#;\Omega_0) 
&=  w(\mathbb{H}_n^\#-\mathbb{H}_{n,0}^\#;\Omega_0) + w(\mathbb{H}_n^\#;\Omega_n-\Omega_0). \label{eq:wBootDecomp}
\end{align}
We now show that each of the terms on the right are marginally $o_p(1)$. For the first, this follows from the fact that $|w(\mathbb{H}_n^\#-\mathbb{H}_{n,0}^\#;\Omega_0)| = |\langle\Omega_0(\mathbb{H}_n^\#-\mathbb{H}_{n,0}^\#),\mathbb{H}_n^\#-\mathbb{H}_{n,0}^\#\rangle_{\mathcal{H}}|\le \|\mathbb{H}_n^\#-\mathbb{H}_{n,0}^\#\|_{\mathcal{H}}^2\|\Omega_0\|_{\mathrm{op}}$, and this upper bound is $o_p(1)$ by Lemma~\ref{lem:bootstraDriftTerm}. For the second, this follows from the fact that $|w(\mathbb{H}_n^\#;\Omega_n-\Omega_0)|\le \|\mathbb{H}_n^\#\|_{\mathcal{H}}^2 \|\Omega_n-\Omega_0\|_{\mathrm{op}}$, combined with the fact that $\|\Omega_n-\Omega_0\|_{\mathrm{op}}=o_p(1)$, by assumption, and $\|\mathbb{H}_n^\#\|_{\mathcal{H}}^2=O_p(1)$, by virtue of the fact that $\|\mathbb{H}_n^\#-\mathbb{H}_{n,0}^\#\|_{\mathcal{H}}=o_p(1)$ and $\mathbb{H}_{n,0}^\#\rightsquigarrow \mathbb{H}$ weakly almost surely. 

We now derive a form of Slutsky's lemma to show that $w(\mathbb{H}_n^\#;\Omega_0)\rightsquigarrow w(\mathbb{H};\Omega_0)$ weakly, conditionally on $(Z_i)_{i=1}^\infty$, in probability. In particular, taking $f : \mathbb{R}\rightarrow [-1,1]$ to be a bounded, 1-Lipschitz function, letting $Z_1^n:=(Z_i)_{i=1}^n$, and recalling \eqref{eq:wBootDecomp}, we see that
\begin{align*}
&\left|E\left[ f\left(w[\mathbb{H}_n^\#;\Omega_n]\right)\,\middle|\,Z_1^n\right] - E[f\left(w[\mathbb{H};\Omega_0]\right)] \right| \\
&\le \left|E\left[ f\left(w[\mathbb{H}_n^\#;\Omega_n]\right)\,\middle|\,Z_1^n\right] - E\left[ f\left(w[\mathbb{H}_{n,0}^\#;\Omega_0]\right)\,\middle|\,Z_1^n\right]\right| \\
&\quad+ \left|E\left[ f\left(w[\mathbb{H}_{n,0}^\#;\Omega_0]\right)\,\middle|\,Z_1^n\right]- E[f\left(w[\mathbb{H};\Omega_0]\right)] \right| \\
&\le E\left[\min\left\{2,\left|w(\mathbb{H}_n^\#-\mathbb{H}_{n,0}^\#;\Omega_0) + w(\mathbb{H}_n^\#;\Omega_n-\Omega_0)\right|\right\}\,\middle|\,Z_1^n\right] \\
&\quad+ \left|E\left[ f\left(w[\mathbb{H}_{n,0}^\#;\Omega_0]\right)\,\middle|\,Z_1^n\right]- E\left[f\left(w[\mathbb{H};\Omega_0]\right)\right] \right|.
\end{align*}
Taking a supremum over all 1-Lipschitz $f : \mathbb{R}\rightarrow[-1,1]$ on both sides and using that $w[\mathbb{H}_{n,0}^\#;\Omega_0]\rightsquigarrow w[\mathbb{H};\Omega_0]$ weakly a.s. is equivalent to the supremum over such $f$ of the latter term on the right being $P_0$-a.s. $o(1)$ \citep[Chapter 23.2.1 of][]{van2000asymptotic}, we  see that it is $P_0$-a.s. true that
\begin{align*}
\sup_f &\left|E\left[ f\left(w[\mathbb{H}_n^\#;\Omega_n]\right)\,\middle|\,Z_1^n\right] - E[f\left(w[\mathbb{H};\Omega_0]\right)] \right| \\
&\le E\left[\min\left\{2,\left|w(\mathbb{H}_n^\#-\mathbb{H}_{n,0}^\#;\Omega_0) + w(\mathbb{H}_n^\#;\Omega_n-\Omega_0)\right|\right\}\,\middle|\,Z_1^n\right]  + o(1).
\end{align*}
Taking an expectation of the first term on the right over $Z_1,Z_2,\ldots,Z_n\iidsim P_0$, recalling that $w(\mathbb{H}_n^\#-\mathbb{H}_{n,0}^\#;\Omega_0) + w(\mathbb{H}_n^\#;\Omega_n-\Omega_0)$ is marginally $o_p(1)$, and applying the dominated convergence theorem shows that this nonnegative conditional expectation converges to zero in mean, and therefore also in probability. Hence, the above shows that $w(\mathbb{H}_n^\#;\Omega_n)$ converges weakly to $w(\mathbb{H};\Omega_0)$, given $(Z_i)_{i=1}^\infty$, in probability. This implies that the $(1-\alpha)$ quantile of $w(\mathbb{H}_n^\#;\Omega_n)$ conditional on $Z_1^n$, namely $\widehat{\zeta}_n$, converges in probability to the $(1-\alpha)$-quantile of $w(\mathbb{H};\Omega_0)$, namely $\zeta_{1-\alpha}$.
\end{proof}

\subsection{Proofs for Section~\ref{sec:perfGuaranteesRegularized}}

\subsubsection*{Proofs for Section~\ref{sec:regularizedOneStepGuarantee}}

Recall that the (squared) Hilbert-Schmidt norm is defined as $\|\dot{\nu}_P^*\|_{\mathrm{HS}}^2:= \sum_{k=1}^\infty \|\dot{\nu}_P^*(h_k)\|_{L^2(P)}^2$.
\begin{lemma}\label{lem:L2PHEquiv}
Suppose $\nu$ is pathwise differentiable at $P$ with EIF $\phi_P$. Then, $\|\phi_P\|_{L^2(P;\mathcal{H})}^2=\|\dot{\nu}_P^*\|_{\mathrm{HS}}^2$.
\end{lemma}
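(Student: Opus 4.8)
The plan is to combine the Riesz-representation property of the EIF with Parseval's identity in $\mathcal{H}$ and Tonelli's theorem. By the definition of the EIF in \eqref{eq:EIFRiesz}, there is a $P$-probability-one set $\mathcal{Z}'$ on which $\dot{\nu}_P^*(h)(z)=\langle h,\phi_P(z)\rangle_{\mathcal{H}}$ holds simultaneously for all $h\in\mathcal{H}$; in particular, for every basis element $h_k$ and all $z\in\mathcal{Z}'$, we have $\dot{\nu}_P^*(h_k)(z)=\langle h_k,\phi_P(z)\rangle_{\mathcal{H}}$. Since $(h_k)_{k=1}^\infty$ is an orthonormal basis of $\mathcal{H}$, Parseval's identity applied to the element $\phi_P(z)\in\mathcal{H}$ gives $\|\phi_P(z)\|_{\mathcal{H}}^2=\sum_{k=1}^\infty \langle h_k,\phi_P(z)\rangle_{\mathcal{H}}^2$ for each such $z$.

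Integrating this identity against $P$ and interchanging the (nonnegative) sum with the integral via Tonelli's theorem, I would obtain
\begin{align*}
\|\phi_P\|_{L^2(P;\mathcal{H})}^2&=\int \|\phi_P(z)\|_{\mathcal{H}}^2\,P(dz)=\int \sum_{k=1}^\infty \langle h_k,\phi_P(z)\rangle_{\mathcal{H}}^2\,P(dz) \\
&=\sum_{k=1}^\infty \int \langle h_k,\phi_P(z)\rangle_{\mathcal{H}}^2\,P(dz)=\sum_{k=1}^\infty \int \dot{\nu}_P^*(h_k)(z)^2\,P(dz)=\sum_{k=1}^\infty \|\dot{\nu}_P^*(h_k)\|_{L^2(P)}^2,
\end{align*}
where the fourth equality uses the pointwise identity $\dot{\nu}_P^*(h_k)(z)=\langle h_k,\phi_P(z)\rangle_{\mathcal{H}}$ valid on $\mathcal{Z}'$, which is $P$-almost every $z$. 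The final expression is exactly $\|\dot{\nu}_P^*\|_{\mathrm{HS}}^2$ by the definition of the Hilbert-Schmidt norm recalled just before the lemma, completing the argument.

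The only point requiring any care is the interchange of the infinite sum and the integral, but because all summands $\langle h_k,\phi_P(z)\rangle_{\mathcal{H}}^2$ are nonnegative, this is justified directly by Tonelli's theorem regardless of whether the common value is finite or infinite, so the identity holds even when both sides equal $+\infty$. One should also note that the $P$-almost-sure equality $\dot{\nu}_P^*(h_k)=\langle h_k,\phi_P(\cdot)\rangle_{\mathcal{H}}$ is used only countably often, so the relevant exceptional sets can be combined into the single $P$-null set $\mathcal{Z}\setminus\mathcal{Z}'$ without difficulty. I therefore do not anticipate a substantive obstacle; the content of the lemma is essentially that Parseval plus Tonelli convert the Bochner $L^2$-norm of the EIF into the Hilbert-Schmidt norm of the efficient influence operator.
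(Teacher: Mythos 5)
Your proof is correct and is essentially the paper's own argument: both rest on the $P$-a.s. Riesz identity $\dot{\nu}_P^*(h_k)(z)=\langle h_k,\phi_P(z)\rangle_{\mathcal{H}}$, Parseval's identity applied to $\phi_P(z)$, and an interchange of the nonnegative sum with the integral (the paper invokes this implicitly; you cite Tonelli explicitly). Your added remarks on the null sets and on the identity holding even when both sides are infinite are fine but not substantively different from the paper's reasoning.
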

\begin{proof}[Proof of Lemma~\ref{lem:L2PHEquiv}]
Suppose that $\nu$ has EIF $\phi_P$. Using that (i) since $\phi_P$ is the EIF, it is $P$-a.s. true that $\dot{\nu}_P^*(h_k)(z) = \langle\phi_P(z),h_k\rangle_{\mathcal{H}}$ for all $k\in\mathbb{N}$, and (ii) for any $h\in\mathcal{H}$, $\|h\|_{\mathcal{H}}^2=\sum_{k=1}^\infty \langle h,h_k\rangle_{\mathcal{H}}^2$, we see that
\begin{align}
E_P\left[\sum_{k=1}^\infty \dot{\nu}_P^*(h_k)(Z)^2\right]&= E_P\left[\sum_{k=1}^\infty \langle \phi_P(Z),h_k\rangle_{\mathcal{H}}^2\right] = E_P\left[\|\phi_P(Z)\|_{\mathcal{H}}^2\right] = \left\|\phi_P\right\|_{L^2(P;\mathcal{H})}^2. \label{eq:FourierMainDisplay}
\end{align}
\end{proof}

In the following lemma, the little-oh and big-Omega notation both denote behavior as $n\rightarrow\infty$.
\begin{lemma}[No tight, non-zero weak limit for a scaling of the the regularized one-step estimator when $\sum_{k=1}^\infty P_0 \dot{\nu}_0^*(h_k)^2=+\infty$]\label{lem:notTight}
Suppose that $\nu$ is pathwise differentiable at $P_0$. 
Let $(\beta_n)_{n=1}^\infty$ be an $\ell^2$-valued sequence that grows to $(1,1,\ldots)$ pointwise as $n\rightarrow\infty$ and let $(c_n)_{n=1}^\infty$ be a nonnegative real-valued sequence. All of the following hold:
\begin{enumerate}[label=(\roman*)]
    \item\label{it:slowScaling} if $c_n=o[n^{1/2}/\sigma_0(\beta_n)]$, then $c_n P_n \phi_0^{\beta_n}\overset{p}{\rightarrow} 0$;
    \item\label{it:rightScaling} if $c_n=o(n^{1/2})$, then either $c_n P_n \phi_0^{\beta_n}$ does not converge weakly in $\mathcal{H}$ to a tight random element or $c_n P_n \phi_0^{\beta_n}\overset{p}{\rightarrow} 0$;
    \item\label{it:fastScaling} if $c_n=\Omega(n^{1/2})$ and $\sum_{k=1}^\infty P_0 \dot{\nu}_0^*(h_k)^2=+\infty$, then $c_n P_n \phi_0^{\beta_n}$ does not converge weakly in $\mathcal{H}$ to a tight random element.
\end{enumerate}
\end{lemma}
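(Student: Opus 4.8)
The plan is to handle the three parts in increasing order of difficulty, exploiting two elementary facts: that $P_0\phi_0^{\beta_n}=0$ (since each $\dot{\nu}_0^*(h_k)\in\dot{\mathcal{P}}_{P_0}\subseteq L^2_0(P_0)$), and that the $k$-th coordinate of $\phi_0^{\beta_n}$ in the basis $(h_k)_{k=1}^\infty$ is $\langle\phi_0^{\beta_n}(z),h_k\rangle_{\mathcal{H}}=\beta_{n,k}\dot{\nu}_0^*(h_k)(z)$. Throughout write $\tau_k^2:=P_0\dot{\nu}_0^*(h_k)^2$, which is finite and bounded by $\|\dot{\nu}_0^*\|_{\mathrm{op}}^2$ since $\dot{\nu}_0^*(h_k)\in L^2(P_0)$. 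For \ref{it:slowScaling}, I would use that $\phi_0^{\beta_n}(Z_1),\dots,\phi_0^{\beta_n}(Z_n)$ are iid, mean-zero, and Bochner square integrable with $\|\phi_0^{\beta_n}\|_{L^2(P_0;\mathcal{H})}=\sigma_0(\beta_n)$, so the cross terms vanish and $E_0\|P_n\phi_0^{\beta_n}\|_{\mathcal{H}}^2=\sigma_0(\beta_n)^2/n$. Hence $E_0\|c_nP_n\phi_0^{\beta_n}\|_{\mathcal{H}}^2=c_n^2\sigma_0(\beta_n)^2/n$, which is $o(1)$ exactly when $c_n=o(n^{1/2}/\sigma_0(\beta_n))$, and Markov's inequality gives $c_nP_n\phi_0^{\beta_n}\overset{p}{\rightarrow}0$.

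For \ref{it:rightScaling}, fix $k$ and write the coordinate as $\langle c_nP_n\phi_0^{\beta_n},h_k\rangle_{\mathcal{H}}=(c_n/n^{1/2})\,\beta_{n,k}\cdot n^{1/2}P_n\dot{\nu}_0^*(h_k)$. The univariate central limit theorem gives $n^{1/2}P_n\dot{\nu}_0^*(h_k)\rightsquigarrow N(0,\tau_k^2)$, while $\beta_{n,k}\to1$ and $c_n/n^{1/2}=o(1)$, so Slutsky's lemma yields $\langle c_nP_n\phi_0^{\beta_n},h_k\rangle_{\mathcal{H}}\overset{p}{\rightarrow}0$ for each $k$. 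Now if $c_nP_n\phi_0^{\beta_n}\rightsquigarrow\mathbb{G}$ for a tight $\mathbb{G}$, the continuous mapping theorem applied to $h\mapsto\langle h,h_k\rangle_{\mathcal{H}}$ shows $\langle\mathbb{G},h_k\rangle_{\mathcal{H}}=0$ almost surely for every $k$; by countability and Parseval's identity, $\mathbb{G}=0$ almost surely, and weak convergence to the constant $0$ is convergence in probability to $0$. This is precisely the stated dichotomy.

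For \ref{it:fastScaling}, I would argue by contradiction: suppose $X_n:=c_nP_n\phi_0^{\beta_n}\rightsquigarrow\mathbb{G}$ with $\mathbb{G}$ tight. Since $\mathcal{H}$ is Polish, weak convergence to a tight limit forces $(X_n)$ to be uniformly tight, so the standard tail criterion for tightness in a separable Hilbert space applies: for each $\epsilon>0$, $\lim_{M\to\infty}\sup_n\Pr(\sum_{k>M}\langle X_n,h_k\rangle_{\mathcal{H}}^2>\epsilon)=0$. I will violate this. Using $c_n=\Omega(n^{1/2})$, fix $\delta>0$ with $c_n^2/n\ge\delta^2$ for all large $n$; then for any $M<M'$,
\[
\sum_{k>M}\langle X_n,h_k\rangle_{\mathcal{H}}^2\ \ge\ \delta^2\sum_{M<k\le M'}\beta_{n,k}^2\big(n^{1/2}P_n\dot{\nu}_0^*(h_k)\big)^2 .
\]
By the multivariate central limit theorem together with $\beta_{n,k}\to1$, the right-hand side converges in distribution to $\delta^2W_{M,M'}$, where $W_{M,M'}:=\sum_{M<k\le M'}G_k^2$ and $(G_k)$ is the centered Gaussian sequence with covariance $\mathrm{Cov}(G_j,G_k)=P_0[\dot{\nu}_0^*(h_j)\dot{\nu}_0^*(h_k)]$ and $E[G_k^2]=\tau_k^2$. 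Hence $\limsup_n\Pr(\sum_{k>M}\langle X_n,h_k\rangle_{\mathcal{H}}^2>\epsilon)\ge\Pr(W_{M,M'}>\epsilon/\delta^2)$, and letting $M'\to\infty$ along the monotone increasing $W_{M,M'}\uparrow W_{M,\infty}:=\sum_{k>M}G_k^2$ gives $\limsup_n\Pr(\sum_{k>M}\langle X_n,h_k\rangle_{\mathcal{H}}^2>\epsilon)\ge\Pr(W_{M,\infty}>\epsilon/\delta^2)$.

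The crux, and the main obstacle, is to show that $W_{M,\infty}=\infty$ almost surely, i.e., that a centered Gaussian sequence whose variances sum to $+\infty$ has $\sum_k G_k^2=\infty$ almost surely; here $\sum_{k>M}\tau_k^2=\infty$ because removing the finitely many finite terms from the divergent series $\sum_k\tau_k^2$ preserves divergence. A naive second-moment argument fails, since infinite mean does not force an almost-surely infinite nonnegative variable. The clean route is a zero-one argument: the event $\{\sum_kG_k^2<\infty\}=\{(G_k)\in\ell^2\}$ is a measurable linear subspace of $\mathbb{R}^{\mathbb N}$, so by the zero-one law for Gaussian measures it has probability $0$ or $1$; were it $1$, then $(G_k)$ would be an $\ell^2$-valued Gaussian element and Fernique's theorem \citep{fernique1970integrabilite} would give $E\|(G_k)\|_{\ell^2}^2=\sum_k\tau_k^2<\infty$, contradicting divergence. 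Thus $W_{M,\infty}=\infty$ almost surely, so $\Pr(W_{M,\infty}>\epsilon/\delta^2)=1$ and $\limsup_n\Pr(\sum_{k>M}\langle X_n,h_k\rangle_{\mathcal{H}}^2>\epsilon)=1$ for every $M$ and every $\epsilon>0$, flatly contradicting the tightness criterion. Therefore $X_n$ admits no tight weak limit. In writing this up I would be careful to state and reference the exact Hilbert-space tightness criterion being invoked and the Gaussian zero-one law, and to verify the covariance identity $\mathrm{Cov}(G_j,G_k)=P_0[\dot{\nu}_0^*(h_j)\dot{\nu}_0^*(h_k)]$ uses $P_0\dot{\nu}_0^*(h_k)=0$.
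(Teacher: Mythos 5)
Your proof is correct, and parts \ref{it:slowScaling} and \ref{it:rightScaling} coincide with the paper's own argument: the same mean-zero/second-moment computation giving $E\|c_nP_n\phi_0^{\beta_n}\|_{\mathcal{H}}^2=c_n^2\sigma_0(\beta_n)^2/n$ followed by Chebyshev, and the same coordinate-wise CLT-plus-Slutsky argument showing that any tight weak limit must have all generalized Fourier coefficients degenerate at zero. Part \ref{it:fastScaling} is where you genuinely diverge. The paper argues by contradiction but first reduces to $c_n=n^{1/2}$, then uses coordinate-wise weak convergence to identify the putative limit $\mathbb{H}_0$ as a Gaussian random element of $\mathcal{H}$ with $\langle\mathbb{H}_0,h_k\rangle_{\mathcal{H}}\sim N(0,P_0\dot{\nu}_0^*(h_k)^2)$, and then applies Fernique's theorem in $\mathcal{H}$: a Gaussian element must have finite second moment, yet Parseval forces $E\|\mathbb{H}_0\|_{\mathcal{H}}^2=\sum_k P_0\dot{\nu}_0^*(h_k)^2=+\infty$. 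You never identify the limit at all: you contradict uniform tightness of the sequence itself, via Prokhorov, the equi-small-tails characterization of tightness in a separable Hilbert space, finite-dimensional CLT lower bounds on the tails, and then the Gaussian zero-one law plus Fernique on $\ell^2$ to show the limiting tail quadratic form $\sum_{k>M}G_k^2$ is almost surely infinite. Both proofs lean on Fernique at the crux, but your route buys two things: it handles the general case $c_n=\Omega(n^{1/2})$ directly through the inequality $c_n^2/n\ge\delta^2$, whereas the paper's reduction to $c_n=n^{1/2}$ is slightly loose as stated (since $n^{1/2}/c_n$ need not converge, making the reduction airtight requires a subsequence argument, which your inequality sidesteps); and it avoids having to upgrade marginal Gaussianity of the coordinates of $\mathbb{H}_0$ to Gaussianity of the element itself. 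The price is heavier machinery, and, as you yourself flag, a careful write-up must cite the precise Hilbert-space tightness criterion and the zero-one law for measurable linear subspaces of a Gaussian measure, since both carry real weight in your argument.
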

Before giving the proof, we note that the condition that $\sum_{k=1}^\infty P_0 \dot{\nu}_0^*(h_k)^2=+\infty$ holds in all of the examples we exhibit in this work for which there does not exist an EIF. Moreover, if there does exist an EIF $\phi_0$, then Lemma~\ref{lem:L2PHEquiv} shows that $\sum_{k=1}^\infty P_0 \dot{\nu}_0^*(h_k)^2<+\infty$ if and only if $\phi_0$ is $P_0$-Bochner square integrable.
\begin{proof}[Proof of Lemma~\ref{lem:notTight}]
Let $\nu$ and $(\beta_n)_{n=1}^\infty$ be as in the statement of the lemma.

We first prove \ref{it:slowScaling}. Suppose that $c_n=o[n^{1/2}/\sigma_0(\beta_n)]$. By the definition of $\sigma_0^2(\beta_n)$, we have that
\begin{align*}
&E_{P_0^n}\|n^{1/2} P_n \phi_0^{\beta_n}/\sigma_0(\beta_n)\|_{\mathcal{H}}^2 \\
&\quad=E_{P_0^n}\left[\frac{1}{n}\sum_{i=1}^n \sum_{k=1}^\infty \beta_{n,k}^2 \dot{\nu}_0^*(h_k)(Z_i)^2\right]/\sigma_0^2(\beta_n) = \sigma_0^2(\beta_n)/\sigma_0^2(\beta_n)=1.
\end{align*}
As $c_n=o[n^{1/2}/\sigma_0(\beta_n)]$, this implies that $E_{P_0^n}\|c_n P_n \phi_0^{\beta_n}\|_{\mathcal{H}}^2 = o(1)$, which in turn implies that $\|c_n P_n \phi_0^{\beta_n}\|_{\mathcal{H}}^2$ is $o_p(1)$, as desired.

We now prove \ref{it:rightScaling}. Suppose that $c_n=o(n^{1/2})$ and $c_n P_n \phi_0^{\beta_n}$ converges weakly in $\mathcal{H}$ to a tight random element $\mathbb{H}_0$. We will show that this can only be true if $\mathbb{H}_0$ is equal to the zero element of $\mathcal{H}$ almost surely. By Theorem~1.8.4 of \cite{van1996weak}, $c_n P_n \phi_0^{\beta_n}\rightsquigarrow\mathbb{H}_0$ implies that $\langle c_n P_n \phi_0^{\beta_n},h_k\rangle_{\mathcal{H}}\rightsquigarrow \langle \mathbb{H}_0,h_k\rangle_{\mathcal{H}}$ for all $k\in\mathbb{N}$. Moreover, since $\langle c_n P_n \phi_0^{\beta_n},h_k\rangle_{\mathcal{H}}=c_n P_n \langle \phi_0^{\beta_n},h_k\rangle_{\mathcal{H}}=\beta_{n,k}[c_n/n^{1/2}][n^{1/2} P_n \dot{\nu}_0^*(h_k)] $, this shows that $\beta_{n,k}[c_n/n^{1/2}][n^{1/2} P_n \dot{\nu}_0^*(h_k)] \rightsquigarrow \langle \mathbb{H}_0,h_k\rangle_{\mathcal{H}}$ for all $k\in\mathbb{N}$. Since $c_n/n^{1/2}=o(1)$, $\beta_{n,k}\overset{n\rightarrow\infty}{\longrightarrow} 1$, and, by the central limit theorem, $n^{1/2} P_n \dot{\nu}_0^*(h_k)=O_p(1)$, it holds that $\beta_{n,k}[c_n/n^{1/2}][n^{1/2} P_n \dot{\nu}_0^*(h_k)]\overset{p}{\rightarrow} 0$. 
As weak limits must share the same distribution, this shows that $\langle \mathbb{H}_0,h_k\rangle_{\mathcal{H}}$ is degenerate at zero for all $k$. Hence, $\mathbb{H}_0$ is almost surely equal to the zero element of $\mathcal{H}$.

We now prove \ref{it:fastScaling}. It suffices to show that $c_n P_n \phi_0^{\beta_n}$ does not converge weakly to a tight random element when $c_n=n^{1/2}$. We argue this by contradiction. To this end, suppose that there exists a tight random element $\mathbb{H}_0$ such that $n^{1/2} P_n \phi_0^{\beta_n}\rightsquigarrow \mathbb{H}_0$. By Lemma~1.8.4 of \cite{van1996weak}, $\mathbb{H}_0$ is then such that $\langle n^{1/2}P_n \phi_0^{\beta_n},h_k\rangle_{\mathcal{H}}\rightsquigarrow \langle \mathbb{H}_0,h_k\rangle_{\mathcal{H}}$ for all $k\in\mathbb{N}$. Combining this with the fact that $\langle n^{1/2}P_n \phi_0^{\beta_n},h_k\rangle_{\mathcal{H}}= \beta_{n,k}n^{1/2}P_n \dot{\nu}_0^*(h_k)$, $\beta_{n,k}\overset{k\rightarrow\infty}{\longrightarrow}1$, and a univariate central limit theorem, this shows that $\mathbb{H}_0$ is such that $\langle \mathbb{H}_0,h_k\rangle_{\mathcal{H}}\sim N[0,\dot{\nu}_0^*(h_k)^2]$ for all $k\in\mathbb{N}$. Hence, $\mathbb{H}_0$ is a Gaussian random element. Also, by Fernique's theorem \citep{fernique1970integrabilite}, $\mathbb{H}_0\in L^2(P_0;\mathcal{H})$. But $\|\mathbb{H}_0\|_{L^2(P_0;\mathcal{H})}^2=E[\sum_{k=1}^\infty \langle \mathbb{H}_0,h_k\rangle_{\mathcal{H}}^2] = \sum_{k=1}^\infty P_0 \dot{\nu}_0^*(h_k)^2$, which is equal to $+\infty$ by assumption. Contradiction.
\end{proof}

\begin{proof}[Proof of Theorem~\ref{thm:alReg}]
By the definitions of $\bar{\nu}_n^{\beta_n}$, $\phi_0^{\beta_n}$, $\mathcal{R}_n^{j,\beta_n}$, and $\mathcal{D}_n^{j,\beta_n}$,
\begin{align*}
\bar{\nu}_n^{\beta_n}&-\nu(P_0) - P_n\phi_0^{\beta_n} - \frac{1}{2}\sum_{j=1}^2 \sum_{k=1}^\infty (1-\beta_{n,k}) \langle\nu(\widehat{P}_n^j)-\nu(P_0),h_k\rangle_{\mathcal{H}} h_k \\
&= \frac{1}{2}\sum_{j=1}^2[\mathcal{R}_n^{j,\beta_n} + \mathcal{D}_n^{j,\beta_n}].
\end{align*}
Taking an $\mathcal{H}$-norm of both sides, applying the triangle inequality on the right, and upper bounding averages by maxima yields
\begin{align*}
\left\|\bar{\nu}_n^{\beta_n}-\nu(P_0) - P_n\phi_0^{\beta_n} - \frac{1}{2}\sum_{j=1}^2 \mathcal{B}_n^{j,\beta_n}\right\|_{\mathcal{H}}&\le \max_j \|\mathcal{R}_n^{j,\beta_n}\|_{\mathcal{H}} + \max_j \|\mathcal{D}_n^{j,\beta_n}\|_{\mathcal{H}}, 
\end{align*}
which bears resemblance to \eqref{eq:cfExpansion} but contains an extra bias term $\frac{1}{2}\sum_{j=1}^2 \mathcal{B}_n^{j,\beta_n}$. Plugging in the assumption that $\|\mathcal{R}_n^{j,\beta_n}\|_{\mathcal{H}}$ and $\|\mathcal{D}_n^{j,\beta_n}\|_{\mathcal{H}}$ are $O_p(\|\beta_n\|_{\ell^2}/n^{1/2})$ for each $j\in\{1,2\}$ gives \eqref{eq:regularizedMainDecomp}. 
Combining this with the assumption that $\mathcal{B}_n^{j,\beta_n}=O_p[\|\beta_n\|_{\ell^2}/n^{1/2}]$ for each $j\in\{1,2\}$ and the fact that $P_n \phi_0^{\beta_n}$ is $O_p[\sigma_0(\beta_n)/n^{1/2}]=O_p[\|\beta_n\|_{\ell^2}/n^{1/2}]$ by Chebyshev's inequality \citep{grenander1963probabilities} and Lemma~\ref{lem:approximateEIF} then gives \eqref{eq:regRate}.
\end{proof}

\begin{proof}[Proof of Lemma~\ref{lem:driftTermRegularized}]
This proof is similar to that of Lemma~\ref{lem:driftTerm}. Fix $\delta>0$ and an $\ell^2$-valued sequence $(\beta_n)_{n=1}^\infty$ that is such that $\|\phi_n^{1,\beta_n}-\phi_0^{\beta_n}\|_{L^2(P_0;\mathcal{H})}=o_p(r_n)$ holds. We will show that
\begin{align}
\lim_{n\rightarrow\infty}P_0^n\{\|\mathcal{D}_n^{1,\beta_n}\|_{\mathcal{H}}>r_n n^{-1/2}\delta\}= 0. \label{eq:Dn1DeltaRegularized}
\end{align}
As $\delta$ was arbitrary, this will show that $\mathcal{D}_n^{1,\beta_n}=o_p(r_n/n^{1/2})$. This will establish the stated result in the case where $j=1$, and an analogous argument can be used to handle the case where $j=2$.

Let $1_{\mathcal{E}_n}$ denote the indicator that the event $\mathcal{E}_n$ that $\|\phi_n^{1,\beta_n}-\phi_0^{\beta_n}\|_{L^2(P_0;\mathcal{H})}^2\le r_n^2\delta^2/2$ and let $\mathcal{E}_n^c$ denote the complement of $\mathcal{E}_n$.  We will leverage the following decomposition when showing \eqref{eq:Dn1DeltaRegularized}:
\begin{align}
P_0^n\{\|\mathcal{D}_n^{1,\beta_n}\|_{\mathcal{H}}>n^{-1/2}\delta\}&\le P_0^n\left(\{\|\mathcal{D}_n^{1,\beta_n}\|_{\mathcal{H}}>r_n  n^{-1/2}\delta\}\cap\mathcal{E}_n\right) + P_0^n(\mathcal{E}_n^c) \nonumber \\
&= E_0^n\left[1_{\mathcal{E}_n}P_0^n\left\{\|\mathcal{D}_n^{1,\beta_n}\|_{\mathcal{H}}>r_n n^{-1/2}\delta\,\middle|\, Z_1,\ldots,Z_{n/2}\right\}\right] + o(1), \label{eq:Dn1ProbRegularized}
\end{align}
where $E_{0}^n$ denotes an expectation under sampling from the $n$-fold product measure $P_0^n$ and $o(1)$ denotes a deterministic term that goes to zero as $n\rightarrow\infty$. The equality above holds by the law of total probability, the fact that $\mathcal{E}_n$ is measurable with respect to the $\sigma$-field generated by $Z_1,\ldots,Z_{n/2}$, and the assumption that $\|\phi_n^{1,\beta_n}-\phi_0^{\beta_n}\|_{L^2(P_0;\mathcal{H})}=o_p(r_n)$ implies that $P_0^n(\mathcal{E}_n^c)=o(1)$. To show \eqref{eq:Dn1DeltaRegularized}, the above shows that it suffices to show that the first term on the right-hand side is $o(1)$. To this end, note that Chebyshev's inequality for Hilbert-valued random variables \citep{grenander1963probabilities} and the bilinearity of inner products shows that
\begin{align}
&1_{\mathcal{E}_n}P_0^n\left\{\|\mathcal{D}_n^{1,\beta_n}\|_{\mathcal{H}}>r_n n^{-1/2}\delta\,\middle|\, Z_1,\ldots,Z_{n/2}\right\} \label{eq:Dn1EventRegularized} \\
&\le 1_{\mathcal{E}_n}\frac{E_{P_0^n}[\|(P_n^1-P_0)(\phi_n^{1,\beta_n}-\phi_0^{\beta_n})\|_{\mathcal{H}}^2\mid Z_1,\ldots,Z_{n/2}]}{r_n^2 n^{-1}\delta^2} \nonumber \\
&= 1_{\mathcal{E}_n}\frac{(n/2)^{-1}E_{P_0^n}[P_n^1\|(I-P_0)(\phi_n^{1,\beta_n}-\phi_0^{\beta_n})\|_{\mathcal{H}}^2\mid Z_1,\ldots,Z_{n/2}]}{r_n^2 n^{-1}\delta^2} \nonumber \\
&\quad+ 1_{\mathcal{E}_n}\frac{4}{n^2}\sum_{i\not=j}\tfrac{E_{P_0^n}[\langle (I-P_0)(\phi_n^{1,\beta_n}-\phi_0^{\beta_n})(Z_i),(I-P_0)(\phi_n^{1,\beta_n}-\phi_0^{\beta_n})(Z_k)\rangle_{\mathcal{H}}\mid Z_1,\ldots,Z_{n/2}]}{r_n^2 n^{-1}\delta^2}, \nonumber
\end{align}
where $(I-P_0)(\phi_n^{1,\beta_n}-\phi_0^{\beta_n})$ denotes the map $z\mapsto (\phi_n^{1,\beta_n}-\phi_0^{\beta_n})(z) - P_0(\phi_n^{1,\beta_n}-\phi_0^{\beta_n})$ and the sum is over $(i,j)\in \{n/2+1,\ldots,n\}^2$ such that $i\not=j$.  Each expectation in the summand on the right-hand side above is well-defined since $(\phi_n^{1,\beta_n}-\phi_0^{\beta_n})\in L^2(P_0;\mathcal{H})$ implies that $(I-P_0)(\phi_n^{1,\beta_n}-\phi_0^{\beta_n})\in L^2(P_0;\mathcal{H})$ as well. In fact, each expectation in the summand on the right-hand side is zero since, by the fact that $(I-P_0)(\phi_n^{1,\beta_n}-\phi_0^{\beta_n})\in L^2(P_0;\mathcal{H})$ and Fubini's theorem,
\begin{align*}
&E_{P_0^n}[\langle (I-P_0)(\phi_n^{1,\beta_n}-\phi_0^{\beta_n})(Z_i),(I-P_0)(\phi_n^{1,\beta_n}-\phi_0^{\beta_n})(Z_k)\rangle_{\mathcal{H}}\mid Z_1,\ldots,Z_{n/2}] \\
&= \int \langle (I-P_0)(\phi_n^{1,\beta_n}-\phi_0^{\beta_n})(z_1),(I-P_0)(\phi_n^{1,\beta_n}-\phi_0^{\beta_n})(z_2)\rangle_{\mathcal{H}} P_0^2(dz_1,dz_2) \\
&= \iint \langle (I-P_0)(\phi_n^{1,\beta_n}-\phi_0^{\beta_n})(z_1),(I-P_0)(\phi_n^{1,\beta_n}-\phi_0^{\beta_n})(z_2)\rangle_{\mathcal{H}} P_0(dz_1)P_0(dz_2) \\
&= \int \left\langle \int (I-P_0)(\phi_n^{1,\beta_n}-\phi_0^{\beta_n})(z_1)P_0(dz_1),(I-P_0)(\phi_n^{1,\beta_n}-\phi_0^{\beta_n})(z_2)\right\rangle_{\mathcal{H}} P_0(dz_2) \\
&= \int \left\langle 0,(I-P_0)(\phi_n^{1,\beta_n}-\phi_0^{\beta_n})(z_2)\right\rangle_{\mathcal{H}} P_0(dz_2) = 0.
\end{align*}
Returning to \eqref{eq:Dn1EventRegularized} and simplifying the first term on the right-hand side of that expression, this shows that
\begin{align*}
&1_{\mathcal{E}_n}P_0^n\left\{\|\mathcal{D}_n^{1,\beta_n}\|_{\mathcal{H}}>r_n n^{-1/2}\delta\,\middle|\, Z_1,\ldots,Z_{n/2}\right\}\le 1_{\mathcal{E}_n}\frac{2\|(I-P_0)(\phi_n^{1,\beta_n}-\phi_0^{\beta_n})\|_{L^2(P_0;\mathcal{H})}^2}{r_n^2 \delta^2}.
\end{align*}
Using that $P_0(\phi_n^{1,\beta_n}-\phi_0^{\beta_n})$ is a minimizer over $h\in\mathcal{H}$ of $\|\phi_n^{1,\beta_n}-\phi_0^{\beta_n} - h\|_{L^2(P_0;\mathcal{H})}^2$ and subsequently leveraging the definition of the event $\mathcal{E}_n$, this shows that
\begin{align*}
1_{\mathcal{E}_n}P_0^n\left\{\|\mathcal{D}_n^{1,\beta_n}\|_{\mathcal{H}}>r_n n^{-1/2}\delta\,\middle|\, Z_1,\ldots,Z_{n/2}\right\}&\le 1_{\mathcal{E}_n}\frac{2\|\phi_n^{1,\beta_n}-\phi_0^{\beta_n}\|_{L^2(P_0;\mathcal{H})}^2}{r_n^2 \delta^2} \\
&\le \min\left\{1,\frac{2\|\phi_n^{1,\beta_n}-\phi_0^{\beta_n}\|_{L^2(P_0;\mathcal{H})}^2}{r_n^2 \delta^2}\right\}.
\end{align*}
Note that the right-hand side above is no larger than $1$. 
Taking an expectation of both sides over $Z_1,\ldots,Z_{n/2}\iidsim P_0$ and recalling that $\|\phi_n^{1,\beta_n}-\phi_0^{\beta_n}\|_{L^2(P_0;\mathcal{H})}=o_p(r_n)$, the dominated convergence theorem shows that the first term on the right-hand side of \eqref{eq:Dn1ProbRegularized} is $o(1)$. This completes the proof.
\end{proof}

\begin{proof}[Proof of Lemma~\ref{lem:regRemBd}]
Since $\nu(P),\nu(P_0)\in\mathcal{H}$ and $\phi_P^\beta\in L^2(P_0;\mathcal{H})$, it holds that $\mathcal{R}_P^\beta\in\mathcal{H}$. We begin by showing that, for any $\ell\in\mathbb{N}$,
\begin{align}
\left\langle \mathcal{R}_P^{\beta},h_{\ell}\right\rangle_{\mathcal{H}}&= \beta_{\ell} \left[\left\langle \nu(P)-\nu(P_0),h_{\ell}\right\rangle_{\mathcal{H}} + P_0 \dot{\nu}_P^*(h_\ell)\right]. \label{eq:regRCoordProj}
\end{align}
As $\ell$ was arbitrary and $(h_k)_{k=1}^\infty$ is an orthonormal basis of $\mathcal{H}$, this will then show that
\begin{align}
\mathcal{R}_P^{\beta}&= \sum_{k=1}^\infty \beta_k \left[\left\langle \nu(P)-\nu(P_0),h_k\right\rangle_{\mathcal{H}} + P_0 \dot{\nu}_P^*(h_k)\right]h_k. \label{eq:regRForm}
\end{align}
We now establish \eqref{eq:regRCoordProj} for a fixed $\ell\in\mathbb{N}$. Note that
\begin{align*}
&\left\langle \mathcal{R}_P^{\beta},h_{\ell}\right\rangle_{\mathcal{H}} \\
&:= \left\langle \nu(P)-\nu(P_0) + P_0 \phi_P^{\beta} - \sum_{k=1}^\infty (1-\beta_k) \langle\nu(P) - \nu(P_0),h_k\rangle_{\mathcal{H}} h_k,h_{\ell}\right\rangle_{\mathcal{H}} \\
&= \left\langle \sum_{k=1}^\infty \left\langle \nu(P)-\nu(P_0),h_k\right\rangle_{\mathcal{H}} h_k + P_0 \phi_P^{\beta} - \sum_{k=1}^\infty (1-\beta_k) \langle\nu(P) - \nu(P_0),h_k\rangle_{\mathcal{H}} h_k,h_{\ell}\right\rangle_{\mathcal{H}} \\
&= \left\langle \sum_{k=1}^\infty \beta_k \left\langle \nu(P)-\nu(P_0),h_k\right\rangle_{\mathcal{H}} h_k + P_0 \phi_P^{\beta},h_{\ell}\right\rangle_{\mathcal{H}} \\
&= \beta_{\ell} \left\langle \nu(P)-\nu(P_0),h_{\ell}\right\rangle_{\mathcal{H}} + \left\langle P_0 \phi_P^{\beta},h_{\ell}\right\rangle_{\mathcal{H}}.
\end{align*}
It remains to show that $\langle P_0 \phi_P^{\beta},h_{\ell}\rangle_{\mathcal{H}}=\beta_\ell P_0 \dot{\nu}_P^*(h_\ell)$. To see that this holds, note that, since $\nu$ is pathwise differentiable at $P$, Lemma~\ref{lem:approximateEIF} ensures that $\phi_P^{\beta}(z)$ is the Riesz representation of $r_P^\beta(\cdot)(z)$ on a set $\mathcal{Z}^\beta$ of $P$-probability one. Since $P_0\ll P$, $\mathcal{Z}^\beta$ has $P_0$-probability one as well. Hence, $\int \langle\phi_P^{\beta}(z),h_\ell\rangle_{\mathcal{H}} P_0(dz)=\int r_P^\beta(h_\ell)(z) P_0(dz) = P_0 r_P^\beta(h_\ell)$. Furthermore, since $\phi_P^{\beta}\in L^2(P_0;\mathcal{H})$, $\int \langle\phi_P^{\beta}(z),h_\ell\rangle_{\mathcal{H}} P_0(dz)=\langle P_0\phi_P^{\beta},h_\ell\rangle_{\mathcal{H}}$, and so $\langle P_0\phi_P^{\beta},h_\ell\rangle_{\mathcal{H}} = P_0 r_P^\beta(h_\ell)$. Plugging in the definition of $r_P^\beta$ shows that $\langle P_0\phi_P^{\beta},h_\ell\rangle_{\mathcal{H}}=\beta_\ell P_0 \dot{\nu}_P^*(h_\ell)$, as desired. This establishes \eqref{eq:regRCoordProj}, which in turn establishes \eqref{eq:regRForm}. Using the form of $\mathcal{R}_P^\beta$ given in \eqref{eq:regRForm} establishes the equality in the statement of the lemma. The inequality follows by Cauchy-Schwarz.
\end{proof}

\begin{proof}[Proof of Lemma~\ref{lem:biasTerm}]
We have that
\begin{align*}
&\|\mathcal{B}_P^\beta\|_{\mathcal{H}}^2 \\
&= \sum_{k=1}^\infty (1-\beta_k)^2 \langle\nu(P)-\nu(P_0),h_k\rangle_{\mathcal{H}}^2 = \sum_{k=1}^\infty k^{-2u}(1-\beta_k)^2 k^{2u} \langle\nu(P)-\nu(P_0),h_k\rangle_{\mathcal{H}}^2 \\
&\le \left[\sup_{k\in\mathbb{N}} \frac{(1-\beta_k)^2}{k^{2u}}\right]\sum_{k=1}^\infty k^{2u} \langle\nu(P)-\nu(P_0),h_k\rangle_{\mathcal{H}}^2=\left[\sup_{k\in\mathbb{N}} \frac{(1-\beta_k)^2}{k^{2u}}\right]\|\nu(P)-\nu(P_0)\|_u^2.
\end{align*}
Taking a square root of both sides above gives the inequality from the lemma statement for general $\beta$. 
In the special case where $\beta_k=1$ for all $k\le K$ and $\beta_k=0$ for all $k>K$, \eqref{eq:biasK} follows by plugging this value of $\beta$ and then applying the triangle inequality.
\end{proof}

\subsubsection{Proofs for Section~\ref{sec:confSetsRegularized}}

\begin{proof}[Proof of Lemma~\ref{lem:regParamEIF}]
In this proof, we will use that $\Gamma_\beta : \mathcal{H}\rightarrow\mathcal{H}$ is a linear mapping, and also that $\|\Gamma_\beta(h)\|_{\mathcal{H}}\le \|h\|_{\mathcal{H}}$ for all $h$.

We now show that $\Gamma_\beta\circ \nu$ is pathwise differentiable with local parameter $\Gamma_\beta\circ \dot{\nu}_P$. To see that this holds, fix a quadratic mean differentiable submodel $\{P_\epsilon : \epsilon \in [0,\delta)\} \in \mathscr{P}(P,\mathcal{P},s)$. Note that
\begin{align*}
    \| \Gamma_\beta\circ \nu(P_\epsilon) - \Gamma_\beta\circ \nu(P) - \epsilon \Gamma_\beta\circ \dot{\nu}_P(s) \|_{\mathcal{H}}&= \| \Gamma_\beta\circ [\nu(P_\epsilon) - \nu(P) - \epsilon \dot{\nu}_P(s)] \|_{\mathcal{H}} \\
    &\le \| \nu(P_\epsilon) - \nu(P) - \epsilon \dot{\nu}_P(s) \|_{\mathcal{H}} = o(\epsilon), 
\end{align*}
where the final equality holds by the pathwise differentiability of $\nu$. Hence, $\Gamma_\beta\circ \nu$ is pathwise differentiable with local parameter $\Gamma_\beta\circ \dot{\nu}_P$. The efficient influence operator is equal to $r_P^\beta$, where this quantity is as defined above Lemma~\ref{lem:approximateEIF}. Indeed, for any $s\in\dot{\mathcal{P}}_P$ and $h\in\mathcal{H}$,
\begin{align*}
\left\langle s,r_P^\beta(h) \right\rangle_{L^2(P)}&= \left\langle s,\sum_{k=1}^\infty \beta_k\langle h,h_k\rangle_{\mathcal{H}}\dot{\nu}_P^*(h_k)\right\rangle_{L^2(P)} = \sum_{k=1}^\infty \beta_k\langle h,h_k\rangle_{\mathcal{H}}\left\langle s,\dot{\nu}_P^*(h_k)\right\rangle_{L^2(P)} \\
&= \sum_{k=1}^\infty \beta_k\langle h,h_k\rangle_{\mathcal{H}}\left\langle \dot{\nu}_P(s),h_k\right\rangle_{\mathcal{H}} = \left\langle h,\sum_{k=1}^\infty \beta_k \left\langle \dot{\nu}_P(s),h_k\right\rangle_{\mathcal{H}}h_k\right\rangle_{\mathcal{H}} \\
&= \left\langle h,\Gamma_\beta\circ \dot{\nu}_P(s)\right\rangle_{\mathcal{H}},
\end{align*}
where above we have used the linearity and continuity of inner products and the definition of the efficient influence operator $\dot{\nu}_P^*$ of $\nu$. By Lemma~\ref{lem:approximateEIF}, $r_P^\beta(\cdot)(z)$ is $P$-almost surely a bounded linear operator with Riesz representation $\phi_P^\beta\in L^2(P;\mathcal{H})$, and therefore $\phi_P^\beta$ is the EIF of $\Gamma_\beta\circ \nu$.
\end{proof}

The following is a consequence of Theorem~\ref{thm:al}, specialized to the case where the pathwise differentiable parameter of interest takes the form $\nu^\beta:=\Gamma_\beta\circ \nu$. Below $\mathcal{R}_n^{j,\beta}$ and $\mathcal{D}_n^{j,\beta}$ are the regularized remainder and drift terms for the $\beta$-regularized one-step estimator $\bar{\nu}_n^\beta$ of $\nu(P_0)$, as defined in Section \ref{sec:regularizedOneStepGuarantee}.
\begin{corollary}[Asymptotic linearity of $\widetilde{\nu}_n^\beta$]\label{cor:nuTildeAL}
Fix $\beta\in \ell_{*}^2$. Suppose that $\nu$ is pathwise differentiable at $P_0$ and that $\mathcal{R}_n^{j,\beta}$ and $\mathcal{D}_n^{j,\beta}$ are both $o_p(n^{-1/2})$ for $j\in\{1,2\}$. Under these conditions,
\begin{align}
    \widetilde{\nu}_n^\beta - \nu^\beta(P_0)&= \frac{1}{n}\sum_{i=1}^n \phi_0^\beta(Z_i) + o_p(n^{-1/2}), \label{eq:nuTildeAL}
\end{align}
$\widetilde{\nu}_n^\beta$ is regular, and $n^{1/2}[\widetilde{\nu}_n-\nu^\beta(P_0)]\rightsquigarrow \mathbb{H}$, where $\mathbb{H}$ is a tight $\mathcal{H}$-valued Gaussian random variable that is such that, for each $h\in\mathcal{H}$, the marginal distribution $\langle\mathbb{H} ,h \rangle_{\mathcal{H}}$ follows a $N(0,E_0[\langle \phi_0^\beta(Z),h \rangle_{\mathcal{H}}^2])$ distribution.
\end{corollary}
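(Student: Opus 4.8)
The plan is to deduce this corollary from Theorem~\ref{thm:al} by applying that theorem to the transformed parameter $\nu^\beta := \Gamma_\beta\circ\nu$ rather than to $\nu$ itself. The first thing I would record is that Lemma~\ref{lem:regParamEIF} already supplies the key hypothesis of Theorem~\ref{thm:al}: under the assumed pathwise differentiability of $\nu$ at $P_0$, the parameter $\nu^\beta$ is pathwise differentiable at $P_0$ with EIF $\phi_0^\beta\in L^2(P_0;\mathcal{H})$. Thus $\nu^\beta$ is a pathwise differentiable parameter possessing an $L^2(P_0;\mathcal{H})$ efficient influence function, exactly the setting to which Theorem~\ref{thm:al} applies.

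Next I would identify $\widetilde{\nu}_n^\beta$ as the cross-fitted one-step estimator of $\nu^\beta$. Since Lemma~\ref{lem:regParamEIF} shows the EIF of $\nu^\beta$ at any $P$ is $\phi_P^\beta$, the EIF at $\widehat{P}_n^j$ is $\phi_n^{j,\beta}$, so the one-step estimator associated with $\nu^\beta$ is $\frac{1}{2}\sum_{j=1}^2[\nu^\beta(\widehat{P}_n^j) + P_n^j\phi_n^{j,\beta}]$; recalling that $\nu^\beta(\widehat{P}_n^j) = \Gamma_\beta\circ\nu(\widehat{P}_n^j)$, this is precisely $\widetilde{\nu}_n^\beta$. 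Consequently, Theorem~\ref{thm:al} will yield the desired asymptotic linearity \eqref{eq:nuTildeAL}, regularity, and weak convergence for $\widetilde{\nu}_n^\beta$, with influence function $\phi_0^\beta$ and a tight Gaussian limit $\mathbb{H}$ whose marginals are $\langle\mathbb{H},h\rangle_{\mathcal{H}}\sim N(0,E_0[\langle\phi_0^\beta(Z),h\rangle_{\mathcal{H}}^2])$, which is exactly the claimed conclusion.

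It remains to verify that the remainder and drift terms appearing in Theorem~\ref{thm:al} for the parameter $\nu^\beta$ are, respectively, the regularized remainder term $\mathcal{R}_n^{j,\beta}:=\mathcal{R}_{\widehat{P}_n^j}^\beta$ and the regularized drift term $\mathcal{D}_n^{j,\beta}$ that the corollary assumes to be $o_p(n^{-1/2})$. The drift match is immediate: the drift term of Theorem~\ref{thm:al} for $\nu^\beta$ is $(P_n^j-P_0)(\phi_n^{j,\beta}-\phi_0^\beta)$, which is the definition of $\mathcal{D}_n^{j,\beta}$. The remainder match is the only step requiring a computation, and I expect it to be the main (albeit routine) obstacle. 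The remainder term of Theorem~\ref{thm:al} for $\nu^\beta$ is $\nu^\beta(\widehat{P}_n^j) + P_0\phi_n^{j,\beta} - \nu^\beta(P_0)$, and I would show that it equals $\mathcal{R}_{\widehat{P}_n^j}^\beta$ as defined in \eqref{eq:remainder}. Writing $h:=\nu(\widehat{P}_n^j)-\nu(P_0)$ and combining the Parseval expansion $h=\sum_{k=1}^\infty\langle h,h_k\rangle_{\mathcal{H}}h_k$ with the definition $\Gamma_\beta(h)=\sum_{k=1}^\infty\beta_k\langle h,h_k\rangle_{\mathcal{H}}h_k$, one obtains the identity $\Gamma_\beta(h)=h-\sum_{k=1}^\infty(1-\beta_k)\langle h,h_k\rangle_{\mathcal{H}}h_k$; substituting this into $\nu^\beta(\widehat{P}_n^j)-\nu^\beta(P_0)=\Gamma_\beta(h)$ and adding $P_0\phi_n^{j,\beta}$ reproduces exactly the expression \eqref{eq:remainder} defining $\mathcal{R}_{\widehat{P}_n^j}^\beta$. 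With both matches established, the hypotheses of Theorem~\ref{thm:al} hold for $\nu^\beta$, and its conclusion is precisely the statement of the corollary.
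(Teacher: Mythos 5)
Your proposal is correct and follows essentially the same route as the paper's proof: both apply Theorem~\ref{thm:al} to the transformed parameter $\nu^\beta$, invoke Lemma~\ref{lem:regParamEIF} to identify $\phi_0^\beta$ (resp.\ $\phi_n^{j,\beta}$) as its EIF at $P_0$ (resp.\ $\widehat{P}_n^j$), and verify that the drift term matches $\mathcal{D}_n^{j,\beta}$ immediately while the remainder term of Theorem~\ref{thm:al} for $\nu^\beta$ coincides with $\mathcal{R}_n^{j,\beta}$ via the identity $\Gamma_\beta(h)=h-\sum_{k=1}^\infty(1-\beta_k)\langle h,h_k\rangle_{\mathcal{H}}h_k$. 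The Parseval-based computation you flag as the "main obstacle" is exactly the display the paper uses to establish that match, so nothing is missing.
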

Since the above imposes conditions on the regularized remainder and drift terms for the regularized one-step estimator $\bar{\nu}_n^\beta$, any analysis that is performed to bound these terms when studying $\bar{\nu}_n^\beta$ can also be used to bound these terms when studying $\widetilde{\nu}_n^\beta$. In particular, Lemmas~\ref{lem:driftTermRegularized} and \ref{lem:regRemBd} can be used to study these terms.
\begin{proof}[Proof of Corollary~\ref{cor:nuTildeAL}]
   We establish that the conditions of Theorem~\ref{thm:al} are satisfied. By Lemma~\ref{lem:regParamEIF}, the EIF of $\nu^\beta$ is equal to $\phi_0^\beta\in L^2(P_0;\mathcal{H})$ at $P_0$ and $\phi_n^{j,\beta}$ at $\widehat{P}_n^j$, $j\in \{1,2\}$. Hence, for each $j\in\{1,2\}$, the regularized drift term $\mathcal{D}_n^{j,\beta}:=(P_n^j-P_0)(\phi_n^{j,\beta}-\phi_0^\beta)$ for the $\beta$-regularized one-step estimator $\bar{\nu}_n^\beta$ of $\nu(P_0)$ is identical to the drift term for the one-step estimator $\widetilde{\nu}_n^\beta$ of $\nu^\beta(P_0)$. Moreover, since
   \begin{align*}
    \mathcal{R}_n^{j,\beta}&:= \nu(\widehat{P}_n^j)-\nu(P_0) + P_0\phi_n^{j,\beta} - \sum_{k=1}^\infty (1-\beta_k)\langle \nu(\widehat{P}_n^j)-\nu(P_0),h_k\rangle_{\mathcal{H}} h_k \\
    &= \nu^\beta(\widehat{P}_n^j)-\nu^\beta(P_0) + P_0\phi_n^{j,\beta},
   \end{align*}
   the regularized remainder term $\mathcal{R}_n^{j,\beta}$ for the $\beta$-regularized one-step estimator $\bar{\nu}_n^\beta$ of $\nu(P_0)$ is also identical to the remainder term for the one-step estimator $\widetilde{\nu}_n^\beta$ of $\nu^\beta(P_0)$. As we have assumed that both $\mathcal{D}_n^{j,\beta}$ and $\mathcal{R}_n^{j,\beta}$ are $o_p(n^{-1/2})$, $j\in\{1,2\}$, Theorem~\ref{thm:al} implies all the claims in the statement of this corollary.
\end{proof}

\begin{proof}[Proof of Theorem~\ref{thm:localAlt}]

By Lemma~\ref{lem:regParamEIF}, $\phi_0^\beta$ is the EIF of $\nu^\beta$ at $P_0$. By the same arguments used to establish Theorem~\ref{thm:al}, $\widetilde{\nu}_n^\beta$ is a regular estimator of $\nu^\beta(P_0)$, and, in particular, $n^{1/2}[\widetilde{\nu}_n^\beta-\nu^\beta(P_\epsilon)]\rightsquigarrow \mathbb{H}^\beta$ under the sampling of $n$ iid draws from $P_{\epsilon=n^{-1/2}}$. Combining this with the pathwise differentiability of $\nu^\beta$, this shows that $n^{1/2}[\widetilde{\nu}_n^\beta-\nu^\beta(P_0)]\rightsquigarrow \mathbb{H}^\beta +  \dot{\nu}_0^\beta(s)$, where $\dot{\nu}_0^\beta:=\Gamma_\beta\circ\dot{\nu}_0$ is the local parameter of $\nu^\beta$ at $P_0$. By similar arguments to those used in the proof of Theorem~\ref{thm:CIcoverage}, $\|\mathbb{H}^\beta +  \dot{\nu}_0^\beta(s)\|_{\mathcal{H}}^2$ is a continuous random variable. Combining this with the fact that $\widehat{\zeta}_n\rightarrow\zeta_{1-\alpha}$ shows that
\begin{align*}
P_{\epsilon=n^{-1/2}}^n\left\{h_0\not\in \Gamma_\beta^{-1}[\mathcal{C}_n^\beta(\widehat{\zeta}_n)]\right\}&\overset{n\rightarrow\infty}{\longrightarrow} \mathrm{Pr}\left\{\|\mathbb{H}^\beta+\dot{\nu}_0^\beta(s)\|_{\mathcal{H}}^2> \zeta_{1-\alpha}\right\}.
\end{align*}
By Corollary~2 of \cite{lewandowski1995anderson}, the definition of $\zeta_{1-\alpha}$, and the fact that $\|\dot{\nu}_0(s)\|_{\mathcal{H}}>0$ and $\beta>0$ entrywise together imply that $\|\dot{\nu}_0^\beta(s)\|_{\mathcal{H}}>0$, the right-hand side above is strictly larger than $\alpha$.

To see that $\nu(P_{\epsilon=n^{-1/2}})$ is an $n^{-1/2}$-rate local alternative, note that
\begin{align*}
\|\nu(P_{\epsilon=n^{-1/2}})-h_0\|_{\mathcal{H}}&= \|\nu(P_{\epsilon=n^{-1/2}})-\nu(P_0)\|_{\mathcal{H}} \\
&= n^{-1/2}\|\dot{\nu}_0(s)\|_{\mathcal{H}} + o(n^{-1/2}) = O(n^{-1/2}).
\end{align*}
Above we used that $\nu(P_0)=h_0$, $\{P_\epsilon: \epsilon\}$ is quadratic mean differentiable, and $\dot{\nu}_0$ is the local parameter of $\nu$ at $P_0$.
\end{proof}

\section{A regularized inverse covariance operator and a consistent estimator thereof}\label{app:regOmega0}

Let $\Sigma_0 : h\mapsto E[\langle h,\mathbb{H}\rangle_{\mathcal{H}}\mathbb{H}]$ denote the covariance operator of the Gaussian random element $\mathbb{H}$ from Theorem~\ref{thm:al}. In this appendix, we study the regularized inverse $\Omega_0 = [(1-\lambda)\Sigma_0 + \lambda I]^{-1}$, where $\lambda>0$ and $I$ denotes the identity operator on $\mathcal{H}$. Though it would be interesting to study the behavior of our confidence set in cases where $\lambda$ shrinks to zero with sample size, doing so may be challenging since the inverse covariance operator $\Sigma_0^{-1}$ may not exist and, even if it does, it will generally be unbounded, which will complicate the use of the continuous mapping theorem that we use to justify the proof of the asymptotic validity of our confidence set (Theorem~\ref{thm:CIcoverage}). Hence, while studying the case where $\lambda$ shrinks to zero slowly with sample size is an interesting area for future work, here we focus on the case where $\lambda$ is a fixed constant that does not depend on sample size.

The regularized inverse of interest writes as $\Omega_0=f_\lambda(\Sigma_0)$, where, for a positive semidefinite linear operator $\Sigma : \mathcal{H}\rightarrow\mathbb{H}$, $f_\lambda(\Sigma):= [(1-\lambda)\Sigma+\lambda I]^{-1}$. The operator $f_\lambda$ can be seen to be Lipschitz continuous relative to the operator norm with Lipschitz constant $(1-\lambda)/\lambda^2$, which holds since, for positive semidefinite $\Sigma_1$ and $\Sigma_2$,
\begin{align*}
&\left\|f_\lambda(\Sigma_1)-f_\lambda(\Sigma_2)\right\|_{\mathrm{op}} \\
&= \left\|[(1-\lambda)\Sigma_1 + \lambda I]^{-1}\circ \{[(1-\lambda)\Sigma_2 + \lambda I] - [(1-\lambda)\Sigma_1 + \lambda I]\}\circ [(1-\lambda)\Sigma_2 + \lambda I]^{-1}\right\|_{\mathrm{op}} \\
&= (1-\lambda)\left\|[(1-\lambda)\Sigma_1 + \lambda I]^{-1}\circ (\Sigma_1-\Sigma_2)\circ [(1-\lambda)\Sigma_2 + \lambda I]^{-1}\right\|_{\mathrm{op}} \\
&\le (1-\lambda)\left\|(1-\lambda)\Sigma_1 + \lambda I\right\|_{\mathrm{op}}\left\|\Sigma_1-\Sigma_2\right\|_{\mathrm{op}}\left\|(1-\lambda)\Sigma_2 + \lambda I\right\|_{\mathrm{op}} \\
&\le (1-\lambda)\lambda^{-2}\left\|\Sigma_1-\Sigma_2\right\|_{\mathrm{op}}.
\end{align*}
By the continuous mapping theorem, an operator-norm-consistent estimator $\Omega_n$ of $\Omega_0$ --- that is, one for which $\|\Omega_n-\Omega_0\|_{\mathrm{op}}=o_p(1)$ --- is thus given by $\Omega_n=f_\lambda(\Sigma_n)$, where $\Sigma_n$ is any operator-norm consistent estimator of $\Sigma_0$. The following lemma shows that one such estimator is given by $h\mapsto \frac{1}{2}\sum_{j=1}^2 E_{P_n^j}[\langle h,\phi_n^j(Z)\rangle_{\mathcal{H}} \phi_n^j(Z)]$.
\begin{lemma}\label{lem:Sigman}
Fix $\lambda>0$. Suppose that $\|\phi_0\|_{L^2(P_0;\mathcal{H})}<\infty$ and $\|\phi_n^j-\phi_0\|_{L^2(P_0;\mathcal{H})}=o_p(1)$ for each $j\in\{1,2\}$. If $\Sigma_n : h\mapsto \frac{1}{2}\sum_{j=1}^2 E_{P_n^j}[\langle h,\phi_n^j(Z)\rangle_{\mathcal{H}} \phi_n^j(Z)]$, then $\|\Sigma_n-\Sigma_0\|_{\mathrm{op}}=o_p(1)$.
\end{lemma}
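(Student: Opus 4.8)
The plan is to prove operator-norm consistency of $\Sigma_n$ by first decomposing $\Sigma_n-\Sigma_0$ into a term arising from the nuisance estimation error $\phi_n^j-\phi_0$ and a term arising from empirical-process fluctuations of the (centered) sample covariance built from the true influence function $\phi_0$. Concretely, I would introduce the ``oracle'' operator $\widetilde\Sigma_n : h\mapsto \frac{1}{2}\sum_{j=1}^2 E_{P_n^j}[\langle h,\phi_0(Z)\rangle_{\mathcal{H}}\phi_0(Z)]$ and write $\Sigma_n-\Sigma_0 = (\Sigma_n-\widetilde\Sigma_n) + (\widetilde\Sigma_n-\Sigma_0)$, then bound each piece in operator norm by the triangle inequality. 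For the second piece, $\widetilde\Sigma_n$ is an empirical average (over the two folds) of the $P_0$-mean-zero random operators $h\mapsto \langle h,\phi_0(Z)\rangle_{\mathcal H}\phi_0(Z) - \Sigma_0(h)$; since $\|\phi_0\|_{L^2(P_0;\mathcal H)}<\infty$, these operators are square-integrable Hilbert-Schmidt-valued random elements, so a weak law of large numbers (equivalently, Chebyshev's inequality for Hilbert-Schmidt-valued means, as used elsewhere in the paper via \cite{grenander1963probabilities}) gives $\|\widetilde\Sigma_n-\Sigma_0\|_{\mathrm{HS}}=o_p(1)$, and the operator norm is dominated by the Hilbert-Schmidt norm.

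For the first piece, I would bound $\|\Sigma_n-\widetilde\Sigma_n\|_{\mathrm{op}}$ pointwise: for any unit vector $h$,
\begin{align*}
\left\|(\Sigma_n-\widetilde\Sigma_n)(h)\right\|_{\mathcal H}
&\le \frac{1}{2}\sum_{j=1}^2 E_{P_n^j}\left\|\langle h,\phi_n^j(Z)\rangle_{\mathcal H}\phi_n^j(Z)-\langle h,\phi_0(Z)\rangle_{\mathcal H}\phi_0(Z)\right\|_{\mathcal H}.
\end{align*}
Adding and subtracting $\langle h,\phi_n^j(Z)\rangle_{\mathcal H}\phi_0(Z)$ inside the norm and applying Cauchy--Schwarz reduces each summand to a bound of the form $E_{P_n^j}[\|\phi_n^j(Z)-\phi_0(Z)\|_{\mathcal H}(\|\phi_n^j(Z)\|_{\mathcal H}+\|\phi_0(Z)\|_{\mathcal H})]$, uniformly in the unit vector $h$. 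A further Cauchy--Schwarz in $L^2(P_n^j)$ then controls this by $\|\phi_n^j-\phi_0\|_{L^2(P_n^j;\mathcal H)}(\|\phi_n^j\|_{L^2(P_n^j;\mathcal H)}+\|\phi_0\|_{L^2(P_n^j;\mathcal H)})$. Using a conditioning argument on the fold used to fit $\widehat P_n^j$ (so that $\phi_n^j$ is fixed given the training half), together with the hypotheses $\|\phi_n^j-\phi_0\|_{L^2(P_0;\mathcal H)}=o_p(1)$ and $\|\phi_0\|_{L^2(P_0;\mathcal H)}<\infty$, one obtains that $\|\phi_n^j-\phi_0\|_{L^2(P_n^j;\mathcal H)}=o_p(1)$ and $\|\phi_n^j\|_{L^2(P_n^j;\mathcal H)}=O_p(1)$, whence $\|\Sigma_n-\widetilde\Sigma_n\|_{\mathrm{op}}=o_p(1)$.

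The main obstacle I anticipate is the transfer between the empirical $L^2(P_n^j)$ norms appearing in the bounds and the population $L^2(P_0)$ norms appearing in the hypotheses. Because $\phi_n^j$ depends on the training half of the data but the empirical expectation $E_{P_n^j}$ is over the held-out half, I would handle this cleanly via the same conditioning-plus-Chebyshev device used in the proofs of Lemma~\ref{lem:driftTerm} and Lemma~\ref{lem:bootstraDriftTerm}: condition on the training sample so that $\phi_n^j$ becomes a fixed function, verify that $E_{P_n^j}\|\phi_n^j-\phi_0\|_{\mathcal H}^2$ has conditional mean equal to $\|\phi_n^j-\phi_0\|_{L^2(P_0;\mathcal H)}^2$, and conclude convergence in probability marginally by the dominated convergence theorem after bounding the relevant quantities by $1$ on a high-probability event. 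The remaining steps are routine applications of the triangle and Cauchy--Schwarz inequalities and the domination of the operator norm by the Hilbert--Schmidt norm, so no delicate new estimates beyond this measurability bookkeeping should be required.
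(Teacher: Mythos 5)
Your overall decomposition is sound, and your treatment of the estimation-error piece $\Sigma_n-\widetilde\Sigma_n$ (add-and-subtract, Cauchy--Schwarz, then the conditioning-plus-Markov-plus-dominated-convergence device to pass from $L^2(P_n^j;\mathcal H)$ to $L^2(P_0;\mathcal H)$ norms) matches what the paper does for the corresponding term. However, your handling of the oracle piece $\widetilde\Sigma_n-\Sigma_0$ contains a genuine gap: you assert that the rank-one random operators $T_Z : h\mapsto \langle h,\phi_0(Z)\rangle_{\mathcal H}\phi_0(Z)$ are \emph{square-integrable} Hilbert--Schmidt-valued random elements under the hypothesis $\|\phi_0\|_{L^2(P_0;\mathcal H)}<\infty$, and then invoke Chebyshev's inequality for Hilbert-valued means (Grenander). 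This is false as stated: $\|T_Z\|_{\mathrm{HS}}=\|\phi_0(Z)\|_{\mathcal H}^2$, so square-integrability in Hilbert--Schmidt norm means $E_0[\|\phi_0(Z)\|_{\mathcal H}^4]<\infty$, a fourth-moment condition that does not follow from the assumed second moment $E_0[\|\phi_0(Z)\|_{\mathcal H}^2]<\infty$. The Chebyshev route therefore does not apply under the lemma's hypotheses. The gap is repairable without strengthening the assumptions: the hypothesis does give $E_0[\|T_Z\|_{\mathrm{HS}}]<\infty$, i.e., Bochner integrability of $T_Z$ in the separable Hilbert space of Hilbert--Schmidt operators, so Mourier's law of large numbers for integrable random elements of a separable Banach space yields $\|\widetilde\Sigma_n-\Sigma_0\|_{\mathrm{HS}}=o_p(1)$ (indeed almost surely), and then $\|\cdot\|_{\mathrm{op}}\le\|\cdot\|_{\mathrm{HS}}$ finishes that piece. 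You must cite a first-moment LLN, not a second-moment concentration bound.

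Once repaired, your route is genuinely different from the paper's, and arguably cleaner, on exactly this empirical-fluctuation term. The paper instead writes $\|\Sigma_n^j-\Sigma_0\|_{\mathrm{op}}$ as a supremum of quadratic forms over the unit ball and controls the oracle fluctuation $\sup_{h\in\mathcal H_1}(P_n^j-P_0)\langle h,\phi_0\rangle_{\mathcal H}^2$ by proving that $\{z\mapsto\langle h,\phi_0(z)\rangle_{\mathcal H}^2 : h\in\mathcal H_1\}$ is $P_0$-Glivenko--Cantelli (its Lemma~\ref{lem:gcInnerProd}), via an envelope-plus-covering-number argument that requires truncating to finite-dimensional subspaces. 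Your Hilbert--Schmidt LLN argument, with the correct moment bookkeeping, subsumes that uniformity statement in one stroke, since the supremum of the quadratic form of a self-adjoint operator is bounded by its operator norm and hence by its Hilbert--Schmidt norm; what the paper's longer argument buys is that it stays entirely within standard empirical-process machinery already used elsewhere in the appendix, whereas yours imports the Banach-space strong law.
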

In what follows, for a function $\phi : \mathcal{Z}\rightarrow\mathcal{H}$, we let $\langle h,\phi\rangle_{\mathcal{H}}$ denote the map $z\mapsto \langle h,\phi(z)\rangle_{\mathcal{H}}$. We also recall that $\mathcal{H}_1$ denotes the unit ball of $\mathcal{H}$. We give the proof of the above result after we prove the following supporting lemma.
\begin{lemma}\label{lem:gcInnerProd}
In the setting of Lemma~\ref{lem:Sigman}, $\left\{z\mapsto \langle h,\phi_0(z)\rangle_{\mathcal{H}}^2 : h\in\mathcal{H}_1\right\}$ is $P_0$-Glivenko-Cantelli. Hence, $\sup_{h\in\mathcal{H}_1}(P_n^1-P_0) \langle h,\phi_0\rangle_{\mathcal{H}}^2=o_p(1)$.
\end{lemma}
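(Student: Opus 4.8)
The goal is to show that the class $\mathcal{F}:=\left\{z\mapsto \langle h,\phi_0(z)\rangle_{\mathcal{H}}^2 : h\in\mathcal{H}_1\right\}$ is $P_0$-Glivenko-Cantelli, from which the final conclusion $\sup_{h\in\mathcal{H}_1}(P_n^1-P_0)\langle h,\phi_0\rangle_{\mathcal{H}}^2 = o_p(1)$ follows immediately by the definition of the Glivenko-Cantelli property (applied to the subsample $\{Z_i\}_{i=n/2+1}^n$ drawn iid from $P_0$ that defines $P_n^1$). The plan is to verify a bracketing-entropy or envelope-plus-continuity criterion for this class, exploiting the fact that $\phi_0\in L^2(P_0;\mathcal{H})$.

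First I would fix an orthonormal basis $(h_k)_{k=1}^\infty$ of $\mathcal{H}$ and reduce the supremum over the infinite-dimensional ball $\mathcal{H}_1$ to a tail-controllable finite-dimensional problem. For $h\in\mathcal{H}_1$ write $h = \sum_k \langle h,h_k\rangle_{\mathcal{H}} h_k$ and split $\langle h,\phi_0(z)\rangle_{\mathcal{H}} = \langle h, \Pi_K\phi_0(z)\rangle_{\mathcal{H}} + \langle h,(I-\Pi_K)\phi_0(z)\rangle_{\mathcal{H}}$, where $\Pi_K$ is the projection onto $\mathrm{span}(h_1,\ldots,h_K)$. The key quantitative fact is that, since $\phi_0\in L^2(P_0;\mathcal{H})$, the tail $P_0\|(I-\Pi_K)\phi_0\|_{\mathcal{H}}^2 = \sum_{k>K} P_0\langle \phi_0,h_k\rangle_{\mathcal{H}}^2 \to 0$ as $K\to\infty$. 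This lets me approximate each $\langle h,\phi_0\rangle_{\mathcal{H}}^2$ uniformly over $h\in\mathcal{H}_1$, in $L^1(P_0)$, by functions depending on $z$ only through the $K$-dimensional random vector $\big(\langle \phi_0(z),h_1\rangle_{\mathcal{H}},\ldots,\langle \phi_0(z),h_K\rangle_{\mathcal{H}}\big)$, with the approximation controlled by Cauchy-Schwarz and the $L^2$ integrability of $\phi_0$. The functions $h\mapsto \langle h,\Pi_K\phi_0(z)\rangle_{\mathcal{H}}^2$ range, as $h$ varies over the unit ball, over a class indexed by a compact subset of $\mathbb{R}^K$ and are quadratic (hence Lipschitz on bounded sets) in the $K$-dimensional coordinates, which is a standard finite-dimensional Glivenko-Cantelli class with a square-integrable envelope $\|\phi_0(\cdot)\|_{\mathcal{H}}^2$.

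I would then assemble these pieces via a bracketing argument: given $\epsilon>0$, choose $K$ so the tail contribution is below $\epsilon$ in $L^1(P_0)$ uniformly over $\mathcal{H}_1$, then cover the finite-dimensional index set with finitely many $L^1(P_0)$-brackets using the envelope $G(z):=\|\phi_0(z)\|_{\mathcal{H}}^2 \in L^1(P_0)$ (note $P_0 G = \|\phi_0\|_{L^2(P_0;\mathcal{H})}^2<\infty$). This shows $N_{[\,]}(\epsilon,\mathcal{F},L^1(P_0))<\infty$ for every $\epsilon>0$, and then Theorem~2.4.1 of \cite{van1996weak} (finite bracketing numbers in $L^1(P)$ imply $P$-Glivenko-Cantelli) yields the claim. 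The second sentence of the lemma is then just the Glivenko-Cantelli conclusion instantiated at the relevant empirical measure, observing that $P_n^1$ is the empirical measure of an iid $P_0$-sample and that a Glivenko-Cantelli supremum over a countable dense subcollection (which suffices by continuity of $h\mapsto \langle h,\phi_0(z)\rangle_{\mathcal{H}}^2$) converges in probability, hence outer-almost-surely, to zero.

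The main obstacle I anticipate is controlling the tail term uniformly over the unit ball while keeping the envelope integrable, since the class is indexed by an infinite-dimensional ball and the functions are quadratic rather than merely Lipschitz in $h$. The delicate point is that $\langle h,\phi_0(z)\rangle_{\mathcal{H}}^2$ can be large pointwise even though its $P_0$-expectation is finite, so I must route all the approximation through $L^1(P_0)$ (not pointwise) bounds, repeatedly invoking Cauchy-Schwarz to write differences such as $\langle h,\phi_0\rangle_{\mathcal{H}}^2 - \langle h,\Pi_K\phi_0\rangle_{\mathcal{H}}^2 = \langle h,(I+\Pi_K)\phi_0\rangle_{\mathcal{H}}\,\langle h,(I-\Pi_K)\phi_0\rangle_{\mathcal{H}}$ and bounding its $L^1(P_0)$-norm by $2\|\phi_0\|_{L^2(P_0;\mathcal{H})}\,\|(I-\Pi_K)\phi_0\|_{L^2(P_0;\mathcal{H})}$, which tends to zero uniformly in $h\in\mathcal{H}_1$. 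Managing this tail-plus-finite-dimensional decomposition cleanly, rather than the bracketing bookkeeping itself, is where the real care is needed.
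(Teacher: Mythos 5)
Your proof is correct, but it takes a genuinely different route from the paper's. Both arguments share the same skeleton --- the integrable envelope $F(z)=\|\phi_0(z)\|_{\mathcal{H}}^2$, the projection $\Pi_K$ onto $\mathrm{span}\{h_1,\ldots,h_K\}$ with the tail killed by $\|(I-\Pi_K)\phi_0\|_{L^2(P_0;\mathcal{H})}\rightarrow 0$, and the factorization $a^2-b^2=(a+b)(a-b)$ combined with Cauchy--Schwarz --- but you then establish finite $L^1(P_0)$-\emph{bracketing} numbers and invoke Theorem~2.4.1 of \cite{van1996weak}, whereas the paper bounds the \emph{random} $L^1(P_n^1)$-covering numbers and invokes Theorem~2.4.3 (via Problem~2.4.2) of the same reference. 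Concretely, the paper takes a $\delta$-net of the finite-dimensional ball $\mathcal{H}_1\cap\mathrm{span}\{h_1,\ldots,h_K\}$ and verifies that it is an $\epsilon$-cover in $L^1(P_n^1)$ on a high-probability event where, by the weak law of large numbers, $P_n^1[\sum_{k>K}\langle h_k,\phi_0\rangle_{\mathcal{H}}^2]\le\epsilon/4$ and $\|\phi_0\|_{L^2(P_n^1;\mathcal{H})}^2\le\|\phi_0\|_{L^2(P_0;\mathcal{H})}^2+\epsilon$; your approach needs no such event or empirical-measure bookkeeping, since the tail is controlled deterministically under $P_0$, and it also sidesteps the measurability hypotheses implicit in Theorem~2.4.3, which the paper leaves undiscussed and which your remark about a countable dense subcollection partially addresses. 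Two small points to tighten your write-up: (i) brackets require \emph{pointwise} domination, not merely $L^1(P_0)$ closeness, so your phrase ``route all the approximation through $L^1(P_0)$ (not pointwise) bounds'' is misleading --- what saves you is that your factorization identity yields the pointwise bound $|\langle h,\phi_0(z)\rangle_{\mathcal{H}}^2-\langle h,\Pi_K\phi_0(z)\rangle_{\mathcal{H}}^2|\le 2\|\phi_0(z)\|_{\mathcal{H}}\|(I-\Pi_K)\phi_0(z)\|_{\mathcal{H}}$ uniformly over $h\in\mathcal{H}_1$, and this dominating function has $P_0$-mean at most $2\|\phi_0\|_{L^2(P_0;\mathcal{H})}\|(I-\Pi_K)\phi_0\|_{L^2(P_0;\mathcal{H})}$ by Cauchy--Schwarz, which is exactly what a bracket of small $L^1(P_0)$ width requires; (ii) your final sentence has the implication backwards --- the Glivenko--Cantelli property gives outer-almost-sure convergence, which \emph{implies} convergence in probability, and the latter is all that the $o_p(1)$ conclusion of the lemma needs.
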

\begin{proof}[Proof of Lemma~\ref{lem:gcInnerProd}]
In what follows we let $\mathcal{F}:= \left\{\langle h,\phi_0\rangle_{\mathcal{H}}^2 : h\in\mathcal{H}_1\right\}$. We will show that this collection of functions is $P_0$-Glivenko-Cantelli, which is the first result in the statement of the lemma. Combining this with the fact that $P_n^1$ is the empirical distribution of an iid sample from $P_0$ will then give the second result. To establish that $\mathcal{F}$ is Glivenko-Cantelli, we will show that the conditions of Theorem~2.4.3 in \cite{van1996weak} are satisfied. These conditions follow from $\mathcal{F}$ having a $P_0$-integrable envelope function $F$ and, moreover, satisfying an appropriate covering number condition --- we will define this condition in the next paragraph. Before doing so, we note that $\mathcal{F}$ has $P_0$-integrable envelope function $F(z):=\|\phi_0(z)\|_{\mathcal{H}}^2$. To see that this function is indeed an envelope of $\mathcal{F}$, note that, for any $h\in\mathcal{H}_1$, the Cauchy-Schwarz inequality shows that $\langle h,\phi_0(z) \rangle_{\mathcal{H}}^2\le \|\phi_0(z)\|_{\mathcal{H}}^2 = F(z)$. To see that $F$ is $P_0$-integrable, note that $P_0 F = \|\phi_0\|_{L^2(P_0;\mathcal{H})}^2$, which is finite by an assumption of Lemma~\ref{lem:Sigman}.

In the remainder of this proof, we will establish a covering number condition on $\mathcal{F}$ that implies the covering number condition from Theorem~2.4.3 in \cite{van1996weak}. In particular, in what follows we will show that, for any $\epsilon>0$, there exists an $N\in\mathbb{N}$ such that, with probability tending to one, the $L^1(P_n^1)$-covering number of $\mathcal{F}$ is no more than $N$; here we recall that, for fixed $\epsilon>0$, the corresponding $L^1(P_n^1)$ covering number of $\mathcal{F}$ denotes the size of the minimal $\epsilon$-cover of $\mathcal{F}$ relative to the $L^1(P_n^1)$ metric. Problem 2.4.2 in \cite{van1996weak} justifies why this condition suffices to establish the covering number condition in Theorem~2.4.3 of that reference. 

Fix $\epsilon>0$ and an orthonormal basis $(h_k)_{k=1}^\infty$ of $\mathcal{H}$. By the monotone convergence theorem and the $P_0$-Bochner square integrability of $\phi_0$,
\begin{align*}
\lim_{K'\rightarrow\infty} P_0 \left[\sum_{k=1}^{K'} \langle h_k,\phi_0 \rangle_{\mathcal{H}}^2\right]=P_0 \left[\sum_{k=1}^\infty \langle h_k ,\phi_0\rangle_{\mathcal{H}}^2\right]=\|\phi_0\|_{L^2(P_0;\mathcal{H})}^2<\infty.
\end{align*}
Hence, there exists a $K<\infty$ such that $P_0 \left[\sum_{k=1}^K \langle h_k,\phi_0 \rangle_{\mathcal{H}}^2\right]> \|\phi_0\|_{L^2(P_0;\mathcal{H})}^2-\epsilon/8$, and so, for this $K$, $P_0 \left[\sum_{k=K+1}^\infty \langle h_k,\phi_0 \rangle_{\mathcal{H}}^2\right]\le \epsilon/8$. By the weak law of large numbers, we further have that
\begin{align*}
P_n^1 \left[\sum_{k=K+1}^\infty \langle h_k ,\phi_0\rangle_{\mathcal{H}}^2\right]&= P_0 \left[\sum_{k=K+1}^\infty \langle h_k,\phi_0\rangle_{\mathcal{H}}^2\right] + o_p(1).
\end{align*}
Hereafter we work on the event $\mathcal{E}_n$ where (i) the $o_p(1)$ term above is less than $\epsilon/8$, so that the left-hand side above is no more than $\epsilon/4$, and (ii) $\|\phi_0\|_{L^2(P_n^1;\mathcal{H})}^2\le \|\phi_0\|_{L^2(P_0;\mathcal{H})}^2+\epsilon$; note that $\mathcal{E}_n$ holds with probability tending to one as $n\rightarrow\infty$. We now show that there exists a fixed subset $\widetilde{\mathcal{H}}_1$ of $\mathcal{H}_1$ such that, on this event, $\mathcal{F}_\epsilon:= \{\langle h,\phi_0\rangle_{\mathcal{H}}^2 : h\in \widetilde{\mathcal{H}}_1\}$ is an $\epsilon$-cover of $\mathcal{F}$. In particular, we take $\widetilde{\mathcal{H}}_1$ to be a finite $\delta$-cover of the finite-dimensional subset $\widetilde{\mathcal{H}}_1:=\mathcal{H}_1\cap\,\mathrm{span}\{h_1,\ldots,h_K\}$ of $\mathcal{H}_1$ relative to the $\mathcal{H}$-norm, where $\delta:= \epsilon/[4(\|\phi_0\|_{L^2(P_0;\mathcal{H})}^2+\epsilon)]$. Such a finite $\delta$-cover is guaranteed to exist because the unit ball in a finite-dimensional Hilbert space is necessarily totally bounded in the norm topology. To see that $\mathcal{F}_\epsilon$ is indeed an $\epsilon$-cover of $\mathcal{F}$, fix $h\in\mathcal{H}$ and let $\tilde{h}\in\widetilde{\mathcal{H}}_1$ be such that $\|\pi_K h-\tilde{h}\|_{\mathcal{H}}\le \delta$, where $\pi_K h:=\Pi_{\mathcal{H}}(h\mid \mathrm{span}\{h_1,\ldots,h_K\})$. Observe that
\begin{align}
&\left\|\langle h,\phi_0\rangle_{\mathcal{H}}^2 - \langle \tilde{h},\phi_0\rangle_{\mathcal{H}}^2 \right\|_{L^1(P_n^1)} \nonumber\\
&\quad\le \left\|\langle h,\phi_0\rangle_{\mathcal{H}}^2 - \langle \pi_K h,\phi_0\rangle_{\mathcal{H}}^2 \right\|_{L^1(P_n^1)} + \left\|\langle \pi_K h,\phi_0\rangle_{\mathcal{H}}^2 - \langle \tilde{h},\phi_0\rangle_{\mathcal{H}}^2 \right\|_{L^1(P_n^1)}. \label{eq:triangleIneqProj}
\end{align}
We now show each of the two terms on the right-hand side is no more than $\epsilon/2$. For the first term, we let $\pi_K \phi_0(z):=\Pi_{\mathcal{H}}[\phi_0(z)\mid \mathrm{span}\{h_1,\ldots,h_K\}]$ and note that
\begin{align*}
&\left\|\langle h,\phi_0\rangle_{\mathcal{H}}^2 - \langle \pi_K h,\phi_0\rangle_{\mathcal{H}}^2 \right\|_{L^1(P_n^1)} \\
&= \int |\langle h + \pi_K h,\phi_0(z)\rangle_{\mathcal{H}}\langle h-\pi_K h,\phi_0(z)\rangle_{\mathcal{H}}|  P_n^1(dz) \\
&\le \left[\int \langle h+\pi_K h,\phi_0(z)\rangle_{\mathcal{H}}^2 P_n^1(dz)\right]^{1/2}\left[\int \langle h-\pi_K h,\phi_0(z)\rangle_{\mathcal{H}}^2 P_n^1(dz)\right]^{1/2} \\
&= \left[\int \langle h+\pi_K h,\phi_0(z)\rangle_{\mathcal{H}}^2 P_n^1(dz)\right]^{1/2}\left[\int \langle h-\pi_K h,\phi_0(z)-\pi_K\phi(z)\rangle_{\mathcal{H}}^2 P_n^1(dz)\right]^{1/2} \\
&\le \|h+\pi_K h\|_{\mathcal{H}}\|h-\pi_K h\|_{\mathcal{H}}\|\phi_0-\pi_K\phi_0\|_{L^2(P_n^1;\mathcal{H})}^2. 
\end{align*}
The first equality holds by the definition of the $L^1(P_n^1)$ norm, both inequalities hold by Cauchy-Schwarz, and the second equality holds because $h-\pi_K h$ is orthogonal to $\mathrm{span}\{h_1,\ldots,h_K\}$. Now, since $h\in\mathcal{H}_1$, the triangle inequality and the fact that orthogonal projections cannot increase length show that $\|h+\pi_K h\|_{\mathcal{H}}\|h-\pi_K h\|_{\mathcal{H}}\le 2$. Furthermore, by the choice of $K$ and the fact that we are working on the event $\mathcal{E}_n$, $\|\phi_0-\pi_K\phi_0\|_{L^2(P_n^1;\mathcal{H})}^2\le \epsilon/4$. Hence, the first term on the right-hand side of \eqref{eq:triangleIneqProj} is no more than $\epsilon/2$. For the second term in \eqref{eq:triangleIneqProj}, two consecutive applications of the Cauchy-Schwarz inequality yield that
\begin{align*}
&\left\|\langle \pi_K h,\phi_0\rangle_{\mathcal{H}}^2 - \langle \tilde{h},\phi_0\rangle_{\mathcal{H}}^2 \right\|_{L^1(P_n^1)} \\
&\quad= \int |\langle \pi_K h+\tilde{h},\phi_0(z)\rangle_{\mathcal{H}}\langle \pi_K h-\tilde{h},\phi_0(z)\rangle_{\mathcal{H}}|  P_n^1(dz) \\
&\quad\le \left[\int \langle \pi_K h+\tilde{h},\phi_0(z)\rangle_{\mathcal{H}}^2 P_n^1(dz)\right]^{1/2}\left[\int \langle \pi_K h-\tilde{h},\phi_0(z)\rangle_{\mathcal{H}}^2 P_n^1(dz)\right]^{1/2} \\
&\quad\le \|\pi_K h+\tilde{h}\|_{\mathcal{H}}\|\pi_K h-\tilde{h}\|_{\mathcal{H}}\|\phi_0\|_{L^2(P_n^1;\mathcal{H})}^2.
\end{align*}
Now, by the triangle inequality and the fact that $\pi_K h$ and $\tilde{h}$ belong to $\mathcal{H}_1$, $\|\pi_K h+\tilde{h}\|_{\mathcal{H}}\le 2$. Moreover, because we are working on the event $\mathcal{E}_n$, $\|\phi_0\|_{L^2(P_n^1;\mathcal{H})}^2\le \|\phi_0\|_{L^2(P_0;\mathcal{H})}^2+\epsilon$. Combining these bounds with the fact that $\|\pi_K h-\tilde{h}\|_{\mathcal{H}}\le \delta:= \epsilon/[4(\|\phi_0\|_{L^2(P_0;\mathcal{H})}^2+\epsilon)]$ gives that $\|\langle \pi_K h,\phi_0\rangle_{\mathcal{H}}^2 - \langle \tilde{h},\phi_0\rangle_{\mathcal{H}}^2 \|_{L^1(P_n^1)}\le \epsilon/2$. Returning to \eqref{eq:triangleIneqProj}, this shows that $\|\langle h,\phi_0\rangle_{\mathcal{H}}^2 - \langle \tilde{h},\phi_0\rangle_{\mathcal{H}}^2 \|_{L^1(P_n^1)}\le \epsilon$. As $h\in\mathcal{H}_1$ was arbitrary and $\langle \tilde{h},\phi_0\rangle_{\mathcal{H}}^2\in\mathcal{F}_\epsilon$, this shows that $\mathcal{F}_\epsilon$ is an $\epsilon$-cover of $\mathcal{H}_1$ on the event $\mathcal{E}_n$. Since $\mathcal{F}_\epsilon$ contains finitely many functions, we can invoke Theorem~2.4.3 of \cite{van1996weak} to show that $\mathcal{F}$ is a $P_0$-Glivenko Cantelli class.
\end{proof}

We conclude this appendix with the proof of Lemma~\ref{lem:Sigman}.
\begin{proof}[Proof of Lemma~\ref{lem:Sigman}]
For $j\in\{1,2\}$, let $\Sigma_n^j(h):= E_{P_n^j}[\langle h,\phi_n^j(Z)\rangle_{\mathcal{H}} \phi_n^j(Z)]$. By the triangle inequality,
\begin{align*}
\|\Sigma_n-\Sigma_0\|_{\mathrm{op}}&=\left\|\frac{1}{2}\sum_{j=1}^2 \Sigma_n^j - \Sigma_0\right\|_{\mathrm{op}}\le \frac{1}{2}\sum_{j=1}^2 \left\|\Sigma_n^j - \Sigma_0\right\|_{\mathrm{op}}.
\end{align*}
Hence, it suffices to show that $\left\|\Sigma_n^j - \Sigma_0\right\|_{\mathrm{op}}=o_p(1)$. We show this for the case where $j=1$, and the case where $j=2$ follows by analogous arguments.

Because $\Sigma_n^1-\Sigma_0$ is a positive, self-adjoint operator, it holds that
\begin{align}
\|\Sigma_n^1 - \Sigma_0\|_{\mathrm{op}}&= \sup_{h\in\mathcal{H}_1}\langle\Sigma_n^1(h) - \Sigma_0(h),h\rangle_{\mathcal{H}}. \label{eq:svOpNorm}
\end{align}
We will bound the right-hand side above in what follows. To do this, we will use that, for any $h\in\mathcal{H}_1$,
\begin{align*}
\langle\Sigma_n^1(h) - \Sigma_0(h),h\rangle_{\mathcal{H}} &= \left\langle P_n^1 \langle h,\phi_n^1\rangle_{\mathcal{H}}\phi_n^1 - P_0 \langle h,\phi_0\rangle_{\mathcal{H}}\phi_0,h\right\rangle_{\mathcal{H}} \\
&= \left\langle P_n^1 \left[\langle h,\phi_n^1\rangle_{\mathcal{H}}\phi_n^1 - \langle h,\phi_0\rangle_{\mathcal{H}}\phi_0\right] + (P_n^1-P_0) \langle h,\phi_0\rangle_{\mathcal{H}}\phi_0,h\right\rangle_{\mathcal{H}} \\
&= P_n^1 \left[\langle h,\phi_n^1\rangle_{\mathcal{H}}^2 - \langle h,\phi_0\rangle_{\mathcal{H}}^2\right] + (P_n^1-P_0) \langle h,\phi_0\rangle_{\mathcal{H}}^2 \\
&= P_n^1 \langle h,\phi_n^1-\phi_0\rangle_{\mathcal{H}}\langle h,\phi_n^1+\phi_0\rangle_{\mathcal{H}} + (P_n^1-P_0) \langle h,\phi_0\rangle_{\mathcal{H}}^2.
\end{align*}
Applying the triangle and Cauchy-Schwarz inequalities to the above and combining the result with \eqref{eq:svOpNorm} shows that
\begin{align*}
\|\Sigma_n^1-\Sigma_0\|_{\mathrm{op}}&\le \left[\sup_{h\in\mathcal{H}_1} P_n^1 \langle h,\phi_n^1-\phi_0\rangle_{\mathcal{H}}^2\right]^{1/2}\left[\sup_{h\in\mathcal{H}_1} P_n^1 \langle h,\phi_n^1+\phi_0\rangle_{\mathcal{H}}^2\right]^{1/2} \\
&\quad+ \sup_{h\in\mathcal{H}_1}(P_n^1-P_0) \langle h,\phi_0\rangle_{\mathcal{H}}^2.
\end{align*}
Using that $\sup_{h\in\mathcal{H}_1} P_n^1 \langle h,\phi\rangle_{\mathcal{H}}^2\le \|\phi\|_{L^2(P_n^1;\mathcal{H})}^2$ for $\phi : \mathcal{Z}\rightarrow\mathbb{H}$ and then subsequently applying the triangle inequality in $L^2(P_n^1;\mathcal{H})$, we find that
\begin{align*}
\|\Sigma_n^1-\Sigma_0\|_{\mathrm{op}}&\le \|\phi_n^1-\phi_0\|_{L^2(P_n^1;\mathcal{H})}\left(2\|\phi_0\|_{L^2(P_n^1;\mathcal{H})} + \|\phi_n^1-\phi_0\|_{L^2(P_n^1;\mathcal{H})}\right) \\
&\quad+ \sup_{h\in\mathcal{H}_1}(P_n^1-P_0) \langle h,\phi_0\rangle_{\mathcal{H}}^2.
\end{align*}
The second term is $o_p(1)$ by Lemma~\ref{lem:gcInnerProd}. We now show that the first term is also $o_p(1)$. To see this, first note that $\|\phi_0\|_{L^2(P_n^1;\mathcal{H})}=O_p(1)$ by the fact that $\|\phi_0\|_{L^2(P_0;\mathcal{H})}<\infty$ and since, by the weak law of large numbers, $\|\phi_0\|_{L^2(P_n^1;\mathcal{H})}^2= \|\phi_0\|_{L^2(P_0;\mathcal{H})}^2 + o_p(1)$. Hence, it suffices to show that $\|\phi_n^1-\phi_0\|_{L^2(P_n^1;\mathcal{H})}=o_p(1)$. To see that this holds, note that, for any $\delta>0$, the probability that $\|\phi_n^1-\phi_0\|_{L^2(P_n^1;\mathcal{H})}^2$ exceeds $\delta$ conditional on the data used to create the estimate $\phi_n^1$ of $\phi_0$ satisfies the following:
\begin{align*}
P_0^n&\left\{\|\phi_n^1-\phi_0\|_{L^2(P_n^1;\mathcal{H})}^2 > \delta\,\middle|\,Z_1,\ldots,Z_{n/2}\right\} \\
&\le \min\left\{1,\delta^{-1}E_{P_0^n}\left[\|\phi_n^1-\phi_0\|_{L^2(P_n^1;\mathcal{H})}^2\,\middle|\,Z_1,\ldots,Z_{n/2}\right]\right\} \\
&= \min\left\{1,\delta^{-1}\|\phi_n^1-\phi_0\|_{L^2(P_0;\mathcal{H})}^2\right\}.
\end{align*}
Taking an expectation of both sides over $Z_1,\ldots,Z_{n/2}$, using that $\|\phi_n^1-\phi_0\|_{L^2(P_0;\mathcal{H})}^2=o_p(1)$, and applying the dominated convergence theorem shows that the right-hand side is $o(1)$. As $\delta>0$ was arbitrary, this shows that $\|\phi_n^1-\phi_0\|_{L^2(P_n^1;\mathcal{H})}=o_p(1)$, which gives the result.
\end{proof}

\section{A conservative estimator of the threshold used to define our confidence sets that does not require the bootstrap}\label{app:szekely}

We now present a conservative estimator of the threshold $\zeta_{1-\alpha}$ that is used to construct the confidence sets described in Section~\ref{sec:confSets}. This estimator is applicable in settings where $\Omega_0$ is the identity function. 
Its form is motivated by Theorem~1 in \cite{szekely2003extremal}, which concerns tail probabilities for Gaussian quadratic forms of the type $\sum_{k=1}^\infty c_k N_k^2$, where $(N_k)_{k=1}^\infty$ is an iid sequence of standard normal random variables and $(c_k)_{k=1}^\infty$ is a sequence of nonnegative constants. This result is applicable when $\Omega_0$ is the identity operator since, in that case, $\|\mathbb{H}\|_{\mathcal{H}}^2$ has the same distribution as $\sum_{k=1}^\infty E_0[\langle \phi_0(Z),h_{0,k}\rangle_{\mathcal{H}}^2] N_k^2$, where $(h_{0,k})_{k=1}^\infty$ are the unit eigenvectors of the covariance operator $E[\langle\mathbb{H},\,\cdot\,\rangle_{\mathcal{H}}\mathbb{H}]$ of $\mathbb{H}$. When $\alpha\le 0.2$, as it will be in most practical settings, Theorem~1 in  \cite{szekely2003extremal} can be used to show that $\mathrm{Pr}\{\|\mathbb{H}\|_{\mathcal{H}}^2> \chi_{1-\alpha}^2\|\phi_0\|_{L^2(P_0;\mathcal{H})}^2\}\le \alpha$, where $\chi_{1-\alpha}^2$ denotes the $(1-\alpha)$-quantile of a chi-squared distribution with 1 degree of freedom. Hence, if $s_n^2$ is a consistent estimator of $\|\phi_0\|_{L^2(P_0;\mathcal{H})}^2$, then Theorem~\ref{thm:CIcoverage} shows that $\mathcal{C}_n(\chi_{1-\alpha}^2\cdot s_n^2)$ is an asymptotically valid, albeit conservative, $(1-\alpha)$-confidence set for $\nu(P_0)$. If $\phi_n^j\rightarrow \phi_0$ in probability in $L^2(P_0;\mathcal{H})$ for $j\in\{1,2\}$, then Lemma~\ref{lem:phinConv}, given below, shows that the cross-fitted estimator $\frac{1}{2}\sum_{j=1}^2\|\phi_n^j\|_{L^2(P_n^j;\mathcal{H})}^2$ will converge in probability to $\|\phi_0\|_{L^2(P_0;\mathcal{H})}^2$, so that $s_n^2$ can be taken to be equal to this estimator.
\begin{lemma}\label{lem:phinConv}
If $\|\phi_0\|_{L^2(P_0;\mathcal{H})}<\infty$ and $\|\phi_n^j-\phi_0\|_{L^2(P_0;\mathcal{H})}=o_p(1)$ for each $j\in\{1,2\}$, then $\|\phi_n^j\|_{L^2(P_n^j;\mathcal{H})}^2$ converges to $\|\phi_0\|_{L^2(P_0;\mathcal{H})}^2$ in probability for each $j\in\{1,2\}$ and, consequently, $\frac{1}{2}\sum_{j=1}^2\|\phi_n^j\|_{L^2(P_n^j;\mathcal{H})}^2\rightarrow\|\phi_0\|_{L^2(P_0;\mathcal{H})}^2$ in probability as well.
\end{lemma}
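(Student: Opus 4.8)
The goal is to show that $\|\phi_n^j\|_{L^2(P_n^j;\mathcal{H})}^2\to\|\phi_0\|_{L^2(P_0;\mathcal{H})}^2$ in probability for each $j$; once this is established for the individual folds, the claim about the average $\frac{1}{2}\sum_{j=1}^2\|\phi_n^j\|_{L^2(P_n^j;\mathcal{H})}^2$ follows immediately from the continuous mapping theorem (or simply the fact that an average of quantities converging in probability to the same limit converges to that limit). So the plan is to fix $j\in\{1,2\}$ and treat a single fold. Since $P_n^j$ is the empirical distribution of an iid sample from $P_0$ that is independent of the data used to construct $\phi_n^j$, I would condition on the estimation sample throughout.

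The plan is to decompose via the triangle inequality in $L^2(P_n^j;\mathcal{H})$:
\begin{align}
\left|\,\|\phi_n^j\|_{L^2(P_n^j;\mathcal{H})} - \|\phi_0\|_{L^2(P_0;\mathcal{H})}\,\right|&\le \left|\,\|\phi_n^j\|_{L^2(P_n^j;\mathcal{H})} - \|\phi_0\|_{L^2(P_n^j;\mathcal{H})}\,\right| \nonumber \\
&\quad + \left|\,\|\phi_0\|_{L^2(P_n^j;\mathcal{H})} - \|\phi_0\|_{L^2(P_0;\mathcal{H})}\,\right|. \nonumber
\end{align}
The second term is handled by the weak law of large numbers: since $\|\phi_0\|_{L^2(P_0;\mathcal{H})}^2=P_0\|\phi_0(\cdot)\|_{\mathcal{H}}^2<\infty$, the real-valued random variables $\|\phi_0(Z_i)\|_{\mathcal{H}}^2$ are iid and integrable, so $\|\phi_0\|_{L^2(P_n^j;\mathcal{H})}^2=P_n^j\|\phi_0(\cdot)\|_{\mathcal{H}}^2\to\|\phi_0\|_{L^2(P_0;\mathcal{H})}^2$ in probability, whence the square root converges by the continuous mapping theorem. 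For the first term, the reverse triangle inequality bounds it by $\|\phi_n^j-\phi_0\|_{L^2(P_n^j;\mathcal{H})}$, so it suffices to show this quantity is $o_p(1)$.

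The first term is the main obstacle, since $\phi_n^j$ is a data-dependent function and the empirical $L^2(P_n^j;\mathcal{H})$-norm is taken over the same (independent) sample. I would establish $\|\phi_n^j-\phi_0\|_{L^2(P_n^j;\mathcal{H})}=o_p(1)$ by a conditioning argument identical in structure to the one appearing at the end of the proof of Lemma~\ref{lem:Sigman}. Specifically, for any $\delta>0$, Markov's inequality conditional on the estimation sample (the observations used to form $\phi_n^j$) gives
\begin{align*}
P_0^n\left\{\|\phi_n^j-\phi_0\|_{L^2(P_n^j;\mathcal{H})}^2>\delta\,\middle|\,\text{estimation sample}\right\}&\le \min\left\{1,\delta^{-1}\|\phi_n^j-\phi_0\|_{L^2(P_0;\mathcal{H})}^2\right\},
\end{align*}
where the equality of the conditional expectation $E_{P_0^n}[\|\phi_n^j-\phi_0\|_{L^2(P_n^j;\mathcal{H})}^2\mid\text{estimation sample}]$ with $\|\phi_n^j-\phi_0\|_{L^2(P_0;\mathcal{H})}^2$ uses that $P_n^j$ is the empirical measure of an iid $P_0$-sample independent of $\phi_n^j$. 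Taking an expectation of both sides over the estimation sample, invoking the assumption $\|\phi_n^j-\phi_0\|_{L^2(P_0;\mathcal{H})}=o_p(1)$, and applying the dominated convergence theorem (the integrand is bounded by $1$) shows the unconditional probability is $o(1)$. Since $\delta$ was arbitrary, $\|\phi_n^j-\phi_0\|_{L^2(P_n^j;\mathcal{H})}=o_p(1)$. Combining the two terms of the decomposition and squaring (via the continuous mapping theorem) yields the convergence of $\|\phi_n^j\|_{L^2(P_n^j;\mathcal{H})}^2$, completing the argument.
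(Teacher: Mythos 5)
Your proof is correct, and it rests on exactly the same two probabilistic ingredients as the paper's: the weak law of large numbers applied to the integrable real-valued variables $\|\phi_0(Z_i)\|_{\mathcal{H}}^2$, and the conditional Markov inequality plus dominated convergence device (the one also used at the end of the proof of Lemma~\ref{lem:Sigman}) to show that a data-dependent function integrated against the independent empirical measure is $o_p(1)$. The assembly differs, and in your favor. The paper decomposes the difference of \emph{squared} norms, writing $|\|\phi_n^j\|_{L^2(P_n^j;\mathcal{H})}^2-\|\phi_0\|_{L^2(P_0;\mathcal{H})}^2|\le |(P_n^j-P_0)\|\phi_n^j(\cdot)\|_{\mathcal{H}}^2| + |\|\phi_n^j\|_{L^2(P_0;\mathcal{H})}^2-\|\phi_0\|_{L^2(P_0;\mathcal{H})}^2|$, handles the second term by the reverse triangle inequality in $L^2(P_0;\mathcal{H})$, and then expands the first term into three pieces using the pointwise bound $\|\phi_n^j(z)\|_{\mathcal{H}}^2\le 2\|\phi_n^j(z)-\phi_0(z)\|_{\mathcal{H}}^2+2\|\phi_0(z)\|_{\mathcal{H}}^2$. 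That expansion step is delicate as written: pointwise domination does not by itself control the absolute value of an integral against the \emph{signed} measure $P_n^j-P_0$, so a cross term of the form $2(P_n^j-P_0)\langle \phi_n^j-\phi_0,\phi_0\rangle_{\mathcal{H}}$ really needs to be addressed (e.g., by Cauchy--Schwarz under each of $P_n^j$ and $P_0$ separately). Your decomposition at the level of norms, via the reverse triangle inequality in $L^2(P_n^j;\mathcal{H})$, bypasses this issue entirely: the only empirical quantities you must control are $\|\phi_n^j-\phi_0\|_{L^2(P_n^j;\mathcal{H})}$ and $P_n^j\|\phi_0(\cdot)\|_{\mathcal{H}}^2$, each handled exactly as in the paper, after which the continuous mapping theorem finishes the job. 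So your argument is a cleaner, slightly more robust version of the same proof.
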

\begin{proof}[Proof of Lemma~\ref{lem:phinConv}]
We begin by showing that $\|\phi_n^j\|_{L^2(P_n^j;\mathcal{H})}^2\overset{p}{\rightarrow}\|\phi_0\|_{L^2(P_0;\mathcal{H})}^2$ for fixed $j\in\{1,2\}$. Note that
\begin{align*}
&\left|\|\phi_n^j\|_{L^2(P_n^j;\mathcal{H})}^2 - \|\phi_0\|_{L^2(P_0;\mathcal{H})}^2\right| \\
&\quad\le \left|\int \|\phi_n^j(z)\|_{\mathcal{H}}^2 (P_n^j-P_0)(dz)\right| + \left|\|\phi_n^j\|_{L^2(P_0;\mathcal{H})}^2 - \|\phi_0\|_{L^2(P_0;\mathcal{H})}^2\right|.
\end{align*}
We study these two terms separately. The second term is $o_p(1)$ since (i) by the reverse triangle inequality and the assumption of this theorem, $|\|\phi_n^j\|_{L^2(P_0;\mathcal{H})} - \|\phi_0\|_{L^2(P_0;\mathcal{H})}|\le \|\phi_n^j - \phi_0\|_{L^2(P_0;\mathcal{H})} = o_p(1)$ and (ii) by the continuous mapping theorem, $\|\phi_n^j\|_{L^2(P_0;\mathcal{H})}\overset{p}{\rightarrow} \|\phi_0\|_{L^2(P_0;\mathcal{H})}$ implies that $\|\phi_n^j\|_{L^2(P_0;\mathcal{H})}^2\overset{p}{\rightarrow} \|\phi_0\|_{L^2(P_0;\mathcal{H})}^2$. In what follows, we show that the first term above is $o_p(1)$ as well.

Combining the fact that $\|\phi_n^j(z)\|_{\mathcal{H}}\le \|\phi_n^j(z)-\phi_0\|_{\mathcal{H}} + \|\phi_0(z)\|_{\mathcal{H}}$ with the basic inequality that $(a+b)^2\le 2(a^2+b^2)$, and subsequently applying the triangle inequality, yields that
\begin{align}
\frac{1}{2}&\left|\int \|\phi_n^j(z)\|_{\mathcal{H}}^2 (P_n^j-P_0)(dz)\right| \nonumber \\
&\le \left|\int \left(\|\phi_n^j(z)-\phi_0(z)\|_{\mathcal{H}}^2 + \|\phi_0(z)\|_{\mathcal{H}}^2\right) (P_n^j-P_0)(dz)\right| \nonumber \\
&\le \left|\int \|\phi_n^j(z)-\phi_0(z)\|_{\mathcal{H}}^2 P_n^j(dz)\right| + \left|\int \|\phi_n^j(z)-\phi_0(z)\|_{\mathcal{H}}^2 P_0(dz)\right| \nonumber \\
&\quad+ \left|\int \|\phi_0(z)\|_{\mathcal{H}}^2 (P_n^j-P_0)(dz)\right|. \label{eq:phinjEmpProc}
\end{align}
The second term is equal to $\|\phi_n^j(z)-\phi_0(z)\|_{L^2(P_0;\mathcal{H})}^2$ and so is $o_p(1)$ by assumption. 
The third term is $o_p(1)$ by the weak law of large numbers, which is applicable since $\|\phi_0\|_{L^2(P_0;\mathcal{H})}^2<\infty$ by assumption. By Markov's inequality, the fact that $P_n^j$ and $\widehat{P}_n^j$ are fitted on different subsamples, and the fact that probabilities are no more than $1$, the conditional probability that the first term exceeds any fixed $\delta>0$ satisfies the following:
\begin{align*}
P_0^n&\left\{\left|\int \|\phi_n^j(z)-\phi_0(z)\|_{\mathcal{H}}^2 P_n^j(dz)\right| > \delta\,\middle|\,Z_{(j-1)n/2 + 1},\ldots,Z_{(j-1)n/2 + n/2}\right\} \\
&\le \min\left\{1,\frac{1}{\delta}\|\phi_n^j-\phi_0\|_{L^2(P_0;\mathcal{H})}^2\right\}.
\end{align*}
Taking an expected value of both sides over $Z_{(j-1)n/2 + 1},\ldots,Z_{(j-1)n/2 + n/2}\iidsim P_0$ and using that $\|\phi_n^j(z)-\phi_0(z)\|_{L^2(P_0;\mathcal{H})}^2=o_p(1)$, the dominated convergence theorem shows that $\left|\int \|\phi_n^j(z)-\phi_0(z)\|_{\mathcal{H}}^2 P_n^j(dz)\right| > \delta$ occurs with probability tending to zero. As $\delta>0$ was arbitrary, this shows that the first term on the right-hand side of \eqref{eq:phinjEmpProc} is $o_p(1)$, which completes the proof of the fact that $\|\phi_n^j\|_{L^2(P_n^j;\mathcal{H})}^2\overset{p}{\rightarrow}\|\phi_0\|_{L^2(P_0;\mathcal{H})}^2$ for $j\in\{1,2\}$.

Since $\|\phi_n^j\|_{L^2(P_n^j;\mathcal{H})}^2\overset{p}{\rightarrow}\|\phi_0\|_{L^2(P_0;\mathcal{H})}^2$ for $j\in\{1,2\}$, the continuous mapping theorem shows that $\frac{1}{2}\sum_{j=1}^2\|\phi_n^j\|_{L^2(P_n^j;\mathcal{H})}^2\rightarrow\|\phi_0\|_{L^2(P_0;\mathcal{H})}^2$.
\end{proof}

\section{Numerical considerations for computing the proposed confidence sets}\label{app:practicalNonRegCS}

Evaluating whether some $h_0\in\mathcal{H}$ belongs to the confidence set in \eqref{eq:confSet} requires computing the quadratic form $\langle \Omega_n(\bar{\nu}_n-h_0),\bar{\nu}_n-h_0\rangle_{\mathcal{H}}$ in a possibly infinite dimensional Hilbert space. 
In many cases, computing this quadratic form will require leveraging some form of numerical approximation. 
One way of doing this is to replace the computation of the quadratic form in \eqref{eq:confSet} by a finite-dimensional approximation thereof. To this end, for each $m\in\mathbb{N}$ we let $D_m : \mathcal{H}\rightarrow\mathbb{R}^m$ denote a linear operator. This linear operator should have the property that, for any $h_1$ and $h_2$ in $\mathcal{H}$, $\langle h_1,h_2\rangle_{\mathcal{H}}\overset{m\rightarrow\infty}{\longrightarrow} D_m(h_1)^\top D_m(h_2)$, where here and in all subsequent calculations all vectors are taken to be equal to column vectors when involved in matrix operations. In practice, for a given sample size $n$, $m$ can be chosen to be some large constant. One natural choice of $D_m$ corresponds to the map from $h$ to the vector of the first $m$ generalized Fourier coefficients of $h$ with respect to some orthonormal basis $(h_k)_{k=1}^\infty$, so that $D_m(h)=(\langle h_k,h\rangle_{\mathcal{H}})_{k=1}^m$. 
If $\mathcal{H}=L^2([0,1])$ and it is known that $D_m$ will only be evaluated on elements of $\mathcal{H}$ that have a continuous version, as occurs if $h_0$ is continuous and $\bar{\nu}_n$ is continuous with probability one, then another natural choice is to take $D_m(h)$ to be equal to $(h(t_k)/m^{1/2})_{k=1}^m$, where $t_k=k/(m+1)$ and $h(t_k)$ is taken to be the evaluation of the continuous version of $h$ at $t_k$. If instead $\mathcal{H}=L^2(\mathbb{R})$ and $h$ still has a continuous version, then $D_m$ can be taken equal to $(h(t_k)/[m\varphi_{\mu,\sigma}(t_k)]^{1/2})_{k=1}^m$, where, for $\mu\in\mathbb{R}$ and $\sigma>0$, $\{t_k\}_{k=1}^m$ are such that $\Phi_{\mu,\sigma}(t_k)=k/(m+1)$ with $\Phi_{\mu,\sigma}$ and $\varphi_{\mu,\sigma}$ denoting the cumulative distribution function and probability density function of a $N(\mu,\sigma^2)$ distribution, respectively. In practice $\mu$ and $\sigma$ may be selected based on the data, which can be justified theoretically so long as their random values converge to some limits in probability asymptotically --- for example, in our simulation implementation of the bandlimited density estimator from Example~\ref{ex:cfdBandlimited}, we take $\mu$ and $\sigma/4$ to be the empirical mean and standard deviation of $Y$ given $A=1$, respectively.

The linear operator $D_m$ can be used to approximate the infinite-dimensional quadratic form in \eqref{eq:confSet} with a finite-dimensional quadratic form. In particular, $\langle \Omega_n(\bar{\nu}_n-h_0),\bar{\nu}_n-h_0\rangle_{\mathcal{H}}$ can be replaced by $D_m(\bar{\nu}_n-h_0)^\top\, \widetilde{\Omega}_{n,m}\, D_m(\bar{\nu}_n-h_0)$, where $\widetilde{\Omega}_{n,m}$ is an $m$-dimensional positive definite Hermitian matrix whose value will depend on the standardization operator $\Omega_n$ that it is meant to approximate. If $\Omega_n$ is the identity operator, then $\widetilde{\Omega}_{n,m}$ can be taken to be equal to the $m$-dimensional identity matrix $I_m$. If $\Omega_n$ is instead the estimator of the regularized covariance operator described in Appendix~\ref{app:regOmega0}, then it can instead be approximated by a regularized empirical covariance matrix. In particular, we can let $\widetilde{\Omega}_{n,m}:=[(1-\lambda)\Sigma_{n,m}+\lambda I_m]^{-1}$, where $\Sigma_{n,m}:=\frac{1}{2}\sum_{j=1}^2 P_n^j [D_m(\phi_n^j(\cdot))D_m(\phi_n^j(\cdot))^\top]$, where $P_n^j [D_m(\phi_n^j(\cdot))D_m(\phi_n^j(\cdot))^\top]$ corresponds to the empirical covariance matrix of the random variable $D_m(\phi_n^j(Z))$ computed using the empirical distribution $P_n^j$. Alternatively, $P_n^j [D_m(\phi_n^j(\cdot))D_m(\phi_n^j(\cdot))^\top]$ may be replaced by the empirical correlation matrix of $D_m(\phi_n^j(Z))$ under $P_n^j$ in the definition of $\widetilde{\Omega}_{n,m}$. Though using an empirical correlation matrix rather than an empirical covariance matrix changes the quadratic form used to define the confidence set,  doing so can make selecting the parameter $\lambda$ simpler because, in that case, the matrices $\Sigma_{n,m}$ and $I_m$ are on the same scale in the sense that both have trace $m$.

We conclude by noting that, when $\mathcal{H}$ is an RKHS on $\mathcal{T}$ with feature map $t\mapsto K_t$, it will be possible to compute the quadratic form $\langle \Omega_n(\bar{\nu}_n-h_0),\bar{\nu}_n-h_0\rangle_{\mathcal{H}}$ explicitly in some cases. One particularly interesting case occurs when $\bar{\nu}_n$ and $h_0$ are both contained in the linear span of $\{K_{t_k}\}_{k=1}^m$ and $\Omega_n$ is the identity operator, where the set $\{K_{t_k}\}_{k=1}^m$ may depend on the observed data. In such cases, $\bar{\nu}_n-h_0=\sum_{k=1}^m c_k K_{t_k}$ for some $c:=(c_k)_{k=1}^m\in\mathbb{R}^m$, and so, letting $G$ denote the Gram matrix with $G_{jk}=K_{t_j}(t_k)$, it holds that $\langle \bar{\nu}_n-h_0,\bar{\nu}_n-h_0\rangle_{\mathcal{H}}= c^\top G c$. If $h_0=0$, which would be the key value of $h_0$ to consider when the confidence set is being used to test the null hypothesis that $\nu(P_0)=0$ against the complementary alternative, it is necessarily the case that $h_0$ is in the linear span of $\{K_{t_k}\}_{k=1}^m$ for any collection $\{t_k\}_{k=1}^m$. Hence, in these cases, it suffices that $\bar{\nu}_n$ be in the linear span of $\{K_{t_k}\}_{k=1}^m$ for some $m$. Such cases arise, for example, when using the MMD to test for the equality of two distributions \citep{gretton2012kernel}; when conducting these tests, $\bar{\nu}_n$ is equal to the difference of the one-step estimators of the kernel mean embeddings of two distributions.

\section{Cross-validated selection of the regularization parameter}\label{app:CV}

The key observation that motivates the risk we use is that, along any quadratic mean differentiable submodel $\{P_\epsilon: \epsilon\in [0,\delta)\}\in \mathscr{P}(P,\mathcal{P},s)$, $\nu(P_\epsilon)-\nu(P)$ should approximately be equal to $\epsilon \dot{\nu}_P(s)$ or, put another way, should approximately be equal to
\begin{align*}
\argmin_{h\in\mathcal{H}}\left\|h - \epsilon\dot{\nu}_P(s)\right\|_{\mathcal{H}}^2 &= \argmin_{h\in\mathcal{H}}\left[\frac{1}{2}\|h\|_{\mathcal{H}}^2 - \epsilon\langle h, \dot{\nu}_P(s)\rangle_{\mathcal{H}}\right] \\
&= \argmin_{h\in\mathcal{H}}\left[\frac{1}{2}\|h\|_{\mathcal{H}}^2 - \epsilon\langle \dot{\nu}_P^*(h), s\rangle_{L^2(P)}\right].
\end{align*}
By Lemma~\ref{lem:LinearQMDRemainder} in Appendix~\ref{app:lem}, $\epsilon\langle \dot{\nu}_P^*(h), s\rangle_{L^2(P)}= P_\epsilon \dot{\nu}_P^*(h)+o(\epsilon)$ under appropriate conditions. This suggests that $\nu(P_\epsilon)-\nu(P)$ should approximately equal $\argmin_{h\in\mathcal{H}}[\frac{1}{2}\|h\|_{\mathcal{H}}^2 - P_\epsilon \dot{\nu}_P^*(h)]$. Letting $P_0$ play the role of $P_\epsilon$ and an estimate $\widehat{P}_n^{\mathrm{loss}}$ of $P_0$ play the role of $P$ suggests that $\nu(P_0)-\nu(\widehat{P}_n^{\mathrm{loss}})$ should approximately minimize $\frac{1}{2}\|h\|_{\mathcal{H}}^2 - P_0 \dot{\nu}_{\widehat{P}_n^{\mathrm{loss}}}(h)$ over $h\in \mathcal{H}$. Put another way, $\nu(P_0)$ should approximately minimize $E_0[\mathscr{L}_{\widehat{P}_n^{\mathrm{loss}}}(Z;h)]=\frac{1}{2}\|h-\nu(\widehat{P}_n^{\mathrm{loss}})\|_{\mathcal{H}}^2 - P_0 \dot{\nu}_{\widehat{P}_n^{\mathrm{loss}}}[h-\nu(\widehat{P}_n^{\mathrm{loss}})]$ over $h\in\mathcal{H}$. This suggests using the loss $\mathscr{L}_{\widehat{P}_n^{\mathrm{loss}}}$ when performing cross-validation to select the regularization parameter $\beta_n$. Such an approach is presented in Algorithm~\ref{alg:cv}. We refer the reader to \cite{van2003unifiedCV} for arguments that can be used to establish oracle guarantees for this cross-validation selector.

\begin{algorithm}[tb]
   \caption{Cross-validated selection of the regularization parameter $\beta_n$}
   \label{alg:cv}
   \linespread{1.5}\selectfont
\begin{algorithmic}[1]
   \STATE \textbf{Inputs:} Data $Z_1,Z_2,\ldots,Z_n$, estimator to be used to estimate the nuisance $P_0$ and a finite subset $B_n$ of $\ell_{*}^2$ of candidate values for the regularization parameter
   \STATE \textbf{Generate folds:} partition the multiset $\{Z_i\}_{i=1}^n$ into multisets $\mathcal{Z}_1,\mathcal{Z}_2,\mathcal{Z}_3,\mathcal{Z}_4$ of roughly equal size
   \FOR{all folds $j=1,2,3,4$}
    \STATE \textbf{Nuisance estimation:} using only data in $\mathcal{Z}_j$, estimate $P_0$ as $\widehat{P}_n^j$.
   \ENDFOR
   \FOR{all permutations $j=(j(1),j(2),j(3),j(4))$ of $\{1,2,3,4\}$}
        \STATE \textbf{Nuisance for regularized one-step:} let $\hat{P}_n^{\mathrm{os}}=\hat{P}_n^{j(1)}$
        \STATE \textbf{Nuisance for loss function:} let $\hat{P}_n^{\mathrm{loss}}=\hat{P}_n^{j(2)}$
        \FOR{all candidate regularization parameters $\beta:=(\beta_k)_{k=1}^\infty\in B_n$}
            \STATE \textbf{Define regularized one-step estimator:} $\widetilde{\nu}_{n,j}^\beta:=\nu(\hat{P}_n^{\mathrm{os}}) + \frac{1}{|\mathcal{Z}_{j(3)}|}\sum_{z\in\mathcal{Z}_{j(3)}} \phi_{\hat{P}_n^{\mathrm{os}}}^\beta(z)$
            \STATE \textbf{Compute risk $(j,\beta)$-specific risk:} $R_j^\beta:= \frac{1}{|\mathcal{Z}_{j(4)}|}\sum_{z\in\mathcal{Z}_{j(4)}} \mathcal{L}_{\hat{P}_n^{\mathrm{loss}}}(z;\widetilde{\nu}_{n,j}^\beta)$
        \ENDFOR
    \ENDFOR
    \FOR{all candidate regularization parameters $\beta:=(\beta_k)_{k=1}^\infty\in B_n$}
        \STATE \textbf{Aggregate the risks:} $R^\beta:=\frac{1}{24}\sum_{\textnormal{permutations $j$ of $\{1,2,3,4\}$}} R_j^\beta$
    \ENDFOR
    \STATE \textbf{return} $\beta^\star\in \argmin_{\beta\in B_n}R^\beta$
\end{algorithmic}
\end{algorithm}

\section{Inefficient influence operators and influence functions}\label{app:inefficient}

When $\mathcal{H}$ is finite-dimensional and the model is semiparametric at $P_0$ --- in the sense that its tangent space $\dot{\mathcal{P}}_{P_0}$ is a strict subspace of $L_0^2(P_0)$ --- there are generally many influence functions that can be used to construct a one-step estimator of $\nu(P_0)$. In our more general Hilbert-valued setting, the same can be done by replacing the efficient influence operator that we use to construct our (regularized) one-step estimators by an inefficient influence operator. Each inefficient influence operator is the Hermitian adjoint of a bounded linear extension $\dot{\nu}_{\mathrm{ext},P}$ of the local parameter $\dot{\nu}_P$ from $\dot{\mathcal{P}}_P$ to $L_0^2(P)$. All such extensions take the form $\dot{\nu}_{\mathrm{ext},P}(s)=\dot{\nu}_P(\Pi_{L_0^2(P)}[s\mid \dot{\mathcal{P}}_P]) + \dot{\xi}_P(\Pi_{L_0^2(P)}[s\mid \dot{\mathcal{P}}_P^\perp])$, where $\dot{\mathcal{P}}_P^\perp$ is the orthogonal complement of $\dot{\mathcal{P}}_P\subset L_0^2(P)$ and $\dot{\xi}_P : \dot{\mathcal{P}}_P^\perp\rightarrow\mathcal{H}$ is bounded and linear. The corresponding influence operator is the Hermitian adjoint of $\dot{\nu}_{\mathrm{ext},P}$, which takes the form $\dot{\nu}_{\mathrm{ext},P}^*=\dot{\nu}_P^* + \dot{\xi}_P^*$, where $\dot{\nu}_P^*$ is the efficient influence operator and $\dot{\xi}_P^*$ is the Hermitian adjoint of $\dot{\xi}_P$. The efficient influence operator is recovered by taking $\dot{\xi}_P$ to be the zero operator. If $\dot{\xi}_P\not=0$, which we assume hereafter, then $\dot{\nu}_{\mathrm{ext},P}^*\not=\dot{\nu}_P^*$ and we call $\dot{\nu}_{\mathrm{ext},P}^*$ an inefficient influence operator.

In some cases, $\dot{\nu}_{\mathrm{ext},P}^*$ will have an associated inefficient influence function $\phi_{\mathrm{ext},P}$. Concretely, this holds if and only if $\dot{\nu}_{\mathrm{ext},P}^*(\cdot)(z)$ is bounded and linear $P$-almost surely; in these cases, $\phi_{\mathrm{ext},P}(z)$ is the Riesz representation of this operator. The following result shows that an efficient influence function must exist for an inefficient one to exist, and also provides a means to derive the form of the EIF based on the form of an inefficient influence function. We let $\Phi$ denote the $L^2(P;\mathcal{H})$-closure of the linear span of $\{z\mapsto s(z)h : h\in\mathcal{H},s\in\dot{\mathcal{P}}_P\}$.
    \begin{lemma}[Expressing the EIF in terms of an inefficient influence function]\label{lem:ineffIF}
        If $\nu$ is pathwise differentiable at $P\in\mathcal{P}$ with inefficient influence function $\phi_{\mathrm{ext},P}\in L^2(P;\mathcal{H})$, then $\nu$ has EIF $\phi_P=\Pi_{L^2(P;\mathcal{H})}\left[\phi_{\mathrm{ext},P}\,|\, \Phi\right]\in L^2(P;\mathcal{H})$.
    \end{lemma}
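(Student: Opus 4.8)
The plan is to characterize the EIF as the Riesz representation of $\dot{\nu}_P^*(\cdot)(z)$ and to identify this with an orthogonal projection of the inefficient influence function $\phi_{\mathrm{ext},P}$ onto the closed subspace $\Phi$. First I would recall that, since $\dot{\nu}_{\mathrm{ext},P}^* = \dot{\nu}_P^* + \dot{\xi}_P^*$ where $\dot{\xi}_P^*$ is the adjoint of $\dot{\xi}_P : \dot{\mathcal{P}}_P^\perp \rightarrow \mathcal{H}$, and since adjoints of operators with this structure map into the appropriate closed subspaces, the key is to relate the pointwise-in-$z$ behavior of $\dot{\nu}_P^*(\cdot)(z)$ to that of $\dot{\nu}_{\mathrm{ext},P}^*(\cdot)(z)$. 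The existence of the inefficient influence function $\phi_{\mathrm{ext},P} \in L^2(P;\mathcal{H})$ guarantees that $\dot{\nu}_{\mathrm{ext},P}^*(\cdot)(z)$ is $P$-a.s.\ bounded and linear. My main goal is to transfer this boundedness to $\dot{\nu}_P^*(\cdot)(z)$ and then identify the Riesz representation.

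The central observation I would exploit is the relationship between inner products in $L^2(P;\mathcal{H})$ and the efficient influence operator. For any $s \in \dot{\mathcal{P}}_P$ and $h \in \mathcal{H}$, the element $z \mapsto s(z) h$ belongs to the generating set of $\Phi$, and I would compute $\langle \phi_{\mathrm{ext},P}, (z\mapsto s(z)h)\rangle_{L^2(P;\mathcal{H})} = \int \langle \phi_{\mathrm{ext},P}(z), h\rangle_{\mathcal{H}} s(z)\, P(dz) = \langle \dot{\nu}_{\mathrm{ext},P}^*(h), s\rangle_{L^2(P)} = \langle h, \dot{\nu}_{\mathrm{ext},P}(s)\rangle_{\mathcal{H}} = \langle h, \dot{\nu}_P(s)\rangle_{\mathcal{H}}$, where the last equality uses that $\dot{\nu}_{\mathrm{ext},P}$ restricts to $\dot{\nu}_P$ on $\dot{\mathcal{P}}_P$. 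The projection $\phi_P := \Pi_{L^2(P;\mathcal{H})}[\phi_{\mathrm{ext},P} \mid \Phi]$ therefore satisfies $\langle \phi_P, (z\mapsto s(z)h)\rangle_{L^2(P;\mathcal{H})} = \langle h, \dot{\nu}_P(s)\rangle_{\mathcal{H}}$ for all such $s,h$, since the projection does not alter inner products against elements of $\Phi$. This identity is exactly what is needed to show that $z \mapsto \langle \phi_P(z), h\rangle_{\mathcal{H}}$ serves as the efficient influence operator $\dot{\nu}_P^*(h)$: testing against every $s \in \dot{\mathcal{P}}_P$ pins down $\dot{\nu}_P^*(h) = \langle \phi_P(\cdot), h\rangle_{\mathcal{H}}$ as elements of $L^2(P)$, because both sides have the same inner product with every score and $\dot{\nu}_P^*(h)$ lies in $\dot{\mathcal{P}}_P$ while $\langle \phi_P(\cdot),h\rangle_{\mathcal{H}}$ lies in $\Phi$'s associated scalar closure.

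The main obstacle I anticipate is the passage from the weak, $L^2$-level identity to the $P$-a.s.\ pointwise Riesz representation required by the definition of an EIF in \eqref{eq:EIFRiesz}. Establishing $\langle \phi_P(\cdot),h\rangle_{\mathcal{H}} = \dot{\nu}_P^*(h)$ as equivalence classes for each fixed $h$ is routine, but upgrading this to a single $P$-probability-one set $\mathcal{Z}'$ on which the identity $\dot{\nu}_P^*(h)(z) = \langle h, \phi_P(z)\rangle_{\mathcal{H}}$ holds simultaneously for all $h \in \mathcal{H}$ requires the separability machinery developed for the efficient influence process. Here I would mirror the argument of Lemma~\ref{lem:RieszRep}: use that a separable version of the process is employed, fix a countable dense $\mathcal{H}'$, establish the identity on a full-measure set for each $h \in \mathcal{H}'$, intersect these countably many sets, and then extend to all $h \in \mathcal{H}$ by the approximation property $\dot{\nu}_P^*(h_j)(z) \rightarrow \dot{\nu}_P^*(h)(z)$ together with continuity of $\langle \phi_P(z), \cdot\rangle_{\mathcal{H}}$. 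Once this is in place, $\phi_P(z)$ is $P$-a.s.\ the Riesz representation of the bounded linear functional $\dot{\nu}_P^*(\cdot)(z)$, so $\nu$ has an EIF equal to $\phi_P$; that $\phi_P \in L^2(P;\mathcal{H})$ is immediate since orthogonal projection cannot increase norm, giving $\|\phi_P\|_{L^2(P;\mathcal{H})} \le \|\phi_{\mathrm{ext},P}\|_{L^2(P;\mathcal{H})} < \infty$.
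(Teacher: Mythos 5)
Your proposal is correct and follows essentially the same route as the paper's proof: the same computation showing $\langle \dot{\nu}_P^*(h),s\rangle_{L^2(P)}=\langle z\mapsto s(z)h,\phi_{\mathrm{ext},P}\rangle_{L^2(P;\mathcal{H})}=\langle z\mapsto s(z)h,\phi_P\rangle_{L^2(P;\mathcal{H})}$ via the restriction property of $\dot{\nu}_{\mathrm{ext},P}$ and the projection, the same handling of the orthogonal-complement direction (your observation that $\langle\phi_P(\cdot),h\rangle_{\mathcal{H}}\in\dot{\mathcal{P}}_P$ is equivalent to the paper's check that it is orthogonal to every $s^\perp$), and the same separability argument to upgrade the per-$h$ $L^2(P)$ identity to a single $P$-probability-one set. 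The only difference is cosmetic ordering: the paper performs the separability reduction first and the orthogonality computation second, while you do the reverse.
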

    \noindent The proof of this lemma is given at the end of this appendix.

    One-step estimators can be constructed using inefficient influence operators. Beginning with cases where an inefficient influence function exists, we define a cross-fitted one-step estimator as $\bar{\nu}_{\mathrm{ext},n}:= \frac{1}{2}\sum_{j=1}^2 [\nu(\widehat{P}_n^j) + P_n^j \phi_{\mathrm{ext},n}^j]$, where $\phi_{\mathrm{ext},n}^j:=\phi_{\mathrm{ext},\widehat{P}_n^j}$, where here and in the following we use notation from Section \ref{sec:cf}. Under similar conditions to those of Theorem~\ref{thm:al}, it can be shown that $\bar{\nu}_{\mathrm{ext},n}$ is regular and asymptotically linear with influence function $\phi_{\mathrm{ext},0}:=\phi_{\mathrm{ext},P_0}$ and
    \begin{align*}
    n^{1/2}\left[\bar{\nu}_{\mathrm{ext},n} - \nu(P_0)\right]&\rightsquigarrow \mathbb{H}_{\mathrm{ext}}, 
    \end{align*}
    where $\mathbb{H}_{\mathrm{ext}}$ is a tight $\mathcal{H}$-valued Gaussian random variable that is such that, for each $h\in\mathcal{H}$, $\langle\mathbb{H}_{\mathrm{ext}} ,h \rangle_{\mathcal{H}}\sim N(0,E_0[\langle \phi_{\mathrm{ext},0}(Z),h \rangle_{\mathcal{H}}^2])$. The above weak convergence facilitates the construction of confidence sets for $\nu(P_0)$ using analogous methods to those used in Section~\ref{sec:confSets}. The main distinction between $\bar{\nu}_{\mathrm{ext},n}$ and $\bar{\nu}_n$ is that, since $\phi_{\mathrm{ext},0}$ is not the EIF, the conditions of the convolution theorem fail to hold \citep[Theorem~3.11.2 and Lemma~3.11.4 of][]{van1996weak}, and so $\bar{\nu}_{\mathrm{ext},n}$ will not be efficient --- e.g., \eqref{eq:convolution} will not generally hold.

    Moving now to cases where an inefficient influence function does not exist, we define a $\beta_n$-regularized one-step estimator as $\bar{\nu}_{\mathrm{ext},n}^{\beta_n}:= \frac{1}{2}\sum_{j=1}^2 [\nu(\widehat{P}_n^j) + P_n^j \phi_{\mathrm{ext},n}^{j,\beta_n}]$ with $\phi_{\mathrm{ext},n}^{j,\beta_n}(z):=\sum_{k=1}^\infty \beta_{n,k} \dot{\nu}_{\mathrm{ext},\widehat{P}_n^j}^*(h_k)(z) h_k$. This regularized one-step estimator satisfies similar guarantees to those satisfied by the one based on the efficient influence operator: it achieves a $\|\beta_n\|_{\ell^2}/n^{1/2}$-rate of convergence when a drift, regularized remainder, and bias terms are small; the drift term will be small if $\phi_{\mathrm{ext},n}^{j,\beta_n}$ is close to $\phi_{\mathrm{ext},0}^{\beta_n}(z):=\sum_{k=1}^\infty \beta_{n,k} \dot{\nu}_{\mathrm{ext},P_0}^*(h_k)(z) h_k$ in $L^2(P_0;\mathcal{H})$; the remainder will be small if $\max_j \sup_{k\in\mathbb{N}}|\langle\nu(\widehat{P}_n^j) - \nu(P_0),h_k\rangle_{\mathcal{H}} + P_0 \dot{\nu}_{\mathrm{ext},\widehat{P}_n^j}^*(h_k)|$ is $o_p(n^{-1/2})$; and the bias term will be small if $\nu(P)$ is sufficiently smooth for all $P\in\mathcal{P}$. As for confidence sets, the same methods as described in Section~\ref{sec:confSetsRegularized} can be used once one notes that, for any fixed $\beta\in \ell^2\cap (0,1]^{\mathbb{N}}$, $\nu^\beta:=\Gamma_\beta\circ \nu$ is pathwise differentiable at $P_0$ with inefficient influence function $\phi_{\mathrm{ext},0}^\beta$. This can be used to justify, for example, the asymptotic validity of the $(1-\alpha)$ confidence set
    \begin{align*}
    &\left\{h\in\mathcal{H} : {\textstyle\sum_{k=1}^\infty} \beta_k^2 \left[{\textstyle\frac{1}{2}\sum_{j=1}^2}\left\{\langle \nu(\widehat{P}_n^j),h_k\rangle_{\mathcal{H}} + P_n^j \dot{\nu}_{\mathrm{ext},\widehat{P}_n^j}^{*}(h_k)\right\} - \langle h,h_k\rangle_{\mathcal{H}}\right]^2\le \widehat{\zeta}_{\mathrm{ext},n}/n\right\},
    \end{align*}
    where $\widehat{\zeta}_{\mathrm{ext},n}$ is selected via the bootstrap. Since $\dot{\nu}_{\mathrm{ext},\widehat{P}_n^j}^{*}$ is an inefficient influence operator, the threshold $\widehat{\zeta}_{\mathrm{ext},n}$ will generally be asymptotically larger than the one used for the confidence set built based on the efficient influence operator given in \eqref{eq:ellipticalCS} \citep[see Lemma~3.11.4 of][]{van1996weak}.

    \begin{proof}[Proof of Lemma~\ref{lem:ineffIF}]
        Denote by $\dot{\nu}_{\mathrm{ext},P}^*$ the inefficient influence operator to which $\phi_{\mathrm{ext},P}$ corresponds, and let its Hermitian adjoint $\dot{\nu}_{\mathrm{ext},P}$ denote the extension of the local parameter $\dot{\nu}_P$ used to define this inefficient influence operator. Throughout this proof we let $\phi_P^\diamond:=\Pi_{L^2(P;\mathcal{H})}[\phi_{\mathrm{ext},P}\,|\, \Phi]\in L^2(P;\mathcal{H})$. Our goal is to show that $\nu$ has EIF $\phi_P=\phi_P^\diamond$.
        
        Since we have assumed throughout that a separable version of the efficient influence process is used, there exists a countable dense subset $\mathcal{H}'$ of $\mathcal{H}$ and a $P$-probability one subset $\mathcal{Z}'$ of $\mathcal{Z}$ such that, for all $h\in\mathcal{H}$ and $z\in\mathcal{Z}'$, there exists an $\mathcal{H}'$-valued sequence $(h_j')_{j=1}^\infty$ that converges to $h$ and satisfies $\dot{\nu}_P^*(h_j')(z)\rightarrow \dot{\nu}_P^*(h)(z)$ as $j\rightarrow\infty$. Fix $\epsilon>0$ and $z\in\mathcal{Z}'$. Let $h_\epsilon$ be such that
        \begin{align*}
            &\left|\dot{\nu}_P^*(h_\epsilon)(z)-\left\langle h_\epsilon,\phi_P^\diamond(z)\right\rangle_{\mathcal{H}}\right|\ge \sup_{h\in\mathcal{H}}\left|\dot{\nu}_P^*(h)(z)-\left\langle h,\phi_P^\diamond(z)\right\rangle_{\mathcal{H}}\right| - \epsilon.
        \end{align*}
        Fix an $\mathcal{H}'$-valued sequence $(h_{\epsilon,j})_{j=1}^\infty$ that converges to $h_\epsilon$ and is such that $\dot{\nu}_P^*(h_{\epsilon,j})(z)\rightarrow \dot{\nu}_P^*(h_\epsilon)(z)$ as $j\rightarrow\infty$. By the choice of this sequence and the fact that $\langle \,\cdot\,,\phi_P^\diamond(z)\rangle_{\mathcal{H}}$ is continuous, there exists a large enough $j$ such that
        \begin{align*}
            &\left|\dot{\nu}_P^*(h_{\epsilon,j})(z)-\left\langle h_{\epsilon,j},\phi_P^\diamond(z)\right\rangle_{\mathcal{H}}\right| \ge \sup_{h\in\mathcal{H}}\left|\dot{\nu}_P^*(h)(z)-\left\langle h,\phi_P^\diamond(z)\right\rangle_{\mathcal{H}}\right| - 2\epsilon.
        \end{align*}
        Taking a supremum over $h_{\epsilon,j}\in\mathcal{H}'$ on the left and then recalling that $\epsilon>0$ was arbitrary shows that
        \begin{align*}
            &\sup_{h\in\mathcal{H}'}\left|\dot{\nu}_P^*(h)(z)-\left\langle h,\phi_P^\diamond(z)\right\rangle_{\mathcal{H}}\right| = \sup_{h\in\mathcal{H}}\left|\dot{\nu}_P^*(h)(z)-\left\langle h,\phi_P^\diamond(z)\right\rangle_{\mathcal{H}}\right|.
        \end{align*}
        Now, for each $h\in\mathcal{H}'$, let $\mathcal{Z}_{h}'':=\{z\in\mathcal{Z} : \dot{\nu}_P^*(h)(z)=\langle h,\phi_P^\diamond(z)\rangle_{\mathcal{H}}\}$. In the remainder we shall show that $\mathcal{Z}_h''$ has $P$-probability one. Combining this with the above display then shows that $\nu$ has EIF $\phi_P=\phi_P^\diamond$, since that will show that the right-hand side above is $0$ on the $P$-probability one set $\mathcal{Z}'\cap[\cap_{h\in\mathcal{H}'}\mathcal{Z}_h'']$.

        Fix $h\in\mathcal{H}'$ and let $\langle h,\phi_P^\diamond\rangle_{\mathcal{H}}$ denote the function $z\mapsto \langle h,\phi_P^\diamond(z)\rangle_{\mathcal{H}}$. We will show that $ \langle\dot{\nu}_P^*(h)-\langle h,\phi_P^\diamond\rangle_{\mathcal{H}},u\rangle_{L^2(P)}=0$ for a generic $u\in L^2(P)$, which will then establish that $\mathcal{Z}_h''$ is a $P$-probability one set and complete our proof. Writing $u=s+s^\perp$ with $s\in\dot{\mathcal{P}}_P$ and $s^\perp$ belonging to the orthogonal complement of $\dot{\mathcal{P}}_P$ in $L^2(P)$, it suffices to show that $ \langle\dot{\nu}_P^*(h)-\langle h,\phi_P^\diamond\rangle_{\mathcal{H}},s\rangle_{L^2(P)}=0$ and $ \langle\dot{\nu}_P^*(h)-\langle h,\phi_P^\diamond\rangle_{\mathcal{H}},s^\perp\rangle_{L^2(P)}=0$. Beginning with the former equality and using that the restriction of $\dot{\nu}_{\mathrm{ext},P}$ to $\dot{\mathcal{P}}_P$ is equal to $\dot{\nu}_P$,
        \begin{align*}
           \left\langle\dot{\nu}_P^*(h),s\right\rangle_{L^2(P)}&= \left\langle h,\dot{\nu}_P(s)\right\rangle_{\mathcal{H}}= \left\langle h,\dot{\nu}_{\mathrm{ext},P}(s)\right\rangle_{\mathcal{H}} = \left\langle \dot{\nu}_{\mathrm{ext},P}^*(h),s\right\rangle_{L^2(P)} \\
           &= \int \dot{\nu}_{\mathrm{ext},P}^*(h)(z)s(z) P(dz)= \int \left\langle h,\phi_{\mathrm{ext},P}(z)\right\rangle_{\mathcal{H}} s(z) P(dz) \\
           &= \int \left\langle s(z)h,\phi_{\mathrm{ext},P}(z)\right\rangle_{\mathcal{H}} P(dz) = \left\langle z\mapsto s(z)h,\phi_{\mathrm{ext},P}\right\rangle_{L^2(P;\mathcal{H})}.
        \end{align*}
        Since $z\mapsto s(z)h\in\Phi$ and $\phi_P^\diamond$ is an orthogonal projection of $\phi_{\mathrm{ext},P}$ onto $\Phi$, the right-hand side equals $\left\langle z\mapsto s(z)h,\phi_P^\diamond\right\rangle_{L^2(P;\mathcal{H})}$. By similar calculations to those used above, this in turn equals $\langle \langle h,\phi_P^\diamond\rangle_{\mathcal{H}},s\rangle_{L^2(P)}$. Hence, $ \langle\dot{\nu}_P^*(h)-\langle h,\phi_P^\diamond\rangle_{\mathcal{H}},s\rangle_{L^2(P)}=0$. To see that $ \langle\dot{\nu}_P^*(h)-\langle h,\phi_P^\diamond\rangle_{\mathcal{H}},s^\perp\rangle_{L^2(P)}=0$, observe that $\langle\dot{\nu}_P^*(h),s^\perp\rangle_{L^2(P)}=0$ since $\dot{\nu}_P^*$ has codomain $\dot{\mathcal{P}}_P$, and
        \begin{align*}
            \langle\langle h,\phi_P^\diamond\rangle_{\mathcal{H}},s^\perp\rangle_{L^2(P)} = \langle z\mapsto s^\perp(z)h,\phi_P^\diamond\rangle_{L^2(P;\mathcal{H})}=0,
        \end{align*}
        where we have used that all elements of $\Phi\subset L^2(P;\mathcal{H})$ are orthogonal to $z\mapsto s^\perp(z)h$ by virtue of the fact that $s^\perp$ is orthogonal to $\dot{\mathcal{P}}_P\subset L^2(P)$.
    \end{proof}

\section{Additional simulation results}

\subsection{Simulation results for Example~\ref{ex:cfdBandlimited}}\label{app:simCfdBandlimited}

We evaluate the coverage of our spherical $L^2(\mathbb{R})$ confidence sets for a bandlimited counterfactual density of $Y(1)$. When doing this, we take $Q(1)$ to be the distribution of $\sigma_M S + \mu_M$, where $(\mu_1,\mu_2,\mu_3)=(-4,0,4)$, $(\sigma_1,\sigma_2,\sigma_3)=(2,2,1)$, and $M\sim \textnormal{Unif}\{1,2,3\}$ is drawn independently of the random variable $S$, which has density function $3\mathrm{sinc}^4(\cdot)/(2\pi)$. The density of $Q(1)$ is depicted in the top left corner of Figure~\ref{fig:onestepIllustration}. It is bandlimited, with the support of its Fourier transform equal to $[-2,2]$. We focus on the case where the bandlimiting parameter $b$ used to define $\underline{\nu}$ in Eq.~\ref{eq:underlineNuDef} is equal to $2$, so that the target of inference $\underline{\nu}(P_0)$ coincides with the density of $Q(1)$.

Figure~\ref{fig:bandlimitedCoverage} displays the coverage of our confidence sets at different nominal levels. At all nominal levels larger than 75\%, the confidence sets are slightly conservative at the sample sizes considered, with the actual coverage probability approaching the nominal level as $n$ grows. A similar improvement with $n$ holds across the full 0-100\% range of nominal levels, which both supports our theoretical weak convergence guarantees for the one-step estimator and their finite-sample utility. In Table~\ref{tab:bandlimitedMISE}, we also verified that, as anticipated by our theory for cases where an EIF exists, the mean integrated squared error of the one-step estimator decays at an $n^{-1}$ rate.

We conclude by comparing the size our $L^2(\mathbb{R})$ confidence set $\mathcal{C}_n$ to those of a pointwise confidence interval for the counterfactual density function at zero, namely $\underline{\nu}(P_0)(0)$. To make this comparison, we first note that, for any $y\in\mathbb{R}$, including $y=0$, and any $h\in\mathcal{C}_n:=\{h\in\mathcal{H} : \|h-\underline{\bar{\nu}}_n\|_{L^2(\lambda_Y)}^2\le \widehat{\zeta}_n/n\}$,
\begin{align}
    |h(y) - \underline{\bar{\nu}}_n(y)|&= \left|\int_{-\infty}^\infty \underline{K}_y(\tilde{y})\,[h(\tilde{y})-\underline{\bar{\nu}}_n(y)]\, \lambda_Y(d\tilde{y})\right| \nonumber \\
    &\le \left\|\underline{K}_y\right\|_{L^2(\lambda_Y)}\left\|h-\underline{\bar{\nu}}_n\right\|_{L^2(\lambda_Y)}\le (b\widehat{\zeta}_n/[n\pi])^{1/2}, \label{eq:bandlimitedUniform}
\end{align}
where we used that $\left\|\underline{K}_y\right\|_{L^2(\lambda_Y)}=(b/\pi)^{1/2}$. Consequently, our $L^2(\mathbb{R})$ confidence set yields an interval for $\underline{\nu}(P_0)(0)$ of the form $\underline{\bar{\nu}}_n(0)\pm (b\widehat{\zeta}_n/[n\pi])^{1/2}$. In our simulation setting, this confidence interval was about $2.3$ times wider than an efficient Wald-type confidence interval for the real-valued quantity $\underline{\nu}(P_0)(0)$ when $\alpha=0.05$. Hence, if a point evaluation of $\underline{\nu}(P_0)$ is truly the target of inference, then there would be a benefit to directly pursuing inference for this quantity, rather than the function as a whole. However, if the function $\underline{\nu}(P_0)$ is the target of inference, then our $L^2(\mathbb{R})$ confidence set is likely the preferred method for making inference. This may be especially true in this bandlimited density example since the fact that \eqref{eq:bandlimitedUniform} holds for all $y\in\mathbb{R}$ shows that a uniform confidence band for $\underline{\nu}(P_0)$ is given by $\mathcal{C}_{n,\infty}:= \{h\in\mathcal{H} : \sup_{y\in\mathbb{R}}|h(y)-\bar{\nu}_n(y)|\le (b\widehat{\zeta}_n/[n\pi])^{1/2}\}$.

\begin{figure}[tb]
    \centering
    \includegraphics[width=0.5\textwidth]{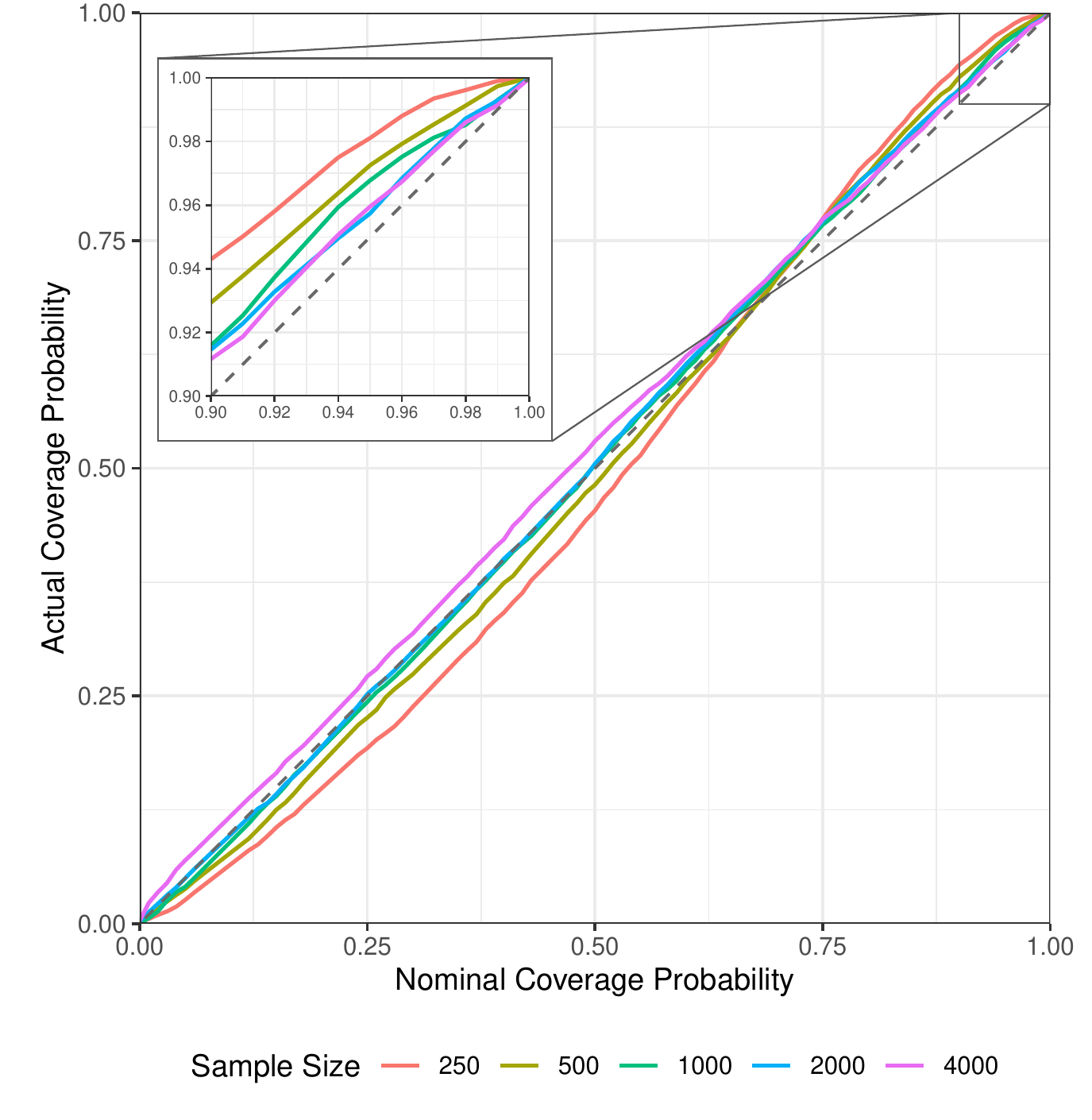}
    \caption{Actual versus nominal coverage of confidence sets for the bandlimited density function based on 5000 Monte Carlo repetitions. The inset displays nominal coverage values that are of particular interest in practice.}
    \label{fig:bandlimitedCoverage}
\end{figure}

\begin{table}[tb]
    \centering
\begin{tabular}{rrrrrr}
\hline
& \multicolumn{5}{c}{Sample Size ($n$)} \\
 & 250 & 500 & 1000 & 2000 & 4000 \\\hline
  Plug-In & 1.11 & 1.59 & 2.22 & 3.04 & 4.15 \\ 
  One-Step & 3.58 & 3.30 & 3.11 & 2.97 & 2.84 \\ 
   \hline
\end{tabular}
    \caption{Performance of the plug-in and one-step estimators at different sample sizes $n$, where performance is measured in terms of $n$ times the mean integrated squared error. As would be predicted by theory, this criterion appears to stabilize with $n$ for the one-step estimator. In contrast, it grows with $n$ for the plug-in estimator.}
    \label{tab:bandlimitedMISE}
\end{table}

\subsection{Supplemental tables and figures for simulation studies}

\begin{figure}[ht]
    \centering
    \includegraphics[width=.5\textwidth]{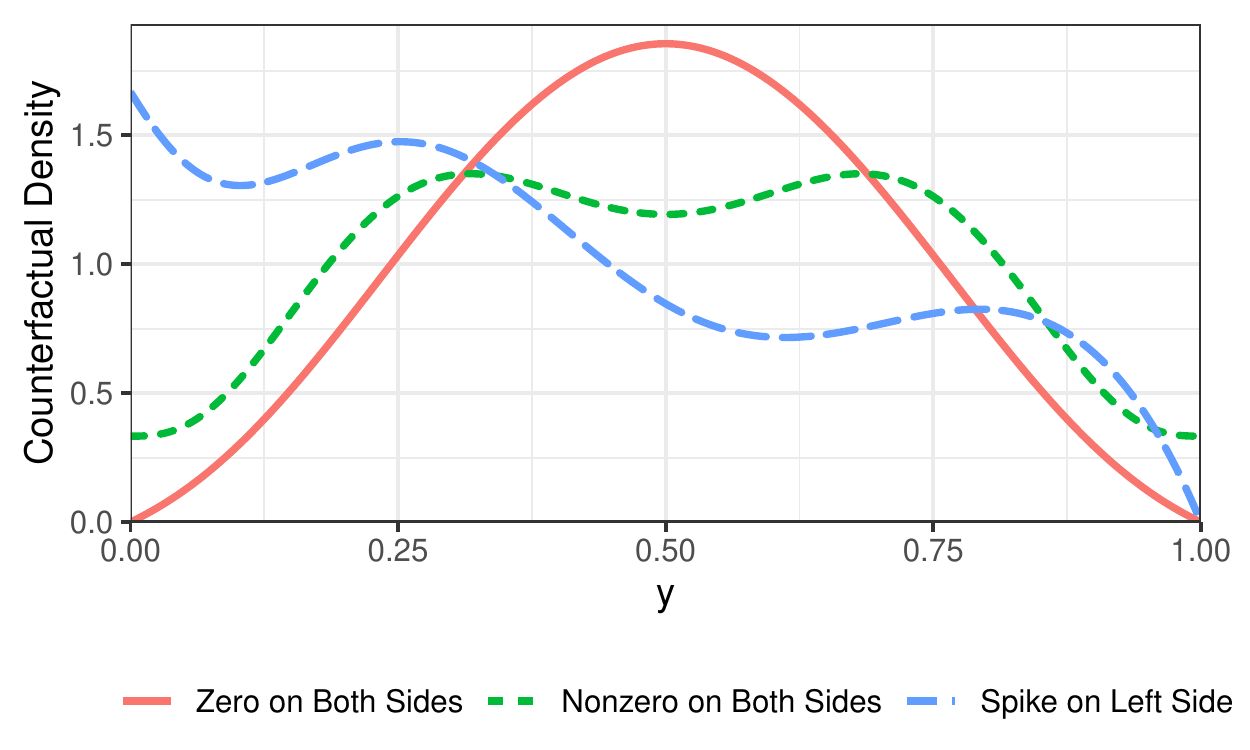}
    \caption{Densities $Q(1)$ of $Y(1)$ used in the three settings considered in the evaluation of estimators in Example~\ref{ex:cfdNonparametric}. Each of these densities is a uniform mixture $\frac{1}{3}\sum_{k=1}^3 \mathrm{Beta}(c_k,d_k)$, where the parameters indexing the beta distributions in this mixture differ across the three settings plotted in the figure. In particular, $\{(c_1,d_1),(c_2,d_2),(c_3,d_3)\}$ is equal to $\{(2,2),(3,3),(4,4)\}$ for `Zero on Both Sides', $\{(1,1),(8,4),(4,8)\}$ for `Nonzero on Both Sides', and $\{(1,5),(5,2),(4,8)\}$ for `Spike on Left Side'. When evaluating the mean integrated squared error performance of estimators of the density of $Y(1)$, $Q(0)$ is set equal to $Q(1)$.}
    \label{fig:Y1dens}
\end{figure}

\begin{figure}[ht]
    \centering
    \begin{subfigure}{.8\textwidth}
    \centering
    \includegraphics[width=\textwidth]{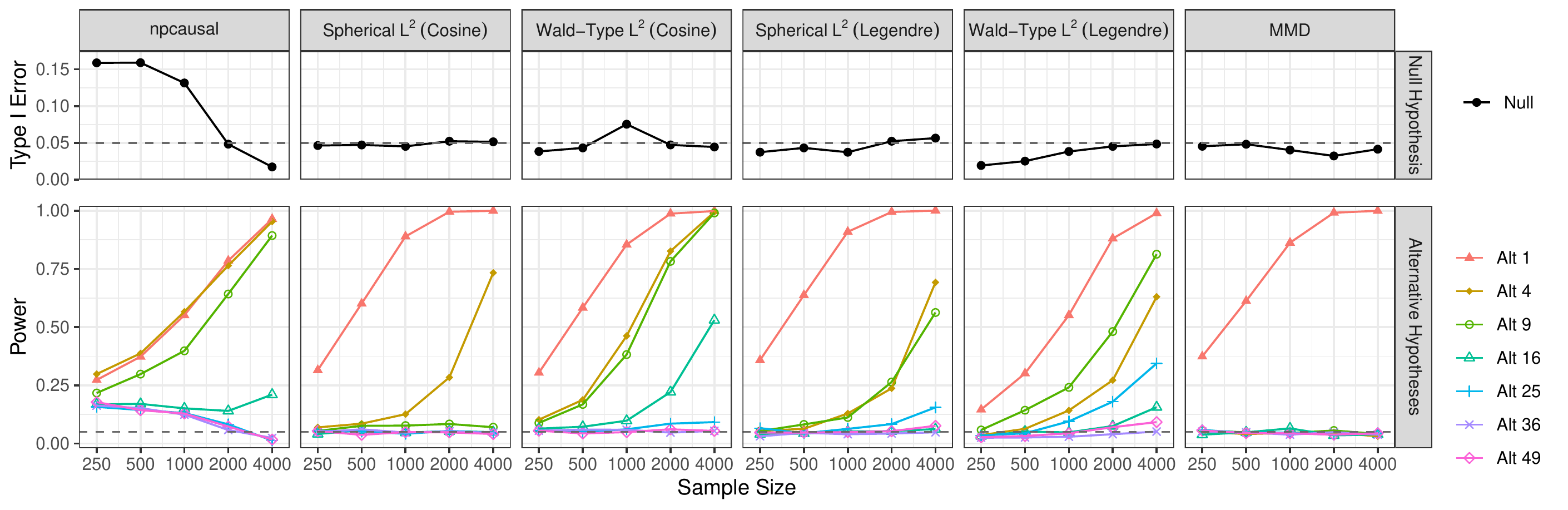}
    \caption{Parameter choices leading to improved power against smoother alternatives: $c=2.5$ and $s=2$.}
    \end{subfigure}
    \par\bigskip
    \begin{subfigure}{.8\textwidth}
    \centering
    \includegraphics[width=\textwidth]{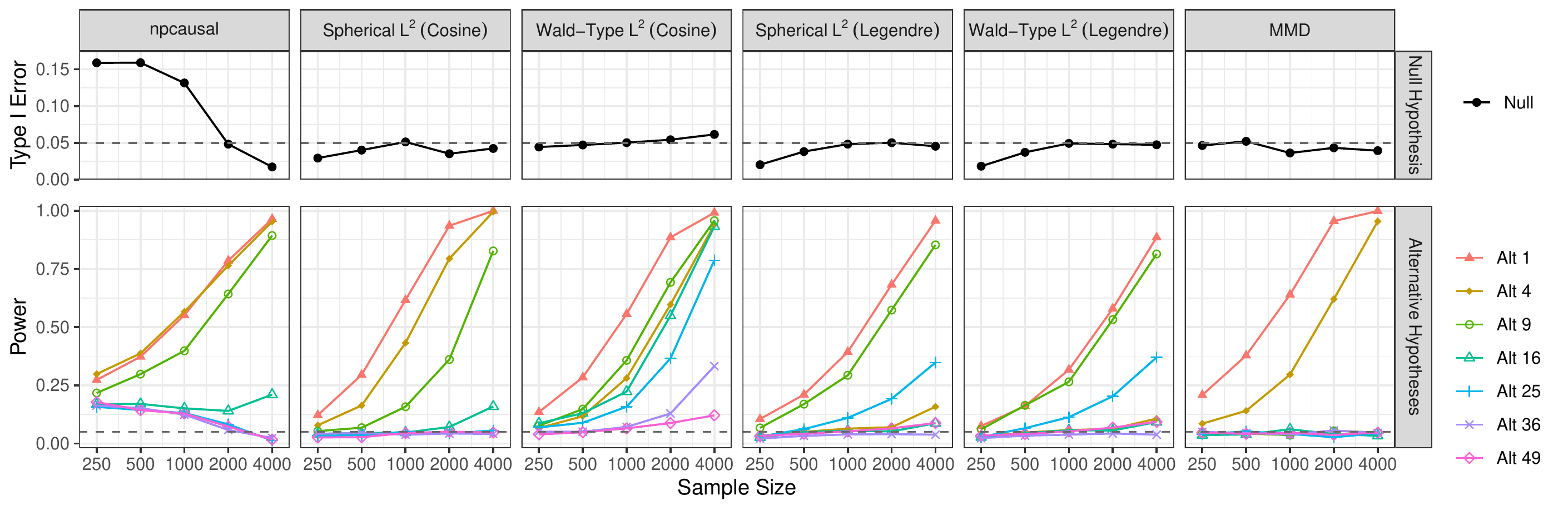}
    \caption{Parameter choices leading to improved power against rougher alternatives: $c=10$ and $s=0.5$.}
    \end{subfigure}
    \caption{Same as Figure~\ref{fig:test_rejProb_medium}, but at different choices of the tuning parameters indexing the tests. The tests based on Example~\ref{ex:cfdNonparametric} set the regularization parameter so that $\beta_k=1/[1+(k/c)^2]$ for a constant $c$ and the MMD test selects a bandwidth equal to some constant $s$ times $\textnormal{median}\{Y_1,\ldots,Y_n\}$. The same npcausal panel from Figure~\ref{fig:test_rejProb_medium} is shown as a benchmark in both subfigures.}
    \label{fig:test_rejProb_smallLarge}
\end{figure}

\end{document}